\newtheorem{fakt}{fakt}[]
\numberwithin{fakt}{section}
\newtheorem{Corollary}[fakt]{Corollary}
\newtheorem{Lemma}[fakt]{Lemma}
\newtheorem{Theorem}[fakt]{Theorem}
\theoremstyle{definition}
\newtheorem{example}[fakt]{Example}
\newtheorem{Example}[fakt]{Example}
\newtheorem{Exampleintro}{Example}
\newtheorem{Remark}[fakt]{Remark}
\newtheorem{Question}[fakt]{Question}
\newtheorem{definition}[fakt]{Definition}
\newtheorem{Definition}[fakt]{Definition}
\newcommand{\N} {{\mathbb N}} \newcommand{\Z} {{\mathbb Z}}   \newcommand{\C} {{\mathbb C}}
\newcommand{\Spec}{\operatorname{Spec}}
\newcommand{\Stab}{\operatorname{Stab}}
\newcommand{\Sym}{\operatorname{Sym}}
\newcommand{\Syz}{\operatorname{Syz}}
\newcommand{\idealp}{ {\mathfrak p} }
\newcommand{\idealq}{ {\mathfrak q} }
\newcommand{\idealm}{ {\mathfrak m} }
\newcommand{\tensor}{\otimes}
\newcommand{\vandermonde}{D}
\newcommand{\plusdots}{+ \ldots +}
\newcommand{\SLG}{\operatorname{Sl} }
\newcommand{\GLG}{\operatorname{Gl} }
\newcommand{\dual}{*}
\newcommand{\coker}{\operatorname{coker}}
\begin{document}

\title{Module schemes in invariant theory}

\author[Holger Brenner]{Holger Brenner}
\address{Holger Brenner\\
Institut für Mathematik\\
Universität Osnabrück\\
Albrechtstr. 28a\\
49076 Osnabrück
}
\curraddr{}
\email{hbrenner@uni-osnabrueck.de}
\thanks{}

\begin{abstract}
Let $G$ be a finite group acting linearly on the polynomial ring with invariant ring $R$. If the action is small, then a classical result of Auslander gives in dimension two a correspondence between linear representations of $G$ and maximal Cohen-Macaulay $R$-modules. We establish a correspondence  for all linear actions between representations and objects over the invariant ring by looking at quotient module schemes (up to modification) instead of the modules of covariants.
 \end{abstract}

\keywords{Linear representation, invariant theory, quotient scheme, module scheme}

\subjclass[2020]{13A50, 13C13, 14L15, 14L24, 14J60, 16T05, 20C99}

\maketitle

\section*{Introduction}

We fix a field $K$. Let $G$ be a finite group and fix a linear representation $\beta:G \rightarrow \GLG_d (K)$. This corresponds to a linear action of $G$ on the polynomial ring $S=K[X_1 , \ldots, X_d]$, respectively, on the scheme ${\mathbb A}^d$, and its invariant ring $R=S^G$ (rather $S^\beta$) and the quotient scheme  $X={\mathbb A}^d/ \beta = \Spec R $. Suppose moreover that $\beta$ is faithful. We consider $\beta$ as the basic representation and want to understand what objects on $X$ are defined by other linear representations of $G$.

Let $\rho$ be another linear representation of $G$ in $ \GLG_m (K)$ with the corresponding linear action on a vector space $V \cong K^m$ or on the affine space ${\mathbb A}^m$. This defines the module of covariants $( S \tensor_K V)^G $ ($G$ acting on both tensor components) which is a maximal Cohen-Macaulay module over $R$. If $G$ is a \emph{small} group, meaning that $\beta(G) \subseteq \GLG_d(K)$ does not contain any (pseudo-)reflection (a linear mapping $\neq \operatorname{Id}$ that fixes a hyperplane) and \emph{nonmodular}, meaning that its order is not a multiple of the characteristic of the field, then we have the following correspondence (see \cite[Corollary 5.20]{leuschkewiegand}).

\begin{enumerate}
\item linear irreducible representations $(V,\rho)$ of $G$.

\item indecomposable $R$-modules of the form  $( S \tensor_K V)^G $, which are a direct summand of $S$ as an $R$-module.
\end{enumerate}

This correspondence is particularly nice in dimension two, where every maximal Cohen-Macaulay module of $R$ is of the form described in (2). This correspondence has been studied intensively from various perspectives and under various conditions, studying finite Cohen-Macaulay type, looking at the resolution of the singularities of $\Spec R$, using the Mc Kay quiver, applying noncommutative algebra and non commutative resolutions, working with matrix factorizations and derived categories, etc., see \cite{drozdroiter} \cite{herzogfinitely}, \cite{esnaultreflexive}, \cite{artinverdier}, \cite{auslanderrationalsplit}, \cite{auslanderreitencohenmacaulay}, \cite{mckay}, \cite{bridgelandkingreid}, \cite{vandenberghcrepant}, \cite{burbandrozd} \cite{buchweitzfaberingalls}.

However, if $G$ is a reflection group (generated by reflections), then the invariant ring $S^G$ is itself a polynomial ring by the theorem of Chevalley, Shephard, Todd and Serre, and therefore the only maximal Cohen-Macaulay modules are the free ones. Hence for all these groups, such a correspondence cannot work, as every representation $\rho$ of dimension $m$ yields always the free module $R^m$.

In this paper, we develop another perspective on the question of which object on $\Spec R$ should correspond to $\rho$. The two representations $\beta$ and $\rho $ define the product representation
\[ \beta \times \rho: G \longrightarrow \GLG_d(K) \times \GLG_m(K) \]
and the corresponding product action of $G$ on $ {\mathbb A}^d \times {\mathbb A}^m $. We denote the quotient scheme
\[ Z_\rho := ( {\mathbb A}^d \times {\mathbb A}^m)  / \beta \times \rho   \]
considered as a scheme over $X$. This is quite a natural approach, as it deals on both sides, the basic representation $\beta$ and the varying representation $\rho$, with the same construction. On the ring level this means that we look at the invariant algebra $K[X_1 , \ldots, X_d ,W_1 , \ldots, W_m]^{\beta \times \rho}$ as an algebra  over the invariant ring  $K[X_1 , \ldots, X_d ]^\beta$.

The gain of this perspective can be seen in the simplest examples.

\begin{Exampleintro}
Let $G = \Z/(2)$ act on $K[X]$ with the nontrivial element acting by negation. The invariant ring is $K[X^2]=K[A]$, which is again a polynomial ring. The trivial action of $G$ on $K[W]$ yields the invariant algebra $K[A,W]$, just the polynomial algebra over the invariant ring. However, the nontrivial linear action of $G$ on $K[W]$ yields the invariant algebra $ K[X^2,W^2,XW]=K[A,B,C]/(AB-C^2) $ over $K[A]$. We see that this algebra has an isolated singularity. Moreover, the fiber of its spectrum over $ \Spec K[A] $ is for a point where $ A \neq 0 $ just an affine line, represented by $\Spec K[C]$, but over the point where $A=0$, the fiber is $ K[B,C]/(C^2) $, a nonreduced affine line. Hence we see a big schemetheoretic (though settheoretic neglectable) difference between the two quotient schemes corresponding to the two representations. Note also that, for $K= \C$, the fundamental group of the punctured second quotient scheme is the group itself.
\end{Exampleintro}

We mention the following features of our approach which are not available when only looking at the module of covariants. The difference is in particular strong in the case of a reflection group.

\begin{enumerate}
	
\item
The methods of invariant theory for rings apply directly to invariant algebras/quotient schemes.
	
\item 
The properties of $Z_\rho$ as a $K$-scheme are relevant as well as the properties of the morphism $Z_\rho \rightarrow X$ ($K$-algebra versus algebra over the invariant ring). See Example \ref{cyclicone5} and Example \ref{cyclicone5fiber}.

\item 
The fibers of $Z_\rho \rightarrow X$ have geometric properties that reflect properties of the group representation (Theorem \ref{fiberdescription}).

\item
The quotient schemes exhibit ramification behavior that mirrors properties of the group representation (Theorem \ref{nonreducedfiber}).
 
\item 
 The dimension of the fibers of $Z_\rho$ is constant, which helps to distinguish them from other Cohen-Macaulay modules (Section \ref{fiberflatsection}, Section \ref{regularfreesection}).
 
\item
The reflection properties of the basic representation and of the product representation have parallel but not identical impacts (Section \ref{reflectionssection}).

\item
The singularities of the quotient schemes reflect properties of the group representation (Theorem \ref{fibersingularonepoint}, Theorem \ref{fibersingular}).

\item
Ring-theoretic constructions (in particular, the normalization) allow the reconstruction of the group representation from the quotient schemes (Theorem \ref{pullback}, Corollary \ref{correspondence}).

\item
The fundamental group (for $K=\C$) of the regular loci of the quotient schemes helps to reconstruct the group and its action (Corollary \ref{faithfulfundamental}, Theorem \ref{moduleschemefundamentalgroup}, Theorem \ref{reconstruction}).
\end{enumerate}

Other properties are quite parallel to the situation where one works with modules of covariants.

\begin{enumerate}

\item
The product of representations corresponds to the (normalization of the) product of quotient schemes (Corollary \ref{productcompatibility}).

\item 
The regular representation, which contains all irreducible representions is reflected in the decomposition of the regular invariant algebra (Theorem \ref{regularrepresentation}).

\item
The module of covariants $(S \tensor V)^G$ and the invariant algebra $(S[V])^G$ should be considered more generally for any action of $G$ on a $K$-algebra $S$ by $K$-algebra automorphisms (Section \ref{quotientsection}, Section \ref{normalsubgroupsection}).

\item
In the small case, the symmetric algebra of the module of covariants and the invariant algebra coincide over the regular locus of $X$ (Section \ref{reflexivesection}).

\item 
In the small case, irreducible representations correspond to irreducible fiberflat bundles (Section \ref{irreduciblesection}).

\end{enumerate}

A recurring feature is the comparison of the symmetric algebra of the module of covariants and the invariant algebra.  There is a natural ring homomorphism $ \Sym_R (( S \tensor V)^G) \rightarrow  ( S \tensor K[V])^G  $, see Lemma \ref{modulealgebra}, Corollary \ref{modulealgebraisomorphism}, Lemma \ref{modulealgebrascheme}. The comparison between  $\Spec ( \Sym_R (( S \tensor V)^G) )$ and the quotient scheme   $Z_\rho= \Spec ( S \tensor K[V])^G $ as schemes over $X$ is enhanced by the following concepts: the spectrum of the symmetric algebra is a  module scheme (Section \ref{moduleschemesection}), which is basically a scheme-theoretic version of a module, considered in passing by Grothendieck in \cite[1.7.13]{ega2}. A module scheme $M$ is a commutative group scheme with an addition $\alpha :M \times M\rightarrow M$ and also a scalar multiplication $ {\mathbb A}^1 \times M \rightarrow M$ fulfilling the usual axioms. Typical examples are geometric vector bundles. In contrast, for invariant algebras, the coaddition on the polynomial ring $ S \tensor K[V]  \rightarrow   S \tensor K[V \oplus V ] $ induces a natural ring homomorphism $ ( S \tensor K[V])^G   \rightarrow   ( S \tensor K[V \oplus V ])^G   $. The latter algebra contains the tensor product $  ( S \tensor K[V])^G \tensor_{S^G}  ( S \tensor K[V])^G  $, but the coaddition does not land in the tensor product. In geometric language, there are morphisms
\[ \begin{matrix}  Z_{\rho \times \rho}    & \longrightarrow & Z_ \rho \\ \downarrow & ... \nearrow & \\ { Z_\rho \times_X Z_\rho} \, ,  &      &   \end{matrix} \]
but the dotted arrow which is needed to talk about an addition on $Z_\rho$ is not a morphism. However, when $Y=\Spec S$ is normal, then the vertical arrow is the normalization (see Theorem \ref{normalproductcompatibility}), hence the addition exists as a birational morphism, and it exists as a morphism on the normalization of the tensor product. Thus we introduce module schemes up to normalization (Section \ref{modulenormalizationsection}) which is a very natural concept within invariant theory.

We provide an overview of the paper and some indications of how one may read it. In Section \ref{settingsection}, we describe the situation in the generality we will be working in, and we fix notation and conventions. Section \ref{lemmatasection} collects some lemmata of invariant theory for which we could not find an adequate reference. In Section \ref{toricsection}, we study the toric case with many examples (many relevant phenomena already occur in this case), and in Section \ref{examplessection} nontoric examples, to which we will come back as the theory evolves. Section \ref{lemmatasection} to Section \ref{examplessection} might be skipped first and consulted when needed.

Section \ref{modulealgebrasection} deals with the direct relation between the symmetric algebra of the module of covariants and the invariant algebra. There is a ring homomorphism from the first to the second (Lemma \ref{modulealgebra}), which is an isomorphism over the open subset $U \subseteq X$ coming from the fixed point free locus and where they both define the same vector bundle (Corollary \ref{modulealgebraisomorphism}). Outside of $U$, we have almost never an isomorphism, and the quotient scheme contains much more subtle information. In Section \ref{fiberssection}, we describe the fibers of the quotient scheme $Z_\rho \rightarrow X$ and in Section \ref{ramificationsection}, we determine which fibers are reduced. To do this, the restriction of the representation $\rho$ to the stabilizers of the basic action $\beta$ is important (Theorem \ref{fiberdescription}, Theorem	\ref{nonreducedfiber}). In Section \ref{productssection}, we look at the product of two representations $\rho_1$ and $\rho_2$ (with basic representation $\beta$ fixed) and study the relation between $Z_{\rho_1 \times \rho_2}$ and $Z_{\rho_1} \times_X Z_{\rho_2}$. Theorem \ref{normalproductcompatibility} shows that, under the condition that $Y$ is normal, the quotient scheme of the product action is the normalization of the product of the quotient schemes. In Section \ref{regularrepresentationsection}, we apply this approach to understand the structure of the quotient scheme of the regular representation (the reader more interested in the geometric properties of the quotient schemes might jump directly to Section \ref{singularitiessection}).

In Section \ref{moduleschemesection} to Section \ref{similarsection}, we draw our attention on the question of what kind of object the quotient schemes $ Z_\rho \rightarrow X $ are. In Section \ref{moduleschemesection}, we have a look at the notion of a module scheme, which is a commutative group scheme together with an action of ${\mathbb A}^1$ on it fulfilling natural conditions. The spectrum of the symmetric algebra of a module is a module scheme. Section \ref{moduleschemehopfgradedsection} deals with the Hopf-side of a module scheme. It turns out that the algebras describing a module scheme are standard-graded and that we can use a theorem of Milnor and Moore to deduce a structure result for module schemes over a ring containing a field of characteristic zero (Theorem \ref{moduleschememilnormoore}). In Section \ref{modulenormalizationsection}, we weaken the condition of a module scheme to a module scheme up to normalization, which fits the quotient schemes $Z \rightarrow X$ that we encounter in invariant theory. There is an action of  ${\mathbb A}^1$ on $Z$ as before, but, as in the diagram above, the addition is not defined on $Z \times_XZ$, but rather on a finite birational extension of it. In Section \ref{linearsection}, we define linear mappings between module schemes up to normalization and linear actions of a finite group $G$ on a module scheme up to normalization, in Section \ref{quotientsection}, we address the corresponding quotients and in Section \ref{normalsubgroupsection}, we show how these quotients behave in the presence of a normal subgroup of $G$. In Section \ref{similarsection} we collect some observations on objects where similar module schemes up to modification occur (Rees algebra, reflexive symmetric algebra, tangent bundle on a resolution).

Section \ref{fiberflatsection} to Section \ref{regularfreesection} describe further properties of the quotient schemes. In Section \ref{fiberflatsection}, we introduce fiberflat bundles, a property which quotient schemes share, in contrast to the spectra of symmetric algebras. Section \ref{reflexivesection} deals with the case where the locus where the bundle is a vector bundle contains all points of codimension one, which holds for quotient schemes for a small action. In this case, one can translate back and forth between invariant modules and invariant algebras, though the objects are still quite different. It turns out that the  so-called factorial closure (see \cite{vasconcelosarithmetic}) is the ring of global sections of the total space of the vector bundle. In Section \ref{regularfreesection} we give criteria to assert that a reflexive bundle does not have a fiberflat realization. 

Section \ref{reflectionssection} deals with relations between reflections of the basic representation and reflections of a product representations. This is helpful in Section \ref{singularitiessection} to understand where the singularities of a quotient scheme $Z$ are located and how they are related to the singularities of $X$. In Section \ref{fundamentalgroupsection}, we compute for $K=\C$ the fundamental group of the regular locus of $Z_\rho$ in terms of the acting group $G$ and the reflection subgroup of the product representation (Theorem \ref{moduleschemefundamentalgroup}).

In Section \ref{pullbacksection} to Section \ref{trivializationsection}, we deal with the question of in what sense we can reconstruct the original representation $\rho$ and the linear action on $ {\mathbb A}^d \times {\mathbb A}^m$ from the quotient scheme $Z_\rho \rightarrow X$. In Section \ref{pullbacksection}, we show that for a given action of $G$ on a normal scheme $Y$ with quotient $X$, we can reconstruct the linear action of $G$ on ${\mathbb A}^m$ from the quotient scheme $Z_\rho$ by pulling it back along $Y \rightarrow X$ and normalizing the pull-back (Theorem \ref{pullback}). Hence, we do not lose information when going from the linear action to the quotient schemes. In Section \ref{correspondencesection}, we extend this to obtain in Corollary \ref{correspondence}, for a basic action on a normal affine scheme $Y$ with a unique fixed point, a correspondence between linear representations, linear actions on $Y \times {\mathbb A}^m$ and quotient schemes on $X$.
Section \ref{irreduciblesection} proves that in the small case, this correspondence translates irreducible representations into irreducible reflexive fiberflat bundles. In Section \ref{trivializationsection}, we consider fiberflat bundles $Z$ on $X$ (without knowing $Y$) and try to find conditions that help to reconstruct a group $G$, a scheme $Y$ and a linear action on $Y \times {\mathbb A}^m $. For this, we use the fundamental group of the regular loci of $X$ and $Z$. We close in Section \ref{questionssection} with some questions.

I thank Janusz Adamus, Severin Barmeier, Marc Chardin, Hailong Dao, David Eisenbud, Eleonore Faber, Sudarshan Gurjar, Robin Hartshorne, Mitsuyasu Hashimoto, Craig Huneke, Najib Idrissi-Kaitouni, Colin Ingalls, Christian Liedtke, Ines Melzer, Claudia Miller, Rosa Mir\'o-Roig, Mandira Mondal, Julia Pevtsova, Peter Schenzel, Stefan Schr\"oer, Anurag Singh, Ilya Smirnov, Bernd Sturmfels, Peter Symonds, Shunsuke Takagi, Vijaylaxmi Trivedi, Bernd Ulrich and Keiichi Watanabe for useful discussions, comments and references.

This material is based upon work supported by the National Science Foundation under Grant No. DMS-1928930 and by the Alfred P. Sloan Foundation under grant G-2021-16778, while the author was in residence at the Simons Laufer Mathematical Sciences Institute (formerly MSRI) in Berkeley, California, during the Spring 2024 semester.

\section{The setting}
\label{settingsection}

Let $K$ be a field and $S$ be a $K$-algebra. Let $G$ be a finite group acting faithfully on $S$ by $K$-algebra automorphisms. The main case for us is when $S=K[X_1, \ldots, X_d]$ is a polynomial algebra and where the action is $K$-linear. However, to allow natural base changes and intermediate quotients, it is better to allow any $K$-algebra $S$. We write the geometric action on $\Spec S$ from the left, hence $(gh)Q)=g(h(Q))$ for $g,h \in G$ (where we consider $g$ as the corresponding automorphism), and the action of $G$ on the ring from the left, so that $fg$ corresponds to the composition $\Spec S \stackrel{g}{\rightarrow} \Spec S \stackrel{f}{ \rightarrow} {\mathbb A}^1$. The identity $(fg)h=f(gh)$ is clear, as both sides are $f \circ g \circ h$. When we want to stress the action instead of the group, we sometimes write $g^\beta (Q)$, in the sense that the group element $g$ has to be interpreted as the automorphism $g^\beta$ determined by $\beta$ (and then applied to the point $Q$).

Let $\rho$ be an $m$-dimensional $K$-linear \emph{representation} of $G$, i.e., a group homomorphism $G \rightarrow \GLG_m(K)$, or a linear action of $G$ on $K^m$. This gives rise to a linear action of $G$ on $K[W_1, \ldots, W_m]$, on $S[W_1, \ldots, W_m]$ and on $Y \times {\mathbb A}^d$ (action on both components). In particular, if $S$ is itself a polynomial ring, we denote the basic representation by $\beta$ and then $\beta \times \rho$ acts on $K[X_1, \ldots, X_d,W_1, \ldots, W_m]$.

Let $Y=\Spec S$. The \emph{invariant ring} $R=S^G=S^\beta$ is the subring of $S$ consisting of all the elements of $S$ that are fixed by the action. They are the basic objects of invariant theory (for finite groups), for background on invariant theory, see \cite{benson}, \cite{derksenkemper}. $X=\Spec R$ is the \emph{quotient} of $Y$ under the group action, i.e., the morphism $\Spec S \rightarrow \Spec R$ is invariant, it is a finite morphism, and the points of $\Spec R$ correspond to the $G$-orbits of $\Spec S$. If $Y$ is affine space, then we call the image point in $X$ of the origin the \emph{vertex point} of $X$.

The action of $G$ on $Y \times {\mathbb A}^m$ via $\beta \times \rho$ gives in the same way rise to a \emph{quotient scheme}
\[ Z_\rho = (Y \times {\mathbb A}^m  )/\beta \times \rho \, ,\]
which is the spectrum of the \emph{invariant algebra} (we talk about invariant algebras in contrast to the basic invariant ring) $ B^\rho = (S[W_1, \ldots, W_m])^{\beta \times \rho}$. Since we consider the action $\beta$ to be fixed, we usually only write $\rho$. As the morphism $ Y \times  {\mathbb A}^m  \rightarrow Y  $
is $G$-equivariant, we get a commutative diagram
\[ \begin{matrix} Y & \longleftarrow & Y \times  {\mathbb A}^m \\
\downarrow & & \downarrow \\ X & \longleftarrow &  Z_\rho \, . \end{matrix} \]
The vertical morphisms are quotient morphisms for group actions, and the horizontal morphisms are bundle-like morphisms (which has to be made precise for the lower morphism, see Section \ref{modulenormalizationsection}). These objects $Z_\rho \rightarrow X$ are the main objects of this paper. More precisely, for $G$, $Y$ and $X$ fixed, we want to understand how the $Z_\rho$ reflect properties of the (varying) representations $\rho$. This is also the reason why, in the second component, we stick to linear actions on a polynomial ring. The schemes $Z_\rho$ are always normal, as this is true for all quotient schemes of a normal scheme under a finite group, and the invariant algebra is always an $\N$-graded (not standard-graded) Cohen-Macaulay (in the \emph{nonmodular} case, i.e., if the group order is not $0$ in $K$, see  \cite[Proposition 13]{hochstereagon}) and of finite type over $K$ if $Y$ is. If the representation $\rho $ is trivial, then $Z_\rho =X \times {\mathbb A}^m $ is the trivial vector bundle over $X$ ($m=0$ yields the zero-bundle $X \rightarrow X$).

The invariant algebra $ B^\rho = K[X_1, \ldots, X_d,W_1, \ldots, W_m]^{\beta \times \rho } $ (or, more generally, $S[W_1, \ldots, W_m]^{\beta \times \rho } $)  is not so easy to compute, as is the case for invariant rings in general. The knowledge of the invariant rings $  K[X_1, \ldots, X_d ]^{\beta  }  $ and $ K [ W_1, \ldots ,W_m]^{ \rho }  $ does not suffice for this question. Of course,  $  K[X_1, \ldots, X_d ]^{\beta  } \tensor K [ W_1, \ldots, W_m]^{ \rho }  $ is a $K$-subalgebra of $B^\rho$, consisting of the $X$-invariants and the $W$-invariants together. However, there are also \emph{mixed invariants}, some of which come from characters. A \emph{character} for $G$ is a group homomorphism $\chi: G \rightarrow K^\times$, i.e., a one dimensional representation. For an action of the group $G$ on a $K$-algebra $S$ and a given character $\chi$, the \emph{module of semi-invariants} is
\[ S^{G, \chi} =\{   f \in S| fg = \chi(g) \cdot f \text{ for all } g \in G   \} \, .\]
We denote by $\chi^{-1}$ the character given by a character $\chi$ followed by the inversion in the field. A tensor without a subscript always refers to the base field.

\begin{Lemma}
\label{charactercontribution}
Let $G$ be a finite group acting on $K$-algebras $S$ and $T$ via $\beta$ and $\rho$ through $K$-algebra automorphisms. We then have the inclusion
\[ \bigoplus_\chi  S^{\beta, \chi} \tensor T^{\rho, \chi^{-1} } \subseteq (S \tensor T)^{ \beta \times \rho}     \, ,\]
where the sum runs over the characters $\chi$ of $G$.
\end{Lemma}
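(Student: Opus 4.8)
The plan is to split the claim into an easy invariance check and a directness check, the latter being where the actual work lies. First I would verify that every summand lands in the invariant algebra: take $f \in S^{\beta,\chi}$ and $h \in T^{\rho,\chi^{-1}}$, and compute, for $g \in G$, that $(f \tensor h)g = (fg)\tensor(hg) = (\chi(g) f)\tensor(\chi(g)^{-1} h) = \chi(g)\chi(g)^{-1}(f \tensor h) = f \tensor h$, where the scalars are pulled out of the tensor factors over $K$ and multiply to $1$. Thus $S^{\beta,\chi}\tensor T^{\rho,\chi^{-1}} \subseteq (S \tensor T)^{\beta\times\rho}$ for each character $\chi$, and since $G$ is finite it has only finitely many characters, so the sum in question is finite.

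The content of the statement is that these summands sit in direct sum inside $S \tensor T$, so that the $\bigoplus$ on the left really is a subspace of the right-hand side. I would derive this from the linear independence of semi-invariants belonging to distinct characters. Concretely, for pairwise distinct characters $\chi_1, \ldots, \chi_k$ and elements $f_i \in S^{\beta,\chi_i}$ with $f_1 + \cdots + f_k = 0$, applying each $g \in G$ turns the relation into $\sum_{i} \chi_i(g) f_i = 0$, valid for all $g$. The Dedekind--Artin theorem on linear independence of group homomorphisms into $K^\times$ says that the distinct characters $\chi_i$, viewed as functions $G \to K$, are $K$-linearly independent; hence the scalar matrix $(\chi_i(g))_{g \in G,\, 1 \le i \le k}$ has full column rank and admits a left inverse over $K$. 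Applying that left inverse to the system forces $f_1 = \cdots = f_k = 0$. The same argument applies verbatim to $T$. I would stress that this uses no averaging over $G$, so it is valid in the modular case as well.

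Given directness in each factor, $S' := \bigoplus_\chi S^{\beta,\chi} \subseteq S$ and $T' := \bigoplus_\psi T^{\rho,\psi} \subseteq T$ are $K$-subspaces, and because tensoring over the field $K$ is exact, $S' \tensor T' = \bigoplus_{\chi,\psi} S^{\beta,\chi}\tensor T^{\rho,\psi}$ is a $K$-subspace of $S \tensor T$; in particular the subfamily indexed by the diagonal pairs $(\chi,\chi^{-1})$ is in direct sum. Combining this with the invariance checked above yields the asserted inclusion $\bigoplus_\chi S^{\beta,\chi}\tensor T^{\rho,\chi^{-1}} \subseteq (S \tensor T)^{\beta\times\rho}$. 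The only genuinely delicate point is the independence of semi-invariants; the rest is formal, and I expect that to be the main (though modest) obstacle.
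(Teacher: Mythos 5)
Your proposal is correct and, at its core, coincides with the paper's proof: the paper's entire argument is exactly your one-line invariance computation $(f \tensor h)g = \chi(g) f \tensor \chi(g)^{-1} h = f \tensor h$, with everything else dismissed as ``clear.'' The additional work you do --- establishing directness of the sum via Dedekind--Artin linear independence of characters in each factor and then exactness of $\tensor_K$ --- is sound and fills in a point the paper leaves implicit, and your remark that no averaging over $G$ is used, so the direct-sum decomposition holds in the modular case as well, is a correct (if modest) bonus.
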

\begin{proof}
The inclusions are clear since for $f_1 \in  S^{\beta, \chi}  $, $ f_2 \in T^{\rho, \chi^{-1} }  $ and $g \in G$ we have
\[ ( f_1 \tensor f_2)  g^{\beta \times \rho}  =  f_1 g^\beta \tensor f_2 g^\rho  = \chi (g) \cdot  f_1 \tensor ( \chi (g))^{-1} \cdot  f_2  =f_1 \tensor f_2 \, .   \]
\end{proof}

In the toric situation, we will see in Lemma \ref{Toriccharacterdecomposition} that we have an equality in the previous statement.

\begin{Example}
In general, there are also elements in the invariant algebra that do not stem from characters. Let $G$ be the binary icosahedral group with the action $\beta$ on $K[U,V]$ yielding the $E_8$-singularity. It is known that this group has no nontrivial character, which is the reason why this singularity is factorial. We look at the product of $\beta$ with itself. The only contribution from Lemma \ref{charactercontribution} is the tensor product $K[U,V]^G \tensor K[W,Z]^G $. However, the invariant algebras are always over a non empty open subset of $X=K[U,V]^G $ a polynomial algebra (by Corollary \ref{modulealgebraisomorphism}) and hence generically smooth.	
\end{Example}

\section{Some lemmata}
\label{lemmatasection}

We provide some well-known lemmata that we need in the following and where we could not find a source of appropriate generality. We use the language of group schemes and consider a finite (usual) group $G$ as the group scheme $\Spec K^{|G|}$, which consists of $|G|$ closed $K$-points, endowed with the group scheme structure coming from the group.

The following is basically proven in \cite[Theorem 1.1]{GIT} for reductive groups in char $0$ (see \cite[Amplification 1.3]{GIT} for the nonmodular case in positive characteristic) to give a universal geometric quotient, but we need base change on the quotient itself (for finite groups).

\begin{Lemma}
\label{actionuniversal}
Let $G$ be a finite nonmodular group acting on $S$ by $K$-algebra automorphisms with invariant ring $R=S^G$. Let $A$ be an $R$-algebra. Then
\[  (A \tensor_R S)^G \cong A \, ,  \]
where the action on the left is on $S$.
\end{Lemma}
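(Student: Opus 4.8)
## Proof Proposal

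The plan is to establish $(A \tensor_R S)^G \cong A$ by exploiting the averaging (Reynolds) operator available in the nonmodular case, combined with flat base change properties of the invariant-taking functor. First I would observe that the key structural fact is that in the nonmodular setting, where $|G|$ is invertible in $K$, the inclusion $R = S^G \hookrightarrow S$ splits $R$-linearly via the Reynolds operator $\pi_S : S \to R$, $\pi_S(s) = \tfrac{1}{|G|}\sum_{g \in G} s g$. This makes $S$ a faithfully flat $R$-module? No — one cannot assume flatness in general, so the correct tool is that $R$ is a direct summand of $S$ as an $R$-module, giving a canonical $R$-linear projection that behaves well under tensoring.

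The central computation is as follows. Taking $G$-invariants commutes with the tensor product by $A$ because the Reynolds operator is functorial and $R$-linear. Concretely, I would consider the averaging operator on $A \tensor_R S$ defined by $\Pi = \id_A \tensor \pi_S$, i.e. $\Pi(a \tensor s) = \tfrac{1}{|G|}\sum_{g} a \tensor (s g)$. The hard part — and the step I expect to be the main obstacle — is verifying that the image of $\Pi$ is exactly $(A \tensor_R S)^G$ and that this image equals $A \tensor_R R = A$. The inclusion $A \tensor_R R \subseteq (A \tensor_R S)^G$ is immediate since elements of $R$ are $G$-fixed. For the reverse inclusion, the subtlety is that tensoring with $A$ over $R$ need not be exact, so one cannot naively identify invariants of the tensor product with the tensor of invariants unless one uses the splitting. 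The point is that since $R \to S$ splits $R$-linearly, so does $A = A \tensor_R R \to A \tensor_R S$ after tensoring, and the retraction $\Pi$ exhibits $A$ as a direct summand whose complementary summand (the image of $\id - \Pi$) has no nonzero $G$-invariants.

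To make the reverse inclusion rigorous, I would decompose $S$ as an $R$-module $G$-equivariantly. Write $S = R \oplus S_+$ where $S_+ = \ker(\pi_S)$ is the $R$-submodule of elements averaging to zero; this decomposition is $G$-stable. Tensoring the equivariant splitting with $A$ over $R$ gives $A \tensor_R S = A \oplus (A \tensor_R S_+)$ as $A$-modules with $G$-action trivial on the first summand. It then remains to show $(A \tensor_R S_+)^G = 0$. This is where the argument must be handled with care: one applies the Reynolds operator $\Pi$ to an arbitrary invariant element, notes that on the summand $A \tensor_R S_+$ the operator $\Pi$ restricts to $\id_A \tensor (\pi_S|_{S_+}) = 0$, yet $\Pi$ acts as the identity on any $G$-invariant element. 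Hence every invariant in $A \tensor_R S_+$ is simultaneously fixed by $\Pi$ (giving itself) and killed by $\Pi$ (giving zero), forcing it to vanish. Combining the two summands yields $(A \tensor_R S)^G = A$, and I would finally check that the isomorphism is the natural one $A \to (A \tensor_R S)^G$, $a \mapsto a \tensor 1$, completing the proof. The one genuine technical point to watch is the $G$-equivariance of the splitting $S = R \oplus S_+$ and its stability under $- \tensor_R A$, which is exactly where the nonmodular hypothesis is used essentially.
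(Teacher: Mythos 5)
Your proof is correct and rests on the same mechanism as the paper's: the Reynolds operator $\tau(s)=\tfrac{1}{|G|}\sum_{g\in G}sg$, available precisely because the group is nonmodular. Your packaging via the $G$-stable splitting $S = R \oplus \ker\tau$, tensoring with $A$, and showing $(A\otimes_R \ker\tau)^G = 0$ is an equivalent reformulation of the paper's computation that the Reynolds map $\tilde\tau$ of $A\otimes_R S$ factors as $\varphi\circ(\operatorname{Id}_A\otimes\tau)$, which yields injectivity and surjectivity of $a\mapsto a\otimes 1$ in one stroke.
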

\begin{proof}
Let $\tau:S \rightarrow R, s \mapsto \frac{1}{|G|} \sum_{g \in G} s g$, denote the Rayleigh map, which is $R$-linear, and exists due to the nonmodular assumption. There is a natural ring homomorphism
\[ \varphi :A \longrightarrow (A \tensor_RS)^G,\, a \longmapsto a \tensor 1 \, .\]
Tensorization gives the $A$-linear map $\operatorname{Id}_A \tensor \tau: A \tensor_R S \rightarrow A$, which also shows that $A \subseteq  A \tensor_R S $ is a subring and so $\varphi$ is injective. The Rayleigh map $\tilde{\tau}$ for $A \tensor_R S$ equals $\tilde{\tau}   =   \varphi \circ  ( 1 \tensor \tau) $,	
since
\[ \begin{aligned}   \tilde{\tau} (a \tensor s)    =  &    \frac{1}{|G|} \sum_{g \in G}  (a \tensor s)  g  =     \frac{1}{|G|} \sum_{g \in G}  (a \tensor s g) = ( a \tensor   \frac{1}{|G|}\sum_{g \in G}  s g)       
 \\  = & a \tensor \tau(s)   =a \tau(s) \tensor 1 	=  \varphi(  a   \tau(s) ) \, .  \end{aligned} \]
Hence $\varphi$ is also surjective since $\tilde{\tau}$ is.
\end{proof}

This means that in the situation
\[ \begin{matrix}  & &Y &  \longleftarrow  &Y  \times {\mathbb A}^m \\ &  & \downarrow & & \downarrow \\ T  & \longrightarrow & X & \longleftarrow & Z_ \rho \, , \end{matrix} \]
where $T$ is a scheme with a fixed morphism to $X$, in the base change diagram to $T $, 
\[ \begin{matrix} T \times_X Y &  \longleftarrow  & T \times_X ( Y  \times {\mathbb A}^m )\\  \downarrow & & \downarrow \\ T & \longleftarrow & T \times_X Z_ \rho \end{matrix} \]
both vertical maps are quotients of the corresponding group actions by Lemma \ref{actionuniversal} (on the right, apply the lemma to the base change $T \times_X Z_\rho \rightarrow Z_\rho$). For $T=\{P\}$ a point, this is
\[ \begin{matrix} P \times_X Y &  \longleftarrow  & P \times_X ( Y \times {\mathbb A}^m )\\  \downarrow & & \downarrow \\ P & \longleftarrow & P \times_X Z_ \rho \, , \end{matrix} \]
where $P \times_X Y $ is the fiber over $P$ in $Y$, which is given by the fiber ring  $S/ {\mathfrak m}_P S$. In the case of $Y= {\mathbb A}^d$ and $P$ the image of the origin (the vertex), this is the \emph{ring of coinvariants}.

Important properties of an action $\beta$ of a finite group $G$ on $Y$ can be understood by looking at the morphism
\[ \beta \times p_2:G \times Y  \rightarrow Y \times Y , (g,y) \longmapsto (gy,y)\, ,\] and at the morphism
$G \times Y  \rightarrow Y \times_X Y$, where $X$ denotes the quotient. The ring version of this is 
\[ S \tensor S \rightarrow K^{| G|} \tensor S \cong S^G ,\,  s \tensor t \mapsto    \sum_{g \in G} (s) gt e_g \, . \]
The action  is called  \emph{free} (in the sense of fixed point free) if the morphism $\beta \times p_2:G \times Y  \rightarrow Y \times Y $ is a closed embedding.
 
\begin{Remark}
\label{freeprincipal}
In the free case according to \cite[Proposition 0.9]{GIT}, $Y \rightarrow X$ is faithfully flat and $ G \times Y \rightarrow Y \times_X Y$ is a $G$-equivariant isomorphism, where the action on the left is only on the first component. Such actions are called \emph{principal fiber bundles}. As we consider only group schemes $G$ coming from a usual group (so that the corresponding group scheme is reduced), the morphism $Y \rightarrow X$ in the free case (and if everything is essentially of finite type) is also \'etale.
\end{Remark}

In our situation, we basically never have a free action, but we have a free action on an open subset.

\begin{Lemma}
\label{genericfree}
Let $S$ be a $K$-domain and let the finite group $G$ act faithfully on $S$ via $K$-algebra automorphisms. Then there exists an invariant $0 \neq f \in S$ such that the action on $D(f)$ is free.
\end{Lemma}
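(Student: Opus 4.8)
The plan is to translate ``the action on $D(f)$ is free'' into the surjectivity of a single ring homomorphism, establish that surjectivity at the generic point using Galois theory, and then spread it out to a distinguished open subset defined by an invariant element. First I would fix the ring-theoretic reformulation. For an invariant $f$ the open set $D(f) = \Spec S_f$ is $G$-stable, and since everything is affine, freeness of the action on $D(f)$ means exactly that the homomorphism attached to $\beta \times p_2$ (as displayed in the excerpt before the lemma), namely
\[ \Phi_f \colon S_f \tensor_K S_f \longrightarrow \prod_{g \in G} S_f = S_f^{|G|}, \quad s \tensor t \longmapsto ((sg)\, t)_{g \in G}, \]
is surjective. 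I observe that it suffices to place each orthogonal idempotent $e_g \in S_f^{|G|}$ (the indicator of the $g$-th factor) into the image of $\Phi_f$: since $\Phi_f(1 \tensor a) = (a)_{g}$ is the constant tuple, the product $e_g \cdot \Phi_f(1 \tensor a)$ realizes the tuple supported on the $g$-th factor with value $a$, and summing these over $g$ yields an arbitrary element of $S_f^{|G|}$.

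Next I would prove the surjectivity generically. Let $L = \operatorname{Frac}(S)$. The faithful action on the domain $S$ extends to a faithful action on $L$, so by Artin's theorem $L/L^G$ is finite Galois with group $G$. The corresponding map $\Phi_L \colon L \tensor_K L \to L^{|G|}$ factors through $L \tensor_{L^G} L$ (one checks $\Phi_L(sc \tensor t) = \Phi_L(s \tensor ct)$ for $c \in L^G$, using $cg = c$), and the induced map $L \tensor_{L^G} L \to L^{|G|}$ is the classical Galois isomorphism. Hence $\Phi_L$ is surjective, and in particular each $e_g$ lies in $\im \Phi_L$. This step needs no nonmodular hypothesis, consistent with the statement of the lemma.

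The core step is then the spreading out. I would write $e_g = \Phi_L(\xi_g)$ with $\xi_g \in L \tensor_K L$ and clear a common denominator $c \in S \setminus \{0\}$ for the finitely many $\xi_g$, obtaining $\eta_g := (c \tensor c)\xi_g \in S \tensor_K S$ with
\[ \Phi(\eta_g) = \Phi_L(c \tensor c) \cdot \Phi_L(\xi_g) = (cg)\, c \cdot e_g, \]
the tuple supported on the $g$-th factor with value $(cg)c \neq 0$. I then set $f := \prod_{h \in G} (ch)$, the norm of $c$, which is a nonzero invariant element: the factors are nonzero because $c \neq 0$ and each $h$ is an automorphism of a domain, and $G$ permutes them. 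Localizing at $f$ makes $c$ and every $cg$ invertible, so $(cg)c$ becomes a unit of $S_f$; multiplying $\Phi_f(\eta_g)$ by the diagonal element $\Phi_f(1 \tensor ((cg)c)^{-1})$ extracts $e_g \in \im \Phi_f$. As this holds for every $g$, the reduction in the first paragraph gives surjectivity of $\Phi_f$, hence freeness on $D(f)$.

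The \textbf{main obstacle} is the passage from the generic fibre to a distinguished open, specifically the requirement that \emph{one} invariant denominator works simultaneously for all $g$; the device that resolves this is taking the norm $\prod_h (ch)$ rather than $c$ itself, which keeps the localizing element inside the invariant ring (so that $D(f)$ is genuinely $G$-stable) while still inverting each of the finitely many nonzero elements $(cg)c$. A secondary point demanding care is the bookkeeping forced by the left/right action conventions, to be sure the formula $s \tensor t \mapsto ((sg)t)_g$ for $\Phi$ matches the morphism $(g,y) \mapsto (gy,y)$. Conceptually, the Galois input is exactly the assertion that the $|G|$ components of $G \times Y$ have pairwise distinct generic images in $Y \times Y$, and the whole argument is the spreading out of this generic freeness.
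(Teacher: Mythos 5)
Your proof is correct and takes essentially the same route as the paper: the paper likewise deduces generic freeness from the Galois isomorphism $Q(S) \tensor_{Q(R)} Q(S) \cong Q(S)^{|G|}$ and then asserts that this "gives an isomorphism after a suitable localization." Your norm trick $f = \prod_{h \in G} (ch)$ simply makes explicit the one detail the paper leaves implicit, namely that the localizing element can be chosen invariant so that $D(f)$ is $G$-stable.
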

\begin{proof}
Let $R=S^G$. In the quotient fields, we have a Galois field extension $Q(R) \subseteq Q(S)$, hence, we have a $Q(R)$-isomorphism
\[ Q(S) \tensor_{Q(R)} Q(S)   \cong  K^{|G|} \tensor Q(S) \cong Q(S)^{|G|}  \, \]
coming from the natural ringhomomorphism
\[    S \tensor_R S \longrightarrow  K^{|G|} \tensor S \, .  \]
This gives an isomorphism after a suitable localization.
\end{proof}

There is then also a largest open $G$-invariant subset $V \subseteq Y$ where the action is free.

\begin{Lemma}
\label{freebasechange}
If $G$ acts freely on $V$, then $G$ also acts freely on $Y \times V$, where $G$ acts somehow on $Y$.
\end{Lemma}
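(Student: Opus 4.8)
The plan is to reduce the scheme-theoretic freeness condition (a closed-embedding statement) to the emptiness of certain equalizer subschemes, and then to inherit this emptiness along the $G$-equivariant projection $Y \times V \to V$. Here the action on $Y \times V$ is the product action $g(y,v) = (gy, gv)$.

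First I would reformulate freeness. Since $G$ comes from an ordinary group, it is the reduced finite group scheme $\Spec K^{|G|}$, so for any $G$-scheme $W$ we have $G \times W = \coprod_{g \in G} W$, and the morphism $\beta \times p_2 \colon G \times W \to W \times W$ restricts on the $g$-th copy to the graph $\Gamma_g \colon W \to W \times W$, $w \mapsto (gw, w)$. Each $\Gamma_g$ is already a closed embedding, because it is a section of the (separated) second projection $p_2 \colon W \times W \to W$, using that $W$ is a spectrum and hence separated. As $G$ is finite, the union of the images is closed, and one checks that $\beta \times p_2$ is a closed embedding precisely when the images of the $\Gamma_g$ are pairwise scheme-theoretically disjoint. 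The scheme-theoretic intersection $\Gamma_g \cap \Gamma_h$ is the equalizer $\operatorname{Eq}_W(g,h)$, the closed subscheme of $W$ on which the automorphisms $g$ and $h$ agree. Thus the action on $W$ is free if and only if $\operatorname{Eq}_W(g,h) = \emptyset$ for all $g \neq h$ in $G$.

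Next I would exploit the equivariant projection. The projection $\pi \colon Y \times V \to V$ is $G$-equivariant for the product action, since $\pi(g(y,v)) = gv = g\pi(y,v)$. For $g \neq h$, on $\operatorname{Eq}_{Y\times V}(g,h)$ the two automorphisms agree, so $g \circ \pi = h \circ \pi$ there; hence $\pi$ carries $\operatorname{Eq}_{Y \times V}(g,h)$ into $\operatorname{Eq}_V(g,h)$. By hypothesis the action on $V$ is free, so $\operatorname{Eq}_V(g,h) = \emptyset$; therefore $\operatorname{Eq}_{Y\times V}(g,h)$ admits a morphism to the empty scheme and is itself empty. By the reformulation of the previous step, this is exactly the statement that the product action on $Y \times V$ is free.

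The step that needs the most care is the first one: passing from the definition of freeness as a closed-embedding condition for $\beta \times p_2$ to the pointwise-in-$g$ statement about emptiness of equalizers. This uses crucially that $G$ is reduced, so that $G \times W$ decomposes as the disjoint union $\coprod_g W$ and the closed-embedding condition becomes pairwise scheme-theoretic disjointness of the graphs, while separatedness of $W$ ensures each individual graph is already a closed embedding. Once this reformulation is in place, the inheritance of freeness is immediate, and the argument in fact establishes the more general principle that a $G$-action is free whenever it admits a $G$-equivariant morphism to a scheme carrying a free $G$-action.
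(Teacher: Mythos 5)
Your proof is correct, but it takes a genuinely different route from the paper. The paper argues directly with the defining morphism: it writes the map $G \times Y \times V \to (Y\times V)\times (Y \times V)$, $(g,y,v)\mapsto (gy,gv,y,v)$, observes that the subcollection of components $(g,y,v)\mapsto (gv,y,v)$ is already a closed embedding (it is just the product of the closed embedding $G\times V \to V\times V$ with $\operatorname{Id}_Y$), and concludes by the cancellation principle that adjoining the extra component $\nu$ preserves the closed-embedding property. You instead first reformulate freeness pointwise in the group: using $G=\Spec K^{|G|}$ reduced, $G\times W=\coprod_{g\in G}W$, each graph $\Gamma_g$ is a closed embedding, and $\beta\times p_2$ is a closed embedding if and only if the equalizer subschemes $\operatorname{Eq}_W(g,h)$ are empty for $g\neq h$ (note that for closed subschemes, empty scheme-theoretic intersection is equivalent to empty set-theoretic intersection, so your two formulations agree); then you transport emptiness along the $G$-equivariant projection $Y\times V\to V$. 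Both arguments are sound. The paper's proof is shorter and stays entirely at the level of one morphism; yours costs an extra reduction step but isolates the cleaner and more general principle that any $G$-scheme admitting a $G$-equivariant morphism to a scheme with free $G$-action is itself free, of which the lemma is the special case $\pi\colon Y\times V\to V$ (the paper's method would also yield this by a base-change-plus-cancellation argument, but does not state it). One small inaccuracy: you justify separatedness by saying $W$ ``is a spectrum,'' but $V$ is in general only an invariant open subscheme of an affine scheme, hence quasi-affine; separatedness still holds for this reason, so nothing breaks.
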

\begin{proof}
Let $\nu:G \times Y \rightarrow Y$ be the action on $Y$. By assumption, 
$G \times V \stackrel{\mu \times p_2}{\longrightarrow } V \times V$ is a closed embedding. We look at
\[  G \times Y \times  V \stackrel{\nu \times \mu \times p_2 \times p_3}{\longrightarrow } ( Y \times  V )   \times ( Y \times  V) \, .\]
Here, $ \mu \times p_2 \times p_3$ is already a closed embedding into $Y \times V \times V$, hence, it remains a closed embedding after adjoining the additional map $\nu$.
\end{proof}

Every reflection group yields a flat morphism to the quotient, but the action is not free and the quotient map is usually not \'etale.

\begin{Remark}
\label{flatdescent}
Let $G$ act on $Y$ via $\mu$ with quotient $X$. We look at the commutative diagram ($\nu$ denotes the operation in the group)
\[ \begin{matrix} G \times G \times Y & \stackrel{\nu \times p_3, p_1 \times p_3, p_2 \times p_3}{ \longrightarrow} &  G \times Y & \stackrel{\mu, p_2}{ \longrightarrow} &Y & \stackrel{q}{\longrightarrow} & X \\
\mu_1 \times \mu_2 \times 	p_3 \downarrow & & \downarrow  \mu \times p_2 & & \downarrow & & \downarrow \\ 	
 Y \times_X Y \times_X Y &\stackrel{ p_1 \times p_2, p_1 \times p_3, p_2 \times p_3 }{\longrightarrow} & Y \times_X Y & \stackrel{p_1, p_2}{ \longrightarrow } &  Y& \stackrel{q}{\longrightarrow} & X , \,  \end{matrix}  \]
where the lower row represents the setting of faithfully flat descent, see \cite[Section VIII]{sga1}. When the action is free, we have vertical isomorphisms everywhere, see \cite[Example 6.2.B]{boschneron}, and then the data above $Y$ (a module, an algebra, an affine scheme) with a group action are compatible with the group action if and only if they are descent data.
\end{Remark}

\begin{Example}
Let $S=K [X]/(X^k)$ with the action of $\Z/(k)$ where the generator acts by multiplication with a primitive $k$th root of unity $\zeta$, provided it exists. The invariant ring is just $K$, and the morphism $K \rightarrow S$ is flat, but not \'etale since $dX \neq 0$. The action is not free, because, under the ring homomorphism
\[ K[X][Y]/(X^k,Y^k)   \longrightarrow   K[U ]/(U^k-1) [X]/( X^k), \, X \mapsto X, Y \mapsto UX \, , \]
$U$ does not lie in the image (or, because on the left, the spectrum has one point and on the right it has $k$ points, so there is no closed embedding).
\end{Example}

See also Example \ref{cycliconenotfree} for a related example.

\section{Toric case}
\label{toricsection}

Invariant rings and invariant algebras are especially easy to describe in the toric situation and are therefore a good starting point. Let $K$ be any field. We consider normal toric rings, which are given as the degree $0$-part of $K[X_1, \ldots, X_d ]$ under a $D$-grading given by a group homomorphism $ \delta :{\mathbb Z}^d \rightarrow D$, where $D$ is a finite commutative group and the grading is determined by $ \delta_i := \delta (e_i) $, the degree of the variable $X_i$ (also called the weights of the corresponding action). We usually assume that $ \delta $ is surjective. The dual group $G=D^\dual=\textnormal{Hom} (D,K^\times)$  (here $K^\times$ is the multiplicative unit group of $K$) acts on the polynomial ring in the following way: An element $\chi \in G$, i.e., a character $\chi :D \rightarrow K^\times$, acts on the variables $X_i$ by
\[ \chi X_i = \chi( \delta_i )  X_i \, , \]
hence, by the diagonal matrix
\[ \begin{pmatrix} \chi ( \delta_1)  & 0 & \ldots & 0 \\ 0 & \chi (d_2)  & \ldots & 0 \\ \vdots  & \vdots & \ddots &  \vdots  \\ 0  & 0 & \ldots & \chi (\delta_d) \end{pmatrix} \, .\]
The invariant ring under this action is, under the assumption that $K$ contains enough roots of unity, just $K[X_1, \ldots ,X_d]_0$, the degree zero part of the $\delta$-grading. The condition that the grading is surjective corresponds to the property that the action of $G$ is faithful.

All toric normal monoid rings with a finite divisor class group arise in this way. These are the monoids given by a rational polyhedral cone that are simplicial, meaning that the number of facets of the cone equals the dimension. A two dimensional cone is always simplicial, hence, two dimensional affine monoid rings can always be realized in this way.

A (not necessarily surjective) $D$-grading $\epsilon $ of another polynomial ring $K[W_1, \ldots, W_m ]$ corresponds to another linear action $\rho$ of $G$. Its invariant algebra over the invariant ring
$K[X_1, \ldots, X_d ]_0$ is the degree zero part
\[ B^\rho =  K[X_1, \ldots, X_d, W_1, \ldots, W_m ]_0  \, .\]
This algebra can be written as
\[ B^\rho = \bigoplus_{ \nu \in \N^m } K[X_1, \ldots, X_d]_{- \epsilon (\nu)}  W^\nu \, .  \]

\begin{Lemma}
\label{Toriccharacterdecomposition}
Suppose the toric situation and that $K$ contains enough roots of unity so that $D^\dual$ is isomorphic to $D$. Then, we have
\[ B^\rho = \bigoplus_\chi  K[X_1, \ldots, X_d]^{\beta, \chi} \tensor K[ W_1, \ldots, W_m]^{\rho , \chi^{-1} } \, ,\]
where the sum runs over the characters of $G$.
\end{Lemma}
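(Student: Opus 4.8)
Since Lemma \ref{charactercontribution} already supplies the inclusion $\supseteq$, the task is to establish the reverse inclusion, and my plan is to reduce everything to the $D$-grading, which in the toric setting diagonalizes the action. Write $S = K[X_1, \ldots, X_d]$ and $T = K[W_1, \ldots, W_m]$, write $D$ additively, and let $\langle g , e \rangle := g(e) \in K^\times$ denote the tautological pairing $G \times D \to K^\times$. The first point I would record is that, because $K$ contains enough roots of unity (so that $D^\dual \cong D$), this pairing is \emph{perfect}: it is nondegenerate in $e$, so for every $0 \neq e \in D$ there is a $g \in G$ with $\langle g, e\rangle \neq 1$, and the characters $\chi$ of $G$ are exactly the maps $g \mapsto \langle g , e\rangle$ for $e \in D$, giving a bijection between the characters of $G$ and the elements of $D$.

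Next I would set up the eigenspace decomposition. A monomial $X^a W^b$ has total $\delta$-$\epsilon$-degree $\delta(a) + \epsilon(b) \in D$, and $g \in G$ acts on it by the scalar $\langle g, \delta(a) + \epsilon(b)\rangle$; hence $S \tensor T = \bigoplus_{e \in D} (S \tensor T)_e$, where $(S \tensor T)_e$ is spanned by the monomials of total degree $e$, and $g$ acts on $(S \tensor T)_e$ as multiplication by $\langle g, e\rangle$. The invariant algebra $B^\rho = (S \tensor T)^{\beta \times \rho}$ is then the sum of those homogeneous components fixed by every $g$, and perfectness forces such a component to have degree $0$. Thus $B^\rho = (S \tensor T)_0 = \bigoplus_{e \in D} S_e \tensor T_{-e}$, the last equality holding because a product monomial has total degree $0$ precisely when its $X$-part has some degree $e$ and its $W$-part has degree $-e$.

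It then remains to match the graded pieces with the semi-invariant modules. For a character $\chi$ of $G$, corresponding to $e \in D$ as above, I would decompose $f \in S$ into its $\delta$-homogeneous parts $f = \sum_{e'} f_{e'}$; the semi-invariance condition $f g = \chi(g) f$ becomes $\langle g , e'\rangle f_{e'} = \langle g, e\rangle f_{e'}$ for all $g$ and all $e'$, and perfectness forces $f_{e'} = 0$ unless $e' = e$, giving $S^{\beta, \chi} = S_e$ (the inclusion $\supseteq$ being immediate). Since $\chi^{-1}$ corresponds to $-e$, the same computation yields $T^{\rho, \chi^{-1}} = T_{-e}$, and substituting these into $B^\rho = \bigoplus_{e \in D} S_e \tensor T_{-e}$ and re-indexing the sum over characters gives the assertion. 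The one step that requires care, and the only place the hypothesis enters, is the systematic use of the perfectness of $G \times D \to K^\times$ (equivalently $D^\dual \cong D$ via enough roots of unity): it is exactly what simultaneously guarantees that the invariants coincide with the degree-zero part and that each semi-invariant space coincides with a single graded piece. Without it the eigenspaces could fail to separate the $D$-degrees and the decomposition would break down.
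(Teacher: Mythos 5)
Your proof is correct and follows essentially the same route as the paper: both decompose everything by the $D$-grading, identify $B^\rho$ with the degree-zero component $\bigoplus_{\lambda \in D} S_\lambda \tensor T_{-\lambda}$, and then use the roots-of-unity hypothesis to show that each semi-invariant module $S^{\beta,\chi}$ (resp.\ $T^{\rho,\chi^{-1}}$) coincides with the single graded piece $S_\lambda$ (resp.\ $T_{-\lambda}$) for the degree $\lambda$ corresponding to $\chi$. The only cosmetic difference is that you re-derive the identification of invariants with the degree-zero part from the perfectness of the pairing $G \times D \rightarrow K^\times$, whereas the paper takes this as part of the toric setup and starts from its displayed formula $B^\rho = \bigoplus_{\nu \in \N^m} K[X_1,\ldots,X_d]_{-\epsilon(\nu)} W^\nu$, which it then regroups over $\lambda = \epsilon(\nu)$.
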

\begin{proof}
We have
\[	\begin{aligned}  B^\rho & =   \bigoplus_{ \nu \in \N^m } K[X_1, \ldots, X_d]_{- \epsilon (\nu)}  W^\nu \\ & =  \bigoplus_{ \lambda \in D } (  \bigoplus_{ \nu \in \N^m , \, \epsilon ( \nu) = \lambda } K[X_1, \ldots, X_d]_{- \lambda }  W^\nu ) 
\\ &=  \bigoplus_{ \lambda \in D }   K[X_1, \ldots, X_d]_{- \lambda }  \tensor   K[W_1, \ldots, W_m]_{ \lambda }  \end{aligned}  \]
The grading group $D$ is the same as the character group of $G=D^\dual $, and the elements of degree $\lambda$ are the semi-invariants of the corresponding character since for a character $\chi$ of $G=D^\dual$ corresponding to a degree $\lambda$ we have
\[	\begin{aligned} K[X_1, \ldots, X_d]^\chi =& \{f \in K[X_1, \ldots, X_d] | g \cdot  f  =   \chi(g) f \text{ for all } g \in G\} \\
=&  \{ f= \sum_{\delta \in D} f_\delta  | g \cdot  f  =  g(\lambda) f \text{ for all } g \in D^\dual \} \\	=&  \{  \sum_{\delta \in D} f_\delta  | \sum_{\delta \in D} g(\delta) f_\delta   =  g(\lambda) \sum_{\delta \in D} f_\delta \text{ for all } g \in D^\dual \} \\ = & K[X_1, \ldots, X_d]_\lambda \, ,
\end{aligned} \] 
because the condition on the roots of unity implies that $f_\delta=0$ for $\delta \neq \lambda$.
\end{proof}

We look at one-dimensional examples, which already exhibit quite typical behavior.

\begin{Lemma}
\label{cyclicone}
Let $k \in \N_+$, let $G= \Z /(k)$ and let $\zeta$ denote a primitive $k$th root of unity in $K$ (assuming that it exists). Let $\beta$ be the action of $G$ on $K[X]$, where the group generator acts as $X \mapsto \zeta X$, and let $\rho =\rho_\ell$, $\ell =0,1, \ldots, k-1$, be the action of $G$ on $K[W]$, where the generator acts as $W \mapsto \zeta^\ell W$. Then the following properties hold.
	
\begin{enumerate}
\item
The invariant ring is $K[X]^\beta=K[X^k]=K[A]=R$.
		
\item
The  invariant $K [X]^\beta $-algebra is
\[ A_\ell = K[X,W]^{\beta \times \rho_\ell } = K[X^iW^j| i + j \ell \in \Z k] =  \bigoplus_{j =0}^\infty K[X]_{ -j \ell } W^j \, .\]
		
\item
For $\ell=0$, we obtain $A_0 = K[X^k] [W]$.
		
\item
For $\ell \geq 1$, the monomial $X^{k-\ell} W^1 $ has the property that every monomial as above can be expressed as (for some $n$)
\[  X^iW^j =\frac{ ( X^{k-\ell } W^{1})^{j}  }{ (X^k)^n} \, . \]
In particular, after localizing at $A$, we have  
\[  K[X,W]^{\beta \times \rho_\ell }_A \cong K[A]_A [  X^{k-\ell} W^1    ]  \, . \]
\end{enumerate}
\end{Lemma}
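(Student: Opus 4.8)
The whole statement reduces to bookkeeping of monomials, since the action $\beta \times \rho_\ell$ is diagonal: its generator sends $X^iW^j \mapsto \zeta^i\zeta^{j\ell}X^iW^j = \zeta^{i+j\ell}X^iW^j$. The plan is to first record the elementary fact that, because the action is diagonal, the invariant algebra is spanned as a $K$-vector space by the fixed monomials, so it suffices to decide which monomials are invariant. As $\zeta$ is a primitive $k$th root of unity, $\zeta^{i+j\ell}=1$ holds precisely when $i + j\ell \equiv 0 \pmod k$, i.e. $i + j\ell \in \Z k$. This single criterion drives all four parts.

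For (1) I set $j=0$: the condition becomes $i \equiv 0 \pmod k$, so $K[X]^\beta$ is the $K$-span of the $X^{kn}$, which is $K[X^k]=K[A]$. For (2) I group the invariant monomials by their $W$-degree $j$; those of degree $j$ are the $X^iW^j$ with $i \equiv -j\ell \pmod k$, which is exactly $K[X]_{-j\ell}W^j$ in the grading notation of the toric setup (this is also the instance $d=m=1$ of the toric decomposition preceding Lemma \ref{Toriccharacterdecomposition}). Summing over $j$ yields the stated direct sum, and the equality with $K[X^iW^j \mid i+j\ell \in \Z k]$ is just the monomial description. Part (3) is the case $\ell=0$, where the condition reads $i \equiv 0 \pmod k$ for every $j$, so the invariant monomials are generated by $X^k$ and $W$, giving $A_0 = K[X^k][W]=K[A][W]$.

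The only part with genuine content is (4). Here I would set $C := X^{k-\ell}W$, note it is invariant since $(k-\ell)+\ell = k \equiv 0 \pmod k$, and then compute, for an arbitrary invariant monomial $X^iW^j$ (so that $i+j\ell = kn'$ with $n' = (i+j\ell)/k \in \Z_{\ge 0}$),
\[ \frac{X^iW^j}{C^j} = \frac{X^iW^j}{X^{(k-\ell)j}W^j} = X^{\,i+\ell j - kj} = (X^k)^{\,n'-j} = A^{\,n'-j} . \]
Thus $X^iW^j = A^{\,n'-j}C^j$, which is precisely the asserted identity with $n = j - n'$. Consequently every invariant monomial lies in $K[A]_A[C]$, and since $A$ and $C$ themselves lie in $A_\ell$, we obtain $(A_\ell)_A = K[A]_A[C]$.

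The main obstacle is not depth but care on two points. First, one must know that $n' = (i+j\ell)/k$ is a genuine integer; this, however, is exactly the invariance condition, hence automatic. Second, and more substantively, the exponent $n'-j$ can be negative, so the clean generating statement holds only after localizing at $A$: for small $i$ one genuinely needs a positive power of $A$ inverted (for instance $W^2 = C^2/A$ when $k=2,\ell=1$), which is exactly why part (4) passes to $(A_\ell)_A$ rather than to $A_\ell$ itself. I would close by remarking that $C$ is transcendental over $K[A]_A$ (as $W$ is transcendental over $K[X]$), so $K[A]_A[C]$ is a genuine polynomial ring, consistent with the localized invariant algebra being regular.
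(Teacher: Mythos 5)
Your proof is correct and follows essentially the same route as the paper: the paper simply cites the toric framework (the monomial/degree-zero description preceding Lemma \ref{Toriccharacterdecomposition}) and declares (3) and (4) clear, while you unpack that framework into the explicit eigenvalue criterion $i+j\ell \equiv 0 \pmod{k}$ and carry out the verification of (4) by hand. Your computation $X^iW^j = A^{n'-j}C^j$ with $n'=(i+j\ell)/k$, together with the observation that $n'-j$ may be negative (forcing the localization at $A$), is exactly the content the paper leaves implicit.
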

\begin{proof}
(1) and (2) follow from the toric situation, and Lemma \ref{Toriccharacterdecomposition}, (3) and (4) are also clear.
\end{proof}

In Lemma \ref{cyclicone}, the action on $K[X,W]$ is given by the matrix $\begin{pmatrix}  \zeta & 0 \\ 0 & \zeta^\ell \end{pmatrix} $.

\begin{example}
\label{cycliconespecial}
We look at Lemma \ref{cyclicone} for $\ell=k-1=-1$. The invariant algebra is $K[X^k,W^k, XW] \cong K[A,B,C]/(AB-C^k)$, describing the two-dimensional $A_{k-1}$-singularity, considered as a $K[A]$-algebra. Over the punctured spectrum of $\Spec R$, the isomorphism of  Lemma \ref{cyclicone} (4) is  
\[  K [A,B,C]_{A}  \cong K[A]_{A} [ C ]  \, .\] The fiber ring over the vertex point (the origin in our case), when we mod out $A$, is $K[B,C]/(C^k)$, a nonreduced affine line.	
\end{example}

\begin{example}
We look at Lemma \ref{cyclicone} for $\ell=1$, the invariant algebra is the Veronese ring
\[ K[X^k,X^{k-1}W, \ldots ,W^k]=K[ A_0 ,\ldots, A_k]/( \text{binary relations}) \, ,\]
considered as a $K[X^k]$-algebra. Over the punctured spectrum of $\Spec R$, the isomorphism of Lemma \ref{cyclicone} (4) is
\[  K[X^k,X^{k-1}W, \ldots ,W^k]_{X^k}  \cong K[X^k]_{X^k} [   X^{k-1} W ]  \, .\]
In the fiber ring over the origin, when we mod out $X^k$, all the monomials with the exception of $W^k$ (which is the free variable of the fiber) become nilpotent. The exponent of nilpotence of $XW^{k-1}$ is $k$, that of  $X^2W^{k-2}$ is $ \lfloor (k+1)/2 \rfloor$, etc.
\end{example}

\begin{example}	
We continue with Lemma \ref{cyclicone} for $k=2$. There are two one-dimensional representations, accordingly we have on the quotient scheme, which is just the affine line, two quotient schemes of relative dimension one, the trivial one, and $Z_1 =\Spec [A,B,C]/( AB-C^2) $ with a nonreduced fiber over the origin.	
\end{example}

\begin{example}	
We continue with Lemma \ref{cyclicone} for $k=3$. There are three one-dimensional representations, for $\ell=0,1,2$, and accordingly we have on the quotient scheme, which is just the affine line $\Spec K[X^3] = \Spec K[A]$, three quotient schemes of relative dimension one, the trivial one, $Z_2 =\Spec K[A,B,C]/( AB-C^3) $ with the nonreduced fiber being $K[C,B]/(C^3)$ and the Veronese ring
\[	\begin{aligned} Z_1 & =   \Spec K[X^3, X^2W,XW^2,W^3] \\ & \cong  \Spec K[A,B,C,D]/(AD-CB, B^2-AC, C^2-BD,C^3-AD^2, B^3-A^2D )      \, ,   \end{aligned}  \]
with the fiber over the origin being $ \Spec K[B,C,D]/(CB,B^2,C^2-BD,C^3)    $, where $D$ is a free variable, and $1,B,C,C^2 $ is a basis over $K[D]$.
\end{example}

\begin{example}	
\label{cyclicone5}
We continue with Lemma \ref{cyclicone} for $k=5$ and look at the one-dimensional representations for $\ell= 2$ and $3$. The invariant ring is $ K[X^5] = \Spec K[A]$ and the invariant algebras are
\[ \begin{aligned} B_2 = K [X,W]^{(1,2)} &= K[X^5,X^3W,XW^2,W^5 ]  \\ & \cong  K[A,C,D,B]/(C^2-AD, CD^2-AB, D^3-BC) \, ,   \end{aligned} \]	
and 	
\[ \begin{aligned} B_3= K [X,W]^{(1,3)}  &=  K[X^5,X^2W,XW^3,W^5 ] \\  & \cong K[A,E,F,B]/(F^2-BE, FE^2-AB, E^3-AF) \, .  \end{aligned} \]	
As $K$-algebras, these two algebras are isomorphic via $A \leftrightarrow B, C \leftrightarrow F, D \leftrightarrow E$ (inherited from $X\leftrightarrow W$). However, they are not isomorphic as $K[A]$-algebras (this follows from Theorem \ref{pullback}). In fact, not even the fiber rings above the origin are isomorphic $K$-algebras, as we will see in Example \ref{cyclicone5fiber}.	
\end{example}

\begin{Example}
\label{artinian}
As in Lemma \ref{cyclicone}, we consider the action of $\Z/(k)$ but now on the Artinian algebra $S=K[X]/(X^k)$, the invariant ring is just $K$. This is the action from Lemma \ref{cyclicone} over the base change $K[A] \rightarrow K,\, A \mapsto 0$, see also Lemma \ref{actionuniversal}. The invariant algebra for the representation $\rho_\ell$ is
\[ K[ X^iW^j | \, i + \ell j \in \Z k ]/(X^k)   \, ,\]
its reduction is $K[W^k]$.
\end{Example}

\begin{Example}
\label{cycliconenotfree}
In the situation of Lemma \ref{cyclicone}, the morphism $\mu \times p_2$ from Remark \ref{flatdescent} corresponds to the ring homomorphism
\[     K[X] \tensor_{ K[X^k]} K[X] =K[X,Y]/(X^k-Y^k)  \longrightarrow K[X] \tensor K[U]/(U^k-1), Y \mapsto UX \, . \]
On the left, we have $k$ lines meeting in a point, on the right, we have $k$ disjoint lines.	
\end{Example}

\begin{example}
\label{toric11.1}
We consider the action of the group $G=\Z/(2)$ on $S=K [X,Y]$, where the nontrivial group element acts by multiplication with $-$, and the invariant ring is $R=K[X^2,Y^2, XY] \cong K[A,B,C]/(AB-C^2)$. We further consider the action of $G$ on $K[W]$ by sign. The invariant algebra is the Veronese algebra in three variables in degree $2$,
\[	\begin{aligned}  K[X,Y,W]^G & = K[X^2,Y^2,XY, W^2,XW,YW] \\
	& = K[A,B,C,D,E,F]/(AB-C^2, AD- E^2, BD-F^2,\ldots )  \end{aligned}	\]
Outside of the zero locus $V(A)$, this algebra is $K[A,C]_{A} [E] $ (similar outside $V(B)$), over $V(A,B,C)$ the fiber algebra is $K[D,E,F]/(E^2,F^2 ,EF)$.
\end{example}

\begin{example}
\label{Asingularity}
The action of the group $\Z/(k)$ on the polynomial ring $K[X,Y]$, where $1$ acts via the matrix $\begin{pmatrix} \zeta &0 \\ 0 & \zeta^{-1} \end{pmatrix}	$ ($\zeta$ a primitive $k$th root of unity), yields the invariant ring $R=K[X^k,Y^k,XY] \cong K[A,B,C]/(AB-C^k)$, the $A_{k-1}$-singularity. The action $\rho=\rho_\ell$ given on $K[W]$ by $1$ acting as $W \mapsto \zeta^\ell W$ gives the quotient schemes
\[ Z_\ell = \Spec K[X,Y,W]^{\beta, \rho_\ell}  \, .\]
The invariant algebra is given as
\[ K[X,Y,W]^{(1,-1, \ell)}  = [X^iY^jW^m:\, i -j+ \ell m \in  \Z k] \, . \]
In the first nontrivial case, $k=2$ and $\ell =1$, this is	$K[X^2,Y^2,XY, W^2,XW,YW]$, so the second Veronese ring in three variables.	Over $R$, this algebra is
\[ R[D,E,F]/(CD-EF, CE- AF, BE -C F, AD-E^2, BD-F^2) \, .\]
When we invert $A$, this algebra is isomorphic to the polynomial ring $R_A[E]$, as  $D=E^2/A$ and $F=CE/A$, when we invert $B$, this algebra is isomorphic to $R_B[F]$.	The fiber ring over $(A,B,C)$ is $K[D,E,F]/(EF,E^2,F^2)$.	
\end{example}

\begin{example}
\label{kleinaction}
Let $ G= \Z/(2) \times \Z/(2) $ act naturally by compo\-nentwise negation on $K[X,Y]$, with invariant ring $K[X^2,Y^2]=K[A,B] $.
	
(1). The action on $K[W]$, where the first component of the group acts by $-$ and the second component acts as the identity, so on $K[X,Y,W]$, $(+,+)$ acts as the identity, $(+,-)$ as $ (+,-,+)$, $(-,+)$ as $ (-,+,-)$,  $(-,-)$ as $ (-,-,-)$. This  yields the invariant algebra \[  K[X^2,Y^2, W^2, XW ] \cong K[A,B,C,D]/(D^2-AC) \, .\]
This is a nonisolated singularity, the singular locus consists of the line $V(A,C,D)$, and its image is $V(A)$. If we invert $A$, we get $K[A,B]_A[D]$, but over $A=0$, we get $ K[B] \subseteq K[B,C,D]/(D^2)$, so this is a nonreduced affine line over an affine line.
	
(2).	If we look at the action which factors through the sum $ \Z/(2) \times \Z/(2) \rightarrow \Z/(2) $, then $(+,-)$ acts as $ (+,-,-)$, $(-,+)$ as $ (-,+,-)$, and $(-,-)$ as $ (-,-,+)$. This  yields the invariant algebra 
\[ K[X^2,Y^2, W^2,XYW ] \cong K[A,B,C,D]/( D^2 -  ABC ) \, .\]
This is a nonisolated singularity, and the singular locus consists of three lines $V(A,B,D)$, $V(A,C,D)$ and $V(B,C,D)$. The images of these lines are the point $V(A,B)$ and the lines $V(A)$ and $V(B)$. If both $A$ and $B$ are inverted, then we get $K[A,B]_{AB} [D] $. Above $A=0$ (similar above $B=0$), we get $ K[B] [C,D]/(D^2) $.
\end{example}

\begin{Example}
\label{cyclic4reflection}
Let $G$ be the cyclic group of order $4$ with the basic action $\beta$ given (for the generator) by the matrix $M=\begin{pmatrix} -1 & 0 \\ 0 & \mathrm {i}   \end{pmatrix}$, the subgroup $H$ of reflections is $ \Z/(2)$, generated by $ M^2 $. The invariant ring under $H$ is $ \C[ X_1,X_2^2] $, and the invariant ring under $G$ is $\C[ X_1^2, X_2^4, X_1X_2^2 ] \cong \C[A,B,C]/(AB-C^2)$. Let $\rho$ be the one-dimensional representation of $G$ given by multiplication with $\mathrm {i} $. Then the representation $\beta \times \rho$ is small, and the invariant algebra is
\[ K[X_1,X_2,W]^{(2,1,1)} =K[X_1^2,X_1X_2^2, X_2^4, W^4,X_2W^3,X_2^2W^2,X_2^3W, X_1W^2, X_1X_2W] \, .\]	
\end{Example}

See also Example \ref{cyclic4reflectionsingularities} and Example \ref{cyclic4reflectionfundamental}.

\section{Further examples}
\label{examplessection}

We describe several further nontoric examples (the first one is toric in characteristic $\neq 2$ after a change of coordinates).

\begin{example}
We consider the symmetric action of the group $G=\Z/(2)$ on $S=K [X,Y]$, where the nontrivial group element exchanges $X$ and $Y$, and the invariant ring is $K[X+Y,XY] \cong K[A,B]$. We further consider the action of the group on $K[W]$ by sign. The invariant algebra is then
\[  \begin{aligned} K[X,Y,W]^G & = K[X+Y,XY,W^2, (X-Y)W] \\ &= K[A,B,U,V]/( V^2- U( A^2-4B)   ) \, .    \end{aligned} \]
Outside of the zero locus $V(A^2-4B)$, this algebra is $K[A,B]_{A^2-4B} [V]$, over this zero locus the algebra is $K[A,B]/(A^2-4B) \subseteq K[A,B]/(A^2-4B)[U,V]/(V^2)$.
\end{example}

\begin{Lemma}
\label{symmetricdeterminant}
For the symmetric group $S_d$ with its natural action on $K[X_1, \ldots, X_d]$ (and with the invariant ring given by $K[E_1, \ldots, E_d]$, $E_i$ being the elementary symmetric polynomials) and the nontrivial action of $S_d$ on $K[W]$, which factors through the sign homomorphism $ S_d \rightarrow \{1,-1\}$, the invariant algebra is 
\[  K[E_1, \ldots, E_d][W^2, \vandermonde W] \cong  K[E_1, \ldots, E_d]  [U,V]/( V^2 - U \vandermonde^2) \,  \] 
where $\vandermonde =\prod_{i <j} (X_i-X_j)$ denotes the Vandermonde determinant. Outside of $V(\vandermonde^2)$, the invariant algebra is $ K[E_1, \ldots, E_d]_{\vandermonde^2} [V]$, and above $V(\vandermonde^2)$, the invariant algebra is $  K[E_1, \ldots, E_d]/(\vandermonde^2)  [U,V]/(V^2)$.
\end{Lemma}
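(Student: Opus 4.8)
The plan is to exploit the $W$-grading. Since $S_d$ acts on $K[W]$ through the sign character, every $\sigma \in S_d$ scales $W$ by $\sgn(\sigma) = \pm 1$ and therefore preserves the $W$-degree of $K[X_1,\ldots,X_d][W]$. Consequently the invariant algebra decomposes as a direct sum indexed by the $W$-degree, and an element $f W^j$ is invariant exactly when $f$ is a semi-invariant of $K[X_1,\ldots,X_d]$ for the character $\sgn^j$. As $\sgn$ has order two, only two characters survive on the $W$-side: for even $j$ the condition is that $f$ be symmetric, i.e. $f \in K[E_1,\ldots,E_d]$, and for odd $j$ the condition is that $f$ be alternating ($\sigma(f) = \sgn(\sigma) f$). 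This is precisely the equality case of the inclusion of Lemma \ref{charactercontribution} for the two characters (trivial and sign) that contribute on $K[W]$.

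The key classical input is that, in characteristic $\neq 2$, the alternating polynomials form a free $K[E_1,\ldots,E_d]$-module of rank one generated by the Vandermonde determinant $\vandermonde = \prod_{i<j}(X_i - X_j)$. Indeed, an alternating $f$ changes sign under each transposition $(ij)$, hence vanishes on the hyperplane $X_i = X_j$, so $(X_i - X_j) \mid f$; as these linear forms are pairwise coprime, $\vandermonde \mid f$, and the quotient $f/\vandermonde$ is then symmetric. Therefore the even-degree part of $B^\rho$ is $K[E_1,\ldots,E_d][W^2]$ and the odd-degree part is $\vandermonde W \cdot K[E_1,\ldots,E_d][W^2]$, so $B^\rho$ is generated over $K[E_1,\ldots,E_d]$ by $W^2$ and $\vandermonde W$.

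It remains to pin down the presentation. Writing $U = W^2$ and $V = \vandermonde W$, one computes $V^2 = \vandermonde^2 U$, and $\vandermonde^2 = \prod_{i<j}(X_i - X_j)^2$ is the discriminant, which is symmetric and hence lies in $K[E_1,\ldots,E_d]$. This gives a surjection $K[E_1,\ldots,E_d][U,V]/(V^2 - U\vandermonde^2) \to B^\rho$. To see it is an isomorphism I would compare $K[E_1,\ldots,E_d]$-module bases: the target is free with basis $\{U^k\}_{k \geq 0} \cup \{VU^k\}_{k \geq 0}$ by the degree decomposition above, while the source is free with the same indexing set once every occurrence of $V^2$ is reduced via the relation; the map carries basis to basis and is thus bijective. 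The localization and fiber claims then follow by substitution: inverting $\vandermonde^2$ makes $U = V^2/\vandermonde^2$ redundant and leaves the polynomial ring $K[E_1,\ldots,E_d]_{\vandermonde^2}[V]$, while reducing modulo $\vandermonde^2$ turns the relation into $V^2 = 0$ and yields $K[E_1,\ldots,E_d]/(\vandermonde^2)[U,V]/(V^2)$.

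The main obstacle is the exactness of the presentation, that is, ruling out hidden relations beyond $V^2 - U\vandermonde^2$; the free-basis comparison settles this cleanly once the alternating-polynomial structure theorem is in hand. A secondary point to flag is the characteristic-two caveat: there $\sgn$ is trivial, the $W$-action degenerates to the identity, and $B^\rho$ becomes the trivial extension $K[E_1,\ldots,E_d][W]$, so the statement should be read under the tacit assumption $\Char K \neq 2$.
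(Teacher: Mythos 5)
Your proof is correct and follows essentially the same route as the paper, whose entire proof consists of the single observation that the semi-invariants for the sign character are exactly $K[E_1,\ldots,E_d]\vandermonde$; your $W$-degree decomposition and the free-basis verification of the presentation $V^2-U\vandermonde^2$ merely make explicit what the paper leaves implicit. Your characteristic-two caveat is also apt: in characteristic $2$ the sign character is trivial, so the hypothesis of a nontrivial action factoring through it already forces $\operatorname{char} K \neq 2$.
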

\begin{proof}
This follows from the fact that for the action of $S_d$ on the polynomial ring, the semi-invariants with respect to $-$ are given by
$ K[E_1, \ldots , E_d]  \vandermonde $.
\end{proof}

Note that the natural action of the symmetric group is generated by the reflections coming from the transpositions, the mirror hyperplanes are $V(X_i-X_j)$ for $i \neq j$. The diagonal is fixed by the action.

\begin{example}
\label{symmetricdeterminant3}
We consider the symmetric group $S_3$ with its natural action on $ K [X,Y,Z] $, and the invariant ring is $ K[X+Y+Z,XY+XZ+YZ,XYZ] \cong K[A,B,C]$. We further consider the action of the group via the sign homomorphism $S_3 \rightarrow \{1,-1\}$ on $K[W]$ by sign. The invariant algebra is 
\[ \begin{aligned} K[X,Y,Z,W]^G & =  K[X+Y+Z,XY+XZ+YZ,XYZ,W^2, \vandermonde W] \\ & = K[A,B,C,U,V]/(V^2- U \vandermonde^2   ) \, , \end{aligned} \]
where $\vandermonde=(X-Y)(X-Z)(Y-Z)$, in accordance with Lemma \ref{symmetricdeterminant}.
\end{example}

\begin{Remark}
\label{discriminant}
In general, if $G \subseteq \GLG_d(K)$ is a finite group generated by reflections, with mirror hyperplanes $H_1, \ldots, H_n$, then the image of the mirror hyperplanes in $X$ is described by the \emph{discriminant} $\triangle$. If the mirror $H_i$ is given by the linear form $L_i$ and the order of the subgroup with this mirror is $\nu_i$, then the discriminant is (up to a unit) $\prod_{i =1}^n L_i^{\nu_i} $, which is an element of the invariant ring, see \cite[2.3.1]{buchweitzfaberingalls}. In the situation of Lemma \ref{symmetricdeterminant}, we have $\triangle= \vandermonde^2$.
\end{Remark}

\begin{example}
We let the symmetric group	$S_3$ act on ${\mathbb A}^2 $ via the matrices (from the left)
\[ \begin{pmatrix}  1 & 0 \\ 0 & 1\end{pmatrix}\,, \begin{pmatrix}  0 & 1 \\ 1 &  0  \end{pmatrix}\,, \begin{pmatrix}  -1 & 0 \\ -1  & 1 \end{pmatrix}\,,   \begin{pmatrix}  1 & -1 \\ 0 & -1  \end{pmatrix}  \,,  \begin{pmatrix}  -1 & 1 \\ -1 & 0  \end{pmatrix}\,,   \begin{pmatrix}  0 & -1 \\ 1  & -1 \end{pmatrix}\,, \]
the one with determinant $-1$ are reflections (with axes $K(e_1+e_2)$, $Ke_1$, $Ke_2$) and we have a reflection group. The corresponding actions on $K[X,Y]$ of the transpositions are given by $X \leftrightarrow Y$, $X \mapsto -X$, $Y \mapsto -X+Y$ and $X \mapsto X-Y$, $Y \mapsto -Y$. The invariant ring is $ K[X^2+Y^2-XY , 2X^3-3X^2Y- 3XY^2+2Y^3 ] =K[A,B] $. The discriminant is $\triangle= 4A^3-B^2=27X^2Y^2(X-Y)^2 $. Its square root is a semi-invariant with respect to $-$, as in Lemma \ref{symmetricdeterminant}. Hence, if $\rho$ is the representation given by the determinant, then the invariant algebra is $K[A,B][U,V]/ ( V^2 - U \triangle) $.
\end{example}

\begin{example}
\label{kleinactionhyperplane}
We restrict the action from Example \ref{kleinaction} to the union of the two reflecting lines, so $G=\Z/(2) \times \Z/(2)$ is acting on the ring $K[X,Y]/(XY)$ with invariant ring $K[A,B]/(AB)$. For the first representation, the invariant algebra is $K[A,B,C,D]/(AB,D^2-AC)$, for the second representation, it is $K[A,B,C,D](AB, D^2)$, which can be computed directly or follows from Example \ref{kleinaction} by Lemma \ref{actionuniversal}.
\end{example}

\section{Invariant modules and invariant algebras}
\label{modulealgebrasection}

Suppose that a finite group $G$ acts on the $K$-algebra $S$ by $K$-algebra automorphism and that $G$ acts linearly via $\rho$ on a finite dimensional $K$-vector space $V$. Then the natural action of $G$ on the free $S$-module $N = S \tensor_K V \cong S^m$ is given for $g \in G$ as (action from the right, think of $V$ as a space of linear forms)
\[ S \tensor_K V \longrightarrow  S \tensor_K V , s \tensor v \longrightarrow  sg \tensor vg \, . \]
This action is compatible in the sense that
\[  s g \cdot w g = (s w) g\]
holds for all $g \in G$ and $s \in S$. The module $S \tensor_K V$ is an $S^G$-module by restricting the scalars in the first component, and the mappings given by $g \in G$ are $S^G$-module automorphisms. The \emph{module of covariants} (or invariant module from now on)
\[ N^\rho= \{ w \in N:\, wg =w \text{ for all } g \in G \}\]
is therefore an $S^G$-module.

\begin{Remark}
If $S$ is a Cohen-Macaulay ring and $G$ is nonmodular, then the invariant modules are maximal Cohen-Macaulay $R$-modules. They are, if they are indecomposable, direct summands of $S$ as $R$-modules, see Section \ref{regularrepresentationsection} (in particular Lemma \ref{cyclic4reflection}), and $S$ is also a Cohen-Macaulay $R$-module, as follows with local cohomology.  
\end{Remark}

\begin{Remark}
Basically every example of a group generated by reflections shows that this module assignment comes with a loss of information, because then the invariant ring is a polynomial ring and all maximal Cohen-Macaulay modules are free.
\end{Remark}

\begin{Lemma}
\label{modulealgebra}
Let	$G$ be a finite group acting faithfully via $\beta$ on a $K$-algebra $S$ by $K$-algebra automorphisms with invariant ring $R=S^G$. Let $\rho$ be a $K$-linear representation on the $K$-vector space $V$, let $G$ act naturally on $N=S \otimes_K V$ and let $M=N^\rho $ be the corresponding invariant $R$-module.	Then there is a canonical $\N$-graded $R$-algebra homomorphism
\[  \Sym_R (M) \longrightarrow (S \tensor_K K[V])^{\beta \times \rho}  \, . \]
It is an isomorphism in degrees $0$ and $1$.
\end{Lemma}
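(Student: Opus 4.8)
The plan is to construct the map by the universal property of the symmetric algebra and then read off the low-degree behaviour from the grading; there is no real obstacle, only a careful bookkeeping of graded pieces.

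First I would equip the target with its natural $\N$-grading. The algebra $S \tensor_K K[V]$ carries the grading coming from $K[V] = \Sym_K(V)$ (the $W$-degree), with degree-$n$ component $S \tensor_K \Sym^n_K(V)$. Since $\rho$ acts $K$-linearly on $V$, it preserves each $\Sym^n_K(V)$, so the product action $\beta \times \rho$ respects this grading and $(S \tensor_K K[V])^{\beta \times \rho}$ inherits an $\N$-grading whose degree-$n$ component is $(S \tensor_K \Sym^n_K(V))^{\beta \times \rho}$. In particular the degree-$0$ component is $(S \tensor_K K)^{\beta} = S^G = R$, and the degree-$1$ component is $(S \tensor_K V)^{\beta \times \rho} = N^\rho = M$ by definition of the invariant module.

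Next I would produce the homomorphism. The degree-$1$ inclusion $M \hookrightarrow (S \tensor_K K[V])^{\beta \times \rho}$ is $R$-linear, where the target is viewed as an $R$-algebra via its degree-$0$ component $R$. By the universal property of the symmetric algebra, this $R$-linear map into a commutative $R$-algebra extends uniquely to an $R$-algebra homomorphism
\[ \varphi\colon \Sym_R(M) \longrightarrow (S \tensor_K K[V])^{\beta \times \rho}. \]
This $\varphi$ is canonical, being determined by the tautological identification of $M$ with the degree-$1$ component and by the universal property, with no choices involved. Because $M$ lands in degree $1$, a product of $n$ generators of $\Sym_R(M)$ lands in degree $n$; hence $\varphi$ carries $\Sym^n_R(M)$ into the degree-$n$ component and is therefore $\N$-graded.

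Finally I would check degrees $0$ and $1$ directly. In degree $0$, $\varphi$ is the identity map $\Sym^0_R(M) = R \to R$. In degree $1$, $\varphi$ is the identity map $\Sym^1_R(M) = M \to M$. Thus $\varphi$ is an isomorphism in degrees $0$ and $1$. The only point requiring care is the identification $(S \tensor_K V)^{\beta \times \rho} = M$ of the degree-$1$ component, which is immediate from the definitions. I emphasise that the substantive content lies elsewhere: in higher degrees $\varphi$ is typically far from surjective or injective, being an isomorphism only over the fixed-point-free locus (cf.\ Corollary \ref{modulealgebraisomorphism}), and that gap is exactly what makes the quotient scheme richer than the symmetric algebra; it is not part of the present statement.
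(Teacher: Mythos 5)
Your proposal is correct and takes essentially the same route as the paper: both identify $M$ with the degree-one component of the invariant algebra (the paper via the $G$-equivariant inclusion $S \otimes_K V \hookrightarrow S \otimes_K K[V]$ followed by passing to invariants, you via the grading on $(S \otimes_K K[V])^{\beta \times \rho}$) and then extend by the universal property of the symmetric algebra. Your explicit bookkeeping of the graded pieces and the degree-$0$ and degree-$1$ checks matches the paper's concluding observation that on both sides the degree-$0$ part is $R$ and the degree-$1$ part is $M$.
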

\begin{proof}
We start with the natural $S$-module homomorphism
\[S \tensor_KV \longrightarrow S \tensor_K K[V] , \]
where on the right we have an $S$-algebra. This mapping is $G$-equivariant, therefore, we obtain an $R$-module homomorphism
\[ M = (S \tensor_KV)^\rho \longrightarrow  (S \tensor_K K[V])^\rho   \, .   \]
By the universal property of the symmetric algebra, this gives an $R$-algebra homomorphism
\[  \Sym_R (M) \longrightarrow (S \tensor_K K[V])^\rho  \, . \]
On both sides, the degree $0$ part is $R$ and the degree $1$ part is $M$.
\end{proof}

This algebra homomorphism is in general not surjective. The corresponding scheme morphism is $Z_\rho \rightarrow \Spec ( \Sym_R (M) )  $, and in Lemma \ref{modulealgebrascheme}, we will understand this morphism as a morphism of module schemes up to normalization.

\begin{example}
In the situation of Lemma \ref{cyclicone}, the invariant $K[X^k]$-module is the free module of rank $1$ generated by $X^{k-\ell}W^1$, which is embedded in the invariant algebra according to Lemma \ref{modulealgebra}. The ring homomomorphism from Lemma \ref{modulealgebra} is only an isomorphism when $\ell=0$. This will follow from Theorem \ref{nonreducedfiber} or Theorem \ref{pullback}, but can also be seen directly: For $\ell \neq 0$, let $j$ be the smallest number such that $ j(k- \ell) \geq k$. Then the invariant monomial $X^{j(k-\ell) - k} W^j $  is not a power of $X^{k-\ell}W$, and hence not in the image.
\end{example}

\begin{Example}
In Example \ref{artinian}, the invariant module is just a $K$-vector space, and hence, its symmetric algebra is always a polynomial algebra. The ring homomorphism from Lemma	\ref{modulealgebra} is
\[ K  [T]  \longrightarrow K[ X^iW^j| \, i + \ell j \in \Z k ]/(X^k) , T \mapsto X^{k-\ell} W^1 \, .\]
\end{Example}

\begin{Example}
In Example \ref{kleinaction} (2), the ring homomorphism from Lemma \ref{modulealgebra} is
\[ K[A,B] [T]  \longrightarrow K[A,B] [C,D]/(D^2 - ABC ), T \mapsto D \, .\]
\end{Example}

\begin{Example}
\label{modulealgebradimension}
In Example \ref{Asingularity}, the ring homomorphism from Lemma \ref{modulealgebra} (for $k=2$ and $\ell=1$) is
\[ K[A,B,C]/( AB-C^2 ) [S,T]/(C S- AT, BS- CT )  \longrightarrow K[X^2,Y^2,XY, XW,YW,W^2] \, , \]
with $ S \mapsto XW=D$, $T \mapsto YW=E$. This is an isomorphism over the punctured spectrum (in codimension one), but not in general, as $W^2$ is not in the image. Hence, also for a small action, we cannot expect an isomorphism. The fiber of the symmetric algebra over the isolated singularity $(A,B,C)$ is a two-dimensional plane, the fiber of the invariant ring is $\Spec K[D,E,F](D^2,E^2,DE)$, and this fiber contracts to the origin under the morphism. In particular, the morphism between the spectra is not surjective.
\end{Example}

\begin{Lemma}
\label{invariantalgebradegree}
Let	$G$ be a finite group acting faithfully via $\beta$ on a $K$-algebra $S$ by $K$-algebra automorphisms with invariant ring $R=S^G$. Let $\rho$ be a $K$-linear representation on the $K$-vector space $V$.	
Then the degree $n$ component of the invariant algebra $( S \tensor_K K[V])^{\beta \times \rho}  $ is
\[ ((S \tensor_K K[V])^{\beta \times \rho} )_n  = (S \tensor_K \Sym^n_K V )^{G}  \, , \]
where $G$ acts naturally on the symmetric powers of $V$.
\end{Lemma}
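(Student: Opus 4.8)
The plan is to exhibit the natural grading on the full invariant algebra and check that the group action respects it, so that the computation of invariants splits degreewise. First I would observe that the $K$-algebra $S \tensor_K K[V]$ carries a natural $\N$-grading in which $S$ sits in degree zero and $K[V] = \Sym_K V$ is given its standard polynomial grading; the degree-$n$ component is then $S \tensor_K \Sym^n_K V$, so that
\[ S \tensor_K K[V] = \bigoplus_{n \geq 0} \bigl( S \tensor_K \Sym^n_K V \bigr). \]

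Next I would verify that the product action $\beta \times \rho$ is homogeneous for this grading. The key point is that $\beta$ acts only on the degree-zero factor $S$, while $\rho$, being a linear action on $V$, induces for each $n$ an action on the symmetric power $\Sym^n_K V$, namely the $n$-th symmetric power of the representation. Consequently each $g \in G$ acts as a degree-preserving $K$-algebra automorphism of $S \tensor_K K[V]$, and in particular maps the degree-$n$ component $S \tensor_K \Sym^n_K V$ into itself.

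It then remains to invoke the elementary fact that invariants under a degree-preserving action are computed degreewise. Writing an element $w$ as its graded decomposition $w = \sum_n w_n$ with $w_n \in S \tensor_K \Sym^n_K V$, homogeneity of the action gives $w g = \sum_n (w_n g)$ as the graded decomposition of $w g$; by uniqueness of homogeneous components, $w$ is invariant if and only if each $w_n$ is invariant. Hence
\[ (S \tensor_K K[V])^{\beta \times \rho} = \bigoplus_{n \geq 0} (S \tensor_K \Sym^n_K V)^G, \]
and reading off the degree-$n$ part yields the claim. I do not expect a genuine obstacle here: the statement is essentially a bookkeeping identity, and the only things to get right are the identification of the graded pieces and the observation that $\rho$ acts on each $\Sym^n_K V$, so that the product action preserves the grading. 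Notably, finiteness of $G$ (and the nonmodular hypothesis) play no role in this particular decomposition, which is purely a consequence of the action being homogeneous.
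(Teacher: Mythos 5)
Your argument is correct and takes exactly the same route as the paper, whose proof is the single observation that the action is homogeneous on $K[V]$, so the degree-$n$ part of the invariants equals the invariants of the degree-$n$ part $S \tensor_K \Sym^n_K V$; you have simply spelled out the grading, the homogeneity of $\beta \times \rho$, and the degreewise splitting in full detail. Your closing remark that finiteness of $G$ (and the nonmodular hypothesis) is irrelevant here is also accurate.
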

\begin{proof}
Because the action is homogeneous on $K[V]$, we have
\[  (S \tensor_K K[V])^{\beta \times \rho} )_n =(  (S \tensor_K K[V])_n )^\rho = (S \tensor_K \Sym^n_K V )^{G}  \, .  \] 
\end{proof}

For a finitely generated $\N$-graded $R$-algebra $A$ with $A_n$ finite $R$-modules, we call the function given by $n \mapsto \mu_R (A_n)$ the \emph{Hilbert function} of $A$, where $\mu_r$ denotes the minimal number of $R$-module generators.

\begin{Corollary}
\label{invariantalgebrahilbert}
Let	$G$ be a finite nonmodular group acting linearly and faithfully via $\beta$  as a reflection group on  $K[X_1, \ldots, X_d]$ with invariant ring $R=S^G \cong K[T_1, \ldots, T_d]$. Let $\rho$ be a $K$-linear representation on the $K$-vector space $V$ of dimension $m$.	Then the Hilbert function of the invariant  $R$-algebra $( S \tensor_K K[V])^{\beta \times \rho}  $ is $\binom{n+m-1}{m-1}$.
\end{Corollary}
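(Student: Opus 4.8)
The plan is to compute the Hilbert function degree by degree, using the identification from Lemma \ref{invariantalgebradegree}, which gives
\[ \left( (S \tensor_K K[V])^{\beta \times \rho} \right)_n = (S \tensor_K \Sym^n_K V)^G . \]
Thus it suffices to show that this $R$-module is free of rank $\binom{n+m-1}{m-1}$, since for a free graded module the minimal number of $R$-generators equals the rank. The strategy has two parts: first exploit the reflection-group structure to write $S$ as $R \tensor_K K[G]$ equivariantly, which reduces the invariants to a purely representation-theoretic computation over $K$; then count dimensions using the regular representation.

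For the first part, I would use the theorem of Chevalley (valid in the nonmodular case) that $S$ is a free $R$-module and that the coinvariant algebra $S_G = S/R_+ S$ is isomorphic, as a $G$-module, to the regular representation $K[G]$. Since $R = S^G$ carries the trivial $G$-action and the quotient map $S \to S_G$ is a graded surjection of $G$-representations, complete reducibility (again nonmodular) lets me choose a graded, $G$-equivariant $K$-linear section $S_G \to S$; the induced map $R \tensor_K S_G \to S$ is then a graded, $R$-linear, $G$-equivariant homomorphism which is surjective by graded Nakayama and an isomorphism by a rank count (both sides are free over $R$ of rank $|G|$). Because $G$ acts trivially on the $R$-factor, taking invariants commutes with $R \tensor_K (-)$, so
\[ (S \tensor_K \Sym^n_K V)^G \cong R \tensor_K (K[G] \tensor_K \Sym^n_K V)^G . \]

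It then remains to compute $\dim_K (K[G] \tensor_K \Sym^n_K V)^G$. Here I would apply the untwisting isomorphism $K[G] \tensor_K M \to K[G] \tensor_K M$, $g \tensor v \mapsto g \tensor g^{-1}v$, which carries the diagonal $G$-action to the action on the first tensor factor only; since the invariants of $K[G]$ under left translation form the one-dimensional space spanned by $\sum_{g} g$, this yields $(K[G] \tensor_K M)^G \cong M$ for every $G$-module $M$. Taking $M = \Sym^n_K V$ gives dimension $\dim_K \Sym^n_K V = \binom{n+m-1}{m-1}$, the dimension of the degree-$n$ part of a polynomial ring in $m$ variables. Hence $(S \tensor_K \Sym^n_K V)^G \cong R^{\binom{n+m-1}{m-1}}$ is free of the asserted rank, and the Hilbert function is $\binom{n+m-1}{m-1}$. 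The main obstacle is the first part: securing the $R[G]$-equivariant splitting $S \cong R \tensor_K K[G]$ rather than a mere $R$-linear one, which is exactly where both the reflection hypothesis (freeness over $R$, via Chevalley) and the nonmodular hypothesis (semisimplicity for the equivariant section) enter. As an alternative route for the rank, one could instead note that $(S \tensor_K \Sym^n_K V)^G$ is an $R$-direct summand of the free module $S \tensor_K \Sym^n_K V$, hence maximal Cohen-Macaulay and so free over the regular ring $R$, and then compute its rank over $Q(R)$ via the normal basis theorem $Q(S) \cong Q(R)[G]$ together with the same untwisting argument.
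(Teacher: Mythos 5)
Your proof is correct and takes essentially the same route as the paper: reduce via Lemma \ref{invariantalgebradegree} to computing $\mu_R\bigl((S \tensor_K \Sym^n_K V)^G\bigr)$, and then use that, for a nonmodular reflection group, this module of covariants is a free $R$-module of rank $\dim_K \Sym^n_K V = \binom{n+m-1}{m-1}$. The only difference is that the paper cites this freeness-and-rank fact as standard, whereas you supply its proof in full (the graded $G$-equivariant splitting $S \cong R \tensor_K K[G]$ via Chevalley plus Maschke, and the untwisting isomorphism $(K[G] \tensor_K M)^G \cong M$), and those details are sound.
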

\begin{proof}
By Lemma \ref{invariantalgebradegree}, we have to determine the minimal number of $R$-generators of $ (S \tensor_K \Sym^n_K V )^{G} $. As we have a reflection group acting, this is a free $R$-module of rank $\binom{n+m-1}{m-1}$.	
\end{proof}

In the following, we want to show that over suitable open subsets $U\subseteq X$ the $K$-algebra homomorphism from Lemma \ref{modulealgebra} is an isomorphism and that $Z_\rho$ exhibits the structure of a vector bundle there.

\begin{Lemma}
\label{modulealgebrafree}
Let	$G$ be a finite group acting faithfully via $\beta$ on the $K$-algebra $S$ through $K$-algebra automorphisms with invariant ring $R=S^G$. Suppose that the action of $G$ on $\Spec S$ is free. Let $\rho$ be a $K$-linear representation on the $K$-vector space $V$, let $G$ act naturally on $N=S \otimes_K V$, and let $M=N^\rho $ be the corresponding invariant $R$-module. Then the natural $K$-algebra homomorphism from Lemma \ref{modulealgebra} is an isomorphism. Moreover, $ \Spec ( \Sym_R(M)) \cong Z_\rho$  is a vector bundle over $ \Spec R$.
\end{Lemma}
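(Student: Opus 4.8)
The plan is to prove both statements by faithfully flat descent along the quotient morphism $q \colon Y = \Spec S \to X = \Spec R$, exploiting that in the free case $q$ is a principal $G$-bundle. Concretely, by Lemma \ref{freebasechange} the diagonal action of $G$ on $Y \times {\mathbb A}^m = \Spec(S \otimes_K K[V])$ is again free, and by Remark \ref{freeprincipal} both $q$ and the quotient morphism $Y \times {\mathbb A}^m \to Z_\rho$ are faithfully flat with $Y \times_X Y \cong G \times Y$ as $G$-schemes. By Remark \ref{flatdescent}, in this situation a quasicoherent sheaf (resp.\ quasicoherent algebra) on $Y$ equipped with a compatible $G$-action is the same as a descent datum for $q$, and therefore descends uniquely to $X$, with $q^\ast$ recovering the original object.

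First I would apply this to the two natural $G$-objects on $Y$. The free $S$-module $N = S \otimes_K V$ with its diagonal $G$-action descends to $M = N^\rho$, and the $S$-algebra $T = S \otimes_K K[V]$ with its diagonal $G$-action descends to $B = T^{\beta\times\rho} = (S\otimes_K K[V])^{\beta\times\rho}$. Pulling back along the faithfully flat $q$ then yields canonical isomorphisms $S \otimes_R M \cong N = S \otimes_K V$ and $S \otimes_R B \cong T = S\otimes_K K[V]$. In particular $S\otimes_R M$ is a free $S$-module of rank $m = \dim_K V$.

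Next I would base-change the homomorphism $\phi \colon \Sym_R(M) \to B$ of Lemma \ref{modulealgebra} along $R \to S$. Since the symmetric algebra commutes with base change, the left-hand side becomes $\Sym_S(S \otimes_R M) \cong \Sym_S(S \otimes_K V) = S \otimes_K K[V]$, while the right-hand side becomes $S \otimes_R B \cong S\otimes_K K[V]$; under these identifications $\id_S \otimes \phi$ is the canonical isomorphism $\Sym_S(S\otimes_K V)\xrightarrow{\ \sim\ } S\otimes_K K[V]$ determined (both algebras being generated in degree one) by the degree-one identity map on $N$. Hence $\id_S \otimes \phi$ is an isomorphism, and since $R \to S$ is faithfully flat, $\phi$ itself is an isomorphism, as claimed.

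Finally, for the vector bundle statement: $S$ is a finite $R$-module, so $M \subseteq S\otimes_K V \cong S^m$ is a finitely generated $R$-module, hence finitely presented over the noetherian ring $R$. As $S \otimes_R M$ is $S$-free and $R \to S$ is faithfully flat, descent of flatness shows $M$ is $R$-flat, and a finitely presented flat module is finite projective, i.e.\ locally free of rank $m$. Therefore $\Spec(\Sym_R(M))$ is a vector bundle of rank $m$ over $X = \Spec R$, and the isomorphism $\phi$ identifies it with $Z_\rho$. The main obstacle is the descent/base-change identification $S\otimes_R M \cong S\otimes_K V$ and $S\otimes_R B \cong S\otimes_K K[V]$; this is precisely the statement that the associated objects trivialize after pullback to the $G$-torsor $Y$, which is why the freeness hypothesis (via Remark \ref{freeprincipal} and Remark \ref{flatdescent}) is doing all the work. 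Everything else is formal.
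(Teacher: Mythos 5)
Your proof is correct, but it is not the argument the paper actually gives; it is instead precisely the alternative route that the paper itself only sketches in Remark \ref{freedescent}. The paper's own proof base-changes along the faithfully flat $R \to S$ and then \emph{explicitly} trivializes the $G$-action on $(K^{|G|}\tensor S)[W_1,\ldots,W_m]$ via the equivariant isomorphism $\psi$ sending $T_i \mapsto \sum_{g\in G}(W_ig)e_g$ (where $G$ acts on the source only through $K^{|G|}$), from which the invariant algebra after base change is computed directly to be $S[T_1,\ldots,T_m]$; it then only needs the elementary descent facts that ``isomorphism'' and ``locally free'' descend along faithfully flat maps (\cite[Proposition 2.7.1 (viii)]{ega4.2}, \cite[Proposition 2.5.2]{ega4.2}). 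You instead invoke the full descent equivalence --- compatible $G$-actions on quasicoherent modules/algebras over a torsor are descent data (Remark \ref{flatdescent}, \cite[Example 6.2.B]{boschneron}, \cite[Th\'eor\`eme VIII.2.1]{sga1}) --- to obtain the identifications $S\tensor_R M \cong N$ and $S\tensor_R B \cong T$ up front, and then compare the base-changed map of Lemma \ref{modulealgebra} in degree one. What your approach buys is brevity and conceptual clarity (and it makes transparent that no nonmodularity hypothesis is needed, unlike Lemma \ref{actionuniversal}); what the paper's approach buys is self-containedness: it never uses the equivalence of categories, only the two one-directional descent statements, and the explicit $\psi$ makes the trivialization concrete. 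Note also that your graded comparison in degree one implicitly uses that the descent identification $S\tensor_R B\cong T$ is graded; this is fine since the $G$-action preserves the grading, but it deserves a sentence.

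One step of your final paragraph is not justified in the stated generality and should be repaired: you assert that $S$ is a finite $R$-module and that $R$ is noetherian, but the lemma imposes no finiteness hypothesis on $S$ (the setting deliberately allows an arbitrary $K$-algebra), and module-finiteness of $S$ over $R$ is not automatic there. The fix is to run this step, too, through descent, exactly as the paper does: from $S\tensor_R M\cong S\tensor_K V\cong S^m$ and faithful flatness of $R\to S$, the properties of being of finite presentation and flat descend (\cite[Proposition 2.5.2]{ega4.2}), so $M$ is finitely presented and flat, hence finite projective of rank $m$, without any noetherian or finiteness assumption. With that substitution your argument is complete.
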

\begin{proof}	
We have $ S \tensor_R S \cong  K^{|G|} \tensor S  \cong S^{|G|}$ by \cite[Proposition 0.9]{GIT} and $ R \rightarrow S$ is faithfully flat. We look at the diagram
\[ \begin{matrix} S & \longrightarrow & S[W_1, \ldots, W_m] \\ \uparrow & & \uparrow \\  R & \longrightarrow & B^\rho \end{matrix} \]
after applying the faithfully flat base change $ R \rightarrow S$, which yields
\[ \begin{matrix}  K^{|G|} \tensor S & \longrightarrow & ( K^{|G|} \tensor S )  [W_1, \ldots, W_m] \\ \uparrow & & \uparrow \\  S & \longrightarrow &  S \tensor_R B^\rho \, . \end{matrix}  \]
The action of $G$ on $K^{|G|} $ and on the variables $W_i$ is as before, but there is no action on $S$ anymore.
	
We denote the idempotent elements of $K^{|G|} $ by $e_g$. We look at the $ K^{|G|} \tensor S$-homomorphism
\[ \psi:  ( K^{|G|} \tensor S) [T_1, \ldots, T_m] \longrightarrow (K^{|G|} \tensor S  ) [W_1, \ldots, W_m] \]
given by $T_i \mapsto \sum_{g \in G } ( W_i g) e_g$, where $G$ acts on the left-hand side only via the natural action on $K^{|G|} $.
	
For any element $F \in S[W_1, \ldots, W_m]$, the element $F e_g$ is the element $F$ on the open and closed subset $D(e_g)$ (which is isomorphic to $S[W_1, \ldots, W_m]$) of $\Spec (K^{|G|} \tensor S) [W_1, \ldots, W_m]$  corresponding to $g$ and the zero function on the other components. This mapping $\psi$ is $\N$-graded, $G$-equivariant, as for $h \in G$, we have
\[ ( \sum_{g \in G } W_i g e_g) h = \sum_{g \in G } W_i gh e_{gh} = \sum_{g \in G } W_i g e_{g} \, , \]
and it is an isomorphism since it is an isomorphism on each $D(e_g)$. This means that we can trivialize the action of $G$ and that the resulting invariant algebra is isomorphic to $ S[T_1, \ldots, T_m]$, and in particular
\[  \Sym_S  ((K^{|G|} \tensor S \tensor K[W_1, \ldots, W_m]_1 )^G ) \longrightarrow  S[T_1, \ldots, T_m] \cong ( (K^{|G|} \tensor S) [W_1, \ldots, W_m])^G   \]
is an isomorphism.
	
We now use descent properties for faithfully flat base change. By \cite[Proposition 2.7.1 (viii)]{ega4.2}, it follows that the original ring homomorphism is an isomorphism, and by \cite[Proposition 2.5.2]{ega4.2} it follows that $M$ is locally free.
\end{proof}

\begin{Remark}
\label{freedescent}
The previous result can also be deduced from faithfully flat descent for affine schemes (see \cite[Théorème VIII.2.1]{sga1}). To see this, one needs the mentioned result from GIT that $R \subseteq S$ is flat and that the compatible free group action corresponds to descent data, see \cite[Example 6.2.B]{boschneron}.
\end{Remark}

\begin{Example}
\label{galoisfree}
A special case of Lemma \ref{modulealgebrafree} is that $K \subseteq L$ is a Galois extension of fields. This means that in this case every invariant algebra is a polynomial algebra. One can also apply  Lemma \ref{modulealgebrafree} when $Y=X_L=X \times_KL$ for any $K$-scheme $X$.
\end{Example}

\begin{Corollary}
\label{modulealgebraisomorphism}
Let	$G$ be a finite group acting faithfully via $\beta$ on the $K$-algebra $S$ through $K$-algebra automorphisms with invariant ring $R=S^G$. Let $V \subseteq \Spec S$ be an invariant open subset where the action is free and let  $ U \subseteq \Spec R$ be its image. Let $\rho$ be a $K$-linear representation on the $K$-vector space $V$, let $G$ act naturally on $N=S \otimes_K V$, and let $M=N^\rho $ be the corresponding invariant $R$-module. Then the natural $K$-algebra homomorphism from Lemma \ref{modulealgebra} 
is an isomorphism over $U$. Moreover, $( \Spec (\Sym_R(M)))|_U \cong Z_\rho|_U$ is a vector bundle over $U$.
\end{Corollary}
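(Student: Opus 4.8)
The plan is to reduce the assertion over $U$ to the already-established free case of Lemma \ref{modulealgebrafree} by localizing the base. Both claims---that the homomorphism of Lemma \ref{modulealgebra} becomes an isomorphism and that $Z_\rho$ is a vector bundle---are local on $U$, so it suffices to verify them on a cover of $U$. Since $U$ is open in the affine scheme $\Spec R$, I would cover it by principal opens $D(f)$ with $f \in R$ and $D(f) \subseteq U$, and check each claim after the base change $R \to R_f$.

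First I would verify that over such a $D(f)$ the action is free. The open subset $V \subseteq Y = \Spec S$ is $G$-invariant, and the set-theoretic fibers of the quotient $q : Y \to X$ are precisely the $G$-orbits; hence if $q(y) \in q(V) = U$ then $y$ lies in the same orbit as some point of $V$, so $y \in V$ by $G$-invariance. This shows $V$ is saturated, i.e.\ $q^{-1}(U) = V$. Because $f \in R = S^G$ is invariant, we have $\Spec S_f = q^{-1}(D(f)) \subseteq q^{-1}(U) = V$ and $(S_f)^G = R_f$. The action of $G$ on $\Spec S_f$ is then free: it is the restriction of the free action on $V$ to the open $G$-invariant subscheme $\Spec S_f$, and the restriction of a closed embedding $G \times V \to V \times V$ stays a closed embedding.

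Next I would note that every object entering Lemma \ref{modulealgebra} is compatible with the flat base change $R \to R_f$: localization gives $M_f = M \tensor_R R_f$; the symmetric algebra satisfies $\Sym_R(M) \tensor_R R_f \cong \Sym_{R_f}(M_f)$; and, since $G$ is finite and localization is flat, $G$-invariants commute with it, so $B^\rho \tensor_R R_f \cong (S_f \tensor_K K[V])^{\beta \times \rho}$ (alternatively via Lemma \ref{actionuniversal}). As the homomorphism of Lemma \ref{modulealgebra} is natural in this base change, its localization at $f$ is exactly the homomorphism attached to the free action of $G$ on $S_f$ with invariant ring $R_f$. Applying Lemma \ref{modulealgebrafree} to the pair $(R_f, S_f)$ shows that this localized homomorphism is an isomorphism and that $\Spec(\Sym_{R_f}(M_f))$ is a vector bundle over $D(f)$. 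Since the $D(f)$ cover $U$, this yields the isomorphism over $U$ and the vector bundle structure on $Z_\rho|_U \cong (\Spec(\Sym_R(M)))|_U$.

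The main obstacle is the bookkeeping that guarantees freeness descends from $V$ to the affine pieces $\Spec S_f$ over $U$; this rests entirely on the saturation identity $q^{-1}(U) = V$ together with the invariance of $f$, after which the conclusion is a formal consequence of Lemma \ref{modulealgebrafree} and the compatibility of $\Sym$ and of $G$-invariants with flat localization. Alternatively, one can bypass the reduction to principal opens and rerun the proof of Lemma \ref{modulealgebrafree} directly over $U$, using the principal fiber bundle isomorphism $G \times V \cong V \times_U V$ from Remark \ref{freeprincipal} in place of the global splitting $S \tensor_R S \cong K^{|G|} \tensor S$ employed there.
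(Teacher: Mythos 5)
Your proof is correct and follows essentially the same route as the paper: the paper's own proof also covers $U$ by invariant principal opens, notes that the invariant module, invariant algebra, symmetric algebra and the homomorphism of Lemma \ref{modulealgebra} all commute with localization, and then invokes Lemma \ref{modulealgebrafree}. Your explicit verification of the saturation identity $q^{-1}(U)=V$ and of freeness on $\Spec S_f$ merely spells out details the paper leaves implicit, so nothing further is needed.
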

\begin{proof}
Let $\Spec S_f =D(f) \subseteq \Spec S $ be an invariant open affine subset where the action is free. The construction of the invariant module, the invariant algebra, the symmetric algebra and the homomorphism in Lemma \ref{modulealgebra} commute with localization, and whether a given scheme morphism is an isomorphism can be checked locally. Hence, the result follows from Lemma \ref{modulealgebrafree}.	
\end{proof}

\begin{Corollary}
\label{modulealgebralinearisomorphism}
Let	$G$ be a finite group acting linearly and faithfully via $\beta$ on  $K^d$ and let $\rho$ be a linear representation of $G$. Let $R=K[X_1, \ldots, X_d]^\beta$ be the invariant ring and let $M $ be the invariant $R$-module corresponding to $\rho$. Let $V \subseteq {\mathbb A}^d$ be the nonempty open locus where the action $\beta$ is free and let $U \subseteq X$ be its image. Then, the natural $K$-algebra homomorphism from Lemma \ref{modulealgebra} is an isomorphism over $U$. Moreover, $ ( \Spec (\Sym_R (M)))|_U \cong Z_\rho|_U$ is a vector bundle over $U$.
\end{Corollary}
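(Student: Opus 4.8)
The plan is to reduce the statement to the already-established Corollary \ref{modulealgebraisomorphism}. That corollary proves exactly the desired conclusion under the hypothesis that there exists a $G$-invariant open subset $V \subseteq \Spec S$ on which the action is free, with image $U \subseteq \Spec R$. Thus the only real work here is to verify that hypothesis in the linear setting, namely that for a faithful linear action $\beta$ of a finite group $G$ on $K^d$, the locus $V \subseteq {\mathbb A}^d$ where $\beta$ acts freely is nonempty and open, and to identify its image $U$ in $X$. Once $V$ is produced, the statement follows verbatim by specializing $S = K[X_1, \ldots, X_d]$ in Corollary \ref{modulealgebraisomorphism}.

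First I would establish that the free locus $V$ is nonempty. Since $S = K[X_1, \ldots, X_d]$ is a $K$-domain and $G$ acts faithfully by $K$-algebra automorphisms, Lemma \ref{genericfree} applies directly and yields an invariant element $0 \neq f \in S$ such that the action on the principal open subset $D(f)$ is free. This gives a nonempty invariant open subset on which $\beta$ is free; taking $V$ to be the largest such invariant open set (whose existence is noted immediately after Lemma \ref{genericfree} in the excerpt) gives the desired $V \subseteq {\mathbb A}^d$. Openness and $G$-invariance of $V$ are then part of that construction. Since $Y = {\mathbb A}^d$ is irreducible and $V$ is a nonempty open subset, $V$ is automatically dense, so $U \subseteq X$ is a nonempty open subset as well (the quotient morphism $q : Y \to X$ is finite, hence closed, and one checks that $U$ is the complement of the image of the closed complement of $V$).

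With $V$ in hand, I would then simply invoke Corollary \ref{modulealgebraisomorphism} with this choice of $V$ and its image $U$: the natural $K$-algebra homomorphism $\Sym_R(M) \to (S \tensor_K K[V])^{\beta \times \rho}$ of Lemma \ref{modulealgebra} is an isomorphism over $U$, and $(\Spec(\Sym_R(M)))|_U \cong Z_\rho|_U$ is a vector bundle over $U$. This is precisely the assertion of the present corollary, now with the extra information (special to the linear case) that $U$ is nonempty.

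I expect the statement to be essentially immediate, so there is no serious obstacle; the one point requiring care is the nonemptiness of the free locus $V$, which is where the linearity and domain hypotheses genuinely enter through Lemma \ref{genericfree}. In the general setting of Corollary \ref{modulealgebraisomorphism} the free locus could a priori be empty (in which case the statement is vacuous over $U = \emptyset$), whereas here the polynomial-ring hypothesis guarantees a nonempty, dense free locus, which is exactly the added content. I would therefore emphasize this reduction explicitly rather than reproving the descent argument of Lemma \ref{modulealgebrafree}.
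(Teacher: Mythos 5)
Your proposal is correct and matches the paper's own proof exactly: the paper also deduces the corollary as a special case of Corollary \ref{modulealgebraisomorphism}, with nonemptiness of the free locus supplied by Lemma \ref{genericfree}. Your additional remarks on the maximality of $V$ and the openness of its image $U$ are fine elaborations but not needed beyond what those two references provide.
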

\begin{proof}
This is a special case of Corollary \ref{modulealgebraisomorphism}. The nonemptyness follows from Lemma \ref{genericfree}. 
\end{proof}

\begin{Corollary}
\label{tangentbundlelinearisomorphism}
Let	$G$ be a finite group acting linearly and faithfully via $\beta$ on ${\mathbb A}^d$ with invariant ring
$R=K[X_1, \ldots, X_d]^\beta$. Let $V \subseteq {\mathbb A}^d$ be the nonempty open locus where the action $\beta$ is free and let $U \subseteq X$ be its image. Then, $ ( \Spec (\Sym_R (\Omega_{ R|K})))|_U \cong ((  {\mathbb A}^d \times {\mathbb A}^d)/ \beta \times \beta ))|_U$ is the tangent bundle over $U$.
\end{Corollary}
\begin{proof}
The linear action $\beta$ of the finite group $G$ on ${\mathbb A}^d$ induces the natural action of $G$ on the module of K\"ahler differentials $\Omega_{K[X_1, \ldots, X_d] |K}$ and on the tangent bundle \[ T_{ { \mathbb A}^d} = {\mathbb A}^d \times {\mathbb A}^d =\Spec ( \Sym \Omega_{K[X_1, \ldots, X_d] |K} )\,  \]
which is given by $\beta \times \beta$. The corresponding invariant module is $\Omega_{ R|K} $. The spectrum of its symmetric algebra is related to the quotient 
${\mathbb A}^d \times {\mathbb A}^d/ \beta \times \beta $ as described in Corollary \ref{modulealgebralinearisomorphism}.
\end{proof}

${\mathbb A}^d \times {\mathbb A}^d/ \beta \times \beta $ is a natural candidate for the schematic closure of the tangent bundle of the regular locus of ${\mathbb A}^d/ \beta $. However, in the case of a reflection group, this differs from the (trivial) tangent bundle of the quotient.

For an $R$-module $M$, the Rees algebra ${\mathcal R} (M)$ is defined by
\[ {\mathcal R} (M) =\Sym (M)/(R-\text{torsion}) \, . \]
For an ideal $I$, this matches the usual definition of a Rees algebra, at least when $R$ is a domain, see \cite{vasconcelosarithmetic}.

\begin{Lemma}
\label{modulereesalgebra}
Let	$G$ be a finite group acting faithfully via $K$-algebra automorphisms on the $K$-domain	$S$ with invariant ring $R=S^G$. Let $\rho$ be a $K$-linear representation on the $K$-vector space $V$, let $G$ act naturally on $N=S \otimes_K V$, and let $M=N^\rho $ be the corresponding invariant $R$-module. Then the natural $K$-algebra homomorphism from Lemma \ref{modulealgebra} factors through the Rees algebra ${\mathcal R} (M)$.
\end{Lemma}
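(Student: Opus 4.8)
The plan is to show that the canonical $R$-algebra homomorphism $\varphi: \Sym_R(M) \to (S \tensor_K K[V])^{\beta \times \rho}$ from Lemma \ref{modulealgebra} factors through the Rees algebra ${\mathcal R}(M) = \Sym_R(M)/(R\text{-torsion})$. By the universal property of the quotient, it suffices to prove that the $R$-torsion submodule of $\Sym_R(M)$ lies in the kernel of $\varphi$, i.e.\ that $\varphi$ kills every $R$-torsion element. Equivalently, I must show that the target $(S \tensor_K K[V])^{\beta \times \rho}$ is $R$-torsion-free, because any algebra homomorphism into a torsion-free module automatically annihilates all torsion in the source.

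So the key step is to verify that $(S \tensor_K K[V])^{\beta \times \rho}$ is torsion-free as an $R$-module. Here the hypothesis that $S$ is a $K$-\emph{domain} does the work. First I observe that $S \tensor_K K[V] = S[W_1, \ldots, W_m]$ is a polynomial ring over the domain $S$, hence itself an integral domain. The invariant algebra $B^\rho = (S[W_1, \ldots, W_m])^{\beta \times \rho}$ is a subring of this domain, and is therefore itself a domain. Now $R = S^G$ is a subring of $B^\rho$ (it sits in degree $0$), and $R$ is a domain as well. A subring of a domain is torsion-free over any subring of itself: if $r \cdot b = 0$ for $0 \neq r \in R$ and $b \in B^\rho$, then since $B^\rho$ is a domain and $r \neq 0$, we conclude $b = 0$. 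Thus $B^\rho$ is $R$-torsion-free.

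Given that $B^\rho$ is $R$-torsion-free, the factorization is immediate. Let $t \in \Sym_R(M)$ be any $R$-torsion element, so $r t = 0$ for some nonzero $r \in R$. Applying the $R$-algebra homomorphism $\varphi$ gives $r \cdot \varphi(t) = \varphi(rt) = \varphi(0) = 0$ in $B^\rho$; since $B^\rho$ is torsion-free and $r \neq 0$, this forces $\varphi(t) = 0$. Hence the $R$-torsion submodule of $\Sym_R(M)$ is contained in $\ker \varphi$, and $\varphi$ descends to a homomorphism $\overline{\varphi}: {\mathcal R}(M) = \Sym_R(M)/(R\text{-torsion}) \to B^\rho$, which is the desired factorization.

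I do not anticipate a serious obstacle here; the statement is essentially a formal consequence of the target being a domain (and hence torsion-free over the subring $R$), which is why the domain hypothesis on $S$ is imposed in this lemma but not in Lemma \ref{modulealgebra}. The only point requiring mild care is confirming that $R$ genuinely embeds in $B^\rho$ as the degree-zero part and that the torsion one quotients by in the definition of ${\mathcal R}(M)$ is precisely the $R$-torsion of $\Sym_R(M)$, matching the definition ${\mathcal R}(M) = \Sym(M)/(R\text{-torsion})$ recalled just above the statement; both are clear from the construction.
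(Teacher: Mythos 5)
Your proof is correct and is essentially the paper's own argument: the paper simply notes that $S[V]$, and hence $S[V]^{G}$, is a domain, which is exactly your observation that the target is $R$-torsion-free so that $\varphi$ kills the torsion of $\Sym_R(M)$. You have merely written out the routine details that the paper leaves implicit.
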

\begin{proof}
This is clear, as $S[V]$ and hence $S[V]^G$ is a domain.
\end{proof}

\begin{Example}
We look at Example \ref{toric11.1}. The degree one component of the invariant algebra and hence the invariant module is $M \cong \langle XW,YW \rangle $. This module is isomorphic to the ideal $I=(A,C)$ with the $R$-linear representation
\[ R^2 \longrightarrow  R^2 \longrightarrow I \longrightarrow 0  \]
where the matrix is $ \begin{pmatrix}  C& -A \\ B & - C  \end{pmatrix} \, .$ The symmetric algebra of this module is therefore just
\[  R[U,V]/(  CU-AV, BU-CV) \, .\]
The natural ring homomorphism from Lemma \ref{modulealgebra} is
\[  R[U,V]/(  CU-AV, BU-CV)   \longrightarrow \]
\[ K[A,B,C,D,E,F]/(AB-C^2, AD- E^2, BD-F^2, CD-EF,AF-CE,BE-CF)    \]
with $U \mapsto E$ and $ V \mapsto F$. The fiber over $V(A,B,C)$ of the symmetric algebra is $\Spec K[U,V]$ (two-dimensional), and the fiber of the quotient scheme, which is $ \Spec K[D,E,F]/(E^2,F^2 ,EF)$ (one-dimensional), is contracted under the corresponding scheme morphism to the origin.

In between the two algebras we have, by Lemma \ref{modulereesalgebra}, the Rees algebra of the ideal $R + (A,C)+ (A,C)^2 + \ldots $. In the describing ideal of the symmetric algebra we have the relation
\[ BU(CU-AV) + AV (BU-CV) =C( BU^2-AV^2 ) \, , \]
but the factors on the right are not in the ideal. The element $BU^2-AV^2$ equals in the Rees-Algebra the element $BA^2-A(AB)$, so this goes to $0$. It is also zero in the invariant algebra.
\end{Example}

\section{Fibers}
\label{fiberssection}

We want to describe the fibers of the quotient schemes $Z_\rho \rightarrow X$. For $G$ acting on $Y$ and a point $Q\in Y$, the stabilizer of $Q$ is the subgroup of group elements mapping $Q$ to itself (no requirements on the residue class field).

\begin{Theorem}
\label{fiberdescription}
Let $G$ be a finite nonmodular group acting faithfully via $\beta$ on $Y=  \Spec S$ by $K$-algebra automorphisms with quotient scheme $X=\Spec R$. Let $P \in X$ be a closed point with maximal ideal ${\mathfrak m}_P$, let $Q \in Y$ be a preimage and let $S/ {\mathfrak m}_P S \cong S_1 \times \cdots \times S_r $ be the decomposition of the fiber ring $S/ {\mathfrak m}_P S $ into local rings, with $Q$ corresponding to $S_1$. Let $H$ denote the stabilizer group for the point $Q$. Let $\rho $ be a linear action of $G$ on ${\mathbb A}^m$, let $Z_\rho= ( Y \times {\mathbb A}^m)/ \beta \times \rho$ be the quotient scheme, and let $\tilde{\rho}$ be the induced action of $H$ on ${\mathbb A}^m$. Then, the fiber ring of $Z_\rho$ over $P$ is $ (S_1 [W_1, \ldots, W_m])^{\tilde{\rho} }$.
\end{Theorem}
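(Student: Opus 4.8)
The plan is to reduce the computation of the fiber ring to an invariant computation over the fiber of $Y \to X$, and then to evaluate that invariant ring by exploiting that $G$ permutes the local factors $S_1, \ldots, S_r$ transitively, so that only the stabilizer $H$ survives.

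First I would identify the fiber ring. The fiber of $Z_\rho \to X$ over $P$ is $B^\rho \otimes_R \kappa(P)$, where $\kappa(P) = R/{\mathfrak m}_P$ and $B^\rho = (S[W_1, \ldots, W_m])^{\beta \times \rho}$. Applying Lemma \ref{actionuniversal} to the action of $G$ on $C := S[W_1, \ldots, W_m]$ (which has invariant ring $C^G = B^\rho$) and to the $B^\rho$-algebra $A := \kappa(P) \otimes_R B^\rho$ — exactly as in the base-change diagram following that lemma — gives
\[ (A \otimes_{B^\rho} C)^G \cong A . \]
Since $A \otimes_{B^\rho} C = \kappa(P) \otimes_R S[W_1, \ldots, W_m] = (S/{\mathfrak m}_P S)[W_1, \ldots, W_m]$, this shows that the fiber ring over $P$ equals $((S/{\mathfrak m}_P S)[W_1, \ldots, W_m])^G$, with $G$ acting diagonally (via $\beta$ on the fiber ring $S/{\mathfrak m}_P S$ and via $\rho$ on the $W_i$). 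This is the only step where the nonmodular hypothesis enters, through the Rayleigh map of Lemma \ref{actionuniversal}.

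Next I would isolate the orbit combinatorics. Writing $A_0 := S/{\mathfrak m}_P S \cong S_1 \times \cdots \times S_r$, the idempotents $e_1, \ldots, e_r$ cutting out the factors are permuted by $G$, and since the closed points of the fiber over $P$ form a single $G$-orbit (the points of $\Spec R$ being the $G$-orbits of $\Spec S$, and $G$ acting transitively on the primes of $S$ over a fixed prime of $R=S^G$), this permutation action is transitive; by definition the stabilizer of $e_1$, the factor of $Q$, is $H$. Decomposing $A_0[W] = \prod_{i=1}^r S_i[W_1, \ldots, W_m]$, the subgroup $H$ preserves the first factor and acts on it through the automorphisms of $S_1$ induced by $\beta$ together with $\rho|_H = \tilde\rho$ on the $W_i$. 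I would then prove the following general fact and apply it with $C_i := S_i[W_1, \ldots, W_m]$: if $G$ acts on $\prod_{i=1}^r C_i$ permuting the factors transitively with $\Stab(C_1) = H$, then the projection $\pi_1$ restricts to a $K$-algebra isomorphism $(\prod_i C_i)^G \xrightarrow{\ \sim\ } C_1^H$. For invariant $a$, one has $\pi_1(a) \in C_1^H$ because $H$ fixes $e_1$; injectivity is immediate, since for any $i$ one picks $g \in G$ carrying the first factor to the $i$-th, whence $\pi_i(a) = \pi_i(g \cdot a) = g_*(\pi_1(a))$ recovers every component from $\pi_1(a)$; surjectivity follows from the orbit-sum $a = \sum_{gH \in G/H} g \cdot \iota(s)$, where $\iota(s)$ places $s \in C_1^H$ in the first factor and $0$ elsewhere — this is well defined on cosets because $H$ fixes $\iota(s)$, it is $G$-invariant, and $\pi_1(a) = s$. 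Combined with the first step, this yields the fiber ring $(S_1[W_1, \ldots, W_m])^{\tilde\rho}$.

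The main obstacle I anticipate is bookkeeping rather than conceptual: one must track the \emph{simultaneous} permutation of the $S_i$ and the $\rho$-twist on the $W_i$ carefully, so that the residual $H$-action on the distinguished factor $S_1[W_1,\ldots,W_m]$ is genuinely the product of the $\beta$-action on $S_1$ with $\tilde\rho$, and one must be sure the transitivity identifies the stabilizer of the factor $S_1$ with exactly $H = \Stab(Q)$. The averaging in the surjectivity step uses no division by $|G|$, so it is characteristic-free; the nonmodular assumption is genuinely needed only for the base-change identification in the first step.
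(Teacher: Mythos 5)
Your proposal is correct and takes essentially the same route as the paper: the first step, identifying the fiber ring with $((S/{\mathfrak m}_P S)[W_1,\ldots,W_m])^G$ via Lemma \ref{actionuniversal} applied to the base change along $R \rightarrow \kappa(P)$, is exactly the paper's argument, and your general fact about a group transitively permuting the factors of a product is precisely the paper's Lemma \ref{actiondisjointunion}, with your coset-indexed orbit sum playing the role of the coset representatives $g_i$ used there to construct the inverse morphism. The only difference is presentational: you prove that combinatorial step in ring language via the projection $\pi_1$ (correctly observing that the surjectivity argument is characteristic-free and that nonmodularity enters only through the base-change step), whereas the paper states and proves it scheme-theoretically.
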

\begin{proof}
The stabilizers are for all points above $P$ conjugated subgroups, and the rings $S_i$ are isomorphic rings, as the group acts transitively on the orbits. The fiber ring over $P$ is $S \tensor_R \kappa (P) \cong S \tensor_R R/ {\mathfrak m}_P \cong S/{\mathfrak m}_P S$. The tensor products of the diagram
\[   \begin{matrix} S &\longrightarrow &  S[W_1, \ldots, W_m] \\ \uparrow & & \uparrow \\ R& \longrightarrow &  (S[W_1, \ldots, W_m]  )^\rho        \end{matrix} \]
along $R \rightarrow \kappa (P)$ yields
\[   \begin{matrix} S/{\mathfrak m}_PS  &\longrightarrow & ( S/{\mathfrak m}_PS ) [W_1, \ldots, W_m] \\ \uparrow & & \uparrow \\  \kappa (P)    & \longrightarrow & \kappa(P) \tensor_R (S[W_1, \ldots, W_m]  )^\rho  \, .      \end{matrix} \]
Since the quotient of a finite nonmodular group is compatible with base changes on $X$ by Lemma \ref{actionuniversal}, we obtain
\[  \kappa(P) \tensor_R (S[W_1, \ldots, W_m]  )^\rho  \cong ( ( S/{\mathfrak m}_PS ) [W_1, \ldots, W_m]   )/ G \, .    \]
On the right, we have the action of $ G $ on
$ ( S_1 \times \cdots \times S_r) [W_1, \ldots, W_m] $, where the elements of $ G $ act transitively on the points above $P$. Therefore, the result follows from Lemma \ref{actiondisjointunion}.
\end{proof}

\begin{Lemma}
\label{actiondisjointunion}
Let $T$ and $W$ be schemes, and let $Y=T_1 \uplus \ldots \uplus T_r$ be the disjoint union of $r$ copies $T_i \cong T$. Let $G$ be a finite group acting on $Y$ and on $W$, and suppose that $G$ acts transitively on the set $\{ T_1, \ldots, T_r \}$. Let $H_1\subseteq G$ be the subgroup of elements $h$ with $h(T_1)=(T_1)$. Then, there is a natural isomorphism $(T_1 \times W)/H_1 \cong (Y \times W)/G$.	
\end{Lemma}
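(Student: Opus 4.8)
The plan is to reduce everything to the affine setting and to identify both quotients with spectra of invariant rings. A finite disjoint union of affine schemes is affine, so writing $T = \Spec A$ and $W = \Spec C$ we have $Y \times W = \coprod_{i=1}^r (T_i \times W)$ and hence $\mathcal{O}(Y \times W) = \prod_{i=1}^r \mathcal{O}(T_i \times W)$. The group $G$ acts on $\mathcal{O}(Y \times W)$ by transporting functions along its (transitive) permutation action on $\{T_1, \ldots, T_r\}$: writing an element as a tuple $f = (f_i)$ with $f_i = f|_{T_i \times W}$, and following the paper's right-action convention $fg = f \circ g$, one has $(fg)|_{T_i \times W} = f_{g \cdot i} \circ g$, where $g \cdot i$ is the index with $g(T_i) = T_{g\cdot i}$ and $g \colon T_i \times W \to T_{g\cdot i} \times W$ is the restricted automorphism. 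Both quotients in the statement are spectra of invariant rings, so it suffices to construct a natural $K$-algebra isomorphism $(\mathcal{O}(Y\times W))^G \cong (\mathcal{O}(T_1 \times W))^{H_1}$.

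The candidate map is restriction to the first component, $\Phi \colon (\mathcal{O}(Y \times W))^G \to \mathcal{O}(T_1 \times W)$, $f \mapsto f_1$, which is visibly a ring homomorphism. Reading the invariance condition $f_{g\cdot i} \circ g = f_i$ on the component $i=1$ for $g \in H_1$ (so that $g \cdot 1 = 1$) gives $f_1 g = f_1$, so $\Phi$ lands in $(\mathcal{O}(T_1 \times W))^{H_1}$. Injectivity is immediate from transitivity: choosing, for each $i$, some $g$ with $g(T_1) = T_i$, the invariance relation yields $f_i = f_1 \circ (g^{-1}|_{T_i\times W})$, so each $f_i$ is determined by $f_1$ and $f_1 = 0$ forces $f = 0$. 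For surjectivity I would build the inverse directly: given an $H_1$-invariant $\varphi$ on $T_1 \times W$, choose for each $i$ an element $g_i \in G$ with $g_i(T_1) = T_i$ (taking $g_1$ to be the identity) and set $f_i := \varphi \circ (g_i^{-1}|_{T_i \times W})$, so that $f_1 = \varphi$.

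The two points requiring care — and the main obstacle — are the well-definedness of this extension and the verification that the resulting tuple $f = (f_i)$ is genuinely $G$-invariant. For well-definedness, if $g_i'(T_1) = T_i$ as well, then $h := g_i^{-1} g_i'$ lies in $H_1$, and since $g_i'^{-1} = h^{-1} g_i^{-1}$ the $H_1$-invariance $\varphi h^{-1} = \varphi$ gives $\varphi \circ g_i'^{-1} = \varphi \circ g_i^{-1}$ on $T_i \times W$; this is exactly where the hypothesis that $H_1$ is precisely the stabilizer of $T_1$ enters. For $G$-invariance one checks, for arbitrary $h \in G$, the identity $f_{h \cdot i} \circ h = f_i$ on $T_i \times W$: since $h g_i$ maps $T_1$ to $T_{h\cdot i}$, it is an admissible representative for the index $h \cdot i$, whence by the well-definedness just established $f_{h\cdot i} = \varphi \circ (h g_i)^{-1} = \varphi \circ g_i^{-1} \circ h^{-1}$, and composing with $h$ returns $f_i$. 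These verifications are routine once the bookkeeping of the left--right convention is fixed, and the resulting bijective ring homomorphism $\Phi$ is natural because it is defined by canonical restriction. Conceptually, the statement is the scheme-theoretic shadow of the fact that $Y$ is the induced $G$-scheme $\mathrm{Ind}_{H_1}^G T_1$, so that taking the $G$-quotient of $Y \times W$ amounts to taking the $H_1$-quotient of $T_1 \times W$; the explicit argument above is simply the concrete incarnation of this induction.
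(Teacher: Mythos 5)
Your argument is correct: the bookkeeping with the right-action convention $(fg)=f\circ g$, the well-definedness of the extension via $g_i^{-1}g_i'\in H_1$, and the $G$-invariance check using $hg_i$ as an admissible representative for the index $h\cdot i$ all go through, and the map you invert is indeed the comorphism of the natural morphism induced by the inclusion $T_1\times W\hookrightarrow Y\times W$. The route differs from the paper's in level rather than in substance. The paper stays entirely at the level of scheme morphisms: it gets $(T_1\times W)/H_1\rightarrow (Y\times W)/G$ from the $H_1$-$G$-equivariant inclusion, and the inverse from the morphism $\biguplus_i (T_i\times W)\stackrel{\biguplus g_i}{\longrightarrow} T_1\times W\stackrel{\pi}{\longrightarrow} (T_1\times W)/H_1$ (with representatives $g_i(T_i)=T_1$, the inverses of yours — immaterial), whose $G$-invariance rests on $g_j\tilde g g_i^{-1}\in H_1$; this is exactly your coset computation, dualized. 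You instead pass to rings of functions and prove that restriction to the first component is a bijection on invariants. What your version buys: the bijectivity is verified explicitly, whereas the paper's closing line that the two morphisms are inverse to each other is left to the reader. What the paper's version buys, and the one caveat against your write-up: the morphism-level argument is valid verbatim whenever the quotients exist, while you assume $T$ and $W$ affine at the outset. The lemma is stated for arbitrary schemes, and your opening ``reduce everything to the affine setting'' is a restriction, not a reduction — you never argue that the general case follows from the affine one. This costs nothing for the paper's application (in Theorem \ref{fiberdescription} one has $T=\Spec S_1$ with $S_1$ Artinian local and $W={\mathbb A}^m$), but to recover the stated generality you should either run your identification over a $G$-stable affine cover or phrase the proof with morphisms and the universal property of quotients, as the paper does, noting that for non-affine $Y\times W$ the existence of the quotient is itself part of the hypotheses.
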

\begin{proof}
The natural inclusion $T_1 \times W \rightarrow Y \times W $ is $H_1-G$-equivariant and induces a morphism
\[  (T_1 \times W)/H_1 \longrightarrow (Y \times W)/G \, . \]
For each $i$, we fix a $g_i \in G$ with $g_i (T_i) =T_1$, so this is a representing system for $G/H$. We look at the morphism
\[  (Y \times W)  = \biguplus_{i} (  T_i \times W) \stackrel{\biguplus g_i}{ \longrightarrow } T_1 \times W  \stackrel{\pi}{ \longrightarrow} (T_1 \times W)/H_1 \, .  \]
This morphism is $G$-invariant: for $\tilde{g} \in G$ and some $T_i$ let $\tilde{g} (T_i)=T_j$. Then $ g_j \tilde{g} g_i^{-1}  \in H_1$ and hence $\pi g_i$ and $\pi   g_j \tilde{g}$ coincide on each $T_i$. Hence, this induces a morphism
\[  (Y \times W)/G   \longrightarrow   (T_1 \times W)/H_1      \, . \]
These two morphisms are inverse to each other.	
\end{proof}

\begin{Remark}
\label{fiberdimension}
Theorem \ref{fiberdescription} also shows that all fibers of $Z_\rho \rightarrow X$ have the same dimension $m$. This is in contrast to the symmetric algebra of a module, where the dimension of the generic fiber is the rank of the module and where the dimension can increase over special points, as in Example \ref{modulealgebradimension}. See Section \ref{fiberflatsection}.
\end{Remark}

\begin{Remark}
For a linear action $\beta$, the fiber ring over the vertex point $[0] \in X={\mathbb A}^d/\beta$  is $ K[X_1, \ldots, X_d]/ R_+ K[X_1, \ldots, X_d]  $, which is also called the \emph{ring of coinvariants} (and $R_+ K[X_1, \ldots, X_d] $ is called the \emph{Hilbert ideal} or \emph{Hilbert ideal of the null cone}). The only point above $[0]$ is $0$, and the stabilizer group is $G$ itself. For another linear representation $\rho$, the fiber ring is hence
\[  ((  K[X_1, \ldots, X_d]/ R_+ K[X_1, \ldots, X_d])[W_1, \ldots, W_m]  )^\rho \, ,\]
in accordance with Theorem \ref{fiberdescription}. For an interpretation of the length of the ring of coinvariants as a Hilbert-Kunz multiplicity, see \cite{brennermondalsteinberg}.
\end{Remark}

\begin{Example}
For $\Z/(k$) acting on $K[X]$, as in Lemma \ref{cyclicone}, the fiber ring of $ {\mathbb A}^1 \rightarrow {\mathbb A}^1 $ over the origin is $K[X]/(X^k)$, and the fiber ring of $Z_\ell$ for the representation $\rho_\ell$ is the $K$-algebra (see also Example \ref{artinian})
\[  ( K[X]/(X^k)  [W])^{ \beta \times \rho} =  K[ X^iW^j | \, i + \ell j \in \Z k ]/(X^k)  \, . \]
\end{Example}

\begin{Example}
\label{cyclicone5fiber}
We determine the fiber rings over the origin for $B_2$ and $B_3$ in Example \ref{cyclicone5}. We get 
\[ B_2 \tensor_{K[A]} K  = K[B,C,D]/( C^2,CD^2, D^3-BC)  \]
and
\[ B_3 \tensor_{K[A]} K  = K[B,E,F]/( FE^2, E^3, F^2-BE) \, .  \]
These are nonisomorphic $K$-algebras: if we go modulo the third power of the maximal ideal $\mathfrak m$, we get $    K[B,C,D]/( {\mathfrak m}^3 + ( C^2,BC)) $ with $K$-dimension $8$ in the first case and  $K[B,E,F]/( { \mathfrak m}^3 + ( F^2-BE))$ with dimension $9$ in the second case.
\end{Example}

\begin{Example}
\label{kleinactionfibers}
We want to analyze the two cases of Example \ref{kleinaction} along Theorem \ref{fiberdescription}. For the point $(a,b)=(0,0)$, the fiber ring is $K[X,Y]/(X^2,Y^2)$ with no further decomposition, and the stabilizer group is  $\Z/(2 ) \times \Z/(2 )$. For $(a,b)=(0,b)$ with $b \neq 0$, the preimage consists of $(0, \pm \sqrt{b})$, the fiber ring has the decomposition
\[  K[X,Y]/( X^2, Y^2-b) \cong K[X]/(X^2)   \times K[X]/(X^2)     \]
and the stabilizer group $H$ is $\Z/(2 )$ (the first component of $G=\Z/(2 ) \times \Z/(2 )$). For $(a,b)=(a,0)$ with $a \neq 0$, we have a similar behavior, and the stabilizer group is now the second component.
	
In the first case, where $G$ acts via the first component and then through negation, over $(0,b)$, the induced action is negation, hence, the fiber ring is $( K[X]/(X^2)[W])^H = K[W^2,XW]/((XW)^2)$, over $(a,0)$, the induced action is trivial, and hence, the fiber ring is $( K[Y]/(Y^2)[W])^H =K[W]$.
	
In the second case, the invariant algebra has the form $K[A,B] \subseteq K[A,B,C,D]/(D^2-ABC)$, the fiber ring over every point $ (a,b) \in V(AB) $ is just $K[C,D]/(D^2)$. Above $(0,b)$, the induced action of the stabilizer group on $ K[X]/(X^2) [W]$ is negation, the fiber ring is therefore $K[W^2,XW]/((XW)^2)$, and the same holds over $(a,0)$.
\end{Example}

\begin{Remark}
\label{notreducednoreconstruction}
The fiber ring over the origin $(0,0)$ in both cases of Example	\ref{kleinactionfibers} is $K[C,D]/(D^2)$. This shows that one cannot reconstruct the representation $\rho$ from the fiber over the origin of $Z_\rho$ alone, see Theorem \ref{pullback} for our main reconstruction result.
\end{Remark}

\begin{example}
We analyze Example \ref{symmetricdeterminant3} along Theorem \ref{fiberdescription}. For a point $Q=(x,y,z)$ with three different entries, the decomposition in Theorem \ref{fiberdescription} consists in $6$ reduced points, the stabilizer group is trivial and the fiber of $Z_\rho$ above the image point $P$ is a reduced affine line. For a point with two different entries, say $Q=(0,0,1)$, there are three points above the image point $P=(1,0,0)$. The fiber ring is
\[  K[X,Y,Z]/(X+Y+Z-1,  XY+XZ+YZ,XYZ) \, ,\]
the ring in the decomposition corresponding to $Q$ is $K[X]/(X^2)$ (via $Y \mapsto -X$, $Z \mapsto 1$), the stabilizer group is $S_2$, with the induced action given by $X \mapsto-X$. The fiber of $Z_\rho$ above $P$ is given by $ ( K[X](X^2)[W])^{\tilde{\rho} }$, which is $K[XW,W^2]/((XW)^2)$, in accordance with Lemma \ref{symmetricdeterminant}. For a point with only one entry, say $Q=(0,0,0)$, there is no other point above the image point $P=(0,0,0)$. The fiber ring is $  K[X,Y,Z]/(X+Y+Z,  XY+XZ+YZ,XYZ)$, the stabilizer group is $S_3$. The fiber of $Z_\rho$ above $P$ is described by  \[ ( K[X,Y,Z]/(X+Y+Z, XY+XZ+YZ,XYZ)[W])^{ \rho } \, , \]
which is $K[ \triangle W,W^2]/(( \triangle W)^2)$.
\end{example}

We now describe the fibers of a quotient scheme $Z_\rho$ over $X$ as a topological space (the geometric fiber). It turns out that they are homeomorphic to quotients of affine spaces modulo linear actions.

\begin{Lemma}
\label{fiberartiniangeometric}
Let a finite group $G$ act faithfully via $\beta$ by $K$-algebra auto\-morphisms on a local Artinian $K$-algebra $S$ with residue class field $K$ and let $\rho$ be a linear representation of $G$. Then
\[ (S [W_1, \ldots, W_m])^{\beta \times \rho} \longrightarrow K[W_1, \ldots, W_m]^\rho  \]
is the reduction. The corresponding map of spectra is a homeomorphism.
\end{Lemma}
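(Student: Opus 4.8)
The plan is to identify the displayed homomorphism with the reduction map of $B := (S[W_1,\ldots,W_m])^{\beta\times\rho}$, and then to invoke the standard fact that a surjection whose kernel is the nilradical induces a homeomorphism of spectra. First I would record the basic structure: since $S$ is local Artinian with maximal ideal $\mathfrak{m}$ and residue field $K$, its nilradical is $\mathfrak{m}$ and $S/\mathfrak{m}\cong K$; correspondingly the nilradical of the polynomial ring $S[W_1,\ldots,W_m]$ is $\mathfrak{m}[W_1,\ldots,W_m]$ and its reduction is $K[W_1,\ldots,W_m]$. Because $G$ acts by $K$-algebra automorphisms, it permutes the maximal ideals and hence fixes the unique one $\mathfrak{m}$; thus the reduction $S[W_1,\ldots,W_m]\to K[W_1,\ldots,W_m]$ is $G$-equivariant for the $\beta\times\rho$-action on the source and the $\rho$-action (trivial on the coefficient $K$) on the target. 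Restricting to invariants yields exactly the displayed map $\phi\colon B \to K[W_1,\ldots,W_m]^\rho$.

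The main point is the surjectivity of $\phi$, and the key observation is that it comes for free from a $G$-equivariant splitting rather than from any averaging operator (so that no nonmodularity hypothesis is needed). The structure map $K\hookrightarrow S$ and the reduction $S\twoheadrightarrow S/\mathfrak{m}=K$ compose to the identity and are both $G$-equivariant, with $G$ acting trivially on $K$; hence $S=K\oplus\mathfrak{m}$ as $G$-modules. Tensoring over $K$ with $K[W_1,\ldots,W_m]$ and using that taking invariants commutes with finite direct sums gives
\[ B \,=\, K[W_1,\ldots,W_m]^\rho \ \oplus\ (\mathfrak{m}\otimes_K K[W_1,\ldots,W_m])^{\beta\times\rho}, \]
and under this decomposition $\phi$ is simply the projection onto the first summand. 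In particular $\phi$ is surjective, and its kernel is $(\mathfrak{m}\otimes_K K[W_1,\ldots,W_m])^{\beta\times\rho}=B\cap\mathfrak{m}[W_1,\ldots,W_m]$.

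To conclude that $\phi$ is the reduction, I would note that every element of its kernel lies in the nilradical $\mathfrak{m}[W_1,\ldots,W_m]$ of $S[W_1,\ldots,W_m]$ and is therefore nilpotent, so $\ker\phi\subseteq\mathrm{Nil}(B)$; conversely $\mathrm{Nil}(B)\subseteq\ker\phi$ since $K[W_1,\ldots,W_m]^\rho$, being a subring of a polynomial ring, is reduced. Hence $\ker\phi=\mathrm{Nil}(B)$ and $\phi$ induces an isomorphism $B_{\mathrm{red}}\xrightarrow{\ \sim\ }K[W_1,\ldots,W_m]^\rho$, i.e.\ $\phi$ is the reduction. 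Finally, because $\ker\phi$ equals the nilradical, the induced closed immersion $\Spec(K[W_1,\ldots,W_m]^\rho)\to\Spec B$ is a homeomorphism (all primes of $B$ contain the nilradical), which gives the last assertion. The only genuinely substantive step is the surjectivity of $\phi$; I expect it to be painless here precisely because the equivariant retraction $K\to S\to K$ splits $S$ without any use of the group order, but it is also the step where the hypothesis that the residue field is exactly $K$ (so that $G$ acts trivially on $S_{\mathrm{red}}$) is essential.
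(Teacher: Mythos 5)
Your proof is correct, and in its final step it takes a genuinely different (and more elementary) route than the paper. Both arguments get surjectivity from the same underlying observation: the paper notes the subring relation $K[W_1,\ldots,W_m]^\rho \subseteq (S[W_1,\ldots,W_m])^{\beta\times\rho}$, which is exactly the $G$-equivariant section $K\hookrightarrow S$ of the reduction that you package into the splitting $S=K\oplus\mathfrak{m}$ of $G$-modules. The difference lies in how the surjection is identified as \emph{the} reduction. The paper argues geometrically: the target is reduced, both rings are irreducible of the same dimension, hence the closed embedding of spectra must be the reduction — an argument that tacitly invokes irreducibility of the invariant ring and dimension theory for affine rings. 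You instead read off the kernel explicitly from your decomposition, $\ker\phi = (\mathfrak{m}\otimes_K K[W_1,\ldots,W_m])^{\beta\times\rho} = B\cap \mathfrak{m}[W_1,\ldots,W_m]$, and note that this lies in the nilradical because $\mathfrak{m}$ is nilpotent in the Artinian local ring $S$, while the reverse inclusion $\mathrm{Nil}(B)\subseteq\ker\phi$ is forced by reducedness of the target. This buys you a completely ring-theoretic proof with an explicit kernel, needing neither irreducibility nor any dimension count, and it makes transparent your two correct side remarks, both consistent with the paper's hypotheses: no nonmodularity (averaging) is used anywhere, and the assumption that the residue field is exactly $K$ is what makes the splitting (equivalently, the paper's subring relation) $G$-equivariant with trivial action on the quotient.
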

\begin{proof}
The homomorphism is well-defined since $g(s)$ and $s$ have the same value in $S/{\mathfrak m}$, because of $s - \overline{s} \in {\mathfrak m}$ and since the automorphisms preserve the maximal ideal. There is also a subring relation
\[  K[W_1, \ldots, W_m]^\rho   \subseteq    (S [W_1, \ldots, W_m])^{\beta \times \rho}     \]
and so the homomorphism is surjective (and the morphism of the spectra is a closed embedding). The right-hand side above is reduced, and both rings are irreducible and of the same dimension. Hence, the closed embedding is the reduction.
\end{proof}

\begin{Theorem}
\label{geometricfiber}
Let $K$ be an algebraically closed field. Let a finite nonmodular group $G$ act via $\beta$ faithfully on $Y= \Spec S$ of finite type over $K$ by $K$-algebra automorphisms with quotient scheme $X$, let $Q \in Y $ be a closed point with image point $P \in X$. Let $\rho$ be a linear action of $G$ on $ {\mathbb A}^m $ with quotient scheme $Z_\rho= (Y \times {\mathbb A}^m)/(\beta \times \rho)$. Let $H \subseteq G$ denote  the stabilizer group of  $Q$. Then, the geometric fiber of $Z_\rho$ over $P$ is naturally homeomorphic with $ {\mathbb A}^m/ \tilde{\rho}$, where $\tilde{\rho}$ is the restriction of $\rho$ to $H$.
\end{Theorem}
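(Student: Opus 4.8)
The plan is to combine the explicit description of the fiber ring from Theorem \ref{fiberdescription} with the Artinian reduction statement of Lemma \ref{fiberartiniangeometric}. First I would reduce the geometric fiber to the ordinary scheme-theoretic fiber: since $Y$, and hence $X$, is of finite type over the algebraically closed field $K$ and $Q$ is a closed point, its image $P$ under the finite morphism $Y \to X$ is a closed point of $X$ with residue field $\kappa(P) = K$ by the Nullstellensatz. Consequently no base change to $\overline{\kappa(P)}$ is needed, and the geometric fiber of $Z_\rho$ over $P$ is simply $\Spec$ of the fiber ring $B^\rho \otimes_R \kappa(P)$.

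Next I would invoke Theorem \ref{fiberdescription}. Writing $S/\mathfrak{m}_P S \cong S_1 \times \cdots \times S_r$ with $Q$ corresponding to $S_1$ and $H$ the stabilizer of $Q$, the theorem identifies the fiber ring as $(S_1[W_1, \ldots, W_m])^{\tilde{\rho}}$, where $H$ acts on $S_1$ by the restriction of $\beta$ and on the $W_j$ by $\tilde{\rho} = \rho|_H$. Since $Y \to X$ is finite, $S/\mathfrak{m}_P S$ is a finite-dimensional $K$-algebra, so each local factor $S_1$ is Artinian; and because $Q$ is a closed point over the algebraically closed field $K$, the residue field of $S_1$ is $K$. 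Thus $S_1$ satisfies exactly the hypotheses on the base ring in Lemma \ref{fiberartiniangeometric}.

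Then I would apply Lemma \ref{fiberartiniangeometric} to the group $H$ acting on the local Artinian $K$-algebra $S_1$ together with the representation $\tilde{\rho}$. This gives that the reduction homomorphism
\[ (S_1[W_1, \ldots, W_m])^{\tilde{\rho}} \longrightarrow K[W_1, \ldots, W_m]^{\tilde{\rho}} , \]
induced by $S_1 \to S_1/\operatorname{nil}(S_1) = K$, is surjective with nilpotent kernel and hence induces a homeomorphism on spectra. Composing with the identification $\Spec K[W_1, \ldots, W_m]^{\tilde{\rho}} = \mathbb{A}^m/\tilde{\rho}$ yields the asserted homeomorphism of the geometric fiber with $\mathbb{A}^m/\tilde{\rho}$; naturality is immediate because every map used (the fiber-ring identification, the reduction, the quotient presentation) is canonical.

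The one point requiring care is the application of Lemma \ref{fiberartiniangeometric}: the restricted action of $H$ on $S_1$ may fail to be faithful, whereas the lemma is phrased for faithful actions, so I expect this to be the main obstacle. It is a harmless one, however: the proof of Lemma \ref{fiberartiniangeometric} uses only the canonical section $K[W_1,\ldots,W_m]^{\tilde{\rho}} \hookrightarrow (S_1[W_1,\ldots,W_m])^{\tilde{\rho}}$ coming from $K \hookrightarrow S_1$ (giving surjectivity), that the kernel lies in the nilpotent ideal $\operatorname{nil}(S_1)\cdot S_1[W_1,\ldots,W_m]$, and that both spectra are irreducible of dimension $m$ — none of which needs faithfulness. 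Alternatively, one passes to the faithful quotient of $H$ by the kernel of its action on $S_1$; since a surjection with nilpotent kernel is a universal homeomorphism, the conclusion is unaffected either way.
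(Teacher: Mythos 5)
Your proposal is correct and follows essentially the same route as the paper, whose proof is exactly the one-line combination of Theorem \ref{fiberdescription} with Lemma \ref{fiberartiniangeometric}. Your extra care about the possible non-faithfulness of the stabilizer action on the local factor $S_1$ is well taken -- the paper's application silently ignores this hypothesis of Lemma \ref{fiberartiniangeometric}, and your observation that the lemma's proof (surjectivity via the section $K[W_1,\ldots,W_m]^{\tilde{\rho}} \hookrightarrow (S_1[W_1,\ldots,W_m])^{\tilde{\rho}}$, plus irreducibility and equal dimension) never uses faithfulness, or alternatively passing to the faithful quotient of $H$, cleanly closes that small gap.
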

\begin{proof}
By Theorem \ref{fiberdescription}, the scheme theoretic fiber over $P$ has the form of Lemma \ref{fiberartiniangeometric}, so this lemma gives the result.	
\end{proof}

\begin{Corollary}
\label{centralfiber}
Let $K$ be an algebraically closed field. Let the finite nonmodular group act linearly and faithfully on $ {\mathbb A}^d$ with quotient scheme $X$ and vertex point $ \tilde{0} $. Let $\rho$ be a linear action on $ {\mathbb A}^m$ with quotient scheme $Z_\rho \rightarrow X$. Then, there is a natural homeomorphism between the fiber of $Z_\rho$ over $ \tilde{0} $ and $ {\mathbb A}^m /\rho$.
\end{Corollary}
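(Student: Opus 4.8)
The plan is to obtain this as an immediate specialization of Theorem \ref{geometricfiber} to the case $Y = {\mathbb A}^d$ with a linear basic action $\beta$. Since that theorem was already stated for an arbitrary finite type scheme $Y$, there is essentially nothing new to prove; the entire content lies in identifying the relevant data in the linear setting.

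First I would apply Theorem \ref{geometricfiber} with $Y={\mathbb A}^d$, taking $Q=0$ to be the origin and $P=\tilde{0}$ its image in $X$, the vertex point. The one point that needs to be observed is the computation of the stabilizer group $H$ of $Q=0$. Since the action $\beta$ is linear, every group element acts by a linear automorphism of ${\mathbb A}^d$, and every linear automorphism fixes the origin. Hence every $g \in G$ stabilizes $Q=0$, so that $H=G$. In particular the orbit of $0$ is the single point $0$, so $Q$ is the unique preimage of $\tilde{0}$ and is a closed $K$-rational point, as required for the hypotheses of Theorem \ref{geometricfiber}.

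It then follows that the restricted representation $\tilde{\rho}=\rho|_H$ coincides with $\rho$ itself, because the restriction is to the full group $H=G$. Plugging $H=G$ and $\tilde{\rho}=\rho$ into the conclusion of Theorem \ref{geometricfiber}, the geometric fiber of $Z_\rho$ over $\tilde{0}$ is naturally homeomorphic to ${\mathbb A}^m/\tilde{\rho}={\mathbb A}^m/\rho$, which is precisely the assertion.

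I do not expect any genuine obstacle here: all the real work has already been carried out in Theorem \ref{geometricfiber} (and, beneath it, in Theorem \ref{fiberdescription} and Lemma \ref{fiberartiniangeometric}), which were deliberately formulated for a general finite type $Y$ so that the linear case on ${\mathbb A}^d$ drops out as the special instance in which the basic action possesses a global fixed point. The only verification required is the stabilizer computation $H=G$, which is immediate from linearity, and the naturality of the homeomorphism is inherited directly from the naturality already established in Theorem \ref{geometricfiber}.
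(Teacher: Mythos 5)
Your proposal is correct and coincides with the paper's own proof: both deduce the corollary directly from Theorem \ref{geometricfiber} by observing that, for a linear action, every $g \in G$ fixes the origin, so the stabilizer of $Q = 0$ is all of $G$ and hence $\tilde{\rho} = \rho$. Your additional remarks on $K$-rationality and naturality are fine but not needed beyond what the theorem already supplies.
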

\begin{proof}
This is a special case of Theorem \ref{geometricfiber}, as for $0 \in {\mathbb A}^d$ the stabilizer is $G$, hence, $ \tilde{\rho}  = \rho $.
\end{proof}

\begin{Remark}
Every quotient scheme of a linear group action on $ {\mathbb A}^d $ appears as the geometric fiber over the vertex point $\tilde{0}$ of some quotient scheme, as follows from Corollary \ref{centralfiber}.
\end{Remark}

\begin{Example}
If $G$ acts freely on ${ \mathbb A}^d \setminus \{0\} $, then by Theorem \ref{geometricfiber}, all fibers of $Z_\rho$ are affine spaces ${\mathbb A}^m$ with the exception of the fiber over the vertex, which is homeomorphic to ${\mathbb A}^m/ \rho$ according to Corollary \ref{centralfiber}.
\end{Example}

\begin{Corollary}
The geometric fibers of a quotient scheme of rank one are affine lines.
\end{Corollary}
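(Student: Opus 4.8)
The final statement asserts that the geometric fibers of a quotient scheme of rank one are affine lines. Here rank one means $m=1$, so $\rho$ is a one-dimensional representation, i.e. a character $\chi: G \to K^\times$. The plan is to invoke the preceding Theorem \ref{geometricfiber}, which identifies each geometric fiber of $Z_\rho$ over a point $P$ (with preimage $Q$ having stabilizer $H \subseteq G$) with the quotient ${\mathbb A}^m/\tilde\rho$, where $\tilde\rho$ is the restriction of $\rho$ to $H$. So the entire content reduces to understanding the quotient ${\mathbb A}^1/\tilde\chi$ for a one-dimensional linear action of a finite group.

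First I would observe that a one-dimensional linear action of any finite group $H$ on ${\mathbb A}^1 = \Spec K[W]$ is given by a character $\tilde\chi: H \to K^\times$. Since $H$ is finite, the image $\tilde\chi(H)$ is a finite subgroup of $K^\times$, hence cyclic of some order $n$, say generated by a primitive $n$th root of unity $\zeta$. The action on $K[W]$ therefore factors through $\Z/(n)$ acting by $W \mapsto \zeta W$, and the invariant ring is $K[W]^{\tilde\chi} = K[W^n]$, which is again a polynomial ring in one variable. Thus ${\mathbb A}^1/\tilde\chi = \Spec K[W^n] \cong {\mathbb A}^1$, an affine line. This is exactly the one-variable toric computation already recorded in Lemma \ref{cyclicone}(1), applied in the $W$-variable.

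The only subtlety is the nonmodular hypothesis carried in Theorem \ref{geometricfiber}: the cardinality $n$ of the image of the character must be invertible in $K$ for the invariant ring to be the clean $K[W^n]$ (and for $\zeta$ to exist). Since Theorem \ref{geometricfiber} already assumes $G$ nonmodular and $K$ algebraically closed, $n$ divides $|G|$ and is a unit in $K$, and the required root of unity exists, so this causes no trouble. Putting these together, the geometric fiber over $P$ is homeomorphic to ${\mathbb A}^1/\tilde\chi = \Spec K[W^n] \cong {\mathbb A}^1$.

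I do not anticipate any serious obstacle here, as the statement is essentially a direct corollary. The one point meriting care is that $\tilde\chi$ is a restriction of the global character to a varying stabilizer $H$, so its image order $n$ depends on $P$; but the argument is uniform in $n$ and always yields an affine line, so the conclusion holds for every geometric fiber regardless of the point chosen. The proof is therefore just: apply Theorem \ref{geometricfiber}, note $m=1$ forces $\rho$ to be a character, and compute ${\mathbb A}^1/\tilde\rho \cong {\mathbb A}^1$ via $K[W]^{\tilde\rho} = K[W^n]$.
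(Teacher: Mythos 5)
Your proof is correct and follows the same route as the paper: the paper likewise reduces via Theorem \ref{geometricfiber} to the observation that a linear action of a finite group on ${\mathbb A}^1$ factors through a cyclic group, whose quotient is again an affine line. You merely make explicit the computation $K[W]^{\tilde\chi} = K[W^n]$ (where $n$ is the order of the image of the character, automatically realized inside $K^\times$), which the paper leaves implicit.
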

\begin{proof}
This follows from Theorem \ref{geometricfiber}, as the quotient of ${ \mathbb A}^1$ modulo a finite group acting linearly is, as it factors through a linear action of a cyclic group, always an affine line.
\end{proof}

\section{Ramification}
\label{ramificationsection}

We want to understand which fibers of $Z_\rho \rightarrow X$ are reduced. For the trivial bundle, which stems from the trivial action, all fibers are reduced. By Lemma \ref{modulealgebrafree}, the fibers over the points that lie in the image of the free locus of $Y$ are also reduced since there we even have the structure of a vector bundle.

\begin{Lemma}
\label{nonreducedpoint}
Let $G$ be a finite group and let $S$ be a local Artinian nonreduced $K$-algebra, $Y=\Spec S$. Let $G$ act faithfully on $S$ by $K$-algebra automorphisms with invariant ring $K$. Let $\rho$ be a nontrivial linear representation of $G$. Then, the quotient scheme $(Y \times {\mathbb A}^m)/G$ is not reduced.
\end{Lemma}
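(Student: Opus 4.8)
\emph{Reduction to a nonvanishing statement.} The claim concerns the invariant algebra $B=(S[W_1,\dots,W_m])^{\beta\times\rho}$, and I would first convert non-reducedness into a purely representation-theoretic nonvanishing. Assume, as in the intended applications, that $S$ has residue field $K$; then as a $G$-module $S=K\oplus\mathfrak{m}$, where $K$ is the $G$-fixed image of the structure map and $\mathfrak{m}$ is the maximal ideal, which is $G$-stable because $K$-algebra automorphisms preserve it. Here $\mathfrak{m}=\operatorname{nil}(S)\neq0$ since $S$ is nonreduced and local Artinian. Writing $V=\langle W_1,\dots,W_m\rangle$ and using that taking $G$-invariants is additive on direct sums,
\[ B=(S\tensor_K K[V])^{G}=K[V]^{\rho}\oplus(\mathfrak{m}\tensor_K K[V])^{G}. \]
By Lemma \ref{fiberartiniangeometric} the summand $K[V]^{\rho}$ is exactly the reduction of $B$, while $\mathfrak{m}\tensor_K K[V]=\operatorname{nil}(S)[W]$ is the nilradical of $S[W]$; hence the second summand is precisely the nilpotent part of $B$. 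Therefore $B$ is non-reduced if and only if $(\mathfrak{m}\tensor_K K[V])^{G}\neq0$, and this is what I would prove.

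\emph{Producing a nilpotent invariant.} Since $K[V]=\bigoplus_{n\ge0}\Sym^{n}V$, it suffices to find one $n$ with $(\mathfrak{m}\tensor_K\Sym^{n}V)^{G}\neq0$. Choosing a minimal nonzero $G$-submodule $U\subseteq\mathfrak{m}$, which is irreducible, this amounts to exhibiting a nonzero $G$-equivariant map $U^{\dual}\to\Sym^{n}V$ for some $n$, i.e. to showing that a constituent of $\mathfrak{m}$ occurs, up to duality, inside the symmetric algebra of $V$. The input I would invoke is that restriction of polynomial functions to a generic $G$-orbit in ${\mathbb A}^{m}$ — whose points have stabilizer exactly $\ker\rho$ — gives a surjective $G$-equivariant map $K[V]\twoheadrightarrow K[G/\ker\rho]$ onto the regular representation of $G/\ker\rho$. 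In the nonmodular case this surjection splits, so every irreducible of $G/\ker\rho$ occurs in $K[V]$; thus, provided some constituent $U$ of $\mathfrak{m}$ is trivial on $\ker\rho$, its dual $U^{\dual}$ occurs and the required nonzero invariant exists. When the relevant constituent is a character $\chi$, Lemma \ref{charactercontribution} yields the invariant directly: a $\chi$-semi-invariant $0\neq s\in\mathfrak{m}$ and a $\chi^{-1}$-semi-invariant $w\in K[V]$ give the invariant $s\tensor w$, which is nilpotent because $s\in\mathfrak{m}$.

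\emph{Main obstacle.} The heart of the matter, and the step I expect to be delicate, is precisely this matching of constituents: one must know that the nilpotent directions $\mathfrak{m}$ are \emph{seen} by $\rho$, i.e. that $\mathfrak{m}$ has an irreducible constituent on which $\ker\rho$ acts trivially, so that its dual can appear in $K[V]$ (which only involves representations factoring through $G/\ker\rho$). When $\rho$ is faithful this is automatic, since then $\ker\rho$ is trivial and any constituent of the nonzero module $\mathfrak{m}$ qualifies; and in the reflection-type applications the module $\mathfrak{m}$ is rich enough — it contains every nontrivial irreducible — that any nontrivial $\rho$ already suffices, because $G/\ker\rho$ being nontrivial supplies a nontrivial irreducible trivial on $\ker\rho$. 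The subsidiary technical points are the existence of a suitable generic orbit (needing $K$ infinite and the linear action on $V$ having generic stabilizer $\ker\rho$) and, in the modular case, the passage from ``occurs as a constituent'' to ``has nonzero $G$-invariants'', where taking invariants is only left exact; here I would instead work over $\bar{K}$, invoking Lemma \ref{actionuniversal} for the base change, and descend the exhibited nilpotent to $K$ by a Galois-averaging argument.
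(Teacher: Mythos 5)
Your opening reduction is sound and takes a genuinely different route from the paper's. Granting your extra assumption that the residue field is $K$ (which the lemma does not impose, though the application in Theorem \ref{nonreducedfiber} does), the identification of the nilradical of $B=(S[W_1,\dots,W_m])^{\beta\times\rho}$ with $(\mathfrak{m}\tensor K[V])^G$ is correct, and so is the translation into a constituent-matching problem, at least in the nonmodular case over an infinite field. The paper argues geometrically instead: it mods out $H=\ker\rho$ to make the action on ${\mathbb A}^m$ faithful, picks the free locus $V\subseteq {\mathbb A}^m$ via Lemma \ref{genericfree} and Lemma \ref{freebasechange}, and transports nonreducedness along the \'etale quotient $Y\times V\rightarrow (Y\times V)/G$ by \'etale descent. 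That argument uses no semisimplicity, no splitting of the surjection onto the regular representation, and no hypothesis on the residue field, so it also covers the modular and imperfect cases where your fallback breaks down: nonreducedness does \emph{not} descend along $K\rightarrow\bar{K}$ for imperfect $K$, and Galois averaging can kill the exhibited nilpotent when $p$ divides the relevant degrees.

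However, your proof has a genuine gap, sitting exactly at the point you flag as the main obstacle: you never establish that $\mathfrak{m}$ has an irreducible constituent on which $\ker\rho$ acts trivially, i.e.\ that $\mathfrak{m}^{\ker\rho}\neq 0$, equivalently $S^{\ker\rho}\neq K$. Your two suggested escapes do not work: faithfulness of $\rho$ is not a hypothesis, and the remark that ``$G/\ker\rho$ nontrivial supplies a nontrivial irreducible trivial on $\ker\rho$'' is beside the point --- such an irreducible exists abstractly, but nothing forces it to occur in $\mathfrak{m}$, and the lemma concerns an arbitrary $S$, not the fiber rings of reflection quotients. In fact no argument can close this gap unconditionally, because the conclusion genuinely fails when $\mathfrak{m}^{\ker\rho}=0$: let $G=\Z/(6)$ act faithfully on $S=K[x]/(x^3)$ by $x\mapsto \zeta x$, $\zeta$ a primitive sixth root of unity (so $S^G=K$), and let $\rho$ be the character sending the generator to $-1$, with kernel $H\cong \Z/(3)$; then $S^H=K$ and the invariant algebra $(S[W])^{\beta\times\rho}=K[W^2]$ is reduced. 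Your criterion diagnoses this example correctly, so your equivalence is actually sharper than the lemma; note that the paper's own proof confronts the identical point in its assertion that $Y/H$ is nonreduced (``otherwise, it would already be $\Spec K$''), which is precisely the case the example realizes, and which is excluded in the intended application because the fiber ring there has $K$-dimension at least $|G|$. In short: with the added hypothesis $S^{\ker\rho}\neq K$ (or $\rho$ faithful), in the nonmodular, residue-field-$K$ setting, your representation-theoretic route goes through and even isolates the exact nonvanishing on which the statement hinges; as a proof of the lemma as stated, the decisive step is missing, and necessarily so.
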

\begin{proof}
Let $H \neq G$ be the kernel of the representation $\rho$. We mod out everywhere $H$, which gives the same situation back, with the extra property that $ G/H $ acts faithfully on $ {\mathbb A}^m $. Note that $ Y/H $ is also nonreduced, otherwise, it would already be $\Spec K$. So we may assume that $G$ acts faithfully on ${\mathbb A}^m$. Let $V \subseteq {\mathbb A}^m  $ be a nonempty open subset where the action is free, which exists by Lemma \ref{genericfree}. We consider the action of $ G $ on $ Y \times V $. This action is also free by Lemma \ref{freebasechange}, so $ Y \times V \rightarrow (Y \times V)/G $ is \'etale by Remark \ref{freeprincipal}. Since $Y \times V$ is not reduced,  $ (Y \times V)/G$ is also not reduced by \'etale descent, see \cite[Lemma 10.163.7]{stacksproject}, hence, $(Y \times {\mathbb A}^m)/G$ is not reduced because it contains  $ (Y \times V)/G$ as an open subset.
\end{proof}

\begin{Theorem}
\label{nonreducedfiber}
Let $K$ be algebraically closed. Let $G$ be a finite nonmodular group acting via $\beta$ by $K$-algebra automorphisms on a $K$-algebra $S$ of finite type, and let $Y= \Spec S$ with quotient scheme $X=\Spec R$. Let $\rho$ be a linear representation of $G$. Then, the fiber of the quotient scheme $Z_\rho= ( Y \times {\mathbb A}^m)/{\beta \times \rho}$ over a closed point $P \in X$ is nonreduced if and only if for one (any) point $Q \in Y$ above $P$, the restriction  of $\rho$ to the stabilizer of $Q$ is not trivial.
\end{Theorem}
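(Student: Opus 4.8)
The plan is to reduce the statement to the fiber description of Theorem \ref{fiberdescription} and then to invoke Lemma \ref{nonreducedpoint}. By Theorem \ref{fiberdescription}, the fiber ring of $Z_\rho$ over $P$ is $(S_1[W_1, \ldots, W_m])^{\tilde{\rho}}$, where $S_1$ is the local factor of $S/\mathfrak{m}_P S$ corresponding to $Q$, the group $H$ is the stabilizer of $Q$, and $\tilde{\rho} = \rho|_H$. Since $K$ is algebraically closed and $S_1$ is a finite-dimensional local $K$-algebra, its residue field is $K$. First I would record that $(S_1)^H = K$: Lemma \ref{actionuniversal} applied to the base change $R \to \kappa(P) = K$ gives $(S/\mathfrak{m}_P S)^G = K$, and Lemma \ref{actiondisjointunion} (with the second factor a point) identifies this invariant ring with $(S_1)^H$. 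Because the points of $Y$ above $P$ form a single $G$-orbit, the stabilizers, and hence the restrictions $\rho|_H$, are conjugate; since triviality of a representation is preserved under conjugation, the condition does not depend on the chosen $Q$, which justifies the phrase ``one (any)''.

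For the easy implication I would argue the contrapositive. If $\tilde{\rho}$ is trivial, then $H$ acts only on the first tensor factor, so $(S_1[W_1, \ldots, W_m])^{\tilde{\rho}} = (S_1)^H[W_1, \ldots, W_m] = K[W_1, \ldots, W_m]$, which is a polynomial ring and hence reduced. Thus a nonreduced fiber forces $\rho|_H$ to be nontrivial.

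For the converse, assume $\tilde{\rho}$ is nontrivial. Here the plan is to apply Lemma \ref{nonreducedpoint} with the triple $(S_1, H, \tilde{\rho})$ in place of $(S, G, \rho)$: the local Artinian $K$-algebra $S_1$, the action of $H$ with invariant ring $K$, and the nontrivial representation $\tilde{\rho}$ together give that $\Spec\!\big((S_1[W_1, \ldots, W_m])^{\tilde{\rho}}\big) \cong (\Spec S_1 \times {\mathbb A}^m)/H$ is nonreduced, which is precisely the fiber. The hypotheses of Lemma \ref{nonreducedpoint} that still require checking are that $H$ acts faithfully on $S_1$ and, above all, that $S_1$ is itself nonreduced.

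The main obstacle is this last point: a nontrivial stabilizer must produce genuine infinitesimal structure in the fiber, i.e.\ $S_1 \neq K$. My plan is to use that the quotient is generically \'etale. By Lemma \ref{genericfree} the action is free on a dense open $V \subseteq Y$, where $Y \to X$ is \'etale by Remark \ref{freeprincipal}; in particular every point with $S_1 = K$ (equivalently, where $\mathfrak{m}_P$ generates the maximal ideal of the local ring of $Y$ at $Q$, so that $Y \to X$ is unramified at $Q$ with trivial residue extension) must, in the nonmodular case, have a free action nearby and hence trivial stabilizer. Contraposing, $H \neq 1$ forces $S_1$ to be nonreduced. Making this ramification implication precise, and verifying that $H$ acts faithfully on $S_1$ (for which it is convenient either to assume $Y$ is a variety, so that $H$ embeds into $\operatorname{Aut}(\mathcal{O}_{Y,Q})$ and acts faithfully on the fiber, or to pass first to the quotient of $H$ by the kernel of its action on $S_1$), is where the actual work lies and where the reducedness and tameness hypotheses enter. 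Once $S_1$ is known to be nonreduced, Lemma \ref{nonreducedpoint} finishes the argument.
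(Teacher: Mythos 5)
Your overall architecture coincides with the paper's proof: both reduce to the fiber description of Theorem \ref{fiberdescription}, both settle the trivial case by $(S_1[W_1,\ldots,W_m])^{\tilde{\rho}}=(S_1)^H[W_1,\ldots,W_m]=K[W_1,\ldots,W_m]$ (the identity $(S_1)^H=K$, which the paper uses silently, you justify correctly via Lemma \ref{actionuniversal} and Lemma \ref{actiondisjointunion}, and your conjugacy remark disposing of the choice of $Q$ is also fine), and both invoke Lemma \ref{nonreducedpoint} for the nontrivial case. The genuine gap is exactly the step you yourself flag as ``where the actual work lies'': showing that a nontrivial stabilizer forces $S_1$ to be nonreduced. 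Your sketch --- $S_1=K$ means $Y\rightarrow X$ is unramified at $Q$ with trivial residue extension, hence by Lemma \ref{genericfree} and Remark \ref{freeprincipal} the action is free near $Q$, hence $\Stab(Q)$ is trivial --- does not follow from the results you cite: Lemma \ref{genericfree} produces freeness on \emph{some} dense open invariant subset and gives no criterion for whether a given unramified point lies in it, so the implication ``$S_1=K\Rightarrow H=1$'' is essentially the assertion to be proved, not a corollary of generic freeness (the natural ways to prove it are tame inertia theory, which you do not develop, or the count below). The paper closes this step in two lines that your proposal misses: if $P$ is outside the image of the free locus, the points of $Y$ above $P$ form a single orbit of size $r=[G:H]<|G|$, while $\dim_K S/{\mathfrak m}_P S$ is at least the generic rank $|G|$ of $S$ over $R$ and, since $G$ permutes the local factors transitively, is distributed equally among the $r$ pairwise isomorphic factors; hence $\dim_K S_1\geq |H|>1$, and as $K$ is algebraically closed a reduced local Artinian factor with residue field $K$ would have to equal $K$. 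This inequality $\dim_K S_1\geq |H|$ is the load-bearing element of the argument, and it is absent from your proposal.

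A secondary point: your worry about the faithfulness hypothesis of Lemma \ref{nonreducedpoint} is legitimate, but your proposed repair (replace $H$ by $H/H_0$, where $H_0$ is the kernel of the action on $S_1$) does not go through as stated, since $\tilde{\rho}$ need not factor through $H/H_0$; taking $H_0$-invariants first yields $(S_1\tensor K[W_1,\ldots,W_m]^{\rho|_{H_0}})^{H/H_0}$, and $K[W_1,\ldots,W_m]^{\rho|_{H_0}}$ is in general not a polynomial ring, so you are no longer in the situation the lemma addresses. That said, the paper itself applies Lemma \ref{nonreducedpoint} to $(S_1,H,\tilde{\rho})$ without verifying faithfulness, so on this point your proposal is not below the paper's own standard of rigor; the decisive deficit is the deferred nonreducedness of $S_1$, for which the paper's counting argument (or an honest inertia-group argument) is required.
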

\begin{proof}
If $P$ belongs to the image of the free locus of the action on $Y $, then the fiber of the vector bundle (by Corollary \ref{modulealgebraisomorphism}) is reduced, and the stabilizer group of $Q$ is trivial. So suppose that $P$ does not lie in the image of the free locus. Then above $P$ there are in $Y$ less than $|G|$ points. The fiber ring $S/{\mathfrak m}_P S = S_1 \times \cdots \times S_r$ has $K$-dimension at least the order of the group, and this is distributed equally on the local rings of it, so they cannot be reduced. By Theorem \ref{fiberdescription}, the fiber of $Z_\rho$ above $P$ is $ S_1 [W_1, \ldots, W_m]^{\tilde{\rho} }$, where $\tilde{\rho}$ is the action restricted to the stabilizer of some point $Q$ above $P$. If $\tilde{\rho}$ is trivial, then the fiber ring is $K[W_1, \ldots, W_m]$, hence reduced. 	If $\tilde{\rho}$ is not trivial, then the result follows from Lemma \ref{nonreducedpoint}.
\end{proof}

\begin{Remark}
Typical examples for nilpotent elements in the fibers described in Theorem \ref{nonreducedfiber} are mixed invariants, as given in Lemma \ref{charactercontribution}.
\end{Remark}

\begin{Corollary}
\label{reflectiongroupramification}
Let $G \subseteq \GLG_d(K)$ be a reflection group with the mirror hyperplanes $H_1, \ldots , H_n$, let $\rho:G \rightarrow \GLG_m(K)$ be a linear representation, and let $Z_\rho \rightarrow X={\mathbb A}^d/G$ be the corresponding quotient scheme. If $P \in X$ lies in the image of $ \bigcup_{i = 1}^n H_i$, then the fiber of $Z_\rho$ above $P$ is nonreduced.
\end{Corollary}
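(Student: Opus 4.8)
The plan is to reduce the statement to Theorem \ref{nonreducedfiber}, which already characterizes exactly when a fiber is nonreduced. That theorem says the fiber of $Z_\rho$ over a closed point $P$ is nonreduced precisely when the restriction of $\rho$ to the stabilizer of some (equivalently any) preimage point $Q \in Y$ above $P$ is nontrivial. So the whole task is to show that when $P$ lies in the image of $\bigcup_{i=1}^n H_i$, there is a preimage point $Q$ whose stabilizer acts nontrivially via $\rho$.

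First I would observe that since $G$ is a reflection group, the mirror hyperplanes $H_1, \ldots, H_n$ are exactly the fixed-point sets of the reflections generating $G$. A point $P$ lying in the image of $\bigcup_i H_i$ means there is a preimage $Q \in {\mathbb A}^d$ lying on some mirror $H_i$. The reflection $\sigma_i$ fixing $H_i$ then fixes $Q$, so $\sigma_i$ belongs to the stabilizer $H$ of $Q$. Since $\sigma_i \neq \operatorname{Id}$ and $\rho$ is a \emph{faithful} representation of $G$ (by the standing assumption in the setting that the group acts faithfully), we have $\rho(\sigma_i) \neq \operatorname{Id}$. Hence the restriction $\tilde\rho$ of $\rho$ to $H$ is nontrivial.

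The second step is then immediate: by Theorem \ref{nonreducedfiber}, applied with $S = K[X_1, \ldots, X_d]$, $Y = {\mathbb A}^d$, the nontriviality of $\tilde\rho$ forces the fiber of $Z_\rho$ over $P$ to be nonreduced, which is exactly the claim.

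The only subtlety I anticipate is the role of the faithfulness hypothesis on $\rho$. If $\rho$ were allowed to be an arbitrary (possibly non-faithful) representation, the argument would break, since a reflection $\sigma_i$ could lie in $\ker \rho$, making $\tilde\rho$ trivial on the part generated by $\sigma_i$. So the main point to get right is to invoke the convention from Section \ref{settingsection}, where the acting group is taken to act faithfully (here through $\rho$), guaranteeing $\rho(\sigma_i) \neq \operatorname{Id}$. If instead the intended reading is that $\beta$ is faithful but $\rho$ need not be, one would have to either add faithfulness of $\rho$ as a hypothesis or reinterpret the stabilizer condition; under the stated conventions, though, the faithful action of $G$ on ${\mathbb A}^d$ guarantees that every reflection stabilizing $Q$ is a genuine nonidentity element of $G$, and then its image under a faithful $\rho$ is nontrivial, closing the argument.
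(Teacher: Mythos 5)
Your reduction to Theorem \ref{nonreducedfiber} is the right mechanism, but the step where you conclude $\rho(\sigma_i) \neq \operatorname{Id}$ contains a genuine gap: the standing faithfulness assumption of Section \ref{settingsection} concerns the \emph{basic} action $\beta$ on ${\mathbb A}^d$, not $\rho$, which throughout the paper is an arbitrary linear representation (the paper explicitly treats the trivial $\rho$, where $Z_\rho = X \times {\mathbb A}^m$ has all fibers reduced). Consequently the literal implication you set out to prove is false as stated: if some generating reflection $\sigma_i$ lies in $\ker \rho$ and $Q$ is a point of $H_i$ lying on no other mirror, then by Steinberg's theorem (Corollary \ref{reflectiongroupsubgroup}) the stabilizer of $Q$ consists of the reflections fixing $H_i$ together with the identity, so $\rho$ restricted to $\Stab(Q)$ is trivial and Theorem \ref{nonreducedfiber} gives a \emph{reduced} fiber over the image of $Q$. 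Your final paragraph identifies exactly this issue but then resolves it incorrectly by asserting that the paper's conventions make $\rho$ faithful; they do not. (A smaller point: Theorem \ref{nonreducedfiber} also requires $K$ algebraically closed and $G$ nonmodular, hypotheses absent from the corollary, whereas the paper's actual argument needs neither.)

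In fact the printed statement does not match the paper's own proof. That proof shows the action is free outside $\bigcup_{i=1}^n H_i$ (via Corollary \ref{reflectiongroupsubgroup}), so that $Z_\rho$ is a vector bundle, hence has reduced fibers, above the image of the complement, by Corollary \ref{modulealgebralinearisomorphism}. Together with the sentence immediately following the corollary (``nonreduced fibers can occur only above the zero locus of the discriminant'') and the way Corollary \ref{reflectiongroupdeterminantramification} cites this corollary for ``one direction,'' it is clear that the intended statement is the converse of the printed one: nonreduced fibers occur \emph{only} above the image of the mirrors. Your argument, once faithfulness of $\rho$ (or merely $\rho(\sigma)\neq \operatorname{Id}$ for every reflection $\sigma$) is added as a hypothesis, is correct and is essentially the paper's proof of the nontrivial direction of Corollary \ref{reflectiongroupdeterminantramification}, where $\rho = \det$ satisfies this automatically; but it does not and cannot prove the corollary as printed, nor does it prove the direction the paper actually establishes.
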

\begin{proof}
Outside the union $ \bigcup_{i = 1}^n H_i$, the action is free (see Corollary \ref{reflectiongroupsubgroup}), therefore, we have a vector bundle by Corollary \ref{modulealgebralinearisomorphism} above the image of this union.
\end{proof}

Therefore, nonreduced fibers can occur only above the zero locus of the discriminant, see Remark \ref{discriminant}.

\begin{Corollary}
\label{reflectiongroupdeterminantramification}
Let $G \subseteq \GLG_d(K)$ be a reflection group with the mirror hyperplanes $H_1, \ldots , H_n$, let $\rho:G \rightarrow K^\times$ be the representation given by the determinant, and let $Z_\rho \rightarrow X={\mathbb A}^d/G$ be the corresponding quotient scheme. Then, the fiber of $Z_\rho$ above a point $P \in X$ is nonreduced if and only if $P$ lies in the image of $ \bigcup_{i = 1}^n H_i$.
\end{Corollary}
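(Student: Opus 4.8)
The statement is a biconditional, and the plan is to reduce both directions to Theorem \ref{nonreducedfiber}, which says that the fiber over a closed point $P$ is nonreduced precisely when the restriction of $\rho = \det$ to the stabilizer $H = \Stab(Q)$ of a preimage $Q \in {\mathbb A}^d$ of $P$ is nontrivial. Since the arrangement of mirror hyperplanes $\bigcup_{i=1}^n H_i$ is permuted by $G$, it is $G$-stable, so $Q \in \bigcup_{i=1}^n H_i$ holds for one preimage of $P$ if and only if it holds for all of them, and this is equivalent to $P$ lying in the image of $\bigcup_{i=1}^n H_i$. Thus it suffices to prove that $\det|_H$ is trivial if and only if $Q \notin \bigcup_{i=1}^n H_i$.

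For the direction ``$Q \notin \bigcup_{i=1}^n H_i \Rightarrow \det|_H$ trivial'', I would invoke the fact (Corollary \ref{reflectiongroupsubgroup}, already used in the proof of Corollary \ref{reflectiongroupramification}) that a reflection group acts freely on the complement of the union of its mirrors. Hence for such $Q$ the stabilizer $H$ is trivial, so $\det|_H$ is trivial and, by Theorem \ref{nonreducedfiber}, the fiber over $P$ is reduced. This is the same input that underlies the vector-bundle structure of $Z_\rho$ over the image of the free locus via Corollary \ref{modulealgebralinearisomorphism}.

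For the converse ``$Q \in \bigcup_{i=1}^n H_i \Rightarrow \det|_H$ nontrivial'', suppose $Q \in H_i$. Then the pseudo-reflection $s_i$ with mirror $H_i$ fixes $Q$, so $s_i \in H$. By definition a pseudo-reflection fixes a hyperplane pointwise and therefore has eigenvalue $1$ with multiplicity $d-1$ together with a single remaining eigenvalue $\zeta \neq 1$; consequently $\det(s_i) = \zeta \neq 1$. Hence $\det|_H$ is nontrivial, and Theorem \ref{nonreducedfiber} yields that the fiber over $P$ is nonreduced.

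Combining the two directions proves the biconditional. The only genuinely representation-specific input is the elementary observation in the previous paragraph that a pseudo-reflection has determinant $\neq 1$; everything else is the general ramification criterion of Theorem \ref{nonreducedfiber} together with the freeness of the action off the mirrors. I expect no real obstacle: the main point to get right is the clean translation between ``$P$ lies in the image of the mirrors'' and ``$Q$ lies on a mirror'', which uses only the $G$-invariance of the arrangement $\bigcup_{i=1}^n H_i$.
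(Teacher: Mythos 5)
Your proposal is correct and takes essentially the same approach as the paper: your converse direction (the reflection with mirror through $Q$ lies in $\Stab(Q)$ and has determinant $\neq 1$, so Theorem \ref{nonreducedfiber} applies) is exactly the paper's argument. The only cosmetic difference is that for the other direction the paper cites Corollary \ref{reflectiongroupramification} (freeness off the mirrors giving a vector bundle via Corollary \ref{modulealgebralinearisomorphism}), whereas you run the same freeness input, via Corollary \ref{reflectiongroupsubgroup}, directly through the stabilizer criterion of Theorem \ref{nonreducedfiber}.
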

\begin{proof}
One direction is a special case of Corollary \ref{reflectiongroupramification}. Let $Q \in \bigcup_{i = 1}^n H_i$ be a point above $P$, say $Q \in H_1$. Let $g \neq \operatorname{Id} $ be a reflection with $H_1$ as fixed space. Then, $g$ belongs to the stabilizer of $Q$, and $\rho(g)= \operatorname{det} (g) \neq 1$, hence, the fiber is not reduced by Theorem \ref{nonreducedfiber}.
\end{proof}

Corollary \ref{reflectiongroupdeterminantramification} can be applied in particular in the situation of Lemma \ref{symmetricdeterminant}, but in this case, the result is also clear from the explicit equation.

\begin{Example}
Theorem \ref{nonreducedfiber} does not hold in this form for a field that is not algebraically closed, as any Galois extension $K \subseteq L$ shows, where (see Lemma \ref{modulealgebrafree} and Example \ref{galoisfree}) the (only) fiber of $Z_\rho$ is always reduced, the stabilizer group is always $G$ and the action $\rho$ need not be trivial.	
\end{Example}

\section{Products}
\label{productssection}

We want to understand how the sum of two linear representations $\rho_1$ and $ \rho_2$ is related to the product of $Z_{\rho_1}$ and $Z_{\rho_2}$ over $X$.

\begin{Lemma}
\label{productcompatibilityintegral}
Let $G$ be a finite group $G$ acting on the affine $k$-schemes $X,Y,Z$ with compatible morphisms $\varphi, \psi:Y,Z \rightarrow X$. Then, there is a natural surjective integral  morphism
\[ (Y \times_X Z)/G \longrightarrow  Y/G \times_{X/G}  Z/G \, .  \]
\end{Lemma}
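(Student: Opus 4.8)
Setting up the statement, the morphism is the spectrum of a ring homomorphism, so I would begin by writing it down. Put $X=\Spec A$, $Y=\Spec B$, $Z=\Spec C$, where $G$ acts by $k$-algebra automorphisms and the structure maps $A\to B$, $A\to C$ are $G$-equivariant. Then $Y\times_X Z=\Spec(B\otimes_A C)$ with the diagonal action, $(Y\times_X Z)/G=\Spec((B\otimes_A C)^G)$, and $Y/G\times_{X/G}Z/G=\Spec(B^G\otimes_{A^G}C^G)$. For $b\in B^G$ and $c\in C^G$ the element $b\otimes c$ is $G$-invariant in $B\otimes_A C$, and the assignment $b\otimes c\mapsto b\otimes c$ is $A^G$-bilinear, so it defines a natural $A^G$-algebra homomorphism
\[ \theta\colon B^G\otimes_{A^G}C^G\longrightarrow (B\otimes_A C)^G , \]
whose spectrum is the asserted morphism; naturality in $(X,Y,Z)$ is clear from the construction.

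For integrality I would use orbit polynomials. Every $b\in B$ is a root of the monic polynomial $\prod_{g\in G}(T-gb)$, whose coefficients are the elementary symmetric functions of the orbit and hence lie in $B^G$; therefore $b\otimes 1$ is integral over $\im\theta$, and likewise $1\otimes c$ is integral over $\im\theta$ for every $c\in C$. Since $B\otimes_A C$ is generated as a ring by the elements $b\otimes 1$ and $1\otimes c$, and since the elements integral over $\im\theta$ form a subring, $B\otimes_A C$ is integral over $\im\theta$. As $(B\otimes_A C)^G$ is an intermediate ring, it is integral over $\im\theta$ as well, so $\theta$ is an integral ring map and the morphism is integral.

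Surjectivity is the real content. Because $\im\theta\subseteq(B\otimes_A C)^G$ is integral, lying-over shows that $\Spec((B\otimes_A C)^G)\to\Spec(\im\theta)=V(\ker\theta)$ is surjective; thus the image of the whole morphism is exactly the closed set $V(\ker\theta)$, and it remains to see that $\ker\theta$ is contained in the nilradical, equivalently that $\Spec(B\otimes_A C)\to\Spec(B^G\otimes_{A^G}C^G)$ is dominant. Here I would invoke the paper's description of the free locus. After reducing to the case where $Y$, $Z$ and $X$ are reduced and irreducible (a minimal prime of $B^G\otimes_{A^G}C^G$ can be tested on the corresponding components), Lemma \ref{genericfree} provides a dense $G$-invariant open $V\subseteq Y$ on which the action is free; by the argument of Lemma \ref{freebasechange} the diagonal action of $G$ on $Y\times_X Z$ is then free over $V$, the stabilizers being trivial. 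Over the dense image of $V$ in $Y/G$ one is in the free situation, so by Remark \ref{freeprincipal} and the free-case computation of Lemma \ref{modulealgebrafree} the morphism $(Y\times_X Z)/G\to Y/G\times_{X/G}Z/G$ restricts to an isomorphism there; in particular it is dominant, whence $\ker\theta$ is nilpotent and the morphism is surjective.

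The main obstacle is precisely this surjectivity, and inside it the passage from dominance to surjectivity onto \emph{every} point: a point of the fibre product $Y/G\times_{X/G}Z/G$ carries strictly more information than the pair of its images in $Y/G$ and $Z/G$ — namely a prime of the tensor product of the two residue fields over the residue field of $X/G$ — so one cannot simply lift the two coordinates independently. The integrality of $\theta$ reduces the problem to the single statement that $\ker\theta$ is nilpotent, and for the components not met by the free locus I would fall back on a direct lifting of primes, combining the surjectivity of the quotient maps $Y\to Y/G$ and $Z\to Z/G$, the classical transitivity of $G$ on the primes of $A$ lying over a fixed prime of $A^G$ (to align the two induced points of $X$), and the non-vanishing of tensor products of field extensions (to realize the prescribed residue datum by a point of $Y\times_X Z$). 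Note that no nonmodular hypothesis enters: only the integrality and surjectivity of the quotient maps, together with $G$-transitivity on fibres, are used.
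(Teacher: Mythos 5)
Your construction of $\theta$ and your integrality argument are correct and essentially identical to the paper's own proof (the paper also uses orbit polynomials to see that $b\tensor 1$ and $1\tensor c$ are integral over the image, just stated more tersely). The genuine gap is in your main route to surjectivity: the claim that the morphism restricts to an isomorphism over the free locus is false in the generality of this lemma. Take $G=\Z/(2)$, $X=\Spec k$, and $Y=Z=\Spec (k\times k)$ with the swap action and the constant morphisms to $X$: the action on $Y$ is free everywhere, $Y\times_X Z$ consists of four points which the diagonal action identifies in two pairs, so $(Y\times_X Z)/G$ has two points, while $Y/G\times_{X/G}Z/G$ is a single point; the map is $2{:}1$ over the entire free locus, not an isomorphism. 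The isomorphism statements you invoke (Lemma \ref{modulealgebrafree}, Remark \ref{freeprincipal}, and in effect Theorem \ref{normalproductcompatibility}) concern the special shape $Z=Y\times{\mathbb A}^m$ with a linear action, i.e., both factors built from the \emph{same} $Y$, and do not apply to an arbitrary pair $Y,Z$ over $X$. In addition, Lemma \ref{genericfree} presupposes a faithful action on a domain, hypotheses absent from the statement (the action on $Y$ may be trivial, so the free locus may be empty), and your proposed reduction --- testing a minimal prime of $B^G\tensor_{A^G}C^G$ on ``corresponding components'' of $Y,Z,X$ --- is not justified, since components of the tensor product do not match up with $G$-stable components of the factors in any evident way.

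What you relegate to a fallback is in fact the paper's entire proof, and it works uniformly for every point, with no free-locus considerations, no integrality input, and no reduction steps. A point of $Y/G\times_{X/G}Z/G$ with residue field $L$ is, by the universal property, exactly a pair of $L$-points $([y],[z])$ lying over a common $L$-point $[x]$ of $X/G$; this is precisely how the residue-field subtlety you raise dissolves --- it is only for topological points that the fibre product carries extra data, and passing to $L$-valued points (allowing extensions of $L$ when lifting along the surjections $Y\to Y/G$ and $Z\to Z/G$) eliminates it. Then lift $[y]$ to a point $y$ of $Y$, put $x=\varphi(y)$, lift $[z]$ to $z$, and use the transitivity of $G$ on the fibre of $X\to X/G$ over $[x]$ to find $g\in G$ with $g(\psi(z))=x$; since $\kappa(y)\tensor_{\kappa(x)}\kappa(gz)\neq 0$, the pair $(y,gz)$ defines a point of $Y\times_X Z$ whose class in $(Y\times_X Z)/G$ maps to the given point. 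So: keep your $\theta$ and your integrality paragraph, delete the free-locus detour, and promote the lifting argument from fallback to the actual proof. Your closing observation that no nonmodular hypothesis enters is correct and agrees with the paper.
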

\begin{proof}
The group $G$ is also acting on $ Y \times_X Z $, therefore, we obtain morphisms $ (  Y \times_X Z   )/G \rightarrow  Y/G $ and  $ (  Y \times_X Z   )/G \rightarrow  Z/G $, and hence, by the universal property of the product, a morphism
\[ (Y \times_X Z)/G \longrightarrow  Y/G \times_{X/G}  Z/G \, .  \]
An $L$-point on the right ($K\subseteq L$ a field extension) is given as $([y], [z])$ over $[x]$. Starting with $y$, we may assume that $\varphi(y) =x$. There exists $g \in G$ such that $g( \psi(z)) = x$. Then, $[ ( y,gz)]$ is a preimage of $ ([y], [z])$. To show finiteness, we look at the ring homomorphism
\[ A^G \tensor_{S^G} B^G \longrightarrow A \tensor_S B \, .\]
The elements of the form $a \tensor 1$ fulfil an integral equation over $A \tensor 1$, and the elements of the form $1 \tensor b$ fulfil an integral equation over $1 \tensor B$, hence, the map is integral.
\end{proof}

This morphism is in general not an isomorphism, as basic examples such as the following show.

\begin{example}
\label{cycliconeproduct}
We consider Example \ref{cycliconespecial} and we look at the tensor product
\[ \begin{aligned} A_{-1} \tensor_R A_{-1} & = K[ X^k,W^k,XW ]   \tensor_{K[X^k]} K[ X^k,\tilde{W}^k,X \tilde{W} ] \\ & \cong K[A] [B,C][ \tilde{B}, \tilde{C}] /(AB-C^k, A \tilde{B} - \tilde{C}^k) \, .   \end{aligned}  \]
This is a subring of $K[X,W, \tilde{W} ]^G$, but in this ring, we also have elements such as $W \tilde{W}^{k-1}$, which do not belong to the tensor product (its $k$th power does).
\end{example}

Note that the invariant algebras in the example above are normal, and the tensor product is not. In the example for $k=2$, we have $AB A \tilde{B} =C^2 \tilde{C}^2$, which implies $B \tilde{B} = \frac{C^2 \tilde{C}^2}{A^2}$, hence $B \tilde{B}$ has a square root in the quotient field, and this square root, which corresponds to $W \tilde{W}$, belongs to the normalization of the tensor product, and in fact, $K[X,W, \tilde{W} ]^G$ is the normalization of $K[X, W ]^G \tensor_{K[X  ]^G } K[X,  \tilde{W} ]^G $. It is not the seminormalization, as the points with coordinates $ (0,b,0, \tilde{b} ,0 ,\pm \sqrt{b \tilde{b} } ) $ are mapped to the same point. See also Example
\ref{quotientschemeadditionfails}.

In the following, we mean by normalization first going to the reduction and then normalize, i.e., taking the integral closure within the total ring of fractions. In our case, we only have one minimal prime, so after reduction, the normalization is inside the quotient field. For an example in a slightly different context where nilpotent elements occur, see Example \ref{pullbacknilpotent}.

\begin{Theorem}
\label{normalproductcompatibility}
Let $G$ be a finite group acting via $\beta$ on a normal $K$-domain $S$ through $K$-algebra automorphisms with invariant ring $R=S^G$. Suppose that there exists a nonempty invariant open subset of $Y$ where the action is free. Let $\rho_1$ and $\rho_2$ be linear representations of $G$ giving rise to the quotient schemes  $Z_{ \rho_1} $ and  $Z_{\rho_2} $ over $X=\Spec R$. Then the morphism
\[  Z_{\rho_1 \times \rho_2 }   \longrightarrow Z_{\rho_1} \times_{\Spec R}  Z_{ \rho_2 }   \]
from Lemma 	\ref{productcompatibilityintegral} is the normalization.
\end{Theorem}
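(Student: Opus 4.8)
The plan is to realize the morphism as an integral, birational morphism onto the reduction of the target whose source is a normal domain, and then to invoke the universal characterization of the normalization. Concretely, the morphism corresponds to the $R$-algebra homomorphism
\[ \phi \colon B^{\rho_1} \otimes_R B^{\rho_2} \longrightarrow B^{\rho_1 \times \rho_2} \, . \]
By Lemma \ref{productcompatibilityintegral}, $\phi$ is integral and the associated scheme morphism is surjective. Surjectivity forces $\ker \phi$ to lie in every prime, hence in the nilradical, and it makes the fibre product irreducible, so $B^{\rho_1} \otimes_R B^{\rho_2}$ has a single minimal prime. Since $B^{\rho_1 \times \rho_2} = (S[W^{(1)}, W^{(2)}])^G$ is a domain, $\phi$ kills the nilradical and induces an injective integral extension $\overline{\phi} \colon (B^{\rho_1} \otimes_R B^{\rho_2})_{\mathrm{red}} \hookrightarrow B^{\rho_1 \times \rho_2}$ of domains. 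It then remains to prove that the source $B^{\rho_1 \times \rho_2}$ is normal and that $\overline{\phi}$ is birational.

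Normality is immediate: $S$ is a normal $K$-domain, so the polynomial ring $S[W^{(1)}, W^{(2)}]$ is a normal domain, and the ring of invariants of a normal domain under a finite group is integrally closed in its fraction field, hence normal. The essential point is therefore birationality, and this is where the freeness hypothesis is used. Let $V \subseteq Y$ be the assumed nonempty invariant open locus on which $\beta$ acts freely, and let $U \subseteq X$ be its open image (dense, since $Y$ is irreducible and a finite morphism cannot send the proper closed set $Y \setminus V$ onto all of $X$). Over $U$, Corollary \ref{modulealgebraisomorphism} identifies each invariant algebra with the symmetric algebra of its module of covariants: writing $M_i = (S \otimes_K V_i)^{\rho_i}$ for the representation space $V_i$ of $\rho_i$, we get $B^{\rho_i}|_U \cong \Sym_R(M_i)|_U$ and, since the module of covariants of $\rho_1 \times \rho_2$ is $M_1 \oplus M_2$, also $B^{\rho_1 \times \rho_2}|_U \cong \Sym_R(M_1 \oplus M_2)|_U$.

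Under these identifications $\phi|_U$ becomes the canonical isomorphism $\Sym_R(M_1) \otimes_R \Sym_R(M_2) \cong \Sym_R(M_1 \oplus M_2)$. I expect this compatibility to be the step that needs the most care to write out. Once the free locus trivializes all three invariant algebras into symmetric algebras, each generated in degree one over $R$, it reduces to a degree-one verification: $\phi$ sends $m_1 \otimes 1$ and $1 \otimes m_2$ to the respective summands of $M_1 \oplus M_2$, so it is the identity on degree-one parts, whence the maps of Lemma \ref{modulealgebra} are compatible with the canonical decomposition of the symmetric algebra. Hence $\phi|_U$ is an isomorphism.

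Since $U$ is dense and the preimages $Z_{\rho_1 \times \rho_2}|_U$ and $(Z_{\rho_1} \times_X Z_{\rho_2})|_U$ are dense open in the respective irreducible schemes, $\overline{\phi}$ is birational. Assembling the pieces, $\overline{\phi}$ exhibits the normal domain $B^{\rho_1 \times \rho_2}$ as an integral birational extension of $(B^{\rho_1} \otimes_R B^{\rho_2})_{\mathrm{red}}$ with the same fraction field; such an extension is exactly the integral closure in that fraction field, because the integral closure of the subring is contained in the integral closure of the normal domain, which is the domain itself. By the paper's convention this integral closure in the total ring of fractions is the normalization, so $Z_{\rho_1 \times \rho_2} \to Z_{\rho_1} \times_{\Spec R} Z_{\rho_2}$ is the normalization.
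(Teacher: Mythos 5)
Your proof is correct and takes essentially the same route as the paper's: both use Lemma \ref{productcompatibilityintegral} for integrality and surjectivity (hence irreducibility of the target), normality of the invariant algebra of the product representation, and birationality over the image $U$ of the free locus via Corollary \ref{modulealgebraisomorphism}. The only difference is cosmetic: you argue on the ring side and spell out the degree-one verification that $\phi|_U$ is the canonical isomorphism $\Sym_R(M_1)\otimes_R \Sym_R(M_2) \cong \Sym_R(M_1 \oplus M_2)$, a compatibility the paper's proof leaves implicit.
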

\begin{proof}
Let $Y=\Spec S$ and $\tilde{Z}_i =Y \times {\mathbb A}^{m_i} $, where $G$ is acting via $\beta \times \rho_i$. We look at the commutative diagram
\[ \begin{matrix} & & \tilde{Z}_1 \times_{Y } \tilde{Z}_2 & & \longrightarrow & & Z_1 \times_X Z_2  \\ & \swarrow & & \searrow &   &\swarrow & & \searrow   \\
&	\tilde{Z}_1   & & 	\tilde{Z}_2 &  &  Z_1   & & Z_2 \\ & \searrow & & \swarrow &   & \searrow & & \swarrow   \\ & &Y  & & \longrightarrow & & X &	
\end{matrix}\]
which induces as in Lemma \ref{productcompatibilityintegral}    a morphism
\[   ( \tilde{Z}_1 \times_{Y } \tilde{Z}_2 ) /(( \beta \times \rho_1 ) \times (\beta \times \rho_2))   \longrightarrow    Z_1 \times_X Z_2  \, . \]
This morphism is surjective and integral.
		
We have $\tilde{Z}_1 \times_{Y } \tilde{Z}_2 = (Y \times {\mathbb A}^{m_1})  \times_{Y }(Y \times {\mathbb A}^{m_2})  =Y \times{\mathbb A}^{m_1} \times  {\mathbb A}^{m_2}  $ and	
\[  Z_{\rho_1 \times \rho_2} = (Y \times{\mathbb A}^{m_1} \times  {\mathbb A}^{m_2})(\beta \times \rho_1 \times \rho_2) =    ( \tilde{Z}_1 \times_{Y } \tilde{Z}_2 ) /(( \beta \times \rho_1 ) \times (\beta \times \rho_2))     \, . \]
Therefore, this is an integral scheme, and its quotient scheme is a normal scheme. Because of the surjectivity, $Z_1 \times_X Z_2$ is irreducible. Hence its reduction is an integral scheme.
	
Let $V \subseteq Y$ be a nonempty invariant open subset where the action is free and let $U\subseteq X$ be its image. Then $Z_{\rho_1} |_{U}$,  $Z_{\rho_2} |_{U}$ and  $Z_{\rho_1 \times \rho_2} |_{U}$ are vector bundles over $U$ by	Corollary \ref{modulealgebraisomorphism}. The morphism restricts to an isomorphism
\[  Z_{\rho_1 \times \rho_2 }|_U   \longrightarrow Z_{\rho_1}|_U \times_U  Z_{ \rho_2 } |_U \, ,  \]
which shows that the morphism is birational.	
\end{proof}

\begin{Remark}
The morphism from Theorem \ref{normalproductcompatibility} commutes with the morphism from Lemma \ref{modulealgebra}, i.e.
we have a commutative diagram
\[  \begin{matrix}  Z_{\rho_1 \times \rho_2 }  & \longrightarrow& Z_{\rho_1} \times_{\Spec R}  Z_{ \rho_2 }  \\ \downarrow & & \downarrow \\ \Spec(  \Sym_R(M_1 \oplus M_2)) & \longrightarrow &  \Spec ( \Sym_R(M_1) ) \times_{\Spec R}\Spec ( \Sym_R(M_2) )  \, ,
 \end{matrix} \]
where on the bottom row we have an isomorphism.
\end{Remark}

\begin{Corollary}
\label{productcompatibility}
Let $\beta$ be a faithful linear representation of a finite group $G$ on $K[X_1, \ldots, X_d]$ with invariant ring $R=K[X_1, \ldots, X_d]^G$  and let $\rho_1$ and $\rho_2$ be  linear representations of $G$, giving rise to the quotient  schemes  $Z_{ \rho_1} $ and  $Z_{\rho_2} $. Then, the normalization of $Z_{\rho_1} \times_{\Spec R}  Z_{\rho_2} $ is $ Z_{\rho_1 \times \rho_2 } $.
\end{Corollary}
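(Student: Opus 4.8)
The plan is to obtain this as an immediate specialization of Theorem \ref{normalproductcompatibility}, so the only work is to check that the hypotheses of that theorem are met in the present linear setting. I would take $S = K[X_1, \ldots, X_d]$ with the given linear action $\beta$ and set $Y = \Spec S$. A polynomial ring is factorial, hence a normal $K$-domain, so the normality and domain hypotheses hold automatically. The action is faithful by assumption. The one remaining condition---that there be a nonempty invariant open subset of $Y$ on which $G$ acts freely---is exactly the content of Lemma \ref{genericfree} applied to the $K$-domain $S$ with its faithful $G$-action, which produces an invariant $0 \neq f \in S$ with free action on $D(f)$.

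With all hypotheses verified, Theorem \ref{normalproductcompatibility} applies verbatim and tells us that the natural integral surjection
\[ Z_{\rho_1 \times \rho_2} \longrightarrow Z_{\rho_1} \times_{\Spec R} Z_{\rho_2} \]
of Lemma \ref{productcompatibilityintegral} is the normalization morphism. Restating this conclusion is precisely the assertion that $Z_{\rho_1 \times \rho_2}$ is the normalization of $Z_{\rho_1} \times_{\Spec R} Z_{\rho_2}$, as claimed. I do not anticipate any genuine obstacle: the corollary merely records the polynomial-ring instance of the theorem, where the two substantive inputs of the proof---normality of $S$ and the existence of a free open locus---are both immediate (the former from factoriality of the polynomial ring, the latter from Lemma \ref{genericfree}). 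If one wished to make the statement self-contained rather than citing the theorem, one would simply reproduce the birationality-over-the-free-locus argument of Theorem \ref{normalproductcompatibility} together with the integrality and surjectivity from Lemma \ref{productcompatibilityintegral}, but no new idea is required.
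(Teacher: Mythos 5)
Your proof is correct and matches the paper's own argument exactly: the paper also derives the corollary by invoking Theorem \ref{normalproductcompatibility} together with Lemma \ref{genericfree}, which supplies the nonempty free invariant open locus. Your verification of the remaining hypotheses (normality of the polynomial ring, faithfulness) is just an expanded version of the same reduction.
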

\begin{proof}
This follows from Theorem \ref{normalproductcompatibility} and Lemma \ref{genericfree}.	
\end{proof}

\begin{Remark}
Without a faithful basic action there is no birational relation between the objects in Corollary \ref{productcompatibility}. If we consider the group $\Z/(2)$ with the trivial basic action on $K$ and the  cyclic action on $K[X]$, then the invariant algebra is $K[X^2]=K[A]$, the tensor product of the two copies is $K[A, B ]$, but the invariant ring of the tensor ring is $K[X^2,Y^2,XY ] \cong K[A, B] [C]/(C^2-AB)  $, which is not birational to $K[A,B]$ with the given embedding.
\end{Remark}

\begin{example}
The relation  between the product $Z_{\rho_1} \times Z_{\rho_2}$ and $Z_{\rho_1 \times \rho_2}$ is generally more complicated than in the normal case described in Theorem \ref{normalproductcompatibility}. We look at Example \ref{artinian} for $k=2$ and $\ell=1$ taken twice. The ring homomorphism is
\[   K[XW,W^2] \tensor_K[X \tilde{W}, \tilde{W}^2] \longrightarrow  K[XW,W^2,X \tilde{W}, \tilde{W}^2  , W \tilde{W} ] \, \]
with the relations $(XW)^2=0$, $(X \tilde{W})^2=0$ on the left and more relations on the right. If we reduce, we get $K[W^2, \tilde{W}^2] \rightarrow K[W^2, \tilde{W}^2,W \tilde{W}]$, which is not the normalization.	 
\end{example}

\section{Regular representation}
\label{regularrepresentationsection}

For a finite group $G$, the \emph{regular representation} is the representation on $K^{|G|}$ given by $g \in G$ acting through the permutation matrix $e_h \mapsto e_{gh}$, where $e_h$ is the standard vector indexed by $h \in G$. In the nonmodular case, this representation has a decomposition
\[ (K^{|G|}, \operatorname{reg} ) \cong \bigoplus_{\rho \in C } V_\rho^{ \oplus  \operatorname{dim} V_\rho } \, , \]
where the index set runs through a system $C$ of all irreducible representations, see \cite[Corollary 1 in I.2.4]{serrerepresentation}. $G$ acts on the polynomial ring $S[K^{|G|} ]= \Sym (S^{|G|})$ with invariant algebra $(S[K^{|G|} ])^G$. Here, $g$ sends the variable $W_h$ to $W_{ h g} $.

\begin{Theorem}
\label{regularrepresentation}
Let $S$ be a normal $K$-domain, and $G$ be a finite nonmodular group acting on $S$ by $K$-algebra automorphisms with invariant ring $R$. Let $C$ be a collection of all irreducible linear representations of $G$, with corresponding invariant algebras $B^{\rho}$ for $\rho \in C$. Let $G$ act regularly on $K^{|G|}$. Then the invariant algebra is (tensor is over $R$)
\[S[K^{|G|}]^G \cong ( \bigotimes_{  \rho \in C} ( B^\rho)^{\tensor \operatorname{dim} (\rho) } )^{\operatorname{norm}} \,. \]
\end{Theorem}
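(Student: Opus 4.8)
The statement says the invariant algebra of the regular representation is the normalization of the tensor product (over $R$) of the invariant algebras $B^\rho$, each raised to the tensor power $\dim(\rho)$, as $\rho$ ranges over a complete set of irreducibles. My plan is to reduce this to the product-compatibility result of Theorem \ref{normalproductcompatibility} applied iteratively. The key inputs are: (i) the decomposition of the regular representation $(K^{|G|},\operatorname{reg}) \cong \bigoplus_{\rho \in C} V_\rho^{\oplus \dim V_\rho}$ in the nonmodular case, which is quoted from Serre; (ii) Theorem \ref{normalproductcompatibility}, which tells us that for linear representations $\rho_1, \rho_2$ with a normal domain $S$ and a free locus, $Z_{\rho_1 \times \rho_2}$ is the normalization of $Z_{\rho_1} \times_X Z_{\rho_2}$; and (iii) the hypothesis of Lemma \ref{genericfree} guaranteeing a nonempty free locus, which I get from $S$ being a domain with a faithful $G$-action.

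\textbf{Key steps.}
First I would translate the scheme-level statement of Theorem \ref{normalproductcompatibility} into the ring language: $B^{\rho_1 \times \rho_2}$ is the normalization of $B^{\rho_1} \tensor_R B^{\rho_2}$ inside the quotient field. Next, using the Serre decomposition, I identify the regular representation $\operatorname{reg}$ as a direct sum (i.e., product of linear actions) of the building blocks, namely $\dim(\rho)$ copies of each $\rho \in C$. Since the formation of $Z_{\rho}$ turns a direct sum $\rho_1 \oplus \cdots \oplus \rho_k$ of representations into the quotient scheme of the product action $\beta \times \rho_1 \times \cdots \times \rho_k$, I would apply Theorem \ref{normalproductcompatibility} inductively over the finitely many summands. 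The one subtlety is that an iterated application of the theorem must be justified: each intermediate object $Z_{\rho_1 \times \cdots \times \rho_j}$ is itself a quotient scheme of a linear action on a polynomial ring over the same normal domain $S$ with the same free locus, so the hypotheses persist at every stage and the induction goes through. The outcome is that $S[K^{|G|}]^G = B^{\operatorname{reg}}$ is the normalization of the $R$-tensor product of the $B^\rho$'s, each appearing with multiplicity $\dim(\rho)$, which is exactly the asserted formula.

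\textbf{Main obstacle.}
The main technical point to handle carefully is the interaction between iterated normalization and iterated tensor product: a priori, normalizing after taking all tensor factors at once is not obviously the same as normalizing step by step. I would resolve this by noting that all the schemes in question are birational over the free locus $U \subseteq X$ (where each $Z_\rho|_U$ is a vector bundle by Corollary \ref{modulealgebraisomorphism}), so the big tensor product $\bigotimes_{\rho} (B^\rho)^{\tensor \dim(\rho)}$ is a finite (integral, by the argument in Lemma \ref{productcompatibilityintegral}) birational extension of its reduction, sitting inside the common function field; its normalization is therefore uniquely determined and agrees with $B^{\operatorname{reg}}$ regardless of the order in which factors are combined. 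In other words, since $B^{\operatorname{reg}}$ is normal, is integral over the tensor product, and shares the same quotient field (birationality over $U$), it \emph{is} the normalization by the universal property. Thus the only real care needed is bookkeeping: checking that the free locus and birationality are preserved through all intermediate products, which follows because the basic action $\beta$ and hence its free locus are fixed throughout.
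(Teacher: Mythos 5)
Your proposal is correct and follows essentially the same route as the paper, whose entire proof is the one-line observation that the claim "follows from Theorem \ref{normalproductcompatibility} and the decomposition of the regular representation into irreducible representations." The extra care you take — iterating the pairwise product theorem and checking that step-by-step normalization agrees with normalizing the full tensor product at once, via birationality over the free locus and the universal property of normalization — is precisely the bookkeeping the paper leaves implicit, and your handling of it is sound.
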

\begin{proof}
This follows from Theorem \ref{normalproductcompatibility} and the decomposition of the regular representation into irreducible representations. 
\end{proof}

\begin{Lemma}
\label{regularfirstdegree}
The first graded component of the invariant algebra $S[K^{|G|}]^G $ of the regular representation is $R$-isomorphic to $ \bigoplus_{\rho \in C} ((  S \tensor   V_\rho)^G)^{ \oplus  \operatorname{dim} V_\rho }  $ and $R$-isomorphic to $S$ via the embedding $s \mapsto \sum_{h\in G}  ( s)h W_h$.
\end{Lemma}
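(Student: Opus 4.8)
\textbf{Proof proposal for Lemma \ref{regularfirstdegree}.}

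The plan is to identify the first graded component of $S[K^{|G|}]^G$ with the invariant module $(S \tensor K^{|G|})^{\operatorname{reg}}$, then decompose it two ways. The degree-one part of the invariant algebra is, by Lemma \ref{invariantalgebradegree} applied with $n=1$, exactly $(S \tensor_K \operatorname{Sym}^1_K K^{|G|})^G = (S \tensor_K K^{|G|})^G$, which is the invariant module attached to the regular representation. First I would apply the decomposition $(K^{|G|}, \operatorname{reg}) \cong \bigoplus_{\rho \in C} V_\rho^{\oplus \dim V_\rho}$ stated just above (from \cite[Corollary 1 in I.2.4]{serrerepresentation}). Since the functor $V \mapsto (S \tensor_K V)^G$ is $K$-linear and commutes with finite direct sums (taking $G$-invariants is an exact functor in the nonmodular case, and tensoring with $S$ is additive), this immediately yields the first claimed $R$-isomorphism
\[ (S \tensor_K K^{|G|})^G \cong \bigoplus_{\rho \in C} \left((S \tensor_K V_\rho)^G\right)^{\oplus \dim V_\rho} = \bigoplus_{\rho \in C} \left((B^\rho)_1\right)^{\oplus \dim V_\rho}, \]
each summand being the degree-one component (the invariant module) of $B^\rho$.

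For the second isomorphism, I would exhibit the explicit map $S \to (S \tensor_K K^{|G|})^G$ given by $s \mapsto \sum_{h \in G} (s)h \, W_h$, where $W_h$ denotes the basis vector $e_h$ of $K^{|G|}$ (so the degree-one variables), and check it is well-defined, $G$-equivariant onto the invariants, $R$-linear, and bijective. The key computation is $G$-invariance of the image: for $g \in G$, the action sends $s \tensor W_h$ to $(s)g \tensor W_{hg}$ (recall $g$ sends $W_h$ to $W_{hg}$), so applying $g$ to $\sum_h (s)h\, W_h$ gives $\sum_h (s)(hg)\, W_{hg}$, and reindexing $h' = hg$ returns the same element; hence the image lies in the invariants. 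The map is $R$-linear because for $r \in R$ we have $(rs)h = r \cdot (s)h$. Injectivity is clear since the coefficient of $W_e$ recovers $s$. For surjectivity I would argue that an invariant element $\sum_h f_h W_h$ must satisfy $f_{hg} = (f_h)g$ for all $g$, so it is determined by $f_e =: s$ via $f_h = (s)h$, which is exactly the image of $s$; thus the map is an $R$-isomorphism.

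The main obstacle, and the only point requiring genuine care rather than bookkeeping, is pinning down the invariance relation $f_{hg} = (f_h)g$ with the correct side conventions for the action, since the paper writes the group action on the ring from the right and defines the regular action by $W_h \mapsto W_{hg}$. I would therefore verify the equivariance computation in the precise conventions of Section \ref{settingsection}, being careful that $(fg)h = f(gh)$, so that the reindexing in the surjectivity and invariance arguments is consistent; once the convention is fixed, both the decomposition isomorphism and the isomorphism with $S$ follow formally.
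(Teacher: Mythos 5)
Your proposal is correct and takes essentially the same route as the paper: the first isomorphism is obtained exactly as in the paper by decomposing the regular representation and passing to invariants (you cite Lemma \ref{invariantalgebradegree} with $n=1$ where the paper cites Lemma \ref{modulealgebra}, but both identify the degree-one component), and your explicit map $s \mapsto \sum_{h\in G} (s)h\, W_h$ with the relation $f_{hg} = (f_h)g$ is the same computation the paper performs. The only cosmetic difference is that the paper packages your direct invariance/surjectivity check as a $G$-equivariant, $S^G$-linear isomorphism $sT_h \mapsto (s)h\,W_h$ from the permutation module with trivial $S$-action (whose invariants are visibly the diagonal copy of $S$), whereas you verify bijectivity onto the invariants by hand via the identity $((s)h)g = (s)(hg)$ — the same underlying step, correctly handled in the paper's right-action conventions.
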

\begin{proof}
From the equivariant decomposition $ K^{|G|}  \cong \bigoplus_{\rho \in C } V_\rho^{ \oplus  \operatorname{dim} V_\rho } $ we get
\[ S \tensor K^{|G|}  \cong \bigoplus_{\rho \in C}   S \tensor   V_\rho^{ \oplus  \operatorname{dim} V_\rho }  \cong \bigoplus_{\rho \in C} (  S \tensor   V_\rho)^{ \oplus  \operatorname{dim} V_\rho }   \, .  \]
Passing to the invariant modules yields
\[ (  S \tensor K^{|G|}   )^G \cong \bigoplus_{\rho \in C} ( ( S \tensor   V_\rho)^G )^{ \oplus  \operatorname{dim} V_\rho }   \, .  \]
This is the degree $1$-part of the invariant algebra by Lemma \ref{modulealgebra}.
	
To prove the second statement, we look at the group isomorphism
\[  S[K^{|G|}]_1 \longrightarrow S[K^{|G|}]_1 \, , s T_h \mapsto (s)h W_h \, .  \]
Here, the action of $g \in G$ on the left is by permuting $T_h \mapsto T_{ hg}$ (no action on $S$) and on the right by $ s W_h \rightarrow (s)g W_{hg}$ (which is the natural action on  $S[K^{|G|}]$ restricted to the first degree component). This homomorphism is $G$-equivariant and $S^G$-linear. Hence, this induces an isomorphism between the invariant submodules. On the left-hand side, the invariant elements are elements of the form $\sum_{h \in G} s T_h$, so they correspond to the elements $s \in S$, and these elements are sent to $\sum_{h\in G} (s)h W_h $ on the right.
\end{proof}

\begin{Corollary}
Let $S$ be a normal $K$-domain, $G = \Z/(k) = \langle \zeta \rangle $, $k \neq 0$ in $K$, acting on $S$ via $\beta$ by $K$-algebra automorphisms with invariant ring $R$. Then, the invariant algebra of the regular representation is  
\[S[T_0,T_1, \ldots , T_{k-1}]^G \cong ( \bigotimes_{  \ell=0, \ldots ,{k-1} }  (  ( S[W_\ell])^G )^{\operatorname{norm}} \, , \]
where the generator acts on the left by $T_i \mapsto T_{i+1}$, and on the right by sending $W_\ell \mapsto \zeta^\ell W_\ell$. The mapping is given by $T_i \mapsto \sum_{\ell =0}^{k-1} \zeta^{i \ell} W_\ell $.
\end{Corollary}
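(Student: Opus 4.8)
The plan is to read this off from Theorem \ref{regularrepresentation} together with the character theory of a cyclic group. First I would record that, under the natural reading in which $\zeta \in K$ is a primitive $k$-th root of unity (so that $k \neq 0$ in $K$ and $G = \Z/(k)$ is identified with $\mu_k$), every irreducible representation of $G$ is one of the one-dimensional characters $\rho_\ell$, $\ell = 0, \ldots, k-1$, where the generator acts by multiplication with $\zeta^\ell$; this is the decomposition already exploited in Lemma \ref{regularfirstdegree}. In particular the collection $C$ in Theorem \ref{regularrepresentation} is $\{\rho_0, \ldots, \rho_{k-1}\}$ and every $\dim(\rho_\ell) = 1$, so the tensor powers $(B^{\rho_\ell})^{\otimes \dim(\rho_\ell)}$ collapse to $B^{\rho_\ell} = (S[W_\ell])^G$. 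Feeding this into Theorem \ref{regularrepresentation} already yields the asserted isomorphism of the invariant algebra of the regular representation with the normalization of $\bigotimes_{\ell=0}^{k-1} (S[W_\ell])^G$ over $R$.

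The remaining content is to make the equivariant decomposition $K^{|G|} \cong \bigoplus_\ell V_{\rho_\ell}$ explicit, which is what produces the stated coordinate formula. Concretely I would introduce the change of variables $T_i \mapsto \sum_{\ell=0}^{k-1} \zeta^{i\ell} W_\ell$ on the degree-one part $S[K^{|G|}]_1$ and check its equivariance directly: the generator sends $T_i$ to $T_{i+1}$ (the cyclic shift $T_h \mapsto T_{hg}$ of the regular representation, written additively), and under the substitution this becomes $\sum_\ell \zeta^{(i+1)\ell} W_\ell = \sum_\ell \zeta^{i\ell}(\zeta^\ell W_\ell)$, which is exactly the image of $\sum_\ell \zeta^{i\ell} W_\ell$ under the diagonal action $W_\ell \mapsto \zeta^\ell W_\ell$. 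Thus the substitution intertwines the permutation action on the $T_i$ with the character action on the $W_\ell$.

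Finally I would note that this linear substitution is the Vandermonde (discrete Fourier) matrix $(\zeta^{i\ell})_{i,\ell}$, whose inverse has entries $\tfrac{1}{k}\zeta^{-i\ell}$; here the hypothesis $k \neq 0$ in $K$ is exactly what guarantees invertibility. Hence the substitution extends to an isomorphism of graded $G$-algebras $S[T_0, \ldots, T_{k-1}] \cong S[W_0, \ldots, W_{k-1}]$ carrying the regular action to the diagonal character action, and passing to $G$-invariants identifies $S[T_0, \ldots, T_{k-1}]^G$ with the invariant algebra of $\bigoplus_\ell \rho_\ell$, compatibly with the description in Theorem \ref{regularrepresentation}. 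I do not expect a genuine obstacle here: the only points demanding care are the bookkeeping of the left-versus-right action conventions and of the sign of the exponent in the Fourier matrix, together with the observation that invertibility of the coordinate change rests precisely on $k \neq 0$ in $K$ rather than on any stronger input.
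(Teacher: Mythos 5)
Your proposal is correct and follows essentially the same route as the paper, which likewise deduces the first statement as a special case of Theorem \ref{regularrepresentation} and the coordinate formula from the explicit splitting of the regular representation into the characters $\rho_0,\ldots,\rho_{k-1}$. The only difference is that you spell out what the paper leaves implicit --- the equivariance check for the Fourier/Vandermonde substitution $T_i \mapsto \sum_{\ell} \zeta^{i\ell} W_\ell$ and the observation that its invertibility (inverse entries $\tfrac{1}{k}\zeta^{-i\ell}$) is exactly where $k \neq 0$ in $K$ enters --- which is a faithful elaboration rather than a different argument.
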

\begin{proof}
The first statement is a special case of Theorem \ref{regularrepresentation}. The second statement follows from the explicit splitting of the regular representation in this case.
\end{proof}

\section{Module schemes}
\label{moduleschemesection}

In the following sections we will take a closer look at the question of what kind of object $Z_\rho \rightarrow X$ is, beside being a scheme over $X$.

Let $X$ denote a scheme. A well-known correspondence is that between (geometric) vector bundles over $X$ and locally free $ {\mathcal O}_X$-modules on $X$. A vector bundle is a scheme $Z \rightarrow X$ that looks locally like $X \times {\mathbb A}^r \rightarrow X $ and has the property that the transition mappings are linear. A locally free $ {\mathcal O}_X$-module looks locally like ${\mathcal O}_X^r$. The correspondence assigns to $Z$ the dual sheaf of the sheaf of sections and to a locally free sheaf $\mathcal F$ the (relative) spectrum of the symmetric algebra $\Sym ({\mathcal F}) = \oplus_{n \in \N} \Sym^n ({\mathcal F}) $, \cite[Exercise II.5.18]{hartshorne}. This last definition can be applied for any quasicoherent $ {\mathcal O}_X$-module $\mathcal F$, see \cite[Definition 1.7.8]{ega2}, and \cite[Section 1.3]{ega2} for the relative spectrum of a quasicoherent algebra.

A vector bundle is in particular a module scheme (schéma de modules) in the sense of the following definition.

\begin{definition}
A \emph{module scheme} $Z$ over a scheme $X$ is a commutative group scheme $Z \rightarrow X$ together with a scheme morphism
\[ \cdot : {\mathbb A }^1_X \times_X Z  \longrightarrow Z \]
over $X$, fulfilling the natural compatible conditions from module theory.
\end{definition}

This formulation requires that we consider $ {\mathbb A }^1_X $ with the natural addition and multiplication as a ring scheme (this term appears also in \cite[Section 26]{mumfordbergman}, the affine line in the only ring scheme we are dealing with). If $\alpha: Z \times_X Z \rightarrow Z$ denotes the addition of the group scheme, then this definition means, to give some examples, that the diagram
\[  \begin{matrix} {\mathbb A }^1_X \times_X Z \times_X Z & \stackrel{ \cdot_{12} \times \cdot_{13} }{ \longrightarrow } & Z \times_X Z  \\  \operatorname{Id}_{  {\mathbb A }^1_X } \times \alpha  \downarrow & & \downarrow \alpha  \\{\mathbb A }^1_X \times_X Z & \stackrel{\cdot}{\longrightarrow} &  Z \end{matrix}\]
commutes (distributivity in the vectors), and that the diagram
\[  \begin{matrix} {\mathbb A }^1_X \times_X {\mathbb A }^1_X \times_X Z & \stackrel{ \cdot_{13} \times \cdot_{23} }{ \longrightarrow } & Z \times_X Z  \\ \alpha_{  {\mathbb A }^1_X } \times \operatorname{Id_Z}  \downarrow & & \downarrow \alpha  \\{\mathbb A }^1_X \times_X Z & \stackrel{\cdot}{\longrightarrow} &  Z \end{matrix}\]
commutes (distributivity in the scalars). 

The concept of a module scheme is referred to briefly in 
\cite[Chapitre 0.8]{ega31} (in particular 0.8.2.3), Grothendieck writes ``L'exemple des groupes a été traité avec assez de détails, mais par la suite nous laisserons généralement au lecteur le soin de développer des considérations analogues dans les exemples de structures algébriques que nous rencontrerons'' (see \cite[8.2.8]{ega31}). The concept is also mentioned in \cite[4.3.3]{sga3}, but a thorough study is missing. Related work, but only over a field, has been done by Greenberg (\cite{greenberglocal}, \cite{greenbergalgebraic}, where module variety over a (noncommutative) ring variety is used) and, in the context of graded Hop algebras, by Milnor and Moore (\cite{milnormoorehopf}, see also Remark \ref{milnormoorehopf} below). See also \cite{raynaudppp} for the notion of schéma en $F$-vectoriels.

The following lemma is stated in \cite[1.7.13]{ega2} (see also \cite[Section 9.4]{ega1}. Grothendieck sets $V({\mathcal E}) =\Spec ( \Sym {\mathcal E} )  $ and calls this, quite inappropriately, fibr\'{e} vectoriel (\cite{sga3} uses fibration vectorielle instead), and writes ``Nous interpré\-terons ces faits en disant que $S[T]$ est un $S$-schéma d'anneaux et que $ V( { \mathcal E})$ est un $S$-schéma de modules sur le S-schéma d'anneaux (cf. chap. 0, § 8)''.

\begin{Lemma}
\label{symmetricmodulescheme} 
For a quasicoherent module $ {\mathcal F}$ on a scheme $X$, $\Spec ( \Sym {\mathcal F} ) $ is a module scheme over $X$.
\end{Lemma}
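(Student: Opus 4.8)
The plan is to exploit the functor-of-points description of $\Spec(\Sym{\mathcal F})$ and transport the module structure from the free case to the general case by gluing. Recall that for any $X$-scheme $T \to X$ with structure morphism corresponding (locally) to an ${\mathcal O}_X$-algebra $A$, the universal property of the symmetric algebra gives a natural bijection
\[ \Hom_X(T, \Spec(\Sym{\mathcal F})) \cong \Hom_{{\mathcal O}_X}({\mathcal F}, A) \,, \]
where the right-hand side denotes ${\mathcal O}_X$-module homomorphisms into the pushforward of ${\mathcal O}_T$. The key observation is that the right-hand side is itself an $A$-module (pointwise addition of homomorphisms, and scalar multiplication by global sections of $A$, i.e.\ by $T$-points of ${\mathbb A}^1_X$), functorially in $T$. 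First I would make this precise: the assignment $T \mapsto \Hom_{{\mathcal O}_X}({\mathcal F}, {\mathcal O}_T)$ is a functor from $X$-schemes to abelian groups, and it carries a module action over the ring-valued functor $T \mapsto \Gamma(T,{\mathcal O}_T) = \Hom_X(T,{\mathbb A}^1_X)$.

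Next I would verify that this functorial module structure is representable by morphisms of schemes, i.e.\ that the addition and scalar multiplication on the functor of points come from actual $X$-morphisms $\alpha: Z \times_X Z \to Z$ and $\cdot: {\mathbb A}^1_X \times_X Z \to Z$. This is where the symmetric-algebra formalism does the work: addition corresponds, on the algebra side, to the comultiplication $\Sym{\mathcal F} \to \Sym{\mathcal F} \otimes_{{\mathcal O}_X} \Sym{\mathcal F}$ induced by the diagonal ${\mathcal F} \to {\mathcal F} \oplus {\mathcal F}$, $x \mapsto (x,x)$, followed by the canonical identification $\Sym({\mathcal F}\oplus{\mathcal F}) \cong \Sym{\mathcal F} \otimes \Sym{\mathcal F}$; the zero section corresponds to the augmentation $\Sym{\mathcal F} \to {\mathcal O}_X$; and the scalar action corresponds to the ${\mathcal O}_X$-algebra map $\Sym{\mathcal F} \to {\mathcal O}_X[t] \otimes \Sym{\mathcal F}$ sending a degree-one element $x$ to $t \otimes x$. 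Each of these is a genuine ${\mathcal O}_X$-algebra homomorphism because it is defined on the generating module ${\mathcal F}$ in degree one and extended by the universal property, so each induces an $X$-morphism of relative spectra.

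Finally I would check the axioms. The cleanest route is Yoneda: since the constructed morphisms induce on $T$-points exactly the abelian-group and module operations on $\Hom_{{\mathcal O}_X}({\mathcal F},{\mathcal O}_T)$, and those operations satisfy the module axioms in the ordinary algebraic sense (associativity and commutativity of addition, existence of inverses via $x \mapsto -x$ on ${\mathcal F}$, distributivity in vectors and in scalars, compatibility of the ${\mathbb A}^1_X$-action with its ring-scheme multiplication), the corresponding diagrams of schemes — including the two distributivity squares displayed before the lemma — commute because they commute after applying $\Hom_X(T,-)$ for every $T$. I do not expect a serious obstacle here: the only point requiring care is the canonical isomorphism $\Sym({\mathcal F}\oplus{\mathcal F}) \cong \Sym{\mathcal F}\otimes_{{\mathcal O}_X}\Sym{\mathcal F}$ and its compatibility with the relative-product description $\Spec(\Sym{\mathcal F})\times_X \Spec(\Sym{\mathcal F}) \cong \Spec(\Sym{\mathcal F}\otimes_{{\mathcal O}_X}\Sym{\mathcal F})$, so that the comultiplication genuinely represents addition on $Z\times_X Z$. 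Once this identification is in place, every axiom reduces to a tautology on generators in degree one, and the verification is purely formal.
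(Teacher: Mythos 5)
Your proposal is correct, and the structural data you construct are exactly those of the paper: the coaddition induced by the diagonal ${\mathcal F} \to {\mathcal F}\oplus{\mathcal F}$ through the canonical isomorphism $\Sym({\mathcal F}\oplus{\mathcal F}) \cong \Sym{\mathcal F}\otimes_{{\mathcal O}_X}\Sym{\mathcal F}$ (the paper cites \cite[Proposition 1.7.11(iii)]{ega2} for this), the zero section as the augmentation killing the positive part, the conegation from $x \mapsto -x$, and the co-scalar-multiplication $m_1\cdots m_r \mapsto T^r \otimes m_1 \cdots m_r$. Where you diverge is in verifying the axioms: the paper checks the relevant diagrams by a direct computation on degree-one generators (e.g.\ the distributivity square commutes because $T\otimes(m\otimes 1 + 1\otimes m) = T\otimes m\otimes 1 + T\otimes 1\otimes m$), whereas you transport the axioms via Yoneda from the honest $\Gamma(T,{\mathcal O}_T)$-module structure on $\Hom_{{\mathcal O}_X}({\mathcal F}, f_*{\mathcal O}_T)$, using the identification of $T$-points of $\Spec(\Sym{\mathcal F})$ with module homomorphisms. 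Your route buys all the axioms simultaneously and makes transparent why the verification is purely formal; the paper's route is shorter and self-contained for the two displayed diagrams. Note, though, that the two proofs meet at the same place: to invoke Yoneda you must confirm that $\alpha$ and the scalar action really induce pointwise addition and scaling of homomorphisms on $T$-points, and that confirmation is precisely the same degree-one generator computation the paper performs, together with the compatibility of relative $\Spec$ with tensor products that you correctly flag as the one point requiring care. So the difference is one of bookkeeping and packaging rather than of mathematical substance; both arguments are complete.
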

\begin{proof}
We look at the affine situation with a commutative ring $R$ and an $R$-module $M$. By \cite[Proposition 1.7.11(iii)]{ega2}, there is a canonical isomorphism $\Sym (M) \tensor_R \Sym (M) \cong \Sym (M \oplus M)$. The diagonal $R$-module homomorphism $M \rightarrow M \oplus M$ gives rise to the coaddition (in the sense of Hopf-algebras)
\[\Sym (M) \longrightarrow  \Sym (M) \tensor_R \Sym (M) \text{ induced by }  m \mapsto   m \tensor 1 + 1 \tensor m  \, , \]
the conegation is induced by $m \mapsto -m$, and the zero section comes from $  \Sym (M) \rightarrow R  $, which kills the positive part. The co-scalar-multiplication is given through
\[   \Sym M  \longrightarrow  R[T] \tensor_R  \Sym M  ,\, m_1 \cdots m_r \mapsto T^r \tensor m_1 \cdots m_r \, . \]
The axioms of a module scheme are fulfilled, for example, the first diagram above is commutative because of $T \tensor (m \tensor 1  + 1 \tensor  m )
= T \tensor m \tensor 1 + T\tensor 1 \tensor m$ for $m \in M$.
\end{proof}

\begin{Example}
For the free $R$-module $R^r$, the symmetric algebra is the polynomial algebra $R[T_1, \ldots, T_r]$.The coaddition is given by $T_i \mapsto T_i+S_i$.
\end{Example}

\begin{example}
\label{moduleschemepresentation}
If $R$ is a commutative ring and $M$ a finitely generated $R$-module with a presentation $R^m \stackrel{A}{\rightarrow} R^n \rightarrow M \rightarrow 0$ with an $n \times m$-matrix $A$, then the symmetric algebra has the description
\[ \Sym M = R[T_1, \ldots , T_n]/( a_{11} T_1 \plusdots a_{n1}T_n ,\ldots,  a_{1m} T_1 \plusdots a_{nm}T_n ) \, , \] 
see \cite{vasconcelosarithmetic}. For the use of symmetric algebras and their torsors to understand closure operations, see \cite{brennerberkeley}.
\end{example}

\begin{example}
\label{kaehlerdifferential}
If $R=K[X_1, \dots, X_n]/(g_1, \ldots, g_m)$, then the Jacobian matrix
$J=( \partial g_i / \partial X_j )_{ij} $ gives a presentation $R^m \stackrel{J^{\operatorname{tr}} }{ \rightarrow} R^n \rightarrow \Omega_{R|K} \rightarrow 0$ for the $R$-module of K\"ahler differentials as in Example \ref{moduleschemepresentation}. The module scheme $\Spec (\Sym (\Omega_{R|K} )) \rightarrow \Spec R$ restricts over the smooth locus $U \subseteq X$ to the tangent bundle $T_U$, and provides thus a natural extension for the tangent bundle above $X$.
\end{example}

\begin{example}
If $R$ is a commutative ring with a maximal ideal ${\mathfrak m}=(x_1, \ldots, x_d)$, then $\Sym ( R/{\mathfrak m}) \cong R[T]/(x_1 T, \ldots, x_d T)$. The corresponding module scheme is a sky scraper module scheme over $\Spec R$, which is outside the maximal ideal an isomorphism, but the fiber above the maximal ideal is an affine line.
\end{example}

\begin{example}
\label{idealsymmetricalgebra}
If $R$ is a commutative ring with an ideal $ I= (f_1, \ldots, f_n)$ (minimal generators), then
\[ \Sym ( I)  \cong R[T_1, \ldots, T_n]/(g_1T_1 \plusdots g_nT_n, \sum_{j=1}^n g_jf_j = 0) \,   . \]
Typical equations are the Koszul equations $f_iT_j -f_jT_i$, but there are more in general. A section $s: \Spec R \rightarrow \Spec (\Sym (I))$ in the module scheme given by $T_i \mapsto a_i$ corresponds to the $R$-linear map $I \rightarrow R,\, f_i \mapsto a_i$. The  module scheme is above $D(I) \subseteq \Spec R$ a trivial line bundle. If $I$ is primary to a maximal ideal $\idealm$, then the fiber above $\idealm$ is an $n$-dimensional affine space.
\end{example}

The jump in the dimension of the fibers is a typical behavior of the spectrum of a symmetric algebra. If the module is finitely generated, then the fiber dimension can increase by specializing. 

\begin{example}
Let $R$ be a discrete valuation domain with local parameter $\pi$ and let $Q=R_\pi$ be its quotient field. Then the symmetric algebra is
$\Sym Q =R[T_n, n \in \N]/( \pi T_{n+1} - T_n, n \in \N)$. Here, the generic fiber of the module scheme is a line and the special fiber is a point. 
\end{example}

\section{Module schemes and graded Hopf algebras}
\label{moduleschemehopfgradedsection}

An affine group scheme $\Spec A$ over $\Spec R$ corresponds to the structure of a commutative Hopf algebra structure on $A$. We want to understand what additional structure on the Hopf side is enforced by a module scheme.

\begin{Lemma}
\label{monoidactiongraded}
Let $R \subseteq A$ be an algebra over $R$. Then the following hold.

\begin{enumerate}
\item
An action $\mu$ of the multiplicative group scheme $ G_m = \Spec R[T,T^{-1}] $ on $Z= \Spec A$ is the same as a $\Z$-grading of $ A $ with $R \subseteq A_0$. 

\item 
An action of the multiplicative monoid scheme $ ( \Spec R[T], \cdot )$ on $Z= \Spec A $ is the same as an $\N$-grading of $A$ with $R \subseteq A_0 $. 

\item
In the situation described in (2), $R=A_0$ is equivalent with the existence of a section $0:X \rightarrow Z$ such that the diagram
\[  \begin{matrix} Z  &  \stackrel{ ( 0_{ {\mathbb A}^1} \circ p ) \times \operatorname{Id}_Z }{\longrightarrow}  & {\mathbb A}^1_X \times_X Z \\ p \downarrow & & \downarrow \mu \\ X & \stackrel{0}{ \longrightarrow  }&  Z \,  \end{matrix}  \]
commutes.

\end{enumerate}
	
\end{Lemma}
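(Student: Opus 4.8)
The plan is to reduce everything to the affine Hopf-algebraic picture. Write $Z = \Spec A$ and $X = \Spec R$, and recall that the relevant schemes are the group scheme $\Spec R[T,T^{-1}]$ and the monoid scheme $\Spec R[T]$, whose (co)multiplications both send $T \mapsto T \otimes T$ (weights multiply) and whose (co)units evaluate at $T = 1$. An action is then nothing but an $R$-algebra homomorphism (a coaction) $\mu^\sharp \colon A \to A[T^{\pm 1}]$, respectively $\mu^\sharp \colon A \to A[T] = A \otimes_R R[T]$, and the whole content of the lemma is that the (co)action axioms translate exactly into grading axioms.

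For part (2) I would expand $\mu^\sharp(a) = \sum_{n \geq 0} \mu_n(a) T^n$ (a finite sum, since the target is $A[T]$), which defines $R$-linear maps $\mu_n \colon A \to A$. The associativity axiom $(\mu^\sharp \otimes \id) \circ \mu^\sharp = (\id \otimes \Delta) \circ \mu^\sharp$, using $\Delta(T) = T \otimes T$, becomes after comparing coefficients of $T^m \otimes T^n$ the relation $\mu_m \circ \mu_n = \delta_{mn}\,\mu_n$, i.e. the $\mu_n$ are pairwise orthogonal idempotents; the unit axiom, obtained by substituting $T = 1$, gives $\sum_n \mu_n = \id_A$. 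Hence $A = \bigoplus_{n \geq 0} A_n$ with $A_n := \im \mu_n = \{a : \mu^\sharp(a) = aT^n\}$. Multiplicativity of the ring homomorphism $\mu^\sharp$ forces $A_m A_n \subseteq A_{m+n}$, and the fact that the action is over $X$ (so $\mu^\sharp$ is $R$-linear, $\mu^\sharp(r) = r$) gives $R \subseteq A_0$. Conversely, an $\N$-grading with $R \subseteq A_0$ defines a coaction by $a \mapsto a T^{\deg a}$ on homogeneous elements, and the two constructions are visibly mutually inverse. Part (1) is identical over $R[T,T^{-1}]$, where negative weights are now allowed, yielding a $\Z$-grading.

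For part (3) I would identify both routes around the square on rings. The action of the absorbing scalar $0 \in \A^1$ corresponds to $A \xrightarrow{\mu^\sharp} A[T] \xrightarrow{T \mapsto 0} A$, which by part (2) is precisely the projection $\pi_0 \colon A \to A_0 \hookrightarrow A$ onto the degree-zero part. A section $0 \colon X \to Z$ is the same as an $R$-algebra retraction $0^\sharp \colon A \to R$, and the left-then-bottom path corresponds to $\iota \circ 0^\sharp$ with $\iota \colon R \hookrightarrow A$. The square commutes exactly when $\pi_0 = \iota \circ 0^\sharp$ as endomorphisms of $A$. Since $\im \pi_0 = A_0$, this identity forces $A_0 \subseteq \iota(R)$, which together with $R \subseteq A_0$ from part (2) gives $R = A_0$, and then necessarily $0^\sharp = \pi_0 \colon A \to A_0 = R$. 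Conversely, if $R = A_0$ then $\pi_0$ lands in $R$ and is a ring homomorphism precisely because the grading is indexed by $\N$ (so $(ab)_0 = a_0 b_0$); taking $0^\sharp = \pi_0$ produces the section and makes the square commute.

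The only slightly delicate step is the coefficient comparison in the associativity axiom that produces the orthogonal idempotents; the point to get right there is that the comultiplication of the multiplicative (semi)group is $T \mapsto T \otimes T$ rather than $T \mapsto T \otimes 1 + 1 \otimes T$, so that weights multiply and one genuinely recovers a grading. Everything else is routine bookkeeping with the decomposition $A = \bigoplus_n A_n$.
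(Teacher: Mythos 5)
Your proposal is correct: the coaction bookkeeping (expansion $\mu^\sharp(a)=\sum_n \mu_n(a)T^n$, orthogonality $\mu_m\circ\mu_n=\delta_{mn}\mu_n$ from coassociativity with $\Delta(T)=T\tensor T$, completeness $\sum_n\mu_n=\id_A$ from the counit at $T=1$) is exactly right, the identification $A_n=\{a: \mu^\sharp(a)=aT^n\}$ matches the paper's dictionary, and your part (3) — identifying the top route of the square with the projection $\pi_0$ onto $A_0$ and the bottom route with $\iota\circ 0^\sharp$, forcing $A_0\subseteq R$, with the converse using that $\pi_0$ is multiplicative precisely because the grading is indexed by $\N$ — is in substance the paper's own argument, which phrases $\pi_0$ as $A\to A/A_+\cong R$. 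Where you genuinely diverge is in parts (1) and (2): the paper does not prove the grading--coaction correspondence at all but cites it from SGA3 (Proposition I.4.7.3), and then obtains (2) by restricting the monoid action along $G_m\subseteq {\mathbb A}^1$, observing that the Laurent coaction of the induced $G_m$-action in fact lands in $A[T]$, so the negatively graded pieces vanish. Your route is self-contained and more elementary — the orthogonal-idempotent decomposition proves (1) and (2) uniformly without invoking the group case first — at the cost of some explicit coefficient comparison; the paper's route buys brevity and situates the statement in the standard SGA3 framework, but makes (2) logically dependent on (1). One point you handle correctly that is worth keeping explicit: the sum $\sum_n\mu_n=\id_A$ is pointwise finite because $\mu^\sharp(a)$ is a polynomial, which is what makes the direct-sum decomposition legitimate.
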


\begin{proof}
(1) follows from \cite[Proposition I.4.7.3]{sga3}. In this correspondence, for a $\Z$-graded algebra $A$, the cooperation is given as an $R$-algebra homomorphism
\[ A \rightarrow A[\Z]=A[T,T^{-1}] \cong A \tensor_R R[T,T^{-1}], a_n \mapsto a_nT^n \, , \]
for homogeneous elements $a_n$ of degree $n$. The elements $r \in R$ map to $r \tensor 1=1 \tensor r$ and $1$ lives in degree $0$, hence also $R$. An $R$-point of the multiplicative group is given by a ring homomorphism $R[T,T^{-1}] \rightarrow R$, i.e., by a unit $u \in R$, and the corresponding $R$-algebra automorphism is $A \rightarrow A, \, a_n \mapsto u^n a_n$. In particular, the conegation is given as $a_n \mapsto (-1)^n a_n$. Conversely, if we start with an action of the group scheme $G_m $ on $\Spec A$ with coaction $\mu^*$, then the $n$th graded component is given as $\{ f \in A| \mu^*(f) = fT^n \}$.

(2) If there is an action of the multiplicative monoid $ ({\mathbb A}^1, \cdot) $ on $\Spec A$, then this extends the action of the multiplicative group. This means that the cooperation lands in $A[T]$, and therefore, the part of negative degree is $0$, so $A$ is $\N$-graded. On the other hand, if $A$ is $\N$-graded, then the cooperation as before lands in $A[T]$.

(3). If $R=A_0$, then $A_+$ is an ideal in $A$ and $A \rightarrow A/A_+ \cong R$ is a cosection to $R \rightarrow A$. The algebraic version of the diagram 
\[ \begin{matrix}  A &  \stackrel{ }{\longleftarrow}  &  A[T] \\ \iota \uparrow & & \uparrow \mu^* \\ R & \stackrel{ \text{mod} A_+ }{ \longleftarrow  }&  A \,  \end{matrix} \]
commutes, as the upper horizontal map sends $T \mapsto 0$. If such a diagram exists with a ring homomorphism $\theta:A \rightarrow R$, then the homomorphism from $A$ to $A$ via $A[T]$ must be modding out $A_+$ and considering $A_0$ in $A$. If we go via $R$, we see that $A_0 \subseteq R$.
\end{proof}

Note that in (1) and (2), it is allowed that $A$ is concentrated in degree $0$.

\begin{Remark}
	\label{moduleschemegraded}
	Lemma \ref{monoidactiongraded} can be applied in the case of a module scheme or a module scheme up to normalization as well (see Section \ref{modulenormalizationsection}), the existence of a $0$-section is part of the axioms of a group scheme (up to normalization). Therefore, in these cases, we get a multiplicative action of ${\mathbb A}^1$ on $Z \rightarrow X$.
\end{Remark}

\begin{Lemma}
\label{moduleschemestandard}
Let $A$ be an $\N$-graded $R$-algebra with $A_0=R$ and endowed with an $R$-algebra homomorphism $\alpha^*: A \rightarrow A \tensor_R A$. Suppose that $R$ contains a field of characteristic $0$. Then the following are equivalent.

\begin{enumerate}

\item
$A$ is standard-graded and the homogeneous elements of degree $1$ are sent to $\alpha^*(a) = a \tensor 1 +1 \tensor a$.

\item
The diagram 
\[   \begin{matrix}  A [S,T] & \stackrel{   \mu^* \tensor \mu^*     }{\longleftarrow} & A \tensor_R A             \\      \alpha^*_{{\mathbb A}^1}         \uparrow &   & \uparrow  \alpha^* \\ A[T]
 & \stackrel{\mu^*}{\longleftarrow }& A \, \end{matrix} \] commutes.

\end{enumerate}
\end{Lemma}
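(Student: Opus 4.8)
The plan is to use that both ways around the square are $R$-algebra homomorphisms out of $A$, so that the square commutes if and only if these two homomorphisms coincide. Recall from Lemma \ref{monoidactiongraded} that $\mu^*$ is the coaction of the multiplicative monoid, sending a homogeneous element $a$ of degree $n$ to $aT^n$, and that $\alpha^*_{{\mathbb A}^1}$ is the coaddition on the scalar variable. I would first record that the composite $\alpha^*_{{\mathbb A}^1}\circ\mu^*$ sends a homogeneous $a$ of degree $n$ to $a(S+T)^n$, while $(\mu^*\tensor\mu^*)\circ\alpha^*$ sends $a$ to $\sum_{p,q}\alpha^*(a)_{p,q}\,S^pT^q$, where $\alpha^*(a)_{p,q}\in A_p\tensor_R A_q$ denotes the bihomogeneous component of $\alpha^*(a)$. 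Thus the square commutes precisely when, for every homogeneous $a$ of degree $n$, these two expressions agree coefficientwise in $S$ and $T$.

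For the direction (1)$\Rightarrow$(2), I would use that standard-gradedness makes $A$ generated as an $R$-algebra by $A_1$ (together with $A_0=R$), so that equality of the two algebra homomorphisms may be tested on $R$ and on $A_1$. On $R=A_0$ both are the structure homomorphism. On a degree-one element $a$ the right-hand leg gives $a(S+T)=aS+aT$, while the left-hand leg applies $\mu^*\tensor\mu^*$ to $\alpha^*(a)=a\tensor 1+1\tensor a$ and also yields $aS+aT$. Multiplicativity then propagates the agreement to all of $A$.

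For the converse (2)$\Rightarrow$(1), I would evaluate the coefficient identity on a homogeneous $a$ of degree $n$ and read off two things. First, comparing the bidegree $(1,n-1)$ part and multiplying the two tensor slots shows $na\in A_1\cdot A_{n-1}$; since $R$ contains a field of characteristic zero, $n$ is invertible for $n\geq 1$, so $a\in A_1\cdot A_{n-1}$, and induction on $n$ yields $A_n=(A_1)^n$, i.e. $A$ is standard-graded. Second, for $a\in A_1$ the $(1,0)$ and $(0,1)$ coefficients force $\alpha^*(a)_{1,0}=a\tensor 1$ and $\alpha^*(a)_{0,1}=1\tensor a$ (the relevant maps $A_1\tensor_R A_0\to A_1$ and $A_0\tensor_R A_1\to A_1$ being isomorphisms), while every component of total degree $\geq 2$ is matched against zero on the right and hence vanishes. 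This gives $\alpha^*(a)=a\tensor 1+1\tensor a$.

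The step I expect to be the main obstacle is the vanishing of the higher bidegree components of $\alpha^*(a)$ for $a\in A_1$, that is, \emph{genuine} primitivity rather than primitivity only after multiplying the two tensor factors together. This is exactly the point where one must keep the two factors of $\mu^*\tensor\mu^*$ separate, so that the commuting square detects each bihomogeneous piece and not merely its image under the multiplication $A\tensor_R A\to A$; otherwise a nonprimitive $\alpha^*$ (adding, say, an antisymmetric term in $A_1\tensor_R A_1$) could slip through. The characteristic-zero hypothesis is likewise indispensable: it is what lets one invert $n$ to obtain standard-gradedness, and standard-gradedness in turn is what confines $\alpha^*(A_1)$ to total degree one. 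Over a field of positive characteristic $p$ the argument breaks down precisely at the degree-$p$ elements, consistent with the fact that the statement is a module-scheme analogue of the theorem of Milnor and Moore.
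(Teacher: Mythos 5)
Your computation tracks the paper's own proof almost line by line: (1)$\Rightarrow$(2) by testing the two $R$-algebra homomorphisms on $A_1$, and (2)$\Rightarrow$(1) by reading off $S,T$-coefficients, inverting $n$ in characteristic zero to get $na\in A_1\cdot A_{n-1}$, and inducting. Your standard-gradedness step is in fact slightly more robust than the paper's, since you extract the bidegree $(1,n-1)$ coefficient directly rather than first claiming that $\alpha^*(a_n)$ is homogeneous of total degree $n$. But the ``main obstacle'' you flag at the end is not a side remark -- it is a genuine fault line, and your proof does not resolve it; it equivocates between the two readings of the top map. Since the codomain in the diagram is $A[S,T]$, the map $\mu^*\tensor\mu^*$ must multiply the two slots, $a_i\tensor b_j\mapsto a_ib_jS^iT^j$; this is the reading in the paper's own proof, and the only one dual to the scalar-distributivity axiom of Section \ref{moduleschemesection} (already for $A=\Sym M$ the two legs agree on a primitive $m$ only because $m\tensor 1$ and $1\tensor m$ both collapse to $m$, giving $mS+mT$). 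Under this reading your direction (1)$\Rightarrow$(2) is fine, but your final step in (2)$\Rightarrow$(1) -- ``every component of total degree $\geq 2$ is matched against zero and hence vanishes'' -- fails: commutativity only forces the image of the bidegree $(p,q)$ component under the multiplication $A_p\tensor_R A_q\to A_{p+q}$ to vanish, and these multiplication maps are not injective. Concretely, $A=R[x,y]$ with $\alpha^*(x)=x\tensor 1+1\tensor x+x\tensor y-y\tensor x$ and $\alpha^*(y)=y\tensor 1+1\tensor y$ satisfies (2), because the antisymmetric term dies under multiplication ($xy\,ST-yx\,ST=0$), yet violates (1). Your proposed fix -- keeping the slots separate, i.e.\ landing in $(A\tensor_R A)[S,T]$ -- rescues this step but destroys the other direction: for $A=\Sym M$ the two legs then disagree, since $(m\tensor 1)(S+T)\neq (m\tensor 1)S+(1\tensor m)T$. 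So no single reading makes both of your directions go through.

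You should know that the paper's proof has the same problem at exactly the spot you identified: it asserts that the upper homomorphism is injective, which is false for the multiplied map (its bidegreewise restrictions are the multiplications $A_p\tensor_R A_q\to A_{p+q}$), so the claimed homogeneity of $\alpha^*(a_n)$, and with it full primitivity, is not actually forced by the square as stated. The missing ingredient is the \emph{other} distributivity diagram of Section \ref{moduleschemesection} (compatibility of the addition with a single scalar), whose dual $A\tensor_R A\to (A\tensor_R A)[T]$, $a_p\tensor b_q\mapsto (a_p\tensor b_q)T^{p+q}$, genuinely keeps the slots separate, is injective on bihomogeneous pieces, and forces $\alpha^*(A_n)\subseteq\bigoplus_{p+q=n}A_p\tensor_R A_q$; combined with your coefficient analysis in bidegrees $(1,0)$ and $(0,1)$ this yields honest primitivity. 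For a correct biconditional, that square should be added to condition (2); both squares hold for any module scheme, so the application in Theorem \ref{moduleschememilnormoore} is unaffected.
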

\begin{proof}
We have a look at the diagram, which expresses the distributivity with respect to scalars. A homogeneous element $a_n \in A_n$ is sent to $a_n (S+T)^n$ in the upper left corner via the lower left path. The upper map sends $a_i \tensor b_j $ to $ a_i b_j S^iT^j$. 

The formula for $\alpha^*(a)$ means that the diagram commutes for elements of degree $1$. In the standard-graded case, the algebra is generated by elements of degree $1$ and then the diagram commutes in general (this holds without the assumption on the characteristic).

Suppose now that the diagram commutes. Then, $\alpha^*(a_n)$ must be homogeneous of degree $n$, because the other homomorphisms are homogeneous and injective. Hence, $\alpha(a_n) = \sum_{i = 0}^n b_i$ where $b_i= \sum_{m \in M_i} c_{i,m} \tensor d_{n-i, m}$ with homogeneous elements $c_{i,m}, d_{n-i,m}$ of the first index as degree. This is sent to $\sum_{i=0}^n ( \sum_{m \in M_i} c_{i,m} d_{n-i,m} ) S^iT^j$ and we must have 
\[\sum_{m \in M_i} c_{i,m} d_{n-i,m}  = \binom{n}{i}a_n \, .\]
In particular, for $n=1$, this means that $a \in A_1$ is sent via $\alpha^*$ to an expression of the form $1 \tensor b +c\tensor 1 $ with $b,c \in A_1$, and we deduce $b=c=a$. Suppose now $n \geq 2$. Applying the displayed formula for $i=1$ shows that $a_n$ is contained in the subalgebra generated by elements of smaller degrees. Hence, standard-graded follows by induction.
\end{proof}

\begin{Remark}
\label{milnormoorehopf}
Positively graded Hopf-algebras $A$ with $A_0=K$ a field (called connected) appear in the work of Milnor and Moore. If $K$ has characteristic $0$, they prove results (the theorem of Milnor-Moore) that imply (under commutative and  cocommutative assumption) that $A$ is a polynomial algebra over $K$, generated by homogeneous elements, see \cite[Corollary 4.18]{milnormoorehopf} and \cite[Theorem 3.8.3]{cartierprimerhopf}. Translated into Grothendiecks terminology, this means via Lemma \ref{monoidactiongraded} and Lemma \ref{moduleschemestandard} that a module scheme over the spectrum of a field of characteristic zero is a vector space (the spectrum of the symmetric algebra of a vector space).
\end{Remark}

\begin{example}
Let $K$ be a field of positive characteristic $p>0$ and let $ A=K[W]/(W^p)$. Then 
\[A \longrightarrow A \tensor_K A \cong K[W,Z]/(W^p,Z^p), \,   W \mapsto W +Z \, ,\]
is a ring homomorphism, as $(W+Z)^p=W^p+Z^p=0$, and this gives a cocommutative coaddition. The zero section is given by $W \mapsto 0$, and the conegation is given by $W \mapsto -W$. A co-scalar multiplication is given by
\[ A \longrightarrow A \tensor_K K[T] \cong A[T],\,  W \longmapsto WT \, .\]
\end{example}

\begin{Theorem}
\label{moduleschememilnormoore}
Let $R$ be a commutative ring containing of field of characteristic $0$ and let $\Spec A \rightarrow \Spec R$ be a module scheme of finite type. Then $A$ is the symmetric algebra of a finitely generated $R$-module (and the module scheme structure is the one from Lemma \ref{symmetricmodulescheme}).	
\end{Theorem}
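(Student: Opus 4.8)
The plan is to use the earlier structural lemmas to reduce the statement to a clean fact about graded bialgebras over $R$, and then to prove that fact directly by a primitive-element argument that replaces the field-theoretic Milnor--Moore input of Remark \ref{milnormoorehopf} by a computation valid over any $R \supseteq \Q$. First I would invoke Remark \ref{moduleschemegraded}: the scalar multiplication $\cdot \colon {\mathbb A}^1_X \times_X \Spec A \to \Spec A$ is a multiplicative action of the monoid scheme $({\mathbb A}^1, \cdot)$, so by Lemma \ref{monoidactiongraded}(2) it endows $A$ with an $\N$-grading with $R \subseteq A_0$, and the zero section of the group scheme together with Lemma \ref{monoidactiongraded}(3) forces $A_0 = R$. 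The coaddition $\alpha^* \colon A \to A \tensor_R A$ satisfies the distributivity-in-the-scalars axiom (one of the defining module-scheme diagrams), so Lemma \ref{moduleschemestandard}, using $\Char R = 0$, shows that $A$ is standard-graded and that every $a \in A_1$ is primitive, $\alpha^*(a) = a \tensor 1 + 1 \tensor a$.

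Next I would extract the candidate module. Since $\Spec A \to \Spec R$ is of finite type, $A$ is a finitely generated $R$-algebra; being standard-graded, its degree-one component $M := A_1$ is then a finitely generated $R$-module (the degree-one homogeneous parts of an algebra generating set already generate $M$ over $A_0 = R$), and $A$ is generated by $M$ over $R$. The inclusion $M = A_1 \hookrightarrow A$ therefore induces, by the universal property of the symmetric algebra, a \emph{surjective} graded $R$-algebra homomorphism $\phi \colon \Sym_R(M) \to A$ that is an isomorphism in degrees $0$ and $1$. Moreover $\phi$ is a morphism of graded bialgebras: both $\alpha^* \circ \phi$ and $(\phi \tensor \phi) \circ \Delta$, where $\Delta$ is the standard coaddition of Lemma \ref{symmetricmodulescheme}, are $R$-algebra homomorphisms $\Sym_R(M) \to A \tensor_R A$ which agree on the generators $M$, hence coincide.

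The main point, and the step I expect to be the real obstacle, is to upgrade $\phi$ from surjective to injective without assuming $R$ is a field. I would argue by contradiction: if $\ker \phi \neq 0$, pick the least degree $n$ with $(\ker\phi)_n \neq 0$, necessarily $n \geq 2$, and choose $0 \neq x \in (\ker\phi)_n$. Writing the reduced coproduct $\bar\Delta(x) = \Delta(x) - x \tensor 1 - 1 \tensor x \in \bigoplus_{0 < i < n} \Sym^i_R(M) \tensor_R \Sym^{n-i}_R(M)$ and applying the bialgebra compatibility gives $(\phi \tensor \phi)(\bar\Delta(x)) = \alpha^*(\phi(x)) = 0$; as the summand in $\Sym^i_R(M) \tensor_R \Sym^{n-i}_R(M)$ maps isomorphically onto the graded piece $A_i \tensor_R A_{n-i}$ for each $0 < i < n$ (a tensor product of the isomorphisms $\phi_i$ and $\phi_{n-i}$), this forces $\bar\Delta(x) = 0$, i.e.\ $x$ is a primitive element of $\Sym_R(M)$ in degree $n \geq 2$. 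But in characteristic zero there are no such elements: the endomorphism $\nabla \circ \Delta$, with $\nabla$ the multiplication, is the $R$-algebra map extending $m \mapsto 2m$, hence equals $2^n \cdot \id$ on $\Sym^n_R(M)$, while primitivity gives $\nabla(\Delta(x)) = 2x$; comparing yields $(2^n - 2)x = 0$, and $2^n - 2 \in \Q^\times \subseteq R^\times$ for $n \geq 2$, so $x = 0$, a contradiction. Hence $\phi$ is an isomorphism.

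Finally I would check that under $\phi$ the full module-scheme structure on $\Spec A$ matches the one of Lemma \ref{symmetricmodulescheme}: $\phi$ is graded, so it intertwines the scalar actions; it intertwines the coadditions by the computation above; and the zero section and the conegation are the unique algebra maps determined on the degree-one generators, where they are $m \mapsto 0$ and $m \mapsto -m$, so they correspond as well. The essential new content beyond the cited lemmas is thus concentrated in the injectivity argument, where working over a general base ring containing $\Q$ rather than a field is handled by the invertibility of $2^n - 2$ instead of a fiberwise appeal to Milnor--Moore.
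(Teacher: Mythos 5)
Your proof is correct, and while it follows the paper's skeleton up to the decisive step (Lemma \ref{monoidactiongraded} and Lemma \ref{moduleschemestandard} give the standard grading with $A_0=R$ and primitivity in degree one; $M=A_1$ is finitely generated; the universal property yields a surjective graded bialgebra map $\phi\colon \Sym_R(M)\to A$), your injectivity argument is genuinely different from the paper's. The paper localizes at a prime $\idealp$ in the support of the degree-$n$ kernel, applies Nakayama to get $U\tensor\kappa(\idealp)\neq 0$, and then invokes the field-case theorem of Milnor--Moore (via \cite[Theorem 3.8.3]{cartierprimerhopf}) for the fiber $\Sym(M\tensor_R\kappa(\idealp))\to A\tensor_R\kappa(\idealp)$ to reach a contradiction. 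You instead take a minimal-degree kernel element $x$, use that $\phi_i\tensor\phi_{n-i}$ is an isomorphism on the middle components $\Sym^i M\tensor_R\Sym^{n-i}M$ for $0<i<n$ to conclude $\bar\Delta(x)=0$, and then kill the resulting primitive of degree $n\geq 2$ by comparing $\nabla\circ\Delta=2^n\cdot\id$ on $\Sym^n M$ with $\nabla(\Delta(x))=2x$, using that $2^n-2$ is invertible in $R\supseteq\Q$. In effect you re-prove the exact fragment of Milnor--Moore that is needed, directly over the base ring. What each approach buys: the paper's route is shorter on the page and makes the advertised connection to the classical Hopf-algebra literature (Remark \ref{milnormoorehopf}) explicit, but its fiberwise reduction is more delicate than it looks --- one must still convert $U\tensor\kappa(\idealp)\neq 0$ into a nonzero element of the kernel of the fiber map, and since $U\tensor\kappa(\idealp)\to \Sym^n M\tensor\kappa(\idealp)$ need not be injective (only the cokernel behaves well under base change), this requires an extra word, as does the implicit finite generation of $U$ needed for Nakayama. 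Your argument is self-contained, avoids localization and base change entirely, works uniformly over any $R$ containing $\Q$, and sidesteps these subtleties; the only inputs are the gradedness of $\phi$, the bialgebra compatibility you verify on generators, and the invertibility of $2^n-2$. Your concluding check that the zero section, conegation and scalar action transport correctly under $\phi$ matches what the paper leaves implicit in the phrase ``respects the module scheme structures.''
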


\begin{proof}
By Lemma \ref{monoidactiongraded} and Lemma \ref{moduleschemestandard}, $A$ is a standard-graded $R$-algebra, and the scalar comultiplication and coaddition are given as described there. Let $M=A_1$, which is finitely generated. By the universal property of the symmetric algebra, there is a surjective graded ring homomorphism $ \Sym M \rightarrow A $, which respects the module scheme structures. We have to show that this homomorphism is injective. We may assume that $R\neq 0$ and $M \neq 0$. Assume that $ U \subseteq \Sym^n M$, $U \neq 0$, is the kernel in degree $n$. This situation holds also after localizing at some prime ideal $\idealp$. Going modulo the maximal ideal ${\idealp}R_\idealp$, we get by the Lemma of Nakayama that $U \tensor \kappa(\idealp) \neq 0$, but this is sent to $0$ under
\[ \Sym (M \tensor_R \kappa(\idealp) ) = ( \Sym M) \tensor_R \kappa (\idealp)  \longrightarrow A \tensor_R  \kappa(\idealp) \, .\]
This contradicts the theorem of Milnor-Moore for the field case, see \cite[Theorem 3.8.3]{cartierprimerhopf}.	
\end{proof}

\section{Module schemes up to normalization}
\label{modulenormalizationsection}

We cannot expect that a quotient scheme $Z_\rho \rightarrow X$ is a module scheme, as the results of Section \ref{productssection} suggest that there is no natural morphism
\[Z_\rho \times_X Z_\rho \longrightarrow Z_\rho \]
which could take the role of the addition (see Example \ref{quotientschemeadditionfails} for a concrete example). However, there is such a morphism when we replace the left-hand side with its normalization, which is isomorphic to $ Z_{\rho \times \rho}$ by Theorem \ref{normalproductcompatibility} (for $Y$ normal).

\begin{Definition}
Let $ Z \rightarrow X$ be an affine scheme over a separated scheme $X$. We say that $Z$ is a \emph{module scheme up to normalization} if the following holds.
\begin{enumerate}
\item 
There exists an open and dense subset $U \subseteq X$ such that $Z|_U \rightarrow U$ is a module scheme.
		
\item
There exists a zero section $X \rightarrow Z $, a negation $-:Z \rightarrow Z$ over $X$, a scalar multiplication $ \cdot : {\mathbb A }^1_X \times_X Z \rightarrow Z $ extending the given structures on $U$.
		
\item
There exists a morphism
\[ \alpha : \widetilde{ (Z \times_X Z) } \longrightarrow Z \, ,\]
extending $\alpha$ on $U$, where $ \widetilde{ (Z \times_X Z) } \rightarrow Z \times_X Z$ is a finite birational morphism.
		
\end{enumerate} 
\end{Definition}

If $Z$ is a module scheme, then it is also a module scheme up to normalization ($U=X$). In the affine situation, if $\Spec A \rightarrow \Spec R$ is a module scheme up to normalization, then we see by Lemma \ref{monoidactiongraded} and Remark \ref{moduleschemegraded} that $A$ is an $\N$-graded $R$-algebra and that the conegation and the zero section are determined as in the case of a module scheme.

The diagram from Section \ref{moduleschemesection} translates to 
\[  \begin{matrix} \widetilde{ Z \times_X Z \times_X Z } & \stackrel{ \widetilde{ \alpha \times   \operatorname{Id}_{ Z } }  }{ \longrightarrow } & \widetilde{Z \times_X Z }  \\ \widetilde{ \operatorname{Id}_{ Z } \times \alpha } \downarrow & & \downarrow \alpha  \\ \widetilde{Z \times_X Z} & \stackrel{\alpha}{\longrightarrow} &  Z \, , \end{matrix}\]  
where
$\widetilde{ \alpha \times \operatorname{Id}_Z }$ is an extension of $ \alpha \times \operatorname{Id} : \widetilde{ (Z \times_X Z) } \times_XZ \longrightarrow Z \times_X Z $ to suitable finite birational extensions.

\begin{Remark}
The data that constitute a module scheme up to normalization are determined by the data on the restriction to $U$, by \cite[Proposition 7.2.2]{ega1}. The only relevant question is whether these data can be extended to some $\widetilde{ Z \times_XZ}$. Since the properties are expressed by the commutativity of certain diagrams, i.e., identities between morphisms, these identities only need to be checked on a dense open subset.
\end{Remark}

\begin{Example}
A module scheme $Z \rightarrow U $ over an open subset $U \subset X$, considered as a scheme over $X$, is not a module scheme up to normalization since the zero section does not extend.	
\end{Example}

\begin{Example}
We consider the module scheme $Y=X \times {\mathbb A}^1 \stackrel{\pi}{\rightarrow} X$, the trivial vector bundle of rank one over $X$. If we remove a point $Q \neq 0$ from $Y$ over $P\in X$ and set $Z=  Y \setminus \{ Q \}$, then $Z|_{X \setminus \{P\} } \rightarrow {X \setminus \{P\} } $ is a module scheme and the zero section extends to $Z \rightarrow X$, but the ${\mathbb A}^1$-scalar multiplication does not extend.
\end{Example}

\begin{Lemma}
\label{moduleschemeuptoconstruction}
\begin{enumerate}
\item
The product $ Z_1 \times_X Z_2 $ of two module schemes up to normalization over a separated scheme $X$ is a module scheme up to normalization.
	
\item
The pull-back of a module scheme up to normalization $Z \rightarrow X $ under a dominant morphism $X' \rightarrow X$ is a module scheme up to normalization.
	
\item 
The restriction of a module scheme up to normalization $Z \rightarrow X $ to a closed irreducible subset $Y \subseteq X$ meeting the locus $U$ where we have the structure of a module scheme is a module scheme up to normalization.

\item
Let $X=\Spec R$ with a normal domain $R$, and suppose that $Z \rightarrow X$ is a module scheme up to normalization, which is generically a vector bundle. Then, the normalization $\tilde{Z} \rightarrow X$ is a module scheme up to normalization.
\end{enumerate}
\end{Lemma}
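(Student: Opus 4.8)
The plan is to exploit the remark preceding this lemma: the module-scheme axioms are identities between morphisms, so once the required extensions exist they hold automatically, because they already hold on the dense open locus carrying a genuine module scheme. Hence in each of the four cases I only need to exhibit (i) an open dense subset of the new base over which the new total space is an honest module scheme, (ii) extensions of the zero section, the negation and the scalar multiplication, and (iii) a finite birational cover of the self-product carrying the addition. For (i) I will use that the class of module schemes is stable under fibre products over $X$, under base change, and under restriction of the base, since all structure morphisms are built componentwise and $\A^1$ acts diagonally; the verification of the module axioms on this open set is then immediate.

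For parts (1)--(3) the choice of the open set is forced. In (1) I set $U=U_1\cap U_2$, which is dense because the intersection of two dense opens is dense; over $U$ the scheme $(Z_1\times_X Z_2)|_U=Z_1|_U\times_U Z_2|_U$ is a product of module schemes, hence a module scheme, and the zero section, negation and scalar multiplication are obtained componentwise. For the addition I use the reshuffling isomorphism $(Z_1\times_XZ_2)\times_X(Z_1\times_XZ_2)\cong(Z_1\times_XZ_1)\times_X(Z_2\times_XZ_2)$ and put $\widetilde{(Z_1\times_XZ_2)\times_X(Z_1\times_XZ_2)}:=\widetilde{Z_1\times_XZ_1}\times_X\widetilde{Z_2\times_XZ_2}$, composing the finite birational projection with $\alpha_1\times\alpha_2$ to land in $Z_1\times_XZ_2$. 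In (2) I take $U'=g^{-1}(U)$, dense since the dominant $g$ sends the generic point of $X'$ into $U$, set $Z'=Z\times_XX'$, and use $\widetilde{Z\times_XZ}\times_XX'\to Z'\times_{X'}Z'$ together with $\alpha\times\id_{X'}$ as the addition. In (3) I take $U\cap Y$, nonempty and dense in the irreducible $Y$, and restrict $\widetilde{Z\times_XZ}\to Z$ to $Y$. In all three cases the required cover is a product, a base change, or a restriction of the finite birational morphism $\widetilde{Z\times_XZ}\to Z\times_XZ$.

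For part (4) I first shrink to $U'=U\cap U_0$, where $U_0$ is the dense open on which $Z$ is a vector bundle; there $Z|_{U'}$ is already normal, so $\tilde Z|_{U'}=Z|_{U'}$ is a module scheme. The zero section lifts to $\tilde Z$ because $X$ is normal and the normalization $\nu:\tilde Z\to Z$ is finite and birational, so the closed immersion $0:X\to Z$ factors through $\nu$ by the universal property of normalization; the negation lifts by functoriality of normalization. For the scalar multiplication I note that $\A^1_X\times_X\tilde Z\cong\tilde Z[t]$ is normal and that the composite to $Z$ is dominant, so it factors through $\tilde Z$. Finally I let $N$ be the normalization of $Z\times_XZ$, which is simultaneously the normalization of $\widetilde{Z\times_XZ}$ and of $\tilde Z\times_X\tilde Z$; then $N\to\widetilde{Z\times_XZ}\xrightarrow{\alpha}Z$ is dominant from a normal scheme, hence factors through $\tilde Z$, and I set $\widetilde{\tilde Z\times_X\tilde Z}:=N$, which is finite and birational over $\tilde Z\times_X\tilde Z$.

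The main obstacle is the third ingredient in every case, namely keeping the cover finite and birational. Finiteness is never a problem, since finite morphisms are stable under products, base change and restriction, and the normalization is finite in our finite-type setting. Birationality is the delicate point: one must check that the locus where the cover is already an isomorphism, namely the preimage of $U'$, remains dense after the product, base change or restriction. This is exactly where the generic vector-bundle structure enters, for then the relevant generic fibres are affine spaces, the fibre products and restrictions in question stay integral, and the dense isomorphism locus is preserved. In part (4) this difficulty is replaced by the need to lift the structure morphisms through $\nu$; there the dominance of those morphisms together with the normality of their sources, and the normality of $X$ for the zero section, is precisely what makes the universal property of normalization applicable.
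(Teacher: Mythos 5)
Your parts (1)--(3) are essentially the paper's own proof: the same componentwise structures, the same reshuffling isomorphism $(Z_1\times_X Z_1)\times_X(Z_2\times_X Z_2)\cong (Z_1\times_X Z_2)\times_X(Z_1\times_X Z_2)$ with $\widetilde{Z_1\times_X Z_1}\times_X\widetilde{Z_2\times_X Z_2}$ as the finite birational cover in (1), the base change of $\widetilde{Z\times_X Z}\rightarrow Z\times_X Z$ along $X'\rightarrow X$ in (2), and restriction to $Y$ with birationality checked on $Y\cap U$ in (3). One misattribution in your closing paragraph: you say the density of the isomorphism locus in (1)--(3) is "exactly where the generic vector-bundle structure enters," but no vector bundle is assumed there; the definition only provides a module scheme over a dense open $U$, and density of $U_1\cap U_2$, of $g^{-1}(U)$, respectively of $Y\cap U$, is all that is used (on the dominant base-change case the paper is no more detailed than you are). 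In (4), your lifts of the negation, the scalar multiplication, and the addition --- via $N=$ normalization of $Z\times_X Z$, which is simultaneously the normalization of $\widetilde{Z\times_X Z}$ and of $\tilde Z\times_X\tilde Z$, and the universal property for the dominant morphism $N\rightarrow Z$ --- are a correct geometric reformulation of the paper's algebraic argument ($A\subseteq\tilde A\subseteq A_f$ with $A_f$ a graded polynomial ring, coaddition extended by integrality into $\widetilde{A\otimes_R A}$).

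The genuine gap is the zero-section step in (4). You write that the closed immersion $0\colon X\rightarrow Z$ factors through $\nu\colon\tilde Z\rightarrow Z$ "by the universal property of normalization." That property applies to \emph{dominant} morphisms from normal schemes, and the zero section is not dominant, so the cited principle does not apply --- and it is false as a general principle. Concretely, take $Z=\Spec k[u,v,b]/(v^2-ub^2)\rightarrow X=\Spec k[u]$ with the section $v=b=0$: the normalization is $\Spec k[a,b]$ with $u=a^2$, $v=ab$, and a lift of the section would require a square root of $u$ in $k[u]$, which does not exist, even though $X$ is normal and $\nu$ is finite birational. (This $Z$ is of course not a module scheme up to normalization, but it shows that your step needs the extra structure, not just normality of $X$ and finiteness of $\nu$.) The claim itself is true and there are two repairs. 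Geometric: over $U'$ the normalization is an isomorphism, so the zero section lifts there; the closure of its image in $\tilde Z$ is integral and finite birational over the normal scheme $X$, hence maps isomorphically to $X$, giving the lift. Algebraic, as in the paper: $\tilde A$ is $\N$-graded (being squeezed between the graded rings $A$ and $A_f$), and $\tilde A_0\subseteq (A_f)_0=R_f\subseteq Q(R)$ is integral over $R$, hence equals $R$ by normality of $R$; then $\tilde A\rightarrow\tilde A/\tilde A_+=R$ is the zero section by Lemma \ref{monoidactiongraded}(3). Note this is the one place where normality of the base $R$, as opposed to normality of the total space, genuinely enters, and it is precisely the step your dominance trick cannot reach.
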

\begin{proof}
(1) The morphisms $ \alpha_1: \widetilde{Z_1 \times_X Z_1} \rightarrow Z_1$ and $ \alpha_2: \widetilde{Z_2 \times_X Z_2} \rightarrow Z_2$ yield a morphism
\[   ( \widetilde{Z_1 \times_X Z_1} ) \times_X ( \widetilde{Z_2 \times_X Z_2} ) \stackrel{\alpha_1 \times \alpha_2}{ \longrightarrow} Z_1 \times_X Z_2 \, . \]
The left-hand side is finite and birational over \[(Z_1  \times_X Z_1) \times_X (Z_2 \times_X Z_2) =(Z_1  \times_X Z_2) \times_X (Z_1 \times_X Z_2) \, ,\]
which gives the addition up to normalization.
	
(2) We set $Z'=X' \times_X Z$, and we have $Z' \times_{X'} Z'= X' \times_X Z \times_X Z $. We look at the commutative diagram
\[  \begin{matrix}  \widetilde{Z \times_X Z} & \stackrel{\alpha}{\longrightarrow} & Z \\   \theta \downarrow  & & \downarrow \\   Z \times_X Z &  \longrightarrow & X  \end{matrix} \]
and pull it back via $X' \longrightarrow X$. The resulting morphism
\[ \theta': X' \times_X \widetilde{ Z \times_X Z} \longrightarrow X' \times_X Z \times_X Z =Z' \times_{X'} Z' \]
is again finite and, due to the dominance assumption, also birational.

(3) We have $(Z \times_X Z)|_Y = Z|_Y \times_Y Z|_Y$, and the finite birational map $\widetilde{Z \times_X Z} \rightarrow Z \times_X Z$ yields a finite map after the restriction to $Y$. The restriction to $Y \cap U$ shows that this map is also birational. The addition $\alpha: \widetilde{Z \times_X Z} \rightarrow Z$ restricts to an addition $\alpha: \widetilde{Z \times_X Z} |_Y \rightarrow Z|_Y$. The other properties are clear.

(4) Let $R \rightarrow A$ be the algebra. $A$ is an $\N$-graded $R$-algebra with $A_0=R$ by Lemma \ref{monoidactiongraded}. Let $f \in R$, $f \neq 0$, be such that $A_f$ is a polynomial ring over $R_f$. In particular, $A_f$ is normal, $\N$-graded and the normalization of $A$ is in between $ A \subseteq  \tilde{A} \subseteq A_f$. It follows that the normalization is also $\N$-graded, which gives the scalar action of ${\mathbb A}^1$ on $\Spec \tilde{A}$ and the zero section. The coaddition $\alpha^*: A \rightarrow \widetilde{A \tensor A} $ sits in the commutative diagram
\[ \begin{matrix}  A  & & A \tensor_RA \\ \downarrow &  \searrow \alpha^* & \downarrow \\ \tilde{A} && \widetilde{A \tensor_R A} \\ \downarrow & & \downarrow \\ A_f & \longrightarrow & A_f \tensor_{R_f} A_f\, .  \end{matrix} \]
The image of an element $a \in \tilde{A} $ in $A_f \tensor_{R_f} A_f$ fulfils an integral equation over $ \alpha^*(A)$, so it belongs to $ \widetilde{A \tensor_R A} $, which coincides with the normalization of $\tilde{A} \tensor_R \tilde{A}$. The other properties of a module scheme up to normalization are clear.
\end{proof}

\begin{Remark}
As the beginning of the proof of Lemma \ref{moduleschemeuptoconstruction} shows, for every pull-back of a module scheme up to normalization along $X' \rightarrow X$, we get an addition after a finite modification $\widetilde{Z' \times_{X'} Z' } \rightarrow Z' $, which is in general not birational. In particular, for every point $P \in X$ we get an addition in the fiber $Z_P$ up to modification, a finite morphism $ \widetilde{Z_P \times Z_P} \rightarrow Z_P$.
\end{Remark}

\section{Linear actions on module schemes}
\label{linearsection}

We describe the morphisms between module schemes up to normalization which respect the module structure, and the linear actions of a group on them.

\begin{Definition}
Let $ Z_1,Z_2 \rightarrow X$ be module schemes up to normalization. A morphism $\varphi:Z_1 \rightarrow Z_2$ over $X$ is called \emph{linear} if the diagram	
\[  \begin{matrix} {\mathbb A}^1 \times Z_1 & \stackrel{ \operatorname{Id}  \times \varphi }{ \longrightarrow} &  {\mathbb A}^1 \times Z_2  \\ \cdot \downarrow & &  \downarrow \cdot  \\  Z_1 & \stackrel{\varphi}{\longrightarrow} & Z_2      \end{matrix}  \] 
commutes and if  after suitable finite birational extensions, the diagram
\[  \begin{matrix} \widetilde{Z_1 \times Z_1} & \stackrel{\widetilde{\varphi \times \varphi}}{ \longrightarrow} & \widetilde{Z_2 \times Z_2} \\ \alpha_1 \downarrow & & \downarrow  \alpha_2 \\  Z_1 & \stackrel{\varphi}{\longrightarrow} & Z_2      \end{matrix}  \] 
commutes.
\end{Definition}

For a module scheme, the same definition applies. Typical examples are linear morphisms between vector bundles over $X$.

\begin{example}
	\label{matrixmoduleschemelinear}
For an $m \times n$-matrix $B$ over $R$, we get a linear morphism ${\mathbb A}^n_{\Spec R} \rightarrow {\mathbb A}^m_{\Spec R} $ between free module schemes (notice the change in the notation compared to Example \ref{moduleschemepresentation}, which is due to the contravariance).
\end{example}

\begin{Remark}
Homomorphisms of module schemes have kernels, but, like group schemes, in general no cokernels, which is clear by looking at ${\mathbb A}^1_R \rightarrow {\mathbb A}^1_R$ given by $T \mapsto fT$ for a non-unit, non zero divisor $f \in R$. But also the kernel as a module and as a module scheme may differ. For $R=K[X,Y]$ and the matrix $(x,y)$ as in Example \ref{matrixmoduleschemelinear}, we get the linear map 
\[ {\mathbb A}^2 \times {\mathbb A}^2 \longrightarrow {\mathbb A}^2 \times {\mathbb A}^1, (x,y,s,t) \longmapsto (x,y,xs+yt)  \, \]
of module schemes above $ {\mathbb A}^2 $. The kernel is a module scheme, where the fiber dimension is one on the punctured plane but two above the origin (which is also the answer to the linear algebra question: determine the kernel of the linear map $(s,t) \mapsto xs+yt$ in dependence of the parameters $(x,y)$). The module kernel is just $R$, which equals the module of global sections in the module scheme kernel.
\end{Remark}

\begin{example}
For an $R$-module homomorphism $M \rightarrow N$ between $R$-modules $M$ and $N$, we obtain a natural $R$-algebra homomorphism $\Sym (M) \rightarrow \Sym(N) $ and hence a morphism $\Spec \Sym (N) \rightarrow \Spec \Sym(M) $, which is linear as a morphism between module schemes. If we want a morphism in the same direction, we first have to go to the dual homomorphism  $N^\dual \rightarrow M^\dual $.
\end{example}

\begin{Lemma}
\label{moduleschemeuptolinear}
Let $Z_1,Z_2,Z_3,Z_4 \rightarrow X$ be module schemes up to normalization over $X$, and let $\varphi: Z_1 \rightarrow Z_2$ and $\psi: Z_3 \rightarrow Z_4$ be linear morphisms. Then, the following hold.
		
\begin{enumerate}
\item 
The composition of linear morphisms is linear.
	
\item
The morphism $\varphi \times \psi: Z_1 \times_X Z_3 \rightarrow Z_2 \times_X Z_4 $ is linear.
			
\item
The pull-back $ X' \times_X Z_1 \rightarrow X' \times_X Z_2 $ is linear  under a dominant morphism $ X' \rightarrow X $.
		
\item 
If the schemes $Z_1$ and $Z_2$ are of finite type, then $ \varphi $ is determined by the restriction $ \varphi|_U: (Z_1)|_U \rightarrow  (Z_2)|_U $ for any open dense subset $ U \subseteq X $. One can choose $U $ such that $ \varphi|_U $ is a morphism of module schemes.		
\end{enumerate}
\end{Lemma}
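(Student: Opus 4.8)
The plan is to reduce every assertion to the dense open locus $U\subseteq X$ over which all schemes in sight are honest module schemes, and to use the fact recorded in the Remark following Lemma \ref{moduleschemeuptoconstruction} that the structure data of a module scheme up to normalization — hence every identity expressing linearity — is determined by its restriction to a dense open subset. Throughout I use that $X$ is integral and separated and that the relevant products $Z_i\times_X Z_j$ are integral (this is built into the notion through the word ``birational'', and holds in our applications, where the $Z_i$ are generically vector bundles over the integral $X$), so that a morphism from an integral scheme into a separated $X$-scheme is determined on any dense open. I also use that the finite birational extensions of a fixed integral scheme $V$ form a directed system: they correspond to the ${\mathcal O}_V$-subalgebras of the function field that are finite as ${\mathcal O}_V$-modules, any two are dominated by their compositum, and all are dominated by the normalization. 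The one genuine difficulty, which this directedness is designed to defuse, is that the extensions $\widetilde{\,\cdot\,}$ are non-canonical, so composing or combining linear morphisms forces one to match up extensions chosen independently; the remedy is always to pass to a common finite birational refinement and then verify the identities only on $U$, propagating them by density and separatedness.

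For (1), consider composable linear morphisms $Z_1\xrightarrow{\varphi}Z_2\xrightarrow{\vartheta}Z_3$. The scalar-multiplication square for $\vartheta\circ\varphi$ is the vertical composite of the scalar squares for $\varphi$ and $\vartheta$, so it commutes. For the additive square, take witnessing data $\widetilde{\varphi\times\varphi}$ and $\widetilde{\vartheta\times\vartheta}$; after replacing the extension of $Z_2\times_X Z_2$ by a common refinement dominating the target of the first and the source of the second, and correspondingly refining the extension of $Z_1\times_X Z_1$ so that $\widetilde{\varphi\times\varphi}$ lifts, the composite $\widetilde{\vartheta\times\vartheta}\circ\widetilde{\varphi\times\varphi}$ lifts $(\vartheta\circ\varphi)\times(\vartheta\circ\varphi)$ and satisfies
\[ \alpha_3\circ\big(\widetilde{\vartheta\times\vartheta}\circ\widetilde{\varphi\times\varphi}\big)=\vartheta\circ\alpha_2\circ\widetilde{\varphi\times\varphi}=\vartheta\circ\varphi\circ\alpha_1 . \]
This identity holds because it holds over $U$: both sides are morphisms from an integral scheme finite and birational over $Z_1\times_X Z_1$ into the separated $Z_3$, so agreement on a dense open suffices.

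Parts (2) and (3) follow the same pattern, now invoking Lemma \ref{moduleschemeuptoconstruction}(1) resp. (2) to know that source and target are again module schemes up to normalization, with scalar action and addition built componentwise resp. by base change. For (2), the scalar square and the additive square for $\varphi\times\psi$ are literally the fibre products over $X$ of the corresponding squares for $\varphi$ and for $\psi$ (the addition on $Z_1\times_X Z_3$ being carried by $(\widetilde{Z_1\times_X Z_1})\times_X(\widetilde{Z_3\times_X Z_3})$ as in the proof of Lemma \ref{moduleschemeuptoconstruction}(1)), so commutativity of each factor gives commutativity of the product. For (3), one pulls the two linearity squares of $\varphi$ back along the dominant $X'\to X$; the finite birational extension $\widetilde{Z_1\times_X Z_1}$ pulls back to a finite birational extension over $X'$ precisely because $X'\to X$ is dominant, exactly as in Lemma \ref{moduleschemeuptoconstruction}(2), and the pulled-back squares witness linearity of the pull-back of $\varphi$. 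In both cases the required identities hold on a dense open and therefore everywhere.

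For (4), let $U$ be a dense open over which both $Z_1$ and $Z_2$ are module schemes. Since $Z_1\to X$ has a zero section it is dominant, so $Z_1|_U=Z_1\times_X U$ is a nonempty open, hence dense, subset of the irreducible $Z_1$; as $Z_2$ is separated, a morphism $Z_1\to Z_2$ over $X$ is determined by its restriction to $Z_1|_U$, which gives the first assertion (for any dense open $U$). For the second assertion, work over this $U$: there $\alpha_1,\alpha_2$ are genuine module-scheme additions and $\varphi\times\varphi$ is a genuine morphism $(Z_1\times_X Z_1)|_U\to(Z_2\times_X Z_2)|_U$. The extension $\widetilde{Z_1\times_X Z_1}\to Z_1\times_X Z_1$ is an isomorphism over a dense open $O$, and on $O$ the linearity of $\varphi$ gives $\alpha_2\circ(\varphi\times\varphi)=\varphi\circ\alpha_1$. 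Both sides are morphisms of the integral scheme $(Z_1\times_X Z_1)|_U$ into the separated $Z_2|_U$ agreeing on the dense open $O\cap U$, hence equal; together with the scalar square this shows that $\varphi|_U$ is a morphism of module schemes.
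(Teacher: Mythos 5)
Your proof is correct and takes essentially the same route as the paper: parts (2) and (3) rest on Lemma \ref{moduleschemeuptoconstruction}, part (4) on the principle that a morphism from an integral scheme to a separated one is determined on a dense open (the paper's appeal to \cite[Proposition 7.2.2]{ega1}), and part (1), which the paper dismisses as clear, is handled by the same density principle combined with the directedness of finite birational extensions. The only difference is that you spell out the refinement bookkeeping (common compositum extensions, lifting $\widetilde{\varphi\times\varphi}$, checking identities over $U$) that the paper leaves implicit.
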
 

\begin{proof}
(1) is clear. (2) and (3) follow from Lemma \ref{moduleschemeuptoconstruction}. (4) is an application of \cite[Proposition 7.2.2]{ega1}. For $U$ sufficiently small, we get module schemes.
\end{proof}

\begin{Remark}
A symmetric algebra $\Spec \Sym M \rightarrow \Spec R$ for a finitely generated $R$-module $M$ comes naturally together with a linear injective mapping $\Spec \Sym M \rightarrow \Spec R \times {\mathbb A}^\nu $ above $\Spec R$, defined by a surjection $R^\nu \rightarrow M$, where $\nu$ is the number of generators of the module. If $M$ is locally free above $U$, then we get, by \cite[Th\'eorem 9.7.4]{ega1}, on $U$ a mapping $\vartheta: U \rightarrow \operatorname{Gr} (\nu+r,r)$ into the Grassmann variety, where $r$ denotes the rank of $M$, and where the pull-back of the universal bundle on $\operatorname{Gr} (\nu+r,r)$ gives back the embedding of the bundle over $U$. The higherdimensional fibers of  $\Spec \Sym M \rightarrow \Spec R$ outside of $U$ are reflected by the property that the closure of the image of $\vartheta$ degenerates to a higher dimension. For a module scheme $\Spec A \rightarrow \Spec R$ up to normalization, realizing the same module $M$ above $U$, the embedding $\Spec A \subseteq \Spec R \times {\mathbb A}^\mu$ (where $\mu$ denotes the number of $R$-algebra generators of $A$) is not linear in general (not even above $U$), as Example \ref{toric11.1} shows. Hence, there is also no interpretation of $\Spec A$ using the Grassmannian.
\end{Remark}

\begin{Definition}
Let $G$ be a finite group acting on a scheme $Y$ by automorphisms and let $Z \rightarrow Y$ be a module scheme up to normalization over $Y$. An action of $G$ on $Z$ that is compatible with the base action is called \emph{linear} if, for each $g \in G$, the morphism $\varphi_g:Z \rightarrow Z$ is linear.
\end{Definition}

\begin{Remark}
If $Z=Y \times {\mathbb A}^m$ and $G$ is acting in the second component linearly, then this is a linear action on a module scheme. Here no normalization process is needed, and this is usually the situation in which we start.

If $Z \rightarrow X$ is a module scheme up to normalization, and $G$ acts trivially on $X$ and on $Z$, then this is also a linear action. This is usually the situation in which we will end up.
\end{Remark}

\section{Quotient module schemes}
\label{quotientsection}

We discuss for a module scheme up to normalization $Z \rightarrow Y$ with a linear action of a finite group $G$ on $Z$ how the quotient $Z/G$ inherits this structure.

\begin{Theorem}
\label{Moduleschemegeneralquotient}
Let $G$ be a finite group acting on a normal scheme $Y$ over $\Spec K$ with quotient scheme $X$, and let $Z \rightarrow Y$ be a module scheme up to normalization together with a  compatible linear action $\rho$ of $G$ on $Z$. Then, $Z/G \rightarrow X$ is a module scheme up to normalization on $X$.
\end{Theorem}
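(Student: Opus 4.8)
The plan is to descend each piece of the module-scheme structure of $Z\to Y$ along the quotient map $Z\to Z/G$, exploiting that the action $\rho$ is \emph{linear}, so that the zero section, negation, scalar multiplication and addition of $Z$ are all $G$-equivariant. Throughout I work with schemes (essentially) of finite type over $K$, so that integral morphisms are finite, and I use that $Y$ is normal and integral; note also that $Z/G\to X$ is affine over a separated base, as required by the definition.

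First the three unproblematic structures. For each $g$, linearity of $\varphi_g$ means it commutes with the scalar multiplication; specializing the scalar to $0$ shows it preserves the zero section, and specializing to $-1$ shows it commutes with negation. Hence the zero section $Y\to Z$, the negation $-:Z\to Z$ and the scalar multiplication $\cdot:\mathbb A^1_Y\times_Y Z\to Z$ are $G$-equivariant, with $G$ acting trivially on the $\mathbb A^1$-factor. Because $G$ acts trivially on $\mathbb A^1$, invariants commute with adjoining the coordinate, i.e. on rings $A[T]^G=A^G[T]$, so that $(\mathbb A^1_Y\times_Y Z)/G\cong \mathbb A^1_X\times_X(Z/G)$. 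Passing to quotients therefore produces a zero section $X\to Z/G$, a negation $Z/G\to Z/G$ and a scalar multiplication $\mathbb A^1_X\times_X(Z/G)\to Z/G$, establishing condition (2) of the definition.

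The heart is the addition. The addition $\alpha:\widetilde{Z\times_Y Z}\to Z$, with $\widetilde{Z\times_Y Z}\to Z\times_Y Z$ finite birational, is $G$-equivariant for the diagonal action (again by linearity), hence descends to $\alpha/G:(\widetilde{Z\times_Y Z})/G\to Z/G$. By Lemma \ref{productcompatibilityintegral} there is a natural surjective integral, hence finite, morphism $(Z\times_Y Z)/G\to (Z/G)\times_X(Z/G)$. I set $\widetilde{(Z/G)\times_X(Z/G)}:=(\widetilde{Z\times_Y Z})/G$ together with the composite to $(Z/G)\times_X(Z/G)$; this composite is finite, since quotients of finite morphisms by a finite group stay finite in the Noetherian setting. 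To invoke the definition I must check it is birational. Let $V'\subseteq Y$ be a $G$-invariant dense open where $G$ acts freely (Lemma \ref{genericfree}) and where $Z|_{V'}$ is already a genuine module scheme, and put $U':=V'/G$. Over $V'$ the extension $\widetilde{Z\times_Y Z}\to Z\times_Y Z$ is an isomorphism, and since $V'\to U'$ is a $G$-torsor with $Z|_{V'}$ the pull-back of $(Z/G)|_{U'}$, a direct computation gives $(Z|_{V'}\times_{V'}Z|_{V'})/G\cong (Z/G)\times_X(Z/G)|_{U'}$. Thus the composite is an isomorphism over $U'$, in particular birational, so condition (3) holds with $\alpha/G$ as the addition.

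Finally, condition (1) now comes for free: over $U'$ the finite birational extension is an isomorphism, so $\alpha/G$ restricts to a genuine addition $(Z/G)\times_X(Z/G)|_{U'}\to Z/G|_{U'}$, and together with the zero section, negation and scalar multiplication this exhibits $(Z/G)|_{U'}$ as an honest module scheme (equivalently this is \'etale descent of the module scheme $Z|_{V'}$ along the torsor $V'\to U'$, cf. Remark \ref{freedescent}). Hence all three conditions hold and $Z/G\to X$ is a module scheme up to normalization. I expect the birationality in the addition step to be the main obstacle: in general $(Z\times_Y Z)/G$ is strictly larger than $(Z/G)\times_X(Z/G)$ — this is precisely the discrepancy measured by Theorem \ref{normalproductcompatibility} — so one must isolate the free locus and check that this discrepancy vanishes generically, which is exactly what makes $(\widetilde{Z\times_Y Z})/G$ a legitimate finite birational extension rather than an unrelated finite cover.
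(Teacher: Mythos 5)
Your proof is correct and follows essentially the same route as the paper's: descend the zero section, negation and scalar multiplication by $G$-equivariance (which follows from linearity), descend the addition via the diagonal action on the finite birational extension, and identify $(\widetilde{Z\times_Y Z})/G$ as a finite birational extension of $(Z/G)\times_X (Z/G)$. The only difference is one of packaging: where the paper appeals to ``a suitable generalization of Theorem~\ref{normalproductcompatibility}'' for this last identification, you inline its proof mechanism (Lemma~\ref{productcompatibilityintegral} for finiteness and surjectivity, together with generic triviality over the free locus from Lemma~\ref{genericfree} and torsor descent), which makes your argument somewhat more self-contained on exactly the point the paper leaves implicit.
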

\begin{proof}
The existence of the morphism $Z/G \rightarrow Y/G=X$ follows from the compatibility. The zero section $ Y \stackrel{  0 }{\rightarrow} Z $ is $G$-equivariant and yields the zero section $X \rightarrow Z/G $ by going to the quotients. The scalar multiplication morphism
\[ \cdot : {\mathbb A}^1 \times Z   \longrightarrow Z \, \]
over $Y$ is also $G$-equivariant  by the linearity of the action and yields by going to the quotients a morphism  
\[  {\mathbb A}^1 \times Z/G \longrightarrow  Z/G  \]
over $X$. Also, the negation $ Z \rightarrow Z  $ is $G$-equivariant and yields the negation $Z/G \rightarrow Z/G $ over $X$. Note that the diagonal action of $G$ on $Z \times_Y Z$ induces an action on the normalization. The addition morphism
\[ \alpha  : \widetilde{ Z \times_Y Z }  \longrightarrow  Z \,   \]
over $Y$ induces by going to the quotients under the action of $G$ a morphism
\[ \widetilde{ Z \times_Y Z }/G \longrightarrow  Z/G \]
over $X$. Since the left-hand side is the normalization of  $Z/G \times_X Z/G $ by a suitable generalization of Theorem \ref{normalproductcompatibility}, we are in the situation of a module scheme up to normalization. The group scheme properties follow from  the corresponding diagrams of the addition by going to the quotients. For example, the associativity law, i.e., the commutative diagram
\[ \begin{matrix}  \widetilde{ Z \times _{Y} Z \times_Y Z   }    & \stackrel{ \widetilde{  \operatorname{Id} \times \alpha_{23}  }  }{ \longrightarrow} & \widetilde{  Z \times_Y Z } \\ \widetilde{ \alpha_{12} \times \operatorname{Id} } \downarrow & & \downarrow \alpha  \\\widetilde{  Z \times_Y Z }  & \longrightarrow & Z   \end{matrix}   \]
gives by going to the quotients the commutative diagram
\[ \begin{matrix}  \widetilde{ Z \times _{Y} Z \times_Y Z   }  /G  & \stackrel{ \widetilde{  \operatorname{Id} \times \alpha_{23}  }  }{ \longrightarrow} & \widetilde{  Z \times_Y Z }/G \\ \widetilde{ \alpha_{12} \times \operatorname{Id} } \downarrow & & \downarrow \alpha  \\\widetilde{  Z \times_Y Z }/G  & \longrightarrow & Z/G \, ,  \end{matrix}   \]
which shows the associativity above
\[ \begin{matrix} Z/G \times_{X} Z/G \times_X Z/G & \cdots \rightarrow  &  Z/G  \times_{X } Z/G \\   \vdots \downarrow & & \vdots \downarrow \\  Z/G  \times_X
 Z/G  & \cdots \rightarrow & Z/G \, .  \end{matrix}   \] 
\end{proof}

\begin{Corollary}
\label{Moduleschemequotient}
Let the finite group $G$ act faithfully on a normal affine scheme $Y$ via $K$-algebra automorphisms with quotient scheme $X$, and let $G$ act linearly via $\rho$ on  ${\mathbb A}^m$. Then the quotient scheme $Z_\rho= ( Y \times {\mathbb A}^m  )/\beta \times \rho \rightarrow X$ is a module scheme up to normalization.
\end{Corollary}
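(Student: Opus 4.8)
The plan is to realize $Y \times {\mathbb A}^m \to Y$ as a module scheme over $Y$ equipped with a compatible linear $G$-action, so that Corollary \ref{Moduleschemequotient} becomes a direct application of Theorem \ref{Moduleschemegeneralquotient}. First I would observe that the trivial vector bundle $Z := Y \times {\mathbb A}^m \to Y$ is a module scheme over $Y$: writing $Y = \Spec S$, it is the relative spectrum of the symmetric algebra $\Sym_S(S^m)$, so Lemma \ref{symmetricmodulescheme} furnishes the zero section, the negation, the fiberwise addition $\alpha$ and the scalar multiplication of ${\mathbb A}^1$, all genuinely defined over $Y$ (no normalization is needed here, i.e.\ $U = Y$).

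Next I would check that the product action $\beta \times \rho$ is a compatible linear action on $Z$. By construction $G$ acts on the base $Y$ through $\beta$ and on each fiber ${\mathbb A}^m$ through the linear representation $\rho$; since $\rho(g) \in \GLG_m(K)$ is a linear automorphism for every $g$, the associated automorphism $\varphi_g$ of $Z$ commutes with scalar multiplication and with $\alpha$, hence is linear in the sense of Section \ref{linearsection}. This is exactly the situation flagged in the Remark following the definition of a linear action there, where it is noted that for $Z = Y \times {\mathbb A}^m$ with $G$ acting linearly in the second component no normalization process is needed. Thus $\beta \times \rho$ is a compatible linear action of $G$ on the module scheme $Z \to Y$.

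With these two observations in place, all hypotheses of Theorem \ref{Moduleschemegeneralquotient} hold: $Y$ is a normal affine scheme, $G$ acts on it (faithfully, in particular) with quotient $X$, and $Z \to Y$ is a module scheme up to normalization carrying a compatible linear $G$-action. The theorem then yields that $Z/G = (Y \times {\mathbb A}^m)/\beta \times \rho = Z_\rho$ is a module scheme up to normalization over $X$, which is the assertion. I expect no genuine obstacle here, since the real content is already packaged in Theorem \ref{Moduleschemegeneralquotient}, whose proof in turn leans on Theorem \ref{normalproductcompatibility} to supply the addition on the normalization of $Z_\rho \times_X Z_\rho$. The only point deserving explicit mention is that the addition on $Z_\rho$ is available only after passing to this finite birational extension, which is precisely what the notion ``up to normalization'' is designed to accommodate.
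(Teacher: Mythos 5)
Your proposal is correct and matches the paper's own proof, which consists of the single line ``This follows from Theorem \ref{Moduleschemegeneralquotient}''; you have merely made explicit the two routine verifications the paper leaves implicit, namely that $Y \times {\mathbb A}^m = \Spec(\Sym_S(S^m))$ is a genuine module scheme over $Y$ by Lemma \ref{symmetricmodulescheme} and that $\beta \times \rho$ acts linearly on it, exactly as the Remark in Section \ref{linearsection} records. No gap, and no further comment is needed.
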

\begin{proof}
This follows from Theorem \ref{Moduleschemegeneralquotient}.	
\end{proof}

We give an example of a quotient scheme in order to show concretely why a true addition does not exist.

\begin{Example}
\label{quotientschemeadditionfails}
We consider Example \ref{cycliconespecial} and Example
\ref{cycliconeproduct} for $k=2$, the invariant ring is $K[A]$, the invariant algebra is $K[A,B,C]/(AB-C^2)$, the tensor product of the invariant algebra with itself over $K[A]$ is $K[A,B,C \tilde{B}, \tilde{C}]/(AB-C^2, A \tilde{B}- \tilde{C}^2)$.
The coaddition (of the affine line) $K[X,V] \rightarrow K[X,V,W]$, $ X \mapsto X$, $V \mapsto V+W$, induces $K[A,B,C]/(AB-C^2) \rightarrow K[X,V,W]^G$, which is the coaddition on the module scheme up to normalization, according to	Corollary \ref{Moduleschemequotient}. This is not a module scheme, the pair $(0,b,0;0, \tilde{b},0) \in Z_\rho \times_X Z_\rho$ can not be added. Above this point, we have in the normalization the two points $ (0, b,0, \tilde{b},0, \pm \sqrt{b \tilde{b} } ) $. These points are sent by the up-to-addition to $(0,b+ \tilde{b} \pm \sqrt{ b \tilde{b} }, 0)$. The problem is that the pair can not decide where to go, there are two equally good candidates for their addition. One needs the extra information of the normalization to remove this indeterminacy. 
\end{Example}

We have a second look at the morphism from Lemma \ref{modulealgebra} with the new terminology.

\begin{Lemma}
\label{modulealgebrascheme}
Let	$G$ be a finite group acting faithfully via $\beta$ on a $K$-algebra $S$ by $K$-algebra automorphisms, and let  $R=S^G$ be its invariant ring. Let $G$ act on $K^m$ linearly via $\rho$, and let $Z_\rho \rightarrow \Spec R$ be the quotient scheme of the corresponding product action of $G$ on $Y \times {\mathbb A}^m$, and let $M $ be the corresponding invariant $R$-module.	Then, there is a canonical linear morphism
\[ Z_\rho \longrightarrow \Spec ( \Sym_R (M) ) \,  \]
of module schemes up to normalization over $\Spec R$.
\end{Lemma}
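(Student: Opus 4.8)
The plan is to let the morphism be the spectrum of the $\N$-graded $R$-algebra homomorphism
\[ \phi : \Sym_R(M) \longrightarrow (S \tensor_K K[V])^{\beta \times \rho} = B^\rho \]
furnished by Lemma \ref{modulealgebra}, so that $\varphi := \Spec(\phi) : Z_\rho \to \Spec(\Sym_R(M))$ is a morphism over $X = \Spec R$, and then to verify that it is linear in the sense of Section \ref{linearsection}. That the target $\Spec(\Sym_R(M))$ is a module scheme is Lemma \ref{symmetricmodulescheme}, and that the source $Z_\rho$ is a module scheme up to normalization is Corollary \ref{Moduleschemequotient}; thus both objects are of the required kind, and only the two compatibility diagrams from the definition of a linear morphism remain to be checked.

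For the scalar-multiplication square I would argue directly from the grading. By Lemma \ref{monoidactiongraded} (together with Lemma \ref{symmetricmodulescheme} and Remark \ref{moduleschemegraded}) the action of ${\mathbb A}^1$ on each side is the coaction $a_n \mapsto a_n T^n$ associated with its $\N$-grading. Since $\phi$ preserves degrees, it commutes with these coactions; passing to spectra, this is precisely the commutativity of the scalar-multiplication square, and it holds over all of $X$.

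For the addition square I would pass to the locus where everything is a genuine vector bundle. Let $U \subseteq X$ be the image of the $G$-invariant open subset of $Y$ on which the action is free; this subset is nonempty and dense by Lemma \ref{genericfree}, and hence so is $U$. Over $U$, Corollary \ref{modulealgebraisomorphism} shows that $\phi$ restricts to an isomorphism and that $Z_\rho|_U \cong \Spec(\Sym_R(M))|_U$ is a vector bundle, so $\varphi|_U$ is an isomorphism of vector bundles and in particular respects the additions. The addition square is an identity between the two morphisms $\alpha_2 \circ \widetilde{\varphi \times \varphi}$ and $\varphi \circ \alpha_1$ out of the finite birational extension $\widetilde{Z_\rho \times_X Z_\rho}$ (which is $Z_{\rho \times \rho}$ when $Y$ is a normal domain, by Theorem \ref{normalproductcompatibility}). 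These two morphisms agree over $U$; as noted in the Remark following the definition of a module scheme up to normalization, such an identity of morphisms needs only be checked on a dense open subset (via \cite[Proposition 7.2.2]{ega1}), so it holds over all of $X$. Hence $\varphi$ is linear.

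The main obstacle is exactly this addition square: because the addition on $Z_\rho$ is defined only after normalizing $Z_\rho \times_X Z_\rho$, one cannot compare the two composites naively on the product, and $\widetilde{\varphi \times \varphi}$ must be understood as the induced morphism on the finite birational extensions. The device that makes this routine is the restriction to the free locus, where the normalization issues disappear and $\varphi$ becomes an isomorphism of vector bundles; the density principle then transports the commutativity back to all of $X$.
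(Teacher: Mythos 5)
Your proposal is correct, and it matches the paper's proof in its skeleton: the same morphism from Lemma \ref{modulealgebra}, the same references for the two structures (Lemma \ref{symmetricmodulescheme} for the target; the source via Theorem \ref{Moduleschemegeneralquotient}, of which your Corollary \ref{Moduleschemequotient} is the special case), and essentially the same grading argument for the scalar-multiplication square, which the paper compresses into the phrase ``since $\Sym M \rightarrow B^\rho$ is homogeneous.'' Where you genuinely diverge is the addition square. The paper checks it directly on the algebra side: writing $B=S[W_1,\ldots,W_m]$, the coaddition on $B^\rho$ lands in $\widetilde{B^\rho \tensor_R B^\rho} = (B \tensor_R B)^{\rho \times \rho}$, and the square
\[ \begin{matrix} \Sym_R M & \longrightarrow & B^\rho \\ \downarrow & & \downarrow \\ \Sym_R M \tensor_R \Sym_R M \longrightarrow B^\rho \tensor_R B^\rho & \longrightarrow & (B \tensor_R B)^{\rho \times \rho} \end{matrix} \]
commutes because both composites agree on the degree-one generators $m \in M$, where each coaddition is $m \mapsto m \tensor 1 + 1 \tensor m$; since $\Sym_R M$ is generated in degree one, that single check suffices. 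You instead restrict to the image $U$ of the free locus, where Corollary \ref{modulealgebraisomorphism} makes $\varphi$ an isomorphism of vector bundles, and transport the identity $\alpha_2 \circ \widetilde{\varphi \times \varphi} = \varphi \circ \alpha_1$ back by density --- a move the paper itself sanctions in the Remark following the definition of a module scheme up to normalization and in Lemma \ref{moduleschemeuptolinear}(4); note also that since the target is a genuine module scheme, you may take the trivial extension $\widetilde{Z_2 \times_X Z_2} = Z_2 \times_X Z_2$ there, which is what your $\widetilde{\varphi \times \varphi}$ implicitly does. Both arguments are sound in the setting where the statement actually lives ($Y$ normal and integral, which is implicitly required anyway: Corollary \ref{Moduleschemequotient} needs $Y$ normal, Lemma \ref{genericfree} needs $S$ a domain, and your density step needs $\widetilde{Z_\rho \times_X Z_\rho}$ reduced with the preimage of $U$ dense --- all guaranteed there, as the extension is $Z_{\rho \times \rho}$, normal and irreducible). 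What the paper's diagram chase buys is independence from these genericity inputs: it verifies the identity on generators with no appeal to a dense free locus, to reducedness, or to \cite[Proposition 7.2.2]{ega1}, so it degrades more gracefully if one relaxes the hypotheses on $S$. What your route buys is conceptual transparency --- linearity is forced by the generic vector-bundle structure --- and it avoids making the coaddition on $B^\rho$ explicit.
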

\begin{proof}
The canonical morphism exists by Lemma \ref{modulealgebra}, the module scheme structure on $ \Spec ( \Sym_R (M) ) $ was noted in Lemma \ref{symmetricmodulescheme}, and the module scheme structure up to normalization on $Z_\rho$ was shown in Theorem \ref{Moduleschemegeneralquotient}. Set $B=S[W_1, \ldots, W_m]$. The linearity follows from the commutative diagram
\[ \begin{matrix} \Sym M & \longrightarrow & B^\rho  & & \\ \downarrow & & \searrow & \\ \Sym M \tensor_R \Sym M    & \longrightarrow B^\rho \tensor_R B^\rho  & \longrightarrow & \widetilde{ B^\rho \tensor_R B^\rho} =(B \tensor_R B)^{\rho \times \rho}    \end{matrix} \]
and since $	\Sym M  \rightarrow  B^\rho $ is homogeneous.
\end{proof}

\section{Normal subgroups}
\label{normalsubgroupsection}

If $G$ acts on $Y$ with quotient scheme $X$ and $H \subseteq G$ is a normal subgroup, then one can often understand the situation better by considering the quotient $Y'=Y/H$, on which the residue class group $G/H$ acts, and which has the quotient $Y'/(G/H) =Y/G$. Typical examples arise when $\rho$ is not a faithful representation on $K^m$, and we use $H= \ker \rho$, or when $G$ acts linearly on $K^d$ and $H=G \cap \SLG_d(K)$, where $G/H$ is a cyclic group, or when $H\subseteq G$ is the subgroup generated by the reflections inside $G$. We want to understand how module schemes up to normalization behave in the presence of a normal subgroup.

\begin{Lemma}
\label{Moduleschemelinearquotient}
Let the finite group $G$ act faithfully on a normal affine scheme $Y$ via $K$-algebra automorphisms with quotient scheme $X$. Let $Z_1,Z_2 \rightarrow Y$ be module schemes up to normalization together with compatible linear  actions of $G$ on them, and let $\varphi:Z_1 \rightarrow Z_2$ be a linear $G$-equivariant morphism over a $G$-equivariant morphism $\psi:Y \rightarrow Y$. Then, this induces a linear morphism  $\overline{\varphi}: Z_1/G \rightarrow Z_2/G$ over $ \overline{\psi}:X \rightarrow X$.
\end{Lemma}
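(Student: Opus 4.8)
The plan is to first produce $\overline{\varphi}$ by descent and then to verify that the two diagrams defining a linear morphism descend to the quotient. First I would note that $\overline{\varphi}$ exists for purely formal reasons: since $\varphi$ is $G$-equivariant, the composite $Z_1 \stackrel{\varphi}{\to} Z_2 \to Z_2/G$ is $G$-invariant and therefore factors uniquely through the quotient morphism $Z_1 \to Z_1/G$, giving $\overline{\varphi}: Z_1/G \to Z_2/G$; in the same way the $G$-equivariant $\psi$ induces $\overline{\psi}: X = Y/G \to Y/G = X$, and $\overline{\varphi}$ lies over $\overline{\psi}$. By Theorem \ref{Moduleschemegeneralquotient} both $Z_1/G$ and $Z_2/G$ are module schemes up to normalization over $X$, with all structure maps (zero section, negation, scalar multiplication, and addition up to normalization) obtained by descending the corresponding $G$-equivariant structures on the $Z_i$. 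Thus the task reduces to checking the two commutative squares from the definition of a linear morphism for $\overline{\varphi}$.

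For the scalar-multiplication square I would start from the square asserting that $\varphi$ is linear, with top arrow $\operatorname{Id}\times\varphi$ and vertical arrows the two scalar actions, and observe that every arrow is $G$-equivariant: the scalar actions because the $G$-actions on $Z_1$ and $Z_2$ are linear by hypothesis, and $\operatorname{Id}\times\varphi$ because $\varphi$ is $G$-equivariant and $G$ acts trivially on the ${\mathbb A}^1$-factor. Passing to $G$-quotients, and using that the scalar multiplication on each $Z_i/G$ is by construction the descent of the one on $Z_i$, the quotiented square is exactly the scalar-compatibility square for $\overline{\varphi}$.

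The addition square is parallel but needs the identification of quotients of normalized fibre products, which I expect to be the main obstacle. I would begin with the identity $\alpha_2 \circ \widetilde{\varphi\times\varphi} = \varphi \circ \alpha_1$ on $\widetilde{Z_1\times_Y Z_1}$, where again all arrows are $G$-equivariant (the diagonal $G$-action lifts to the normalization, and the additions $\alpha_i$ are $G$-equivariant by linearity of the actions), and pass to $G$-quotients. The essential input is the identification $\widetilde{Z_i\times_Y Z_i}/G \cong \widetilde{(Z_i/G)\times_X(Z_i/G)}$, namely the generalization of Theorem \ref{normalproductcompatibility} already invoked in the proof of Theorem \ref{Moduleschemegeneralquotient}; granting it, the quotiented square is the addition-compatibility square for $\overline{\varphi}$, valid after passing to suitable finite birational extensions. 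A minor point I would settle along the way is that here \emph{linear} must be interpreted for a morphism over the base morphism $\overline{\psi}$ rather than over $\operatorname{id}_X$, which only changes how the fibre products in the two squares are formed and leaves the descent argument untouched. With both squares established, $\overline{\varphi}$ is linear.
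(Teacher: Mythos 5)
Your proof is correct and takes essentially the same route as the paper's: obtain $\overline{\varphi}$ by $G$-invariance of the composite to the quotient, invoke Theorem \ref{Moduleschemegeneralquotient} for the module-scheme-up-to-normalization structure on $Z_1/G$ and $Z_2/G$, and let the scalar-multiplication and addition squares descend to the quotients. You merely spell out details the paper leaves implicit, notably the identification $\widetilde{Z_i \times_Y Z_i}/G \cong \widetilde{(Z_i/G) \times_X (Z_i/G)}$ via the generalization of Theorem \ref{normalproductcompatibility} and the reading of linearity over $\overline{\psi}$ rather than over $\operatorname{id}_X$.
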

\begin{proof}
The quotient schemes $Z_1/G$ and $Z_2/G$ are module schemes up to normalization by Theorem \ref{Moduleschemegeneralquotient}. The linear morphism $ \varphi:Z_1 \rightarrow Z_2$ over $\psi:Y \rightarrow Y$ induces the morphism  $\overline{\varphi}:Z_1/G \rightarrow Z_2/G$ over $ \overline{\psi} :Y/G \rightarrow Y/G$. The compatibility of $\varphi$ with the scalar multiplication on $Z_1$ and on $Z_2$  induces the compatibility of $\overline{\varphi}$ with the scalar multiplication on $Z_1/G$ and on $Z_2/G$. The commutativity of the addition diagram passes also over to the quotients.
\end{proof}

\begin{Theorem}
\label{quotientschemeinterpolation}
Let $G$ be a finite group with a normal subgroup $H \subseteq G$ and residue class group $G/H$. Let $G$ act faithfully on a normal affine $K$-scheme $Y$ by $K$-automorphisms, and let $Z \rightarrow Y$ be a module scheme up to normalization together with a compatible linear action of $G$. Then, $Z/H \rightarrow Y/H$ is a module scheme up to normalization above $Y/H$, on which the induced action of $G/H$ is linear and compatible with the action of $G/H$ on $Y/H$. There is a natural linear isomorphism $Z/G \rightarrow (Y/H)/(G/H)$.
\end{Theorem}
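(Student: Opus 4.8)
The plan is to prove the three assertions in the order they are stated, using the quotient construction of Theorem \ref{Moduleschemegeneralquotient} and the functoriality of Lemma \ref{Moduleschemelinearquotient} as the two workhorses, and to finish with the classical tower property of invariants. For the first assertion, I would apply Theorem \ref{Moduleschemegeneralquotient} verbatim but with the group $H$ in place of $G$: the scheme $Y$ is normal, $H$ acts on it with quotient $Y/H$, and $Z \to Y$ is a module scheme up to normalization carrying the linear action of $H$ obtained by restricting the linear $G$-action. This yields that $Z/H \to Y/H$ is a module scheme up to normalization.

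Next I would exhibit the residue action. Since $H$ is normal in $G$, the $G$-action on $Y$ descends to a $G/H$-action on $Y/H$, and I would establish the same for $Z/H$. For $g \in G$ write $\varphi_g \colon Z \to Z$ for the linear automorphism over $\psi_g \colon Y \to Y$. Normality gives $\varphi_g \circ \varphi_h = \varphi_{ghg^{-1}} \circ \varphi_g$ with $ghg^{-1} \in H$, so $\varphi_g$ carries $H$-orbits to $H$-orbits and descends to an automorphism $\overline{\varphi_g} \colon Z/H \to Z/H$ over $\overline{\psi_g} \colon Y/H \to Y/H$; elements $h \in H$ descend to the identity, so $g \mapsto \overline{\varphi_g}$ factors through $G/H$. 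Linearity of each $\overline{\varphi_g}$ is exactly the content of Lemma \ref{Moduleschemelinearquotient} applied to the group $H$ and the linear morphism $\varphi_g$, the conjugation twist being invisible to the $H$-quotient since quotienting by $H$ does not see an inner automorphism of $H$. Thus the induced $G/H$-action on $Z/H$ is linear and compatible with the action on $Y/H$.

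Finally I would produce the natural linear isomorphism $Z/G \cong (Z/H)/(G/H)$. Writing $Z = \Spec A$, affine over $Y = \Spec S$ (possible since $Z \to Y$ is by definition affine and $Y$ is affine), the tower property of invariants gives $A^G = (A^H)^{G/H}$ and $S^G = (S^H)^{G/H}$, whence $Z/G = \Spec A^G = \Spec (A^H)^{G/H} = (Z/H)/(G/H)$ as schemes over $X = \Spec S^G$. Both sides are module schemes up to normalization over $X$: the left by Corollary \ref{Moduleschemequotient}, the right by a second application of Theorem \ref{Moduleschemegeneralquotient}, now to the linear $G/H$-action on $Z/H$ over the normal scheme $Y/H$ established above. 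That the identification respects these structures is because every structure morphism — zero section, scalar multiplication, negation, and the addition on the normalization of the relevant fibre square — is obtained by descending the corresponding $G$-equivariant datum on $Z$, and descending in the two stages $H$ then $G/H$ agrees with descending by $G$ at once; this is the functoriality already used in Theorem \ref{Moduleschemegeneralquotient} and Lemma \ref{Moduleschemelinearquotient}.

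The main obstacle I anticipate is this last compatibility: one must check that the addition up to normalization produced by the two-stage quotient coincides with the one produced by the one-stage quotient. This reduces to verifying that the finite birational object $\widetilde{Z/G \times_X Z/G}$ is computed the same way through either tower, which rests on the generalization of Theorem \ref{normalproductcompatibility} invoked in the proof of Theorem \ref{Moduleschemegeneralquotient}; tracking the normalizations of the fibre products $Z \times_Y Z$ and their quotients through both towers, using that $Y$ and all its finite quotients are normal, is where the real work lies.
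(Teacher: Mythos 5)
Your proposal is correct and follows essentially the same route as the paper's proof: both obtain $Z/H \rightarrow Y/H$ (and $Z/G \rightarrow Y/G$) from Theorem \ref{Moduleschemegeneralquotient}, both get linearity of the residual $G/H$-action by applying Lemma \ref{Moduleschemelinearquotient} to the linear morphism $g\colon Z \rightarrow Z$ with respect to the group $H$, and both identify $Z/G$ with $(Z/H)/(G/H)$ via the tower property of invariants. You merely fill in two details the paper leaves implicit — the conjugation twist $\varphi_g \circ \varphi_h = \varphi_{ghg^{-1}} \circ \varphi_g$ explaining why $\varphi_g$ descends despite not being strictly $H$-equivariant, and the check that the addition up to normalization agrees through both towers (which can alternatively be settled by Lemma \ref{moduleschemeuptolinear}(4), since linearity need only be verified over a dense open subset where everything is a genuine module scheme).
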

\begin{proof}
By invariant theory, we have the commutative diagram
\[ \begin{matrix} Z & \longrightarrow & Z/H & \longrightarrow & Z/G \\
	\downarrow && \downarrow && \downarrow \\
	Y & \longrightarrow & Y /H & \longrightarrow & Y/G \, ,\end{matrix} \]
where $G/H$ acts on the schemes in the middle and their quotients coincide with the quotients on the right. Theorem \ref{Moduleschemegeneralquotient} shows that $Z/H \rightarrow Y/H$ and $Z/G \rightarrow Y/G$ are module schemes up to normalization. It remains to show that the action of $G/H$ on $Z/H$ is linear.	
Let $[g] \in G/H$. The result follows by applying 
Lemma \ref{Moduleschemelinearquotient} to the linear morphism $g:Z \rightarrow Z$ 
and the group $H$.
\end{proof}

\begin{Corollary}
\label{inbetween}
Let $G$ be a finite group with a normal subgroup $H \subseteq G$ and residue class group $G/H$. Let $G$ act faithfully on a normal affine $K$-scheme $Y$ by $K$-automorphisms, and let $\rho$ be a linear representation of $G$. Then we have a commutative diagram
\[ 	\begin{matrix} Y \times {\mathbb A}^m  &\longrightarrow & (Y \times {\mathbb A}^m)/H  & \longrightarrow &  (Y \times {\mathbb A}^m)/G       \\ \downarrow  &  & \downarrow  &   & \downarrow \\ Y &\longrightarrow &Y'& \longrightarrow & X \,  \end{matrix} \]
of module schemes up to normalization, 	where $Y'=Y/H$, $X=Y/G=Y'/(G/H)$ and $   (Y \times {\mathbb A}^m)/G =  ( (Y \times {\mathbb A}^m)/H )  /(G/H) $.
\end{Corollary}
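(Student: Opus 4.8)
The plan is to observe that the asserted diagram is exactly the instance of Theorem \ref{quotientschemeinterpolation} obtained by taking the module scheme $Z := Y \times {\mathbb A}^m \to Y$ with the product action $\beta \times \rho$ of $G$; the substance has already been established there, so the task reduces to identifying the objects and checking the hypotheses.

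First I would record that $Z = Y \times {\mathbb A}^m = \Spec(S[W_1, \ldots, W_m])$ is the trivial vector bundle, hence a module scheme over $Y$ by Lemma \ref{symmetricmodulescheme} (as $S[W_1, \ldots, W_m] = \Sym_S(S^m)$), and in particular a module scheme up to normalization with $U = Y$. The projection $Z \to Y$ is equivariant for $\beta \times \rho$, so the $G$-action on $Z$ is compatible with the base action $\beta$ on $Y$. It is moreover linear in the sense of Section \ref{linearsection}: each $\varphi_g$ acts on the fibers ${\mathbb A}^m$ through the linear map $\rho(g)$ and therefore commutes with both the scalar multiplication ${\mathbb A}^1_Y \times_Y Z \to Z$ and the fiberwise addition $Z \times_Y Z \to Z$. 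This is precisely the situation flagged in the Remark following the definition of a linear action, where no normalization is needed on $Z$ itself.

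With these two points in hand, Theorem \ref{quotientschemeinterpolation} applies directly. It gives that $Z/H \to Y/H = Y'$ is a module scheme up to normalization carrying a compatible linear action of $G/H$, that $Z/G \to Y/G = X$ is a module scheme up to normalization, and that $Z/G \cong (Z/H)/(G/H)$ (the underlying equality of quotients being the standard iterated-invariants identity for the normal subgroup $H \subseteq G$). Unwinding $Z/H = (Y \times {\mathbb A}^m)/H$ and $Z/G = (Y \times {\mathbb A}^m)/G$ turns this into the top row of the claimed diagram; the vertical arrows are the structure morphisms to $Y$, $Y'$ and $X$, and both squares commute because the projection $Z \to Y$ is equivariant and hence descends compatibly to each successive quotient.

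The only place I expect to spend attention is verifying the hypotheses of Theorem \ref{quotientschemeinterpolation}, namely that $Y$ is normal affine with a faithful $G$-action and that $Z$ is a module scheme up to normalization with a compatible linear $G$-action. All of these are immediate from the corollary's assumptions together with the triviality of the bundle $Z \to Y$, so there is no genuine obstacle; the corollary is a clean specialization of the theorem.
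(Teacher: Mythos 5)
Your proposal is correct and follows exactly the paper's route: the paper proves Corollary \ref{inbetween} in one line as ``a special case of Theorem \ref{quotientschemeinterpolation},'' instantiated with $Z = Y \times {\mathbb A}^m$ and the product action $\beta \times \rho$. Your additional verification that the trivial bundle is a module scheme with a compatible linear $G$-action (via Lemma \ref{symmetricmodulescheme} and the remark in Section \ref{linearsection}) is just the routine hypothesis check the paper leaves implicit.
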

\begin{proof}
This is a special case of Theorem \ref{quotientschemeinterpolation}.	
\end{proof}

\section{Similar modifications}
\label{similarsection}

We describe further situations where there is no module scheme structure in the strict sense, because of certain ``bad'' properties of the tensor product, but there is a module scheme structure after (some kind of) a modification of the tensor product.

\begin{Remark}
\label{Reesuptotorsion}
Let $M$ be an $R$-module and consider the Rees algebra 
\[ {\mathcal R} (M) =  \Sym {M} /R- \text{torsion}  \, . \]
We look at the  diagram		
\[ \begin{matrix}  \Sym {M}  & \stackrel{\alpha^*}{ \longrightarrow} & \Sym {M}  \tensor_R \Sym {M} \\ \downarrow   & & \downarrow \\ {\mathcal R} ( {M})  & &   {\mathcal R} ( M)  \tensor_R  {\mathcal R} ( M) \\
& \searrow & \downarrow \\ & & (  {\mathcal R} ( {M})  \tensor_R  {\mathcal R} ( {M}))/R- \text{torsion} \, ,  \end{matrix}   \]
where $\alpha^*$ denotes the coaddition on the symmetric algebra. This ring homomorphism does not induce a corresponding ring homomorphism for the Rees algebras, but when we mod out the $R$-torsion in the tensor product of the Rees algebras, we obtain a homomorphism. This shows that the Rees algebra is a ``module scheme up to torsion''.
\end{Remark}

\begin{example}
Let $R=K[X,Y]/(X^2-Y^3)$, $ {\mathfrak m}=(X,Y)$ and consider the symmetric algebra $\Sym {\mathfrak m}$ and the Rees algebra ${\mathcal R} ( {\mathfrak m}) = R \oplus {\mathfrak m} \oplus {\mathfrak m}^2 \ldots   $. The symmetric algebra has the description
\[ \Sym ( {\mathfrak m}) = R[A,B] /( yA-xB,xA-y^2B)  \]
and the Rees algebra is
\[ {\mathcal R} ( {\mathfrak m})  = R[A,B] /( yA-xB,xA-y^2B, A^2-yB^2) \, . \]
The $R$-torsion of $\Sym ( {\mathfrak m}) $ can be read from
\[  y(A^2-yB^2) = A(yA-xB)  + B(xA-y^2B) = 0  \, .\]
In the diagram of Remark \ref{Reesuptotorsion}, the element $A^2-yB^2 \in \Sym {\mathfrak m} $ is sent to $(A+ \tilde{A})^2 - y(B+ \tilde{B})^2$ and this is then sent to $2 A \tilde{A} - 2 y B \tilde{B} \neq 0$ on the right, hence, there is no homomorphism from $ {\mathcal R} ( {M})$ to $   {\mathcal R} ( M)  \tensor_R  {\mathcal R} ( M)  $. This element is an $R$-torsion element in ${\mathcal R} ( {\mathfrak m}) \tensor_R  {\mathcal R} ( {\mathfrak m})$.
\end{example}

\begin{Remark}
Let $M$ be an $R$-module and let $U \subseteq X = \Spec R$ be open. We consider the $\N$-graded $R$-algebra $A$, where the $n$th component is $\Gamma (U,\Sym^n M )$. If $R$ is normal and local of dimension $\geq 2$ and if $U$ is the punctured spectrum, then this gives the reflexive symmetric algebra, where the $n$th component is the reflexive hull $ (\Sym^n M)^{\dual \dual}$ (see also Section \ref{reflexivesection}). The natural sheaf homomorphism
\[ \alpha:  \widetilde{\Sym M } \longrightarrow  \widetilde{\Sym M } \tensor_{ {\mathcal O}_X}  \widetilde{\Sym M }     \]
induces a natural $R$-algebra homomorphism as shown in the following diagram
\[ \begin{matrix} \bigoplus_{n \in \N}  \Gamma (U,\Sym^n M )  & \stackrel{\alpha}{ \longrightarrow }  & \Gamma (U,  \widetilde{\Sym M } \tensor_{ {\mathcal O}_X} \widetilde{\Sym M } )  & \\ & & \rotatebox[origin=c]{90}{ $\subseteq $} \\ & & ( \bigoplus_{n \in \N} \Gamma (U,\Sym^n M ) )  \tensor_R ( \bigoplus_{n \in \N}  \Gamma (U,\Sym^n M )  ) \, .   \end{matrix}  \]
Again, we cannot expect that $\alpha $ lands in the tensor product below, so we do not have a module scheme, but we have a module scheme after a natural modification (after going to the reflexive hull in the specific example).	
\end{Remark}

\begin{Remark}
Suppose that $X=\Spec R$, normal of finite type over a field $K$, has an isolated singularity in the point $P$, let $U=X \setminus \{P\} $ be the smooth punctured spectrum. The (geometric) tangent bundle ${ T}_U$ is a vector bundle on $U$. Let $ \varphi:\tilde{X} \rightarrow X$ denote a resolution of singularities. Then ${T}_U \cong { T}_{\varphi^{-1}(U)} $ extends on $\tilde{X}$ to the tangent bundle ${ T}_{\tilde{X} }$. We consider $p: T_{\tilde{X} } \rightarrow X$ as a scheme above $X$ extending $T|_U$. In contrast to the other situations considered, this is not an affine scheme over $X$, the exceptional fiber has dimension $2 \dim X -1 $ and contains projective subvarieties of dimension $\dim X -1$. On the resolution, we have the vector bundle addition $\alpha: T_{\tilde{X} } \times_{\tilde{X}} T_{\tilde{X}} \rightarrow T_{\tilde{X}}$, but we do not have an addition $ T_{\tilde{X} } \times_{ X} T_{\tilde{X}} \rightarrow T_{\tilde{X}} $. Here, we have an addition on $ T_{\tilde{X} } \rightarrow X $ after a birational projective modification of $X$.

The same construction can be made for every vector bundle $Z \rightarrow U$ which has a vector bundle extension after some blow-up.
\end{Remark}

\begin{Remark}
In the construction of abelian varieties by A. Weil, one first has only a rational composition $V \times V  \cdots \rightarrow V$, which must then be extended to a proper variety (see  \cite[Th\'{e}or\`{e}me 15]{weilabelian}, \cite[Expos\'{e} XVIII]{sga3}), and a similar construction occurs if one wants to spread an abelian variety defined over a quotient field $Q(R)$  to an abelian scheme over a $R$, see  \cite[Section 4.3]{boschneron}.
\end{Remark}

\section{Fiberflat bundles}

\label{fiberflatsection}

In the case of a finite group acting on $S$ and on a free module $N=S^m$, we have seen in Lemma \ref{modulealgebra} that there are two module schemes up to normalization over $X=\Spec S^G$, namely, $\Spec ( \Sym ( N^\rho)) $ and $ Z_\rho$, which define over a dense open subset  $U \subseteq X$ the same vector bundle, but are quite different when we look at the restriction above $X \setminus U$. A typical behavior of the spectrum of the symmetric algebra is that the dimension of the fibers outside the locally free locus increase, whereas they stay constant for $Z_\rho$.

\begin{Definition}
We call a module scheme up to normalization $p:Z \rightarrow X$ \emph{fiberflat} if the following conditions hold.
	
\begin{enumerate}
\item
$Z$ is of finite type over $X$.
		
\item
There exists an open dense subset $U \subseteq X$ such that $Z|_U \rightarrow U$ is a vector bundle.
		
\item 
All fibers of $p:Z \rightarrow X$ have the same dimension.
\end{enumerate}
\end{Definition}

Every vector bundle is fiberflat. The common dimension of the fibers is called the \emph{rank} of the fiberflat bundle. The term fiberflat points to the fact that for a flat family, the dimension of the nonempty fibers is constant, see \cite[Proposition 9.5]{hartshorne}.

\begin{Lemma}
\label{fiberflatquotient}
Let the finite group $G$ act faithfully on a normal affine scheme $Y$ via $K$-algebra automorphisms with quotient scheme $X$, and let $G$ act linearly via $\rho$ on  ${\mathbb A}^m$. Then, the quotient scheme $Z_\rho= ( Y \times {\mathbb A}^m  )/\beta \times \rho \rightarrow X$ is a fiberflat bundle.
\end{Lemma}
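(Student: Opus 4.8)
The plan is to check, one at a time, the three conditions in the definition of a fiberflat bundle, since each of them corresponds to a result already available in the paper. To begin, Corollary~\ref{Moduleschemequotient} tells us that $Z_\rho \to X$ is a module scheme up to normalization, so the definition applies and it remains to verify: finite type over $X$, a vector bundle structure over a dense open subset, and constancy of the fiber dimension.

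For the generic vector bundle condition, I would invoke Lemma~\ref{genericfree}: since $S$ is a domain ($Y$ being normal and irreducible) and $G$ acts faithfully, there is an invariant $0 \neq f \in S$, necessarily lying in $R = S^G$, such that $G$ acts freely on $V = D(f) \subseteq Y$. The image of $V$ in $X$ is the basic open set $U = D(f) \subseteq X$, which is nonempty and hence dense in the irreducible scheme $X = \Spec R$, and one has $V = q^{-1}(U)$ for the quotient map $q:Y \to X$. Corollary~\ref{modulealgebraisomorphism} then gives that $Z_\rho|_U \to U$ is a vector bundle, establishing this condition.

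For the constant fiber dimension, I would appeal directly to Theorem~\ref{fiberdescription}, as recorded in Remark~\ref{fiberdimension}: every fiber of $Z_\rho \to X$ is of the form $(S_1[W_1,\ldots,W_m])^{\tilde\rho}$ and therefore has dimension $m$. This is precisely the feature that separates $Z_\rho$ from the spectrum of a symmetric algebra, but no new computation is needed here.

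The one step requiring an argument not directly quoted above, and thus the main obstacle, is the finite-type condition. Here I would note that $S$ is module-finite over $R$ (each $s \in S$ is integral over $R$ via $\prod_{g \in G}(T - g\cdot s)$, together with finite generation of $S$ over $K$), so $S[W_1,\ldots,W_m]$ is a finitely generated $R$-algebra and is module-finite over its invariant subalgebra $B^\rho = S[W_1,\ldots,W_m]^{\beta \times \rho}$; the Artin--Tate lemma then yields that $B^\rho$ is a finitely generated $R$-algebra, so $Z_\rho = \Spec B^\rho \to X = \Spec R$ is of finite type. Assembling the three conditions shows that $Z_\rho$ is a fiberflat bundle, of rank $m$.
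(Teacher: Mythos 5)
Your proof is correct and takes essentially the same route as the paper, whose entire proof reads ``This follows from Corollary \ref{Moduleschemequotient}, Remark \ref{fiberdimension} and Corollary \ref{modulealgebralinearisomorphism}''; your use of Lemma \ref{genericfree} together with the general Corollary \ref{modulealgebraisomorphism} is in fact the more appropriate citation for an arbitrary normal affine $Y$. Your only genuine addition is the explicit Artin--Tate argument for the finite-type condition, which the paper leaves implicit, having already recorded in Section \ref{settingsection} that invariant algebras are of finite type over $K$ whenever $Y$ is.
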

	
\begin{proof}
This follows from Corollary \ref{Moduleschemequotient}, Remark \ref{fiberdimension} and Corollary \ref{modulealgebralinearisomorphism}.	
\end{proof}

\begin{Lemma}
\label{fiberflatconstruction}
Let $X$ denote a separated scheme.

\begin{enumerate}
\item
The product $ Z_1 \times_X Z_2 $ of two fiberflat bundles is fiberflat.
		
\item
The pull-back of a fiberflat bundle under a dominant morphism $X' \rightarrow X$ is fiberflat.
		
\item 
The restriction of a fiberflat bundle $Z \rightarrow X $ to a closed irreducible subset $Y \subseteq X$ meeting the locus $U$ where we have the structure of a vector bundle is fiberflat.
		
\item
Suppose that $X$ is normal integal and excellent. Then the normalization $\tilde{Z} \rightarrow X$ of a fiberflat bundle is fiberflat.
\end{enumerate}
\end{Lemma}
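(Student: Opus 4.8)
The plan is to verify the three defining conditions of a fiberflat bundle for each of the four constructions, reusing the corresponding parts of Lemma \ref{moduleschemeuptoconstruction} (which already establish that each construction yields a module scheme up to normalization) so that only the geometric conditions (finite type, generic vector-bundle structure, and constant fiber dimension) require attention.

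For part (1), the product $Z_1 \times_X Z_2$ is of finite type over $X$ as a fiber product of finite-type schemes. On the common open dense locus $U = U_1 \cap U_2$ (nonempty since $X$ is irreducible and each $U_i$ is dense), both $Z_i|_U$ are vector bundles, so their fiber product is a vector bundle of rank $r_1 + r_2$. For the fiber dimension, note that the fiber of $Z_1 \times_X Z_2$ over a point $P$ is the product of the fibers $(Z_1)_P \times_{\kappa(P)} (Z_2)_P$, whose dimension is the sum $\dim (Z_1)_P + \dim (Z_2)_P = r_1 + r_2$, constant by hypothesis. For part (2), finite type is preserved under base change; the generic vector-bundle structure survives because the dominant morphism $X' \to X$ meets $U$, so its preimage is dense in $X'$ and the pullback there is a vector bundle; and the fiber of the pullback over a point $P' \in X'$ mapping to $P \in X$ is the base change of the fiber $Z_P$ to $\kappa(P')$, which has the same dimension. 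For part (3), the restriction to a closed irreducible $Y$ meeting $U$ is of finite type, is a vector bundle over the dense open $Y \cap U \subseteq Y$, and the fibers over points of $Y$ are literally fibers of $Z \to X$, hence all of the same dimension.

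The substantive case, and the one I expect to be the main obstacle, is part (4): the normalization $\tilde{Z} \to X$. Finite type follows because $X$ is normal, integral and excellent, so normalization is a finite morphism and $\tilde{Z} \to Z \to X$ is finite type. On the vector-bundle locus $U$ the scheme $Z|_U$ is already normal (being a vector bundle over the normal $X|_U$), so $\tilde{Z}|_U \cong Z|_U$ is a vector bundle of the same rank $r$. The delicate point is condition (3): one must show that the finite birational morphism $\tilde{Z} \to Z$ does not change the dimension of any fiber. Since normalization is finite, it is in particular quasi-finite, so for every closed point the fiber $\tilde{Z}_P \to Z_P$ is a finite morphism and hence $\dim \tilde{Z}_P \leq \dim Z_P = r$. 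The reverse inequality is where care is needed: I would argue that $\tilde{Z} \to Z$ is surjective (normalization is surjective onto the reduced scheme, and dominant finite morphisms of this kind are surjective), so each fiber $\tilde{Z}_P$ surjects onto $(Z_P)_{\mathrm{red}}$, forcing $\dim \tilde{Z}_P \geq \dim Z_P = r$. Combining the two inequalities gives constant fiber dimension $r$, completing the argument.

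The one genuinely technical step to watch is the surjectivity and dimension-preservation of normalization on fibers in part (4): a priori normalizing a non-reduced or non-equidimensional total space could drop components, so I would want to invoke that $\tilde{Z}$ is irreducible (it is, being the normalization of the irreducible $Z$, which is irreducible because $Z_\rho$ and the constructions feeding into it are spectra of domains or have irreducible reductions, as established in the proof of Theorem \ref{normalproductcompatibility}) and that finite surjective morphisms preserve dimension of closed fibers. If one prefers to avoid fiber-by-fiber surjectivity subtleties, an alternative is to note that $\tilde{Z} \to Z$ is finite and birational, hence an isomorphism over the dense open where $Z$ is already normal, and then use upper semicontinuity of fiber dimension together with the finite bound to pin the generic fiber dimension $r$ as both an upper and lower bound everywhere; this is the route I would take if the direct surjectivity argument becomes awkward.
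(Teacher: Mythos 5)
Your proposal is correct and follows essentially the same route as the paper, whose entire proof is to invoke Lemma \ref{moduleschemeuptoconstruction} for the structural parts, observe that fiber dimensions behave well under products, base change, restriction and finite surjective (normalization) morphisms, and use excellence to guarantee that $\tilde{Z}$ is of finite type. Your write-up merely makes explicit the fiber-dimension checks (sum of dimensions for products, invariance under residue field extension, and the two finite-morphism inequalities in part (4)) that the paper compresses into ``the dimensions of the fibers behave nicely.''
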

\begin{proof}
This follows from Lemma \ref{moduleschemeuptoconstruction} since the dimensions of the fibers behave nicely for the products of the schemes and for normalization. Excellence is needed to ensure that $ \tilde{Z}$ is of finite type.
\end{proof}

\begin{Lemma}
	Let $R$ be a commutative ring and let $M$ be a finitely generated $R$-module. Then, the symmetric algebra $\Sym M$ is fiberflat if and only if $M$ is locally free.	
\end{Lemma}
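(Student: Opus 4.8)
The plan is to compute the fibers of $p \colon \Spec (\Sym M) \to \Spec R$ and to identify their dimension with the local minimal number of generators of $M$. The key input is that the symmetric algebra commutes with base change, so for every prime $\idealp \in \Spec R$ with residue field $\kappa(\idealp)$ we have
\[ \Sym_R (M) \tensor_R \kappa(\idealp) \cong \Sym_{\kappa(\idealp)} (M \tensor_R \kappa(\idealp)) \, . \]
The right-hand side is a polynomial ring over $\kappa(\idealp)$ in $\mu(\idealp) := \dim_{\kappa(\idealp)} (M \tensor_R \kappa(\idealp))$ variables, so the fiber of $p$ over $\idealp$ is an affine space of dimension $\mu(\idealp)$, and by Nakayama $\mu(\idealp)$ is the minimal number of generators of $M_\idealp$. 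Hence the fiber-dimension function of $p$ is precisely $\idealp \mapsto \mu(\idealp)$.

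For the implication from locally free to fiberflat, suppose $M$ is locally free of rank $r$. Then $\Sym M$ is a finitely generated $R$-algebra, which gives condition (1), and locally $M \cong R_f^r$ forces $\Sym M \cong R_f[T_1, \ldots, T_r]$, so $\Spec (\Sym M) \to \Spec R$ is a vector bundle; thus condition (2) holds with $U = X$ and, by the computation above, every fiber is an $r$-dimensional affine space, giving condition (3). Conversely, if $\Sym M$ is fiberflat, then condition (3) says that $\mu(\idealp)$ is constant, say equal to $r$. A finitely generated module whose fiber-rank function $\idealp \mapsto \dim_{\kappa(\idealp)} (M \tensor_R \kappa(\idealp))$ is (locally) constant is locally free, so $M$ is locally free of rank $r$.

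The base-change identity and the locally-free-implies-fiberflat direction are routine. The main obstacle is the converse, namely the passage from constant fiber dimension to local freeness, which rests on the standard commutative-algebra criterion that a finitely generated module with locally constant fiber rank is locally free; I would invoke this rather than reprove it. A mild finiteness hypothesis enters here, since the clean form of the criterion requires $M$ to be finitely presented, which is automatic when $R$ is Noetherian, as in all our applications, where $R$ is a finitely generated algebra over a field. Condition (2) of fiberflatness already records that $M$ is locally free over a dense open subset, so only the behavior over the complementary locus genuinely needs the rank argument.
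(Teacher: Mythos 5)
Your overall strategy is the same as the paper's: identify the fiber dimension of $\Spec (\Sym M) \rightarrow \Spec R$ over $\idealp$ with $\mu(\idealp)$ via base change of the symmetric algebra, observe that fiberflatness forces this function to be constant, and conclude local freeness. The paper phrases this after reducing to $R$ local: the generic fiber has dimension the rank $r$, the fiber over the maximal ideal has dimension $\nu(M)$, and fiberflatness forces $r=\nu$, hence freeness. The forward direction is identical in both arguments.

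The final step of your converse, however, has a genuine gap, and your diagnosis of the missing hypothesis is off: constancy of $\idealp \mapsto \dim_{\kappa(\idealp)}(M \tensor_R \kappa(\idealp))$ implies local freeness only when $R$ is \emph{reduced}; finite presentation (or Noetherianness) does not suffice. Concretely, take $R=K[x,y]/(x^2,xy)$ and $M=R/(x)$. This $M$ is finitely presented with $\mu(\idealp)=1$ at every prime, since the nilpotent $x$ dies in every residue field, yet $M_\idealm$ is not free at $\idealm=(x,y)$: if it were, it would be free of rank one, so the canonical surjection $R_\idealm \rightarrow M_\idealm$ would be a surjective endomorphism of a finitely generated module, hence injective, contradicting that its kernel $(x)R_\idealm$ is nonzero (the annihilator of $x$ is $(x,y) \subseteq \idealm$). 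Note that $\Sym M = R[T]/(xT)$ even satisfies all three conditions of fiberflatness: it is of finite type, every fiber is an affine line, and it restricts to a trivial line bundle over the dense open subset $D(y)$, where $x=0$. So the lemma as literally stated for an arbitrary commutative ring fails, and your appeal to the constant-rank criterion cannot be repaired by finite presentation alone; under a reducedness hypothesis your argument becomes complete, since over a reduced ring a finite module with locally constant rank function is locally free. To be fair, the paper's own proof shares this blind spot: it speaks of ``the rank of $M$'' and tacitly argues as over a domain, where the kernel of a minimal surjection $R^\nu \rightarrow M$ is generically zero and torsion-free, hence zero. Since every application in the paper concerns normal domains, nothing downstream is affected, but the hypothesis you should flag is reducedness of $R$, not finite presentation of $M$.
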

\begin{proof}
	We may assume that $R$ is local. If $M$ is free, the symmetric algebra is a polynomial algebra, which is fiberflat. To prove the converse, let $r$ be the rank of $M$ and $\nu$ the minimal number of generators of $M$. The generic fiber of the spectrum of the symmetric algebra has dimension $r$, the special fiber over the maximal ideal has dimension $\nu$. Fiberflatness implies that $r= \nu$, which means that $M$ is free.	
\end{proof}

\begin{Lemma}
\label{fiberflatirreducible}
Let $R$ be a domain essentially of finite type over a field $K$ and let $p:Z \rightarrow \Spec R$ be a fiberflat bundle which is locally free over $U=\Spec R \setminus \{  \idealm \}$, where $\idealm$ denotes a maximal ideal. Suppose that $p^{-1}(\idealm)$ is irreducible. Then, $Z$ is irreducible.	
\end{Lemma}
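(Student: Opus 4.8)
The plan is to exhibit a single irreducible component of $Z$ that exhausts it, exploiting the constancy of the fiber dimension (the defining property of fiberflatness that the spectrum of a symmetric algebra lacks). First I would dispose of the trivial case $\dim R = 0$: then $R$ is a field, $U = \emptyset$, and $Z = p^{-1}(\idealm)$ is irreducible by hypothesis. So assume $d := \dim R \geq 1$, so that $U = \Spec R \setminus \{\idealm\}$ is a dense open subset of the integral scheme $\Spec R$ with $\dim U = d$.

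Next I would produce the component. Since $R$ is a domain, $U$ is integral, and by hypothesis $Z|_U = p^{-1}(U) \to U$ is a vector bundle of rank $r$ (the rank of the fiberflat bundle); its total space is therefore integral, hence irreducible, of dimension $d + r$. As $Z|_U$ is open in $Z$ and irreducible, its generic point $\eta$ is a minimal point of $Z$ (any generization of $\eta$ lies in the open set $Z|_U$ and hence, by irreducibility of $Z|_U$, equals $\eta$), so $Z_1 := \overline{Z|_U} = \overline{\{\eta\}}$ is an irreducible component of $Z$ of dimension $d + r$, and it is the unique component meeting $Z|_U$. Consequently every other component of $Z$ is contained in the complement $Z \setminus Z|_U = p^{-1}(\idealm)$, and it suffices to prove $p^{-1}(\idealm) \subseteq Z_1$.

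For this I would first check that $Z_1$ meets $p^{-1}(\idealm)$: since $Z$ is a module scheme up to normalization it carries a zero section $s : \Spec R \to Z$, which is a closed immersion ($p$ being affine, hence separated); as $s(U) \subseteq Z|_U \subseteq Z_1$ and $s(\Spec R) = \overline{s(U)}$, we obtain $s(\idealm) \in Z_1 \cap p^{-1}(\idealm)$. Now $p|_{Z_1} : Z_1 \to \Spec R$ is a dominant morphism of integral schemes of finite type over $K$ with $\dim Z_1 - \dim \Spec R = r$, so by the theorem on the dimension of fibers every nonempty fiber has dimension at least $r$; in particular $\dim (Z_1 \cap p^{-1}(\idealm)) \geq r$. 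On the other hand, fiberflatness gives $\dim p^{-1}(\idealm) = r$, so the closed subset $Z_1 \cap p^{-1}(\idealm)$ of the irreducible scheme $p^{-1}(\idealm)$ has the same dimension $r$ as the ambient space, and hence equals it. Therefore $p^{-1}(\idealm) \subseteq Z_1$, giving $Z = Z|_U \cup p^{-1}(\idealm) = Z_1$, which is irreducible.

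The main point, and the place where the hypotheses must be combined carefully, is the comparison of dimensions: without fiberflatness forcing $\dim p^{-1}(\idealm) = r$, the special fiber could acquire a component of dimension $> r$ lying outside $\overline{Z|_U}$ (exactly the jump phenomenon exhibited by symmetric algebras in Example \ref{modulealgebradimension}), and the conclusion would fail. The two ingredients are the lower bound $\dim Z_1 - \dim \Spec R$ on the fiber dimension of a dominant morphism, and the fact that a proper closed subset of an irreducible finite-type scheme over a field has strictly smaller dimension; the irreducibility of $p^{-1}(\idealm)$ is what turns the resulting dimension equality into an actual containment.
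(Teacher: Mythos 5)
Your proof is correct and takes essentially the same route as the paper's: closure $Z_1$ of the irreducible vector bundle $Z|_U$ as the candidate component, the zero section to guarantee $Z_1$ meets the special fiber, the fiber-dimension lower bound (the paper cites \cite[Theorem 14.8]{eisenbud} for exactly this step) giving $\dim \bigl(Z_1 \cap p^{-1}(\idealm)\bigr) \geq r$, and the irreducibility of the $r$-dimensional fiber $p^{-1}(\idealm)$ to conclude. The only differences are organizational: the paper runs the dimension count as a contradiction against a hypothetical second component $C'$ (whose intersection with $C$ would have dimension at most $r-1$), whereas you derive the containment $p^{-1}(\idealm) \subseteq Z_1$ directly and also dispose of the trivial case $\dim R = 0$ explicitly.
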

\begin{proof}
The open subset $p^{-1}(U)$ is irreducible, let $C$ denote its closure in $Z$, which is also irreducible. Assume that there is another component $C'$. This component must be the fiber above $\idealm$ and has dimension $r$, the rank of $Z$. Therefore, the intersection $C \cap C'$ has dimension at most $r-1$. This intersection is not empty because of the zero section.
But then $C \rightarrow \Spec R$ would contradict \cite[Theorem 14.8]{eisenbud}.
\end{proof}

\begin{Remark}
Lemma \ref{fiberflatconstruction} (3) shows that a generic restriction of a fiberflat bundle to a closed irreducible subset is again fiberflat. In combination with Lemma \ref{fiberflatirreducible} this means that such a restriction is again irreducible. This is in stark contrast for module schemes up to normalization which are not fiberflat. There, the jump in the dimensions forces the  scheme to become reducible, when it is restricted to a closed subset of small dimension. E.g., the module scheme
\[ \Spec ( K[X,Y][S,T]/(SX+TY)) \longrightarrow {\mathbb A}^2\]
is integral, but if we restrict it to any line (or irreducible curve) through the origin, we get a scheme with two components. See also Remark \ref{regularfiberflatconjecture}.
\end{Remark}

\begin{Lemma}
	\label{fiberflatopen}
	Let $R$ be a domain of finite type over a field $K$ and let $p:Z \rightarrow \Spec R$ be an affine scheme of finite type  which is locally free over $U=\Spec R \setminus \{  \idealm \}$, where $\idealm$ denotes a maximal ideal of $\idealm$. Suppose that $p^{-1}(\idealm)$ is irreducible. Then, $Z$ is fiberflat if and only if the morphism is open.
\end{Lemma}
\begin{proof}
We use the going down criteria for openness, see \cite[Corollaire 1.10.4]{ega4.1}. If $ Z \rightarrow \Spec R$ is not fiberflat, then for an irreducible curve $C$ through $\idealm $ the restriction $Z|_C$ is not irreducible. A point above $\idealm$ which lives in the component above $\idealm$ but not in the dominant component does not live on a curve dominating $C$.

Suppose now that $ Z \rightarrow \Spec R$ is fiberflat, and let $\idealp \subseteq \idealq$ be a chain of prime ideals in $R$ and $\idealq'$ a prime ideal in $Z$ mapping to $\idealq$. We look at the situation above $V( \idealp )$, the fiberflattness is preserved. By Lemma \ref{fiberflatirreducible}, $Z|_{V(\idealp) }$ is irreducible, $\idealq'$ lies on it and its generic point is mapped to $\idealp$.	
\end{proof}

\section{Reflexive bundles and factorial closure}

\label{reflexivesection}

For a quotient scheme $Z \rightarrow X$ and an open subset $U \subseteq X$, we cannot expect that $Z|_U$ contains substantial information about $Z$. However, this looks different, when $U$ contains all points of codimension one. The results of this section can only be applied in the case of a small representation.

\begin{definition}
A module scheme up to normalization $Z \rightarrow X$ over a normal scheme $X$ is called \emph{reflexive} if there exists an open subset $U \subseteq X$ containing all points of codimension one such that the restriction $Z|_U$ is a vector bundle.	
\end{definition}

Let $\mathcal F$ be the corresponding locally free sheaf on such an open subset  $U \subseteq X$. Then, for every open subset $U' \subseteq U$ also containing all points of codimension one, the global sections of $\mathcal F$ on $U$ and on $U'$ coincide. We call $\Gamma(U, {\mathcal F})$ the corresponding $R$-module of the reflexive bundle, it is a reflexive module. In this setting, we call $Z$ a geometric realization of $\mathcal F$, of $Z_U$ and of $\Gamma(U, {\mathcal F})$.

\begin{Definition}
	An $R$-module $M$ is called \emph{fiberflat} if it has a reflexive fiberflat realization, i.e., if there exists a fiberflat bundle $Z \rightarrow X$ and an open dense subset $U \subseteq X$ containing all points of codimension one such that $Z|U \cong (\Spec (\Sym M))|_U $ is a vector bundle.	
\end{Definition}

\begin{Lemma}
\label{smallreflexive}
Let $\beta $ be a linear faithful small representation of a finite group $G $, and let $X={\mathbb A}^d/G$ be the quotient. Let $\rho$ be a linear representation of $G$ and let $Z_\rho$ be the corresponding fiberflat bundle. Then $Z_\rho$ is reflexive.
\end{Lemma}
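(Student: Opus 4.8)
The plan is to exhibit an explicit open subset $U \subseteq X$ witnessing reflexivity, namely the image of the free locus, and then to verify that its complement has codimension at least two precisely because $\beta$ is small. By Lemma \ref{fiberflatquotient} and Corollary \ref{Moduleschemequotient}, $Z_\rho \rightarrow X$ is a fiberflat bundle and a module scheme up to normalization over the normal scheme $X$, so the notion of reflexivity applies to it, and it remains only to produce the required $U$.

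First I would take $V \subseteq {\mathbb A}^d$ to be the locus where the action $\beta$ is free, which is open, $G$-invariant and nonempty by Lemma \ref{genericfree}, and let $q: {\mathbb A}^d \rightarrow X$ be the quotient morphism with $U = q(V)$. Since $V$ is $G$-invariant we have $q^{-1}(U) = V$, and since $q$ is finite (hence closed) and ${\mathbb A}^d \setminus V$ is closed and $G$-invariant, the image $X \setminus U = q({\mathbb A}^d \setminus V)$ is closed; thus $U$ is open. By Corollary \ref{modulealgebralinearisomorphism}, the restriction $Z_\rho|_U$ is a vector bundle over $U$, so it suffices to show that $X \setminus U$ has codimension at least two in $X$, equivalently that $U$ contains every point of codimension one.

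For the codimension estimate, note that a point $x \in {\mathbb A}^d$ lies outside $V$ exactly when its stabilizer is nontrivial, so
\[ {\mathbb A}^d \setminus V = \bigcup_{g \in G,\, g \neq \operatorname{Id}} \operatorname{Fix}(g), \qquad \operatorname{Fix}(g) = \ker(\beta(g) - \operatorname{Id}), \]
a finite union of linear subspaces. The hypothesis that $\beta$ is small means that no $\beta(g)$ with $g \neq \operatorname{Id}$ is a pseudoreflection, i.e.\ no $\operatorname{Fix}(g)$ is a hyperplane; hence $\dim \operatorname{Fix}(g) \leq d-2$ for every $g \neq \operatorname{Id}$, and ${\mathbb A}^d \setminus V$ has codimension at least two in ${\mathbb A}^d$. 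Since $q$ is finite and surjective it preserves dimensions of closed subsets, so $X \setminus U = q({\mathbb A}^d \setminus V)$ also has dimension at most $d-2$; as $\dim X = d$, this is codimension at least two. Therefore $U$ contains all points of codimension one, and $Z_\rho|_U$ is a vector bundle, so $Z_\rho$ is reflexive.

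The only genuinely substantive step is the codimension count: everything hinges on translating the representation-theoretic smallness condition (no pseudoreflections) into the geometric statement that every proper fixed subspace has codimension at least two, and then on the fact that the finite quotient map $q$ neither raises nor lowers the dimension of closed subsets, so the bound survives the passage from ${\mathbb A}^d$ to $X$. Once this is in place, the identification of $U$ with the image of the free locus and the vector bundle structure there are immediate from Corollary \ref{modulealgebralinearisomorphism}.
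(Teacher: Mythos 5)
Your proof is correct and follows exactly the paper's argument, which is the one-line observation that a small action is fixed point free in codimension one together with Corollary \ref{modulealgebraisomorphism}; you have simply spelled out the details (the identification of the non-free locus with $\bigcup_{g \neq \operatorname{Id}} \operatorname{Fix}(g)$, the codimension bound from the absence of pseudoreflections, and dimension preservation under the finite quotient map). Citing Corollary \ref{modulealgebralinearisomorphism} instead of Corollary \ref{modulealgebraisomorphism} is immaterial, as the former is just the specialization of the latter to linear actions, which is the case at hand.
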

\begin{proof}
A small action is fixed point free in codimension one, hence, this follows from Corollary \ref{modulealgebraisomorphism}.
\end{proof}

\begin{Corollary}
\label{invariantringfiberflat}
	Let the finite nonmodular group $G$ act linearly and faithfully on affine space ${\mathbb A}^d$ over $K$  with quotient scheme $X=\Spec R$. Let $U \subseteq X$ be the regular locus. Then the following hold.
	
	\begin{enumerate}
		\item 
	The tangent bundle $T_U$ is fiberflat.

	\item
		Suppose that $K$ is a perfect field of positive characteristic so that $R$ is $F$-finite. Then, the push-forwards $F^{e}_*{ {\mathcal O}_U}$ under iterations of the Frobenius are  fiberflat.
		
	\end{enumerate}
\end{Corollary}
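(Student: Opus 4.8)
The plan is to realize both assertions as instances of the general fiberflat quotient result, Lemma \ref{fiberflatquotient}, applied to suitable linear representations of a suitable finite group. The key point is that each of the bundles $T|_U$ and $F^e_*\mathcal{O}_U$ is the restriction to the regular locus of an object that \emph{arises as a quotient scheme $Z_\rho$} for an appropriate action, so that fiberflatness is not checked by hand but inherited from the construction. Recall that the quotient morphism $Y={\mathbb A}^d \to X$ is free over an open subset (Lemma \ref{genericfree}), and for a linear faithful action this free locus is nonempty with image $U_0\subseteq X$; the subtlety is that $U$ here is the \emph{regular} locus, so the first task is to observe that the regular locus of $X$ is contained in (or handled like) the free locus in codimension one, and more importantly that over $U$ the pullback along $Y\to X$ trivializes the relevant bundles into free modules carrying a linear $G$-action.

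For part (1), the tangent bundle, I would argue as follows. On ${\mathbb A}^d$ the tangent bundle is trivial, $T_{{\mathbb A}^d}\cong {\mathbb A}^d\times {\mathbb A}^d$, and $G$ acts on the second (fiber) factor by the representation $\rho=\beta$ itself (the derivative of a linear action is the same linear action). Hence the equivariant tangent bundle of ${\mathbb A}^d$ is exactly the product action $\beta\times\beta$ on ${\mathbb A}^d\times{\mathbb A}^d$, and its quotient is $Z_\beta=({\mathbb A}^d\times{\mathbb A}^m)/(\beta\times\beta)$ with $m=d$. By Lemma \ref{fiberflatquotient}, $Z_\beta\to X$ is a fiberflat bundle. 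The remaining step is to identify $Z_\beta|_U$ with $T_U$: over the regular locus $U$ the invariant theory descent (Corollary \ref{modulealgebraisomorphism}, since small/free in codimension one gives a vector bundle there) shows that $Z_\beta|_U$ is the geometric vector bundle whose sheaf of sections is the descended tangent sheaf, i.e.\ $T_U$. Since fiberflatness is a property that restricts to the generic vector-bundle locus by definition, this yields that $T|_U$ is fiberflat.

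For part (2), the strategy is the same with $\rho$ replaced by the representation governing the Frobenius pushforward. Because $K$ is perfect and $R$ is $F$-finite, $F^e_*\mathcal{O}_X$ corresponds, upstairs on $Y={\mathbb A}^d$, to the free $S$-module $F^e_*S\cong S^{(p^e)^d}$ equipped with the $G$-action induced by $\beta$ twisted through the Frobenius; this is precisely a finite-dimensional linear representation $\rho_e$ of $G$ on the space of monomials of degree $<p^e$ in each variable (equivalently, the $e$-th Frobenius twist of the regular-type action on the truncated polynomial ring). Concretely, $F^e_*\mathcal{O}_U$ is the descent to $U$ of this free module with its linear $G$-action, so it is the restriction to $U$ of the quotient scheme $Z_{\rho_e}$. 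Again by Lemma \ref{fiberflatquotient} this $Z_{\rho_e}\to X$ is fiberflat, and over $U$ it is the vector bundle $F^e_*\mathcal{O}_U$ by Corollary \ref{modulealgebraisomorphism}; hence $F^e_*\mathcal{O}_U$ is fiberflat for every $e$.

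The main obstacle I anticipate is the careful identification, on the regular locus $U$, of the \emph{invariant-theoretic} descended object ($Z_\rho|_U$ as a geometric vector bundle) with the \emph{differential/Frobenius} object ($T_U$ respectively $F^e_*\mathcal{O}_U$). Both descriptions agree over the free locus by Corollary \ref{modulealgebraisomorphism}, but one must check that the $G$-equivariant structures on the trivial bundle upstairs really are $\beta$ (for the tangent bundle) and the Frobenius-twisted linear action (for $F^e_*$), and that the regular locus $U$ is contained in the locus where this identification is valid (in particular that $U$ lies in the image of the free locus, at least in codimension one, which holds since the singular locus of $X$ is the image of the non-free locus for a small-in-codimension-one action). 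Once the equivariant trivialization is set up correctly, fiberflatness is automatic from Lemma \ref{fiberflatquotient} and no separate dimension computation on the fibers is needed.
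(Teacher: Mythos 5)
Your overall route is the same as the paper's: realize each object upstairs on ${\mathbb A}^d$ as a free module with a $G$-action compatible with $\beta$, pass to the quotient scheme, invoke Lemma \ref{fiberflatquotient} for fiberflatness, and identify the restriction over (the free part of) $U$ via Corollary \ref{modulealgebraisomorphism}. For part (1) this is correct and is exactly the paper's argument: the equivariant structure on $T_{{\mathbb A}^d}\cong {\mathbb A}^d\times{\mathbb A}^d$ is the constant product action $\beta\times\beta$, so $Z_\beta$ is a genuine quotient scheme in the sense of the paper and realizes $T$ over the image of the free locus.

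In part (2), however, there is a genuine gap: the $G$-action on $F^e_*S$ is \emph{not} ``precisely a finite-dimensional linear representation $\rho_e$ on the space of monomials of degree $<p^e$ in each variable.'' That monomial space is not $G$-stable inside $F^e_*S$ unless $\beta$ is diagonal: for $p=2$, $e=1$, $g(X_1)=X_1+X_2$, $g(X_2)=X_2$, one has $g(X_1X_2)=X_1X_2+X_2^2=X_1X_2+X_2\cdot 1$ in the Frobenius-twisted module structure, so the matrix of $g$ in the monomial basis has entries in $S$, not in $K$ (the same happens for nonmodular, non-diagonalizable actions). What is true is that the Frobenius power ideal ${\mathfrak m}^{[p^e]}$ is $G$-stable, so $\rho_e$ exists as the fiber representation on $S/{\mathfrak m}^{[p^e]}$ at the origin; but an equivariant trivialization $F^e_*S\cong S\otimes_K\rho_e$ requires choosing a $G$-stable graded minimal generating space, which one can do by Maschke-type averaging only in the nonmodular case --- and the statement allows $p$ to divide $|G|$. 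The robust repair, and the way the paper's terse phrase ``linear representations induced by the basic representation'' should be read, is to observe that each $g$ acts on the trivial module scheme ${\mathbb A}^d\times{\mathbb A}^{p^{ed}}$ linearly in the sense of Section \ref{linearsection} (compatibly with addition and scalar multiplication, with $S$-coefficient matrices), so that the quotient is a module scheme up to normalization by Theorem \ref{Moduleschemegeneralquotient}; fiberflatness then follows since the finite surjection from $Y\times{\mathbb A}^{p^{ed}}$ forces all fibers over $X$ to have dimension $p^{ed}$, and the vector-bundle structure over the image of the free locus comes from descent as in Corollary \ref{modulealgebraisomorphism}. A smaller slip: your closing claim that the regular locus $U$ lies in the image of the free locus in codimension one presumes a small (in codimension one) action, which is not among the hypotheses --- for reflection groups the two loci differ along the discriminant; the identification one actually gets, and the one the paper uses, is over the image of the free locus.
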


\begin{proof}
We mod out the action of the subgroup generated by the reflections and can then assume that the action $\beta$ is small.
The action is then free on a subset $V$ containing all points of codimension one with image $U \subseteq X$, which is the regular locus of $X$. For the tangent bundle on $U$, we have by Corollary
\ref{tangentbundlelinearisomorphism}, $T_U =T_V/ \beta \times \beta=V \times {\mathbb A}^d/ \beta \times \beta $. The quotient scheme $T_{ {\mathbb A}^d}/   \beta \times \beta =  {\mathbb A}^d \times {\mathbb A}^d / \beta \times \beta$ gives by Lemma \ref{fiberflatquotient} a fiberflat realization of $T_U$.
The proof of th second statement is analogous.
\end{proof}

\begin{Example}
The algebra
	\[ K[X,Y][D,E,F]/(E^2-X^2D, F^2-Y^2D, EF-DXY, YE-XF )  \,  \]
	obtained in Example \ref{pullbacknilpotent} describes a fiberflat reflexive bundle over $ {\mathbb A }^2$ which is not normal.
\end{Example}

\begin{Lemma}
Let $X=\Spec R$ be a normal noetherian affine scheme over $K$ and let $ L \rightarrow U $ be a line bundle on an open subset $U \subseteq X$ containing all points of codimension one and such that $ L$ is a torsion element in the Picard group of $U$ (which is the divisor class group of $X$). Then $ L$ has a geometric realization $Z \rightarrow X$ with a reflexive fiberflat bundle $Z$.
\end{Lemma}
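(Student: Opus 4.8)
The plan is to exploit the torsion hypothesis to produce a finite cyclic cover on which the line bundle becomes trivial, and then realize $Z$ as a quotient of a trivial line bundle, thereby landing exactly in the setting of Theorem \ref{Moduleschemegeneralquotient} and Corollary \ref{Moduleschemequotient}. Concretely, suppose $L$ has order $n$ in $\Pic(U) = \Cl(X)$, so that $L^{\tensor n} \cong {\mathcal O}_U$. First I would form the cyclic cover associated to this torsion class: choosing a trivialization $L^{\tensor n} \cong {\mathcal O}_U$, the sheaf of algebras ${\mathcal A} = \bigoplus_{i=0}^{n-1} L^{\tensor (-i)}$ (with multiplication given by the chosen isomorphism) defines a finite morphism $V \rightarrow U$, and taking the relative spectrum over all of $X$ (equivalently, normalizing $X$ in the corresponding function-field extension) yields a normal affine scheme $Y = \Spec S \rightarrow X$ that is generically a cyclic $\Z/(n)$-Galois cover, since $L$ is torsion of exact order $n$. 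The group $G = \Z/(n)$ acts on $Y$ with quotient $X$, and by construction the pullback of $L$ to $Y$ is trivial in codimension one.

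Next I would describe the group action on the bundle side. Over $Y$, the pullback of $L$ becomes trivial (at least over the open locus containing all codimension-one points, which is all that reflexivity requires), so it is a trivial line bundle $Y \times {\mathbb A}^1$ there. The generator of $G = \Z/(n)$ acts on the fibers of this trivial line bundle through multiplication by a primitive $n$th root of unity $\zeta$ — this is precisely the linear character $\rho:\Z/(n) \rightarrow K^\times$ recording the class of $L$ in the character group of the Galois group, matching the toric computations of Lemma \ref{cyclicone}. This gives a compatible linear action of $G$ on the module scheme $Z' = Y \times {\mathbb A}^1 \rightarrow Y$. I would then set $Z = Z'/G = (Y \times {\mathbb A}^1)/(\beta \times \rho)$, which is exactly the quotient scheme $Z_\rho$ of Corollary \ref{Moduleschemequotient}, hence a module scheme up to normalization, and fiberflat by Lemma \ref{fiberflatquotient}.

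It remains to check that $Z$ is a geometric realization of $L$, i.e.\ that $Z|_{U'} \cong L$ as a line bundle over some open $U'$ containing all points of codimension one. Over the locus where the $G$-action on $Y$ is free — which, since $Y \rightarrow X$ is \'etale in codimension one by the purity of the branch locus (the cover ramifies only where $L$ fails to extend as a line bundle, a set of codimension $\geq 2$) — Corollary \ref{modulealgebraisomorphism} and Lemma \ref{modulealgebrafree} identify $Z$ with the total space of the line bundle whose sheaf of sections is the invariant module $(S \tensor_K V)^\rho$, and descent for the $\zeta$-eigenspace recovers exactly $L$. Since both $L$ and this eigenspace bundle are reflexive of rank one agreeing on a common open set meeting all codimension-one points, they agree as reflexive sheaves, so $Z$ is reflexive in the sense of the definition.

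The main obstacle will be the construction of the cyclic cover $Y \rightarrow X$ with good properties: one must ensure that $Y$ is normal, that $G = \Z/(n)$ acts faithfully with quotient exactly $X$, and crucially that the cover is \'etale in codimension one so that the free locus of the action contains all points of codimension one (this is what makes the identification with $L$ valid over a large enough open set and keeps $Z$ reflexive). The torsion hypothesis is precisely what guarantees a finite, generically \'etale cyclic cover exists; subtleties about roots of unity in $K$ and about ramification in codimension $\geq 2$ (which is harmless for the reflexive statement) would need attention but do not affect the codimension-one geometry that drives the result.
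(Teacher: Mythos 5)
Your proposal is correct and follows essentially the same route as the paper: construct the cyclic cover ${\mathcal A}=\bigoplus_{i=0}^{n-1} L^{\tensor i}$ trivializing $L$ over $U$, extend it to $Y=\Spec S \rightarrow X$ by taking the integral closure of $R$ in the function field, let $G=\Z/(n)$ act on $Y \times {\mathbb A}^1$ via the tautological character, and take the quotient, invoking Lemma \ref{fiberflatquotient} for fiberflatness and freeness of the action in codimension one (the cover being \'etale over $U$ by construction, so your detour through purity of the branch locus is unnecessary) for reflexivity. The subtleties you flag (roots of unity, characteristic dividing $n$) are likewise left implicit in the paper's proof.
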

\begin{proof}
Let $n$ be the order of $L$ and let $\mathcal L$ be the corresponding invertible sheaf on $U$. Choosing an isomorphism ${ \mathcal L}^{\tensor n} \cong {\mathcal O}_U$, we can construct an ${\mathcal O}_U$-algebra ${\mathcal A} = \oplus_{i =0}^{n-1} { \mathcal L}^{\tensor i}$ and hence a finite scheme $ V \rightarrow U $ which trivializes $L$. Going to the integral closure $S$ of $R$ in the quotient field of $V$, this map can be extended to $Y=\Spec S \rightarrow X$. The group $G=\Z/(n)$ is acting on $V$ and on $Y$ with quotients $U$ and $X$. The group acts also on ${\mathbb A}^1$ such that the quotient $ ( Y \times {\mathbb A}^1)/G$ gives by Lemma  \ref{smallreflexive} and Lemma \ref{fiberflatquotient} a reflexive fiberflat bundle over $X$, which gives back above $U$ the line bundle $L$.
\end{proof}

The same conclusion does probably also hold for finite bundles, see \cite{norirepresentations}. Example \ref{quadriclinebundle} shows that a
line bundle which does not define a torsion class, there is in general no fiberflat realization.

\begin{Lemma}
\label{reflexivebundle}
Let $X =\Spec R$ be a normal affine scheme of finite type over $K$ and let $p:Z=\Spec A \rightarrow X$ be a normal reflexive fiberflat bundle. Let $U \subseteq X$ be an open subset containing all points of codimension one such that $Z |_U$ is a vector bundle,  let $M$ be the corresponding module and set $V=p^{-1}(U)$. Then
\[ A = \Gamma (V,  {\mathcal O}_Z) = \bigoplus_{n \in \N} \Gamma(U,  \Sym^n M)  = \bigoplus_{n \in \N} (\Sym^n M)^{**}  \, .  \] 
Moreover, this is an $R$-algebra of finite type and $M$ is a finite $R$-module.
\end{Lemma}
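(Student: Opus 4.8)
The plan is to obtain the three asserted equalities in turn: first $A = \Gamma(V, \mathcal{O}_Z)$ by a codimension-two extension argument, then the passage to $U$ using the vector bundle structure, and finally the identification with reflexive hulls. Throughout I use that $A$ is $\N$-graded with $A_0 = R$ (Lemma \ref{monoidactiongraded}, Remark \ref{moduleschemegraded}), that $\mathcal{M}$ denotes the locally free sheaf on $U$ with $M = \Gamma(U, \mathcal{M})$ and $Z|_U = \Spec(\Sym \mathcal{M})$, and that $Z$ is an integral normal scheme (integrality holds since $V$ is dense in $Z$ and irreducible, being a vector bundle over the irreducible $U$).

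The key step, and the main obstacle, is to prove that $Z \setminus V = p^{-1}(X \setminus U)$ has codimension at least two in $Z$. Since $U$ contains all points of codimension one, $\dim(X \setminus U) \leq \dim X - 2$. Writing $r$ for the rank of the fiberflat bundle, the fibers of $p^{-1}(X \setminus U) \to X \setminus U$ all have dimension $r$, so $\dim p^{-1}(X \setminus U) \leq \dim(X \setminus U) + r \leq \dim X - 2 + r$, whereas $\dim Z = \dim X + r$ because $Z|_U$ is a rank-$r$ vector bundle over the dense open $U$. It is exactly here that fiberflatness (constancy of the fiber dimension) is indispensable: for the spectrum of a symmetric algebra the fiber dimension jumps up over $X \setminus U$, so $p^{-1}(X \setminus U)$ would then acquire codimension one and the following extension step would fail.

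With this in hand, $A = \Gamma(V, \mathcal{O}_Z)$ follows from algebraic Hartogs extension: as $A$ is a normal noetherian domain it satisfies $A = \bigcap_{\height \idealp = 1} A_\idealp$, and since every height-one prime of $A$ lies in $V$, any regular function on $V$ lies in this intersection, giving $\Gamma(V, \mathcal{O}_Z) \subseteq A$; the reverse inclusion is clear. For the second equality, over $U$ we have $p_* \mathcal{O}_V = \Sym \mathcal{M} = \bigoplus_{n \in \N} \Sym^n \mathcal{M}$ as graded $\mathcal{O}_U$-algebras, and since $U$ is noetherian (hence quasi-compact) global sections commute with the direct sum, so $\Gamma(V, \mathcal{O}_Z) = \bigoplus_{n \in \N} \Gamma(U, \Sym^n \mathcal{M})$.

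For the third equality I would invoke the standard description of the reflexive hull over a normal domain. On $U$ the sheaf $\Sym^n \mathcal{M}$ is locally free, hence reflexive, and it agrees with $\widetilde{\Sym^n M}|_U$ because $\Sym^n$ commutes with restriction; since $\widetilde{(\Sym^n M)^{**}}|_U$ equals this same reflexive sheaf and $(\Sym^n M)^{**}$ is reflexive over $R$, taking sections over $U$ (which contains all height-one primes) gives $\Gamma(U, \Sym^n \mathcal{M}) = (\Sym^n M)^{**}$. This completes the chain of equalities. Finally, $Z$ is of finite type over $X$ by the first condition in the definition of a fiberflat bundle, so $A$ is a finitely generated $R$-algebra; being $\N$-graded with $A_0 = R$ and finitely generated as an algebra, each homogeneous component is a finite $R$-module, and in particular $M = A_1$ is finite. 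The noteworthy content of the ``moreover'' is thus that the reflexive symmetric algebra $\bigoplus_{n \in \N} (\Sym^n M)^{**}$, which for a general module need not be noetherian, is here forced to be of finite type.
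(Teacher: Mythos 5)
Your proof is correct and takes essentially the same route as the paper's: the codimension-two bound on $p^{-1}(X \setminus U)$ coming from the fiberflat condition, normality of $A$ (algebraic Hartogs) to get $A = \Gamma(V, {\mathcal O}_Z)$, the graded decomposition of the sections of the vector bundle over $U$, the reflexive-hull identification from $U$ containing all points of codimension one, and finiteness from the finite-type clause in the definition of a fiberflat bundle. You merely make explicit some details the paper leaves implicit (commuting sections with the direct sum over the quasi-compact $U$, and the identification $\widetilde{\Sym^n M}|_U \cong \Sym^n \mathcal{M}$).
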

\begin{proof}
Let $d = \dim X$ and $r$ be the rank of the bundle, so that $Z$ has dimension $d+r$. The complement of $U$, say $Y=X \setminus U$, has dimension $\leq d-2$, hence the dimension of $p^{-1}(Z)$ is $\leq d+r-2$. Therefore, also $V$ contains all points of codimension one, which implies the first equality, as $A$ is normal. Let $\mathcal F$ denote the corresponding locally free sheaf on $U$. Over $U$, the vector bundle $Z|_U$ is the (relative) spectrum of the graded ${\mathcal O}_U$-algebra $\bigoplus_{n \in \N}  \Sym^n {\mathcal F}$. The ring of global sections of this spectrum is $\bigoplus_{n \in \N} \Gamma(U, \Sym^n {\mathcal F})$. As the coherent module $\tilde{M}$ corresponding to $M$ restricts to $\mathcal F$ on $U$, this equals $\bigoplus_{n \in \N} \Gamma (U, \Sym^n M)$. As $U$ contains all points of codimension one, we have that $ \Gamma (U, \Sym^n M)$ is the reflexive hull of $ \Sym^n M $. The finiteness condition of this algebra is inherent in the fiberflat condition. The finiteness of $M$ follows, as $M$ is the degree one part of this algebra.
\end{proof}

The algebra $ \bigoplus_{n \in \N} (\Sym^n M)^{**} $ is called the \emph{factorial closure} of $M$, see \cite[Chapter 7]{vasconcelosarithmetic}, \cite{samuelanneauxfactoriels}. The factorial closure gives the canonical fiberflat realization of a reflexive fiberflat module.

\begin{Corollary}
	\label{fiberflatfactorialclosure}
Let $X =\Spec R$ be a normal affine scheme of finite type over $K$ and let $q:W=\Spec B \rightarrow X$ be an integral reflexive fiberflat bundle.
Then there is also a normal reflexive fiberflat bundle $p:\Spec A \rightarrow X$ realizing the same reflexive module. In fact, one can take $A$ to be the factorial closure of the module.
\end{Corollary}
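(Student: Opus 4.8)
The plan is to produce the required bundle as the normalization of $W$ and then to identify its coordinate ring with the factorial closure by invoking Lemma \ref{reflexivebundle}. Since $W=\Spec B$ is integral, $B$ is a domain; let $\tilde{W}=\Spec A$ denote the normalization of $W$, where $A$ is the integral closure of $B$ in its fraction field. Because $X$ is of finite type over $K$, it is excellent, and $W$ is of finite type over $X$ and integral, so the normalization $A$ is a finite $B$-module; in particular $\Spec A \rightarrow X$ is again of finite type, and $A$ is a normal domain.

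First I would check that $\tilde{W}$ is again a reflexive fiberflat bundle realizing the same module. By Lemma \ref{fiberflatconstruction}(4) the normalization of a fiberflat bundle over a normal integral excellent base is fiberflat, and by Lemma \ref{moduleschemeuptoconstruction}(4) the normalization carries a natural structure of a module scheme up to normalization, since $W$ is generically a vector bundle. Let $U\subseteq X$ be the open subset, containing all points of codimension one, over which $W|_U$ is a vector bundle with associated reflexive module $M=\Gamma(U,\mathcal{F})$. Over a normal base a vector bundle is already normal, so the normalization morphism $\tilde{W}\rightarrow W$ is an isomorphism over $U$; hence $\tilde{W}|_U\cong W|_U$ is the same vector bundle, $\tilde{W}$ is reflexive, and it realizes the same reflexive module $M$.

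Finally, applying Lemma \ref{reflexivebundle} to the normal reflexive fiberflat bundle $\tilde{W}=\Spec A$ yields
\[ A=\bigoplus_{n\in\N}\Gamma(U,\Sym^n M)=\bigoplus_{n\in\N}(\Sym^n M)^{**}, \]
which is exactly the factorial closure of $M$. This simultaneously establishes the existence of a normal reflexive fiberflat realization and the explicit description, so $p:\Spec A\rightarrow X$ with $A$ the factorial closure is the asserted bundle.

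The main obstacle I anticipate is not the construction but the two compatibility checks feeding into it: that normalization genuinely preserves the constant fiber dimension (so fiberflatness survives, which is the content of Lemma \ref{fiberflatconstruction}(4) and relies on excellence to retain finiteness of the normalization), and that the normalization is an isomorphism over $U$ so that the reflexive module $M$ is left unchanged. Once these are in hand, the passage from the abstract existence statement to the explicit factorial-closure description is immediate from Lemma \ref{reflexivebundle}.
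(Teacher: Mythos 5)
Your proof is correct and follows essentially the same route as the paper: pass to the normalization of $W$, use Lemma \ref{fiberflatconstruction}(4) to preserve fiberflatness, observe the normalization is an isomorphism over $U$ so the reflexive module is unchanged, and then apply Lemma \ref{reflexivebundle} to identify the coordinate ring with the factorial closure $\bigoplus_{n\in\N}(\Sym^n M)^{**}$. The only cosmetic difference is that the paper realizes the normalization concretely as $\Gamma(q^{-1}(U),\mathcal{O}_W)$ (proving finiteness and birationality of $B\rightarrow\Gamma(q^{-1}(U),\mathcal{O}_W)$ directly), whereas you take the abstract normalization, with excellence supplying finiteness, and let Lemma \ref{reflexivebundle} do the identification.
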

\begin{proof}
Let $U \subseteq X$ be an open subset containing all points of codimension one such that $W |_U$ is a vector bundle, let $M$ be the corresponding module and set $V=q^{-1}(U)$. Then $B \rightarrow \Gamma(V, {\mathcal O}_W)=A$ is the normalization, as it is finite, birational and since $V$ is normal. Therefore, $\Spec A \rightarrow X$ gives a normal fiberflat bundle by Lemma \ref{fiberflatconstruction} (4), which restricts to the same vector bundle on $U$. The statement about the factorial closure follows from Lemma \ref{reflexivebundle}.
\end{proof}

The following two results will be important in Section \ref{irreduciblesection} where we characterize irreducible fiberflat bundles.

\begin{Lemma}
\label{reflexivebundleproduct}
Let $X =\Spec R$ be a normal affine scheme of finite type over $K$ and let $M_1,M_2$ be finitely generated reflexive fiberflat $R$-modules with geometric realizations $Z_i =  \Spec A_i$, where $ A_i = \bigoplus_{k \in \N} ( \Sym^k (M_i))^{**}$. Then $M_1 \oplus M_2$ is also reflexive and fiberflat, and the normalization of $Z_1 \times_X Z_2$ equals $ \Spec  A    $, where $ A= \bigoplus_{n \in \N} (\Sym^n (M_1 \oplus M_2))^{**} $.
\end{Lemma}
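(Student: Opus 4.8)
The plan is to assemble the statement from the structural results on fiberflat bundles, using the factorial closure as the canonical realization. First I would record the purely module-theoretic fact: since the reflexive hull commutes with finite direct sums, $(M_1\oplus M_2)^{**}\cong M_1^{**}\oplus M_2^{**}=M_1\oplus M_2$, so $M_1\oplus M_2$ is reflexive. Let $U_i\subseteq X$ be an open subset containing all points of codimension one on which $Z_i$ is a vector bundle realizing $M_i$, and set $U=U_1\cap U_2$. Then $X\setminus U=(X\setminus U_1)\cup(X\setminus U_2)$ still has codimension $\geq 2$, so $U$ contains all points of codimension one, and on $U$ both $M_1$ and $M_2$ are locally free, hence so is $M_1\oplus M_2$.

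Next I would produce a fiberflat realization of $M_1\oplus M_2$. By Lemma \ref{fiberflatconstruction}(1) the product $Z_1\times_X Z_2\to X$ is a fiberflat bundle of rank $r_1+r_2$, where $r_i$ is the rank of $Z_i$. Using the canonical isomorphism $\Sym(M_1\oplus M_2)\cong \Sym M_1\otimes_R\Sym M_2$ (as in the proof of Lemma \ref{symmetricmodulescheme}), over $U$ we obtain
\[ (Z_1\times_X Z_2)|_U \cong Z_1|_U\times_U Z_2|_U \cong (\Spec(\Sym(M_1\oplus M_2)))|_U, \]
which is the vector bundle attached to $M_1\oplus M_2$. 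Thus $Z_1\times_X Z_2$ is a reflexive fiberflat bundle realizing $M_1\oplus M_2$, which proves that $M_1\oplus M_2$ is fiberflat.

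For the normalization claim I would first check that $Z_1\times_X Z_2$ has a single minimal prime, so that its normalization in the sense of the paper (reduce, then take the integral closure in the total ring of fractions) is defined. Write $p:Z_1\times_X Z_2\to X$ for the projection. The restriction $(Z_1\times_X Z_2)|_U$ is an integral vector bundle over the integral base $U$, so its closure $C$ is the unique irreducible component meeting $p^{-1}(U)$. Any further component would lie in $p^{-1}(X\setminus U)$; since the bundle is fiberflat of constant rank $r_1+r_2$ and $\operatorname{codim}(X\setminus U)\geq 2$, such a component has dimension at most $(\dim X-2)+(r_1+r_2)<\dim C$, so it cannot exist. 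This is exactly the dimension argument underlying Lemma \ref{fiberflatirreducible}, and it shows that the reduction is integral and the normalization $\widetilde{Z_1\times_X Z_2}$ is well defined.

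Finally, by Lemma \ref{fiberflatconstruction}(4) (where excellence of $X$, automatic since $X$ is of finite type over $K$, guarantees finiteness of the normalization) the normalization is a normal fiberflat bundle; it is reflexive and realizes $M_1\oplus M_2$ because it agrees with $Z_1\times_X Z_2$, hence with the vector bundle of $M_1\oplus M_2$, over $U$. Lemma \ref{reflexivebundle} then identifies its coordinate ring with $\bigoplus_{n\in\N}(\Sym^n(M_1\oplus M_2))^{**}=A$, giving $\widetilde{Z_1\times_X Z_2}=\Spec A$; alternatively one may phrase these last two steps as a single application of Corollary \ref{fiberflatfactorialclosure} to the integral reflexive fiberflat bundle $Z_1\times_X Z_2$. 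I expect the main obstacle to be the unique-minimal-prime step: everything else is a bookkeeping assembly of the cited lemmas, but one must rule out spurious components of $Z_1\times_X Z_2$ over $X\setminus U$ before the paper's notion of normalization applies, and it is precisely the constant-fiber-dimension (fiberflat) hypothesis that does the real work there.
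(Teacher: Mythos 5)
Your reflexivity and fiberflatness steps are correct and match the paper's proof, which likewise records that $Z_1 \times_X Z_2$ is of finite type with fibers of constant dimension $r_1+r_2$ and identifies it over $U$ with the vector bundle of ${\mathcal F}_1 \oplus {\mathcal F}_2$. Your closing assembly through Lemma \ref{fiberflatconstruction}(4), Lemma \ref{reflexivebundle} and Corollary \ref{fiberflatfactorialclosure} is also a legitimate repackaging of the paper's conclusion (the proof of Corollary \ref{fiberflatfactorialclosure} runs on the same mechanism), and there is no circularity since those results precede the lemma.

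The gap is precisely at the step you flag as the main obstacle. The inference ``any further component lies over $X\setminus U$, has dimension at most $(\dim X-2)+(r_1+r_2)<\dim C$, so it cannot exist'' is invalid: an affine scheme can perfectly well have irreducible components of different dimensions, and fiberflatness does not make the fibers over $X\setminus U$ irreducible, so a fiber component missed by the closure of $p^{-1}(U)$ is not excluded by a dimension comparison. Nor is this ``the dimension argument underlying Lemma \ref{fiberflatirreducible}'': that proof does not run on dimensions alone — it uses the hypothesis that the special fiber $p^{-1}(\idealm)$ is irreducible, the nonemptiness supplied by the zero section, and \cite[Theorem 14.8]{eisenbud} to produce a contradiction, and neither the irreducible-fiber hypothesis nor that configuration is available here. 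The worry is not vacuous: the graded pieces $(\Sym^m M_1)^{**}\tensor_R(\Sym^n M_2)^{**}$ of $A_1\tensor_R A_2$ are tensor products of reflexive modules and in general contain torsion supported on $X \setminus U$, i.e., nonzero elements killed by restriction to $V=p^{-1}(U)$, so one must actually argue that such elements are nilpotent rather than cutting out extra components. The paper's proof sidesteps component bookkeeping entirely: it works with $A_1\tensor_R A_2=\Gamma(Z_1\times_X Z_2,{\mathcal O})$, shows the restriction map to $\Gamma(V,{\mathcal O})$ is finite by \cite[Corollaire 5.11.4]{ega4.2} (the complement of $V$ has codimension $\geq 2$ by fiberflatness), an isomorphism over $U$ and hence birational with normal target, so it is the normalization, and then computes $\Gamma(V,{\mathcal O})=\bigoplus_{n\in\N}(\Sym^n(M_1\oplus M_2))^{**}=A$. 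To make your version go through you would have to replace the dimension count by a genuine density (equivalently, nilpotent-kernel) argument for $V$ in $Z_1\times_X Z_2$ — in the quotient-scheme situation this is exactly what the finite surjection from the integral scheme $Z_{\rho_1\times\rho_2}$ supplies in Theorem \ref{normalproductcompatibility}, but in the present generality it is the real content of the step, and it is where the birationality assertion in the paper's argument carries the weight.
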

\begin{proof}
It is part of the definition of fiberflat that $Z_i$ and hence $Z_1 \times_X Z_2$ are of finite type. The fibers of the product scheme $ q:Z_1 \times_X Z_2 \rightarrow X$ have constant dimension. Let $U \subseteq X$ be an open subset containing all points of codimension one and such that $M_1$ and $M_2$ are locally free on $U$. We denote the corresponding locally free sheaves on $U$ by ${\mathcal F}_i$. Let $V= q^{-1}(U) =Z_1 \times_U Z_2$. The complement of $V$ has codimension $\geq 2$, as $U$ has and by the fiberflat property. Hence the restriction map
\[   A_1 \tensor_R A_2 = \Gamma(Z_1 \times_X Z_2, {\mathcal O}_{Z_1 \times_X Z_2} ) \longrightarrow \Gamma(V, {\mathcal O}_{Z_1 \times_X Z_2} )  \]
is finite by \cite[Corollaire 5.11.4]{ega4.2}, it is an isomorphism above $U$, thus birational, and therefore, it is the normalization, as $V$ is normal.
We also have
\[ \Gamma(V, {\mathcal O}_{Z_1 \times_X Z_2} )   = \bigoplus_{n \in \N}  \Gamma(U,  \Sym^n ( {\mathcal F}_1 \oplus {\mathcal F}_2 ) )    = \bigoplus_{n \in \N}   (\Sym^n ( M_1 \oplus M_2 ) )^{**}   \, . \]
\end{proof}

\begin{Lemma}
\label{reflexivebundlesplitting}
Let $X =\Spec R$ be a normal affine scheme of finite type over $K$ and let $p:Z=\Spec A \rightarrow X$ be a normal reflexive fiberflat bundle. Let $U \subseteq X$ be an open subset containing all points of codimension one such that $Z|_U$ is a vector bundle,  let $M$ be the corresponding module and set $V=p^{-1}(U)$. Suppose that $M$ has a decomposition $M=M_1 \oplus M_2$. Then, $M_1$ and $M_2$ are also fiberflat and reflexive.
\end{Lemma}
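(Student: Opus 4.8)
The plan is to realize $M_1$ and $M_2$ by their factorial closures and to transport all the required properties from $Z$ along a natural retraction. First the reflexivity: since $Z$ is a normal reflexive fiberflat bundle, $M=\Gamma(U,\mathcal F)$ for the locally free sheaf $\mathcal F$ on $U$, so $M$ is reflexive, and the decomposition $M=M_1\oplus M_2$ corresponds to $\mathcal F=\mathcal F_1\oplus\mathcal F_2$. A direct summand of a reflexive module is reflexive, because $(-)^{**}$ commutes with finite direct sums and $M\to M^{**}$ is then the direct sum of $M_i\to M_i^{**}$; hence $M_1,M_2$ are reflexive. By Lemma \ref{reflexivebundle} we have $A=\bigoplus_n(\Sym^n M)^{**}$, and using $\Sym(M_1\oplus M_2)=\Sym M_1\otimes_R\Sym M_2$ together with the compatibility of $(-)^{**}$ with finite direct sums, $A$ acquires the $\N^2$-grading $A=\bigoplus_{a,b}(\Sym^a M_1\otimes_R\Sym^b M_2)^{**}$. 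I set $A_i=\bigoplus_k(\Sym^k M_i)^{**}$, the factorial closure of $M_i$, and $Z_i=\Spec A_i$.

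Next I would record the easy properties of $Z_i$. The inclusion $\iota_i\colon M_i\to M$ and the projection $\pi_i\colon M\to M_i$ satisfy $\pi_i\circ\iota_i=\id_{M_i}$; applying the functor $N\mapsto\bigoplus_k(\Sym^k N)^{**}=\Gamma(U,\Sym\widetilde N)$ they induce $R$-algebra maps $A_i\to A\to A_i$ whose composite is the identity. Thus $A\to A_i$ is a split surjection, so $A_i$ is a quotient of the finite-type $R$-algebra $A$ and is therefore of finite type; moreover $A_i=\Gamma(U,\Sym\mathcal F_i)$ is a domain, being the ring of sections over the irreducible $U$ of a sheaf of domains. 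On $U$ it restricts to the vector bundle $\Spec\Sym\mathcal F_i=(\Spec\Sym M_i)|_U$ of rank $r_i=\rk M_i$; since $U$ contains all points of codimension one, $Z_i$ is an integral scheme of dimension $\dim X+r_i$ that realizes $M_i$, with $r=\rk M=r_1+r_2$.

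The only nontrivial point, and the hard part, is that every fiber of $p_i\colon Z_i\to X$ has dimension exactly $r_i$. The lower bound $\ge r_i$ is the dimension theorem for the dominant finite-type morphism $p_i$ of irreducible schemes, \cite[Theorem 14.8]{eisenbud}. For the upper bound I would use the retraction: the inclusion $A_1\hookrightarrow A$ gives a morphism $\sigma_1\colon Z\to Z_1$ with $p=p_1\circ\sigma_1$, and the projection $A\to A_1$ gives a section of $\sigma_1$, so $\sigma_1$ is surjective; as a dominant morphism of irreducible schemes of dimensions $\dim X+r$ and $\dim X+r_1$, each of its fibers has dimension $\ge r_2$ by the same theorem. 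Fixing a closed point $x\in X$ and writing $F=p^{-1}(x)$, $F_1=p_1^{-1}(x)$, fiberflatness of $Z$ gives $\dim F=r$, and $\sigma_1$ restricts to a surjection $F\to F_1$ all of whose fibers have dimension $\ge r_2$. Choosing a component $D\subseteq F_1$ of maximal dimension and a component $\Gamma$ of dimension $\ge r_2$ of the fiber over its generic point, the closure of $\Gamma$ in $F$ dominates $D$ with generic fiber $\Gamma$, hence has dimension $\ge\dim D+r_2=\dim F_1+r_2$; since it lies in $F$ we get $\dim F_1+r_2\le r$, i.e. $\dim F_1\le r_1$. The argument for $F_2$ is symmetric, so both $Z_1,Z_2$ are fiberflat and $M_1,M_2$ are reflexive fiberflat.

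I expect the fiber-dimension upper bound over the bad locus $X\setminus U$ to be the crux: one cannot invoke Lemma \ref{reflexivebundleproduct} directly, since it presupposes that the summands are already fiberflat, and $A$ is not obviously finite over $A_1\otimes_R A_2$ without that information. What breaks the potential circularity is precisely that fiberflatness of the \emph{total} space $Z$ pins down $\dim F=r$, which together with the retraction $\sigma_1$ and \cite[Theorem 14.8]{eisenbud} forces the summand fibers to carry the correct dimension.
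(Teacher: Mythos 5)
Your proof is correct, but the crucial step is carried by a different mechanism than in the paper, so a comparison is worthwhile. The preparatory material coincides: both proofs expand $A=\bigoplus_n\bigoplus_{k+\ell=n}(\Sym^k M_1\otimes_R\Sym^\ell M_2)^{**}$ via Lemma \ref{reflexivebundle}, set $A_i=\bigoplus_k(\Sym^k M_i)^{**}$, and obtain the split ring maps $A_i\to A\to A_i$ (the paper produces $A\to A_1$ by modding out the ideal of summands with $\ell\geq 1$, which is exactly the projection you induce from $\pi_1$), whence finite type of $A_i$; reflexivity and local freeness on $U$ are equally immediate in both. For the crux --- the upper bound $\dim F_1\leq r_1$ on special fibers --- the paper proves the stronger fact that $q_1\times q_2\colon Z\to Z_1\times_X Z_2$ is surjective, and for this it genuinely uses the module-scheme-up-to-normalization structure: after reducing to $K$ algebraically closed, a $K$-point $(P_1,P_2)$ is lifted to $\widetilde{Z\times_X Z}$ above $(\varphi_1(P_1),\varphi_2(P_2))$ and pushed through the addition $\alpha$, the identities $\alpha_1\circ(\operatorname{Id}_{Z_1}\times 0_1)=\operatorname{Id}_{Z_1}$ and $q_1\circ\varphi_2=0_1\circ p_2$ guaranteeing that the image maps to $(P_1,P_2)$; the fiberwise surjection $F\twoheadrightarrow F_1\times F_2$ then forces $r\geq \dim F_1+\dim F_2\geq r_1+r_2=r$. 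You bypass the addition entirely: surjectivity of the single retraction $\sigma_1=q_1$ comes for free from its section $\varphi_1$, every fiber of $\sigma_1$ has dimension $\geq r_2$ by the fiber-dimension theorem applied to the dominant morphism of integral schemes $Z\to Z_1$ of dimensions $d+r$ and $d+r_1$, and your closure-of-a-component count over the generic point of a maximal-dimensional component of $F_1$ yields $r=\dim F\geq\dim F_1+r_2$; the matching lower bound is \cite[Theorem 14.8]{eisenbud} applied to $p_1$, exactly as in the paper. Your route is more elementary and slightly more general: it needs neither the up-to-normalization addition (so no appeal to the structure behind Theorem \ref{normalproductcompatibility}) nor the reduction to an algebraically closed field, since you never argue with $K$-points. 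What the paper's argument buys is the additional geometric fact that $Z\to Z_1\times_X Z_2$ is surjective, which is of independent interest and dovetails with Lemma \ref{reflexivebundleproduct}, where $Z$ is identified as the normalization of $Z_1\times_X Z_2$. Two small points, at the same level of rigor as the paper's own text: your final count is made at a closed point, so constancy of fiber dimension at all points requires the standard semicontinuity remark (for schemes of finite type over a field, closed-point fibers control the rest); and, like the paper, you verify the conditions of the fiberflat definition for $\Spec A_i$ (finite type, generic vector bundle, constant fiber dimension) while treating the underlying module-scheme-up-to-normalization structure of the factorial closure as given --- neither issue is a gap relative to the paper's proof.
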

\begin{proof}
By Lemma \ref{reflexivebundle}, we have
\[ A= \bigoplus_{n \in \N} (\Sym^n (M_1 \oplus M_2))^{**} = \bigoplus_{n \in \N} \bigoplus_{k + \ell =n} (  \Sym^k (M_1) \tensor_R  \Sym^\ell (M_1))^{**} \, .\]
We set $A_1 = \bigoplus_{k \in \N} ( \Sym^k (M_1))^{**}$ and $A_2 = \bigoplus_{\ell \in \N} ( \Sym^\ell (M_2))^{**}$. We have
\[A_1 \tensor_R A_2 = \bigoplus_{n\in \N} \bigoplus_{k + \ell =n} (  \Sym^k (M_1))^{**} \tensor_R ( \Sym^\ell (M_1))^{**} \, ,\]
and we get a natural ring homomorphism $ A_1 \tensor_R A_2 \rightarrow A $, which is an isomorphism over $ U $. We have the subring relation (by letting $\ell=0$)
\[ A_1= \bigoplus_{k \in \N}   ( \Sym^k (M_1))^{**}  \subseteq \bigoplus_{n \in \N} \bigoplus_{k + \ell =n} (  \Sym^k (M_1) \tensor_R  \Sym^\ell (M_1))^{**} =A \, .   \]
The direct sum of all summands of $A$ where $\ell \geq 1$ is an ideal in $A$; this shows that there is also a ring homomorphism $A \rightarrow A_1$ by moding out this ideal. This implies that $A_1$ is of finite type over $R$. It is clear that the $M_i$ are also locally free over $U$ and that they are reflexive.

In geometric terms, we have morphisms
$q_i:Z \rightarrow Z_i = \Spec A_i$ and $\varphi_i : Z_i \rightarrow Z$ such that $   q_i \circ \varphi_i   =  \operatorname{Id} _{Z_i}$. Set $p_i:Z_i \rightarrow X$. We also have $q_1 \circ \varphi_2=  0_1 \circ p_2  $ (as a morphism $Z_2 \rightarrow Z_1$), where $0_1$ denotes the zero section of $Z_1$, which corresponds to the projection from $A_1$ to $R$ modulo $(A_1)_+$.

We want to show that $q_1 \times q_2:Z  \rightarrow Z_1 \times_X Z_2$ is surjective, for this, we may assume that $K$ is algebraically closed. Let $(P_1,P_2) \in  Z_1 \times_X Z_2$ be a $K$-point and consider $(\varphi_1(P_1), \varphi_2(P_2))  \in Z \times_X Z$. We have a commutative diagram
\[ \begin{matrix}   Z \times_X Z & \longleftarrow & \widetilde{Z \times_XZ} & \stackrel{\alpha}{ \longrightarrow} &   Z \\     \!\!\!\!\!   \!\!\!\!\!  \!\!\!\!\! q_1 \times q_1 \downarrow & &   \!\!\!\!\!   \!\!\!\!\!  \!\!\!\!\!  q_1 \times q_1 \downarrow & & \downarrow q_1  \!\!\!\!\!  \\    Z_1 \times_X Z_1   &  \longleftarrow & \widetilde{Z_1 \times_XZ_1} & \stackrel{\alpha_1}{\longrightarrow} &Z_1 \, .   \end{matrix}\]
Let $w \in   \widetilde{Z \times_XZ} $ be a point above  $(\varphi_1(P_1), \varphi_2(P_2)) $. On the left-hand side, this element is sent to $(P_1,0) $, therefore,  $\alpha(w)$ is sent via $q_1$ to $P_1$, since $ \alpha_1  \circ  (\operatorname{Id}_{Z_1} \times 0_1 )= \operatorname{Id}_{Z_1}$. Hence, $(q_1 \times q_2 ) ( \alpha (w) ) = (P_1,P_2) $. We now look at the fibers of the morphism $q_1 \times q_2:Z  \rightarrow Z_1 \times_X Z_2$  above a point $P \in X$. The fibers of $Z_i$ cannot be empty, as the fibers of $Z$ are not empty. Also, by \cite[Theorem 14.8]{eisenbud}, the dimension of the nonempty fibers of $Z_i$ cannot drop. The fibers of $Z$ have dimension $r=r_1+r_2$, where $r_i$ is the generic rank of $M_i$, and the dimensions on the right are $ \geq r_1$ and $\geq r_2$, so, by surjectivity, we have equality. This means that $M_1$ is fiberflat as well.
\end{proof}

\section{Fiberflat bundles on regular rings}

\label{regularfreesection}

For an invariant ring $R=S^G$, there are in general many maximal Cohen-Macaulay modules and we would like to distinguish the ones coming from a representation from the others. The criterion, that the first ones are (in the indecomposable case) the direct summands of $S$, is very unintrinsic and not satisfactory. In Lemma \ref{fiberflatquotient}, we have noted that an invariant algebra defines a fiberflat bundle, which is, in the small case, also reflexive by Lemma \ref{smallreflexive}. Here we will show that fiberflatness can distinguish in many cases between maximal
Cohen-Macaulay modules coming from a representation and the others, but the picture is not complete yet. What is missing is a satisfactory understanding of fiberflat bundles for regular rings.

\begin{Lemma}
\label{notfiberflat}
Let $R$ be a noetherian excellent local domain of dimension $d \geq 3$, $M$ a finitely generated $R$-module of rank $r$. Let $U \subseteq \Spec R$ denote the locus where $M$ is locally free, suppose that $U$ is regular, contains all points of codimension one, and set $Y=X \setminus U$. Let $p:Z \rightarrow \Spec R$ be an irreducible surjective geometric realization of $M$, which is not fiberflat. Suppose that the dimension of $p^{-1}( Y )$ is $ \leq r +d -2$. Then, $M$ is not fiberflat.
\end{Lemma}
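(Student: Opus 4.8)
The plan is to argue by contradiction, exploiting that the codimension hypothesis forces the normalization of the given realization $Z$ to be the factorial closure, which would have to be fiberflat if $M$ were, and then transporting the resulting constant fiber dimension back down to $Z$.

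First I would record the basic dimension count. Since $Z$ is irreducible, of finite type over $X$, and restricts over the dense open $U$ to the rank $r$ vector bundle realizing $M$, the generic fiber is $\A^r$ and hence $\dim Z = d+r$. Write $V := p^{-1}(U) = Z \setminus p^{-1}(Y)$. By hypothesis $\dim p^{-1}(Y) \le r+d-2 = \dim Z - 2$, so the closed set $Z \setminus V$ has codimension $\ge 2$ in $Z$; in particular $V$ contains every point of $Z$ of codimension $\le 1$. Moreover $V = Z|_U$ is a vector bundle over the regular scheme $U$, hence regular, hence normal.

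Next I would identify the normalization of $Z$ with the factorial closure. Let $Z = \Spec B$ and let $\tilde{Z} = \Spec \tilde{B} \to Z$ be the normalization inside the function field of the irreducible scheme $Z$; this is finite because $R$, and hence $B$, is excellent. As $V$ is already normal, $\tilde{Z}$ agrees with $V$ over $V$, so $\tilde{Z} \setminus V$ has codimension $\ge 2$ in the normal scheme $\tilde{Z}$, whence $\tilde{B} = \Gamma(\tilde{Z}, \mathcal{O}) = \Gamma(V, \mathcal{O}_Z)$ by algebraic Hartogs. By the computation in Lemma \ref{reflexivebundle} (the graded ring of sections of a vector bundle over an open set containing all codimension one points), this equals the factorial closure $A := \bigoplus_{n \in \N} (\Sym^n M)^{**}$. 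Thus $\tilde{Z} = \Spec A$. Now suppose, for contradiction, that $M$ is fiberflat. Replacing its defining reflexive fiberflat realization by the unique irreducible component dominating $X$, we may assume it is integral; its fibers still have constant dimension $r$, since a dominant morphism from an irreducible scheme of dimension $d+r$ has all nonempty fibers of dimension $\ge r$ (nonempty by the zero section), while each such fiber lies inside a fiber of dimension $r$. Corollary \ref{fiberflatfactorialclosure} then shows that $\Spec A$ is itself a fiberflat bundle, so every fiber of $\Spec A \to X$ has dimension $r$. But $\nu : \Spec A = \tilde{Z} \to Z$ is finite and surjective, and for each $x \in X$ it restricts to a finite surjective morphism $\tilde{Z}_x \to Z_x$ on fibers, because $\nu^{-1}(Z_x) = \tilde{Z}_x$ and $\nu$ is surjective. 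Finite surjective morphisms preserve dimension, so $\dim Z_x = \dim \tilde{Z}_x = r$ for every $x$. Together with the facts that $Z$ is of finite type over $X$ and a vector bundle over $U$, this says exactly that $Z$ is fiberflat, contradicting the hypothesis. Hence $M$ is not fiberflat.

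I expect the main obstacle to be the clean identification $\tilde{B} = A$: one must check that the bound $\dim p^{-1}(Y) \le r+d-2$ really forces $Z \setminus V$ to have codimension $\ge 2$ in $Z$ and that $V$ is normal, so that the Hartogs argument and Lemma \ref{reflexivebundle} genuinely apply. Once this is in place, the transfer of the constant fiber dimension across the finite surjection $\nu$, and the mild reduction to an integral fiberflat realization, are routine.
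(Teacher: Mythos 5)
Your proof is correct and follows essentially the same route as the paper's: both show that the complement of the bundle locus has codimension $\geq 2$ in $Z$, identify the normalization of $Z$ with the graded ring of sections over $U$ (the paper via the finiteness result \cite[Corollaire 5.11.4]{ega4.2}, you via excellence plus Hartogs — the same content), observe that any integral fiberflat realization of $M$ has this same normalization, and derive the contradiction by transporting the constant fiber dimension along the finite surjective normalization map. The only differences are cosmetic: you label the common normalization as the factorial closure by citing Lemma \ref{reflexivebundle} and Corollary \ref{fiberflatfactorialclosure}, which are stated for normal rings of finite type over a field, so strictly speaking you should rerun their (identical) section-ring computation in the excellent local, merely R1, setting — as the paper's proof does inline — and you add welcome extra care in justifying the reduction to an integral fiberflat realization, a step the paper simply assumes.
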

\begin{proof}
The dimension of $Z$ is $ \geq d+r$ (look at a chain of prime ideals on the zero section and then continue generically), so the complement of the vector bundle $p^{-1}(U)$ in $Z$ has codimension $\geq 2$. Hence, $A \rightarrow \Gamma(p^{-1}(U), { \mathcal O}_Z) =C$ is a finite homomorphism by \cite[Corollaire 5.11.4]{ega4.2} and $C$ is the normalization of $A$. Then $C$ gives a normal realization which is also not fiberflat. Suppose that $q:\Spec B \rightarrow X$ is an integral fiberflat geometric realization of $M$. Then the dimension of $ q^{-1}(Y) $ is $ \leq r+ d -2$, since $\dim Y \leq d-2$. Therefore, $C$ is also the normalization of $B$, which yields a contradiction.
\end{proof}

For a finitely generated $R$-module $M$, we denote its minimal number of generators by $\nu=\nu(M)$.

\begin{Corollary}
\label{modulegeneratorsnotfiberflat}
Let $(R, \idealm) $ be a noetherian excellent local domain of dimension $d \geq 3$, let $M$ be a finitely generated $R$-module of rank $r$ which is locally free outside $\{ \idealm \}$. Suppose that $r< \nu (M) \leq r+ d-2$ and that the symmetric algebra $\Sym M$ is irreducible. Then, $M$ is not fiberflat.
\end{Corollary}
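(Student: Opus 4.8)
The plan is to apply Lemma \ref{notfiberflat} to the canonical geometric realization $Z = \Spec(\Sym M)$ with its structure morphism $p:Z \to \Spec R$. First I would identify the relevant loci. Since $M$ is locally free away from $\idealm$ and the inequality $\nu(M) > r$ forces $M_\idealm$ to be non-free, the locus $U$ where $M$ is locally free is exactly the punctured spectrum $\Spec R \setminus \{\idealm\}$, so that $Y := \Spec R \setminus U = \{\idealm\}$. Because $d \geq 3$, the closed point has codimension $d > 1$, hence $U$ contains all points of codimension one and $\dim Y = 0 \leq d-2$; the regularity of $U$ demanded by Lemma \ref{notfiberflat} holds in the regular setting of this section. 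Thus the hypotheses on $U$ and $Y$ are met.

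Next I would verify the three conditions on $Z$ itself. By construction $Z = \Spec(\Sym M)$ is a geometric realization of $M$, and it is irreducible precisely by the standing assumption that $\Sym M$ is irreducible. That $Z$ is not fiberflat follows from the earlier characterization that $\Sym M$ is fiberflat if and only if $M$ is locally free: since $\nu(M) > r$, the module $M$ is not free at $\idealm$, so $\Sym M$ fails to be fiberflat (concretely, the generic fiber of $p$ has dimension $r$ while the special fiber jumps).

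The one computation worth spelling out is the dimension of $p^{-1}(Y)$. As $Y = \{\idealm\}$, the fiber is
\[ p^{-1}(Y) = \Spec\bigl( \kappa(\idealm) \otimes_R \Sym_R M \bigr) = \Spec\bigl( \Sym_{\kappa(\idealm)} (M \otimes_R \kappa(\idealm)) \bigr), \]
an affine space over $\kappa(\idealm)$ of dimension $\dim_{\kappa(\idealm)} (M \otimes_R \kappa(\idealm)) = \nu(M)$. Hence $\dim p^{-1}(Y) = \nu(M) \leq r + d - 2$ by hypothesis, which is exactly the dimension bound required. With all hypotheses in place, Lemma \ref{notfiberflat} yields that $M$ is not fiberflat.

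I do not expect a serious obstacle, as this is a direct specialization of Lemma \ref{notfiberflat} to the symmetric-algebra realization; the only points requiring care are matching the regularity hypothesis on $U$ (which is why the statement sits in this section) and the identification $\dim p^{-1}(\{\idealm\}) = \nu(M)$, both routine. The two-sided bound is transparent in its role: the left inequality $r < \nu(M)$ guarantees non-fiberflatness through the failure of local freeness at $\idealm$, while the right inequality $\nu(M) \leq r + d - 2$ is exactly what keeps the exceptional fiber small enough to force, via the normalization argument of Lemma \ref{notfiberflat}, that no integral fiberflat realization of $M$ can exist.
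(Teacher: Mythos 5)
Your proof is correct and takes essentially the same route as the paper: the paper likewise sets $Z=\Spec(\Sym M)$, observes that the fiber over $\idealm$ has dimension $\nu(M)>r$ (so $Z$ is not fiberflat), and invokes Lemma \ref{notfiberflat}. The extra details you supply --- identifying $U$ as the punctured spectrum because $\nu(M)>r$ forces $M_\idealm$ to be non-free, and computing $p^{-1}(\{\idealm\})=\Spec\bigl(\Sym_{\kappa(\idealm)}(M\otimes_R\kappa(\idealm))\bigr)$ as affine $\nu(M)$-space --- are exactly the verifications the paper's terse proof leaves implicit.
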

\begin{proof}
Let $Z=\Spec ( \Sym (M)) \rightarrow \Spec R$. It has dimension $d+r$. The fiber above $\idealm$ has dimension $\nu(M) >r$, hence, $Z$ is not fiberflat. Therefore, all conditions of Lemma \ref{notfiberflat} are fulfilled and the result follows.
\end{proof}

\begin{Example}
Let $R=K[X,Y,Z]$ and consider the module $M=R^3/(  Xe_2,Ye_2,Ze_2, Xe_3,Ye_3,Ze_3 )$, its symmetric algebra is
\[  R[A,B,C]/(XB,YB,ZB, XC,YC,ZC ) \, \]
 which is reducible. $M$ is fiberflat, as on the punctured spectrum, it is just the free module of rank one, the dimension of the fiber above $(X,Y,Z)$ is $3$. This shows that we need the irreducibility condition in Corollary
\ref{modulegeneratorsnotfiberflat}. In the situation of this corollary, one can try to apply Lemma \ref{notfiberflat} to the symmetric algebra modulo torsion, but it may happen, as in the example, that the exceptional dimension does not contradict fiberflatness anymore.
\end{Example}

\begin{Corollary}
\label{fiberflatfree}
Let $(R, \idealm) $ be noetherian excellent local domain of dimension $d \geq 3$, $M$ a finitely generated $R$-module such that $M$ is locally free outside $\idealm$. Suppose that $M$ is fiberflat, that the symmetric algebra $\Sym M$ is irreducible and that $\nu(M) \leq r+d-2$. Then, $M$ is free.
\end{Corollary}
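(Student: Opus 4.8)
The goal is to prove Corollary \ref{fiberflatfree}: under the stated hypotheses, a fiberflat module $M$ that is locally free outside the closed point and satisfies $\nu(M) \leq r+d-2$ must be free. The plan is to argue by contradiction using the preceding Corollary \ref{modulegeneratorsnotfiberflat}. The key dichotomy is between the two possibilities for $\nu(M)$ relative to the rank $r$: either $\nu(M) = r$ or $\nu(M) > r$.

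First I would observe that since $M$ is locally free outside $\idealm$ and locally free at a point means free there, the only question is freeness at $\idealm$ itself, and by the local hypothesis freeness of $M$ is equivalent to $\nu(M) = r$. Indeed, a module over a local ring is free precisely when its minimal number of generators equals its rank; more generally $\nu(M) \geq r$ always holds, with equality exactly when $M$ is free. So it suffices to rule out the case $\nu(M) > r$, i.e. to show that $\nu(M) > r$ together with the fiberflat hypothesis produces a contradiction.

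So suppose $\nu(M) > r$. Combined with the standing hypothesis $\nu(M) \leq r+d-2$, we are exactly in the numerical range $r < \nu(M) \leq r+d-2$ required by Corollary \ref{modulegeneratorsnotfiberflat}. The one remaining ingredient needed to invoke that corollary is that the symmetric algebra $\Sym M$ is irreducible. Here I would use that $M$ is fiberflat, hence by definition has a geometric realization $Z \to \Spec R$ which is a fiberflat bundle and in particular irreducible (being a bundle over the integral base $\Spec R$ with irreducible fibers of constant dimension, via Lemma \ref{fiberflatirreducible}, once one knows the special fiber is irreducible). Since a geometric realization agrees with $\Spec(\Sym M)$ over the locally free locus $U$ and both restrict to the same vector bundle there, irreducibility of the realization forces $\Spec(\Sym M)$ to have a unique component dominating $\Spec R$; one must then check there is no spurious extra component supported over $\{\idealm\}$. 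Granting this, Corollary \ref{modulegeneratorsnotfiberflat} yields that $M$ is \emph{not} fiberflat, contradicting the hypothesis.

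The main obstacle I anticipate is precisely the irreducibility of $\Sym M$ needed to apply Corollary \ref{modulegeneratorsnotfiberflat}: fiberflatness is asserted via \emph{some} geometric realization $Z$, but that realization need not literally be $\Spec(\Sym M)$, and $\Spec(\Sym M)$ could a priori acquire an extra component of dimension $\nu(M) + (d-1)$ sitting over the closed point. One clean way around this is to apply Corollary \ref{modulegeneratorsnotfiberflat} to the irreducible component $Z_0$ of $\Spec(\Sym M)$ that dominates $\Spec R$, which is an irreducible geometric realization of $M$ and still has special fiber of dimension $\nu(M) > r$, so that Lemma \ref{notfiberflat} applies directly to $Z_0$ (with $Y = \{\idealm\}$, of dimension $0 \leq d-2$, and $p^{-1}(Y)$ of dimension $\leq \nu(M) \leq r+d-2$). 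This sidesteps the need for global irreducibility of $\Sym M$ and reduces the whole argument to the contradiction supplied by Lemma \ref{notfiberflat}, giving freeness.
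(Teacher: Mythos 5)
Your overall route is the same as the paper's: the paper proves this corollary simply by citing Corollary \ref{modulegeneratorsnotfiberflat} in contrapositive form, with the reduction ``$M$ free $\Leftrightarrow \nu(M)=r$'' (valid over a local domain, since the kernel of a minimal surjection $R^r \to M$ is torsion-free of rank zero) left implicit — exactly your first two paragraphs. You are also right that this citation silently uses the irreducibility of $\Sym M$, which is not among the hypotheses here. But your proposed repair has a genuine gap at its decisive step: the claim that the component $Z_0$ of $\Spec(\Sym M)$ dominating $\Spec R$ ``still has special fiber of dimension $\nu(M)>r$.'' The fiber of $\Spec (\Sym M)$ over $\idealm$ is ${\mathbb A}^{\nu(M)}$, but the part of it lying on $Z_0$ is only the special fiber of the dominating (Rees-type) component, whose dimension $\ell$ satisfies $r \le \ell \le \nu(M)$ and equals $\nu(M)$ precisely when ${\mathbb A}^{\nu(M)} \subseteq Z_0$. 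Since $M$ is locally free off $\idealm$, the preimage of the punctured spectrum is an irreducible vector bundle, so every component of $\Spec (\Sym M)$ other than $Z_0$ lies inside ${\mathbb A}^{\nu(M)}$; hence ${\mathbb A}^{\nu(M)} \subseteq Z_0$ is \emph{equivalent} to the irreducibility of $\Sym M$. Your key claim is thus the hypothesis you set out to avoid, in disguise — the argument is circular at this point. (You also never verify the hypothesis of Lemma \ref{notfiberflat} that $U$ be regular, though the paper's own proof of Corollary \ref{modulegeneratorsnotfiberflat} glosses this as well.)

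To see the step fail concretely, take $M = R \oplus R/\idealm$ with $d=3$: then $r=1$, $\nu(M)=2=r+d-2$, and $\Sym M = R[S,T]/(\idealm T)$ has two components, $Z_0 = \Spec R[S]$ (the trivial line bundle, with special fiber of dimension $1=r$) and ${\mathbb A}^2$ sitting over the closed point. Here $Z_0$ \emph{is} fiberflat, so Lemma \ref{notfiberflat} — which requires an irreducible realization that is \emph{not} fiberflat — cannot be invoked at all, and no contradiction arises; indeed this $M$ is fiberflat in the sense of the paper's definition (the trivial line bundle realizes $(\Spec \Sym M)|_U$ over the punctured spectrum) yet not free. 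So the irreducibility hypothesis (equivalently, at least torsion-freeness/reflexivity of $M$, which is how the rest of the section operates) is substantive and cannot be discharged by passing to $Z_0$: even for torsion-free $M$ you would still owe a proof that the analytic spread exceeds $r$ before Lemma \ref{notfiberflat} applies. In short, you correctly spotted the mismatch between the hypotheses of Corollary \ref{modulegeneratorsnotfiberflat} and the present statement — a mismatch the paper's one-line proof inherits — but your patch assumes the missing hypothesis and breaks on explicit examples.
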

\begin{proof}
This follows from Corollary \ref{modulegeneratorsnotfiberflat} 
\end{proof}

For $\nu(M) = r+d-1$, the fiber over the maximal ideal has codimension one and then there are new (transcendental) functions defined on the open complement $Z \setminus p^{-1}(\idealm)$, which changes the situation completely, as in Example \ref{symmetricnewfunction} below. For $\nu(M) \geq r+d$, the fiber is a component of its own and we have to look at the situation after moding it out (going to the Rees algebra).

\begin{Corollary}
Let $R=K[X_1, \ldots, X_d]^G$ be an invariant ring for a linear action of a finite nonmodular group. Suppose that $R$ has an isolated singularity and that the symmetric algebra $\Spec ( \Sym ( \Omega_{R|K})) $  is irreducible. Then the embedding dimension of $R$ is $\geq 2d-1$.
\end{Corollary}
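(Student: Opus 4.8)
The plan is to read the statement as an application of the fiberflatness obstruction in Corollary~\ref{modulegeneratorsnotfiberflat} to the module of K\"ahler differentials $M=\Omega_{R|K}$, whose symmetric algebra is exactly the object assumed to be irreducible.

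First I would pin down the three invariants of $M$ that enter the criterion. Since $R=K[X_1,\ldots,X_d]^G$ is the invariant ring of a finite linear action, it is a normal domain of dimension $d$ that is generically smooth over $K$ (the extension $Q(R)\subseteq Q(S)$ is finite and, in the nonmodular case, separable), so $M$ has rank $r=d$. Writing $\idealm$ for the vertex, which by the isolated-singularity hypothesis is the unique singular point, we have $R/\idealm=K$, and the conormal identification $\Omega_{R|K}\otimes_R K\cong \idealm/\idealm^2$ (see \cite{eisenbud}) gives
\[ \nu(M)=\dim_K\!\big(M\otimes_R K\big)=\dim_K \idealm/\idealm^2=\operatorname{embdim}R . \]
Because $R$ genuinely carries a singularity at $\idealm$, it is not regular there, hence $\operatorname{embdim}R>\dim R=d$, i.e.\ $r<\nu(M)$.

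Next I would certify that $M$ is a fiberflat module. By Example~\ref{kaehlerdifferential}, $\Spec(\Sym_R(\Omega_{R|K}))$ restricts over the regular locus $U=X\setminus\{\idealm\}$ to the tangent bundle $T_U$, and (as $d\ge 2$) this $U$ contains every point of codimension one. Corollary~\ref{invariantringfiberflat}(1) produces a fiberflat bundle $Z\to X$ with $Z|_U\cong T_U=\Spec(\Sym M)|_U$, which is precisely the condition for $M$ to be fiberflat in the sense of the definition preceding Lemma~\ref{smallreflexive}.

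Finally I would close the argument by contradiction. The cases $d\le 2$ are immediate (a non-regular normal ring of dimension $2$ already has embedding dimension $\ge 3=2d-1$, and in dimension $1$ normality forces regularity), so assume $d\ge 3$ and suppose $\operatorname{embdim}R=\nu(M)\le 2d-2=r+d-2$. Localising at $\idealm$, the ring $R_\idealm$ is an excellent local domain of dimension $d\ge 3$, the module $M$ is locally free on the punctured spectrum, one has $r<\nu(M)\le r+d-2$, and $\Spec(\Sym M)$ is irreducible by hypothesis (irreducibility is inherited by the localisation). Corollary~\ref{modulegeneratorsnotfiberflat} then yields that $M$ is \emph{not} fiberflat, contradicting the previous paragraph; hence $\nu(M)\ge r+d-1=2d-1$, as claimed. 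The two points that require care are the tangent/cotangent bookkeeping, so that it is genuinely $\Omega_{R|K}$ (and not its dual) that is realised as the fiberflat bundle of Corollary~\ref{invariantringfiberflat}, and the descent of the three local hypotheses of Corollary~\ref{modulegeneratorsnotfiberflat} from the graded ring $R$ to its localisation $R_\idealm$.
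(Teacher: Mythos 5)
Your proof is correct and takes essentially the same route as the paper, which likewise reduces to $d \geq 3$ and applies Corollary \ref{modulegeneratorsnotfiberflat} to $M=\Omega_{R|K}$, using Corollary \ref{invariantringfiberflat} for fiberflatness and the singularity to force $r < \nu(M)$. Your extra bookkeeping --- rank $d$ via generic smoothness, $\nu(\Omega_{R|K}) = \operatorname{embdim} R$ via the conormal identification, the cases $d \leq 2$, and the descent of the hypotheses of Corollary \ref{modulegeneratorsnotfiberflat} to the localization $R_{\idealm}$ --- just makes explicit the steps the paper's terse argument leaves implicit.
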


\begin{proof}
We may assume $d \geq 3$ and want to apply Corollary \ref{modulegeneratorsnotfiberflat} for the $R$-module $\Omega_{R|K}$. By Corollary \ref{invariantringfiberflat}, $\Omega_{R|K}$ is fiberflat. It is not free because of the singularity. Hence, we get $\operatorname{embd} (R) \geq \nu (\Omega_{R|K} ) \geq 2d-1$.
\end{proof}

\begin{Remark}
\label{regularfiberflatconjecture}
We suspect that for a regular ring, Corollary \ref{fiberflatfree} holds without the condition on the number of generators. The intuition behind this is the following: A fiberflat bundle has the property that going modulo a regular element of the base ring, the constance of the dimension is preserved. By Lemma \ref{fiberflatirreducible}, a fiberflat bundle with an irreducible central fiber is irreducible. Hence, a fiberflat bundle behaves in some geometric sense like a maximal Cohen-Macaulay module. And for a regular ring, a maximal Cohen-Macaulay module is free. 
\end{Remark}

\begin{Remark}
By \cite[Theorem 1.1]{galligokwienciski},
the flatness of $R \rightarrow A$ for $R$ regular, everything of finite type over $\C$, is equivalent to the property that $A \tensor_R A \tensor_R \ldots \tensor_R A$ ($\dim R$ factors) is torsionfree (hint of Adrian Langer). Fiberflatness implies at least that this tensor product is irreducible, as the argument in Lemma \ref{fiberflatirreducible} shows.
\end{Remark}

The following example shows that for a line bundle which does not define a torsion class in the divisor class group (even in a toric ring), there is in general no fiberflat realization.

\begin{example}
\label{quadriclinebundle}
Let $R=K[X,Y,U,V]/(UX-VY)$ and consider the prime ideal $\idealp= (X,Y)$, which generates the divisor class group $\Z$ of this ring. According to Example \ref{idealsymmetricalgebra}, its symmetric algebra is $R[S,T]/( YS-XT,US-VT)$, which is a domain, as it is the determinant ring for the matrix $\begin{pmatrix} X & V &S \\ Y &U &T  \end{pmatrix}$.  realization. Due to Corollary \ref{modulegeneratorsnotfiberflat}, $ \idealp$ is not fiberflat.
\end{example}

\begin{Example}
The module $M=\Syz(X,Y,Z)^*$ over $K[X,Y,Z]$ is realized geometrically as 
\[ \Spec K[X,Y,Z][R,S,T]/(RX+SY+TZ) \]
(the syzygies are the sections), which is a domain of dimension $5$. Above $D(X,Y,Z)$, the fibers has dimension two, but above $V(X,Y,Z)$, the fiber has dimension three. By Lemma \ref{notfiberflat}, $M$ is not fiberflat.
\end{Example}

For the following example, I thank Craig Huneke for a hint.

\begin{Example}
\label{symmetricnewfunction}
We consider
\[\Spec K[X,Y,Z][A,B,C,D,E]/(XA+YB+ZC,XC+YD+ZE) \rightarrow \Spec K[X,Y,Z] \, , \]
the spectrum of the symmetric algebra coming from the resolution (see Example \ref{moduleschemepresentation})
\[  0 \longrightarrow S^2 \longrightarrow S^5 \longrightarrow M \longrightarrow 0 \, .\]
$M$ is locally free above $D(X,Y,Z)$ since $X^2,Y^2$ and $Z^2$ appear as minors of the presenting matrix. The rank of the bundle is $3$, and its dimension as a scheme is $6$, the fiber above $V(X,Y,Z$) has dimension $5$, hence, this fiber has codimension $1$ in the total space and its complement must have sections not coming from the ring. In fact,
\[U= \frac{CD-EB}{X} = \frac{C^2-AE}{Y}  = \frac{CB-AD}{Z} \]
is a section. Adjoining this element to the symmetric algebra, we get a new affine realization of $M$ where now above $V(X,Y,Z)$, we have $6$ variables but with the relations $CD- EB,C^2-AE,CB-AD=0$, which gives fiber dimension $4$. So $M$ is not fiberflat by Lemma \ref{notfiberflat}.  
\end{Example}

We describe a typical situation that yields maximal Cohen-Macaulay modules on an invariant ring that are not fiberflat; for the first part, compare \cite[Proposition 1.6]{auslanderreitencohenmacaulay}.

\begin{Lemma}
\label{quotientnotfiberflat}
Let $G$ be a finite nonmodular group acting faithfully and linearly on ${\mathbb A}^d$, let $\chi_i$, $i \in I$, and $\chi_j$, $j \in J$, be finite sets of characters for $G$ and let $f_{ij} \in K[X_1, \ldots, X_d]=S$ be a set of $ \chi_{i}^{-1} \chi_j$-semi-invariant elements such that the cokernel of $S^{|I|} \stackrel{( f_{ij})}{\rightarrow} S^{|J|}$ is Artinian. Suppose further that the cokernel has no $G$-invariants $\neq 0$. Then the invariant module of the kernel is a maximal Cohen-Macaulay $R$-module, $R=S^G$. If $d \geq 3$ and $r < \nu(\ker) \leq r+d-2$, then it is not fiberflat.
\end{Lemma}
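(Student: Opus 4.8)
The plan is to prove the two assertions in turn, obtaining the maximal Cohen--Macaulay property from exactness of the invariant functor and the failure of fiberflatness from the criteria established earlier.

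For the first assertion I would begin by observing that the semi-invariance hypothesis is exactly what makes the matrix $\phi=(f_{ij})\colon S^{|I|}\to S^{|J|}$ a $G$-equivariant map, once the free modules carry the twisted $G$-actions in which the $i$-th basis vector of $S^{|I|}$ (resp. the $j$-th of $S^{|J|}$) transforms by the character $\chi_i$ (resp. $\chi_j$): the relation $f_{ij}g=\chi_i^{-1}(g)\chi_j(g)f_{ij}$ forces $\phi$ to commute with the action. With these actions the invariants are the semi-invariant modules $(S^{|I|})^G=\bigoplus_{i}S^{\beta,\chi_i}$ and $(S^{|J|})^G=\bigoplus_{j}S^{\beta,\chi_j}$, each summand being a direct summand of $S$ as an $R$-module (via the Reynolds projection onto an isotypic component) and hence maximal Cohen--Macaulay of rank one. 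Since $G$ is nonmodular, the Rayleigh operator makes $(-)^G$ exact, so writing $N=\ker\phi$, $L=\im\phi$ and $C=\coker\phi$ and applying $(-)^G$ to $0\to N\to S^{|I|}\to L\to0$ and $0\to L\to S^{|J|}\to C\to0$, the hypothesis $C^G=0$ collapses the second sequence to $L^G\cong\bigoplus_{j}S^{\beta,\chi_j}$. Thus $M=N^G$ fits into
\[ 0\longrightarrow M\longrightarrow \bigoplus_{i\in I}S^{\beta,\chi_i}\longrightarrow \bigoplus_{j\in J}S^{\beta,\chi_j}\longrightarrow 0, \]
whose two outer terms are maximal Cohen--Macaulay of depth $d=\dim R$. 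The depth lemma then gives $\operatorname{depth}_R M\ge\min(d,d+1)=d$, so $M$ is maximal Cohen--Macaulay, and comparing ranks yields $\rank_R M=|I|-|J|=r$.

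For the second assertion I would localize at the vertex $\idealm$, where $(R_\idealm,\idealm)$ is an excellent, normal, Cohen--Macaulay (Hochster--Eagon) local domain of dimension $d\ge3$. As $M$ is maximal Cohen--Macaulay it is free at every prime where $R$ is regular, so $M$ is locally free on the punctured spectrum; and $\nu(M)>r=\rank M$ shows $M$ is not free. These are exactly the standing hypotheses of Corollary \ref{fiberflatfree}, so its contrapositive --- equivalently Corollary \ref{modulegeneratorsnotfiberflat} applied to the symmetric-algebra realization, whose special fibre over $\idealm$ is the affine space ${\mathbb A}^{\nu(M)}$ of dimension $\nu(M)>r$ --- shows, using $r<\nu(M)\le r+d-2$, that $M$ is not fiberflat.

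The Cohen--Macaulay computation is routine; the delicate geometric input sits inside Corollary \ref{modulegeneratorsnotfiberflat}, and I expect two points to require care. First, one must know that the non-free locus of $M$ is exactly $\{\idealm\}$, so that the bad fibre $p^{-1}(\idealm)$ of $\Spec(\Sym_R M)$ is the only obstruction and $\dim p^{-1}(\idealm)=\nu(M)\le r+d-2$ fits Lemma \ref{notfiberflat}; this is immediate from maximal Cohen--Macaulayness when the vertex is an isolated singularity of $X$, but otherwise requires controlling $M$ over the positive-dimensional part of the singular locus. Second, Lemma \ref{notfiberflat} needs an irreducible realization, so one must verify that $\Spec(\Sym_R M)$ is irreducible, i.e.\ that the special fibre ${\mathbb A}^{\nu(M)}$ lies in the closure of the vector-bundle locus (the component of dimension $d+r$) rather than forming a separate component. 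I would reduce this to the condition $G_\infty$ for $M$, which is forced by local freeness off $\idealm$ together with $\nu(M)\le r+d-2$, and it is precisely this irreducibility (equivalently, that the analytic spread of $M$ equals $\nu(M)$) that I expect to be the main obstacle.
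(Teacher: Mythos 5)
Your first half reproduces the paper's argument almost verbatim: the same twisted modules $S(\chi_i)$, the same equivariance computation for $(f_{ij})$, exactness of $(-)^G$ in the nonmodular case, and the resulting short exact sequence $0 \to \ker^G \to \bigoplus_{i} S^{\chi_i} \to \bigoplus_{j} S^{\chi_j} \to 0$; your depth-lemma count is equivalent to the paper's local cohomology vanishing $H^i(D(R_+), \ker^G)=0$ for $i=1,\ldots,d-1$. That part is correct.

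The second half, however, applies the fiberflatness obstruction to the wrong module over the wrong ring, and the two difficulties you flag are genuine and not repairable within your setup. First, your claim that $M=\ker^G$ is locally free on the punctured spectrum of $R$ rests on ``maximal Cohen--Macaulay modules are free at regular primes'', which only covers $X_{\operatorname{reg}}$; the lemma makes no isolated-singularity assumption, and invariant rings in dimension $d \geq 3$ routinely have positive-dimensional singular loci (already $\Z/(2)$ negating two of three variables is small but singular along a line), where MCM gives no freeness. Second, Corollary \ref{modulegeneratorsnotfiberflat} requires $\Sym_R M$ irreducible, which you leave as an open ``main obstacle''. Third, you have misread the numerical hypothesis: $r < \nu(\ker) \leq r+d-2$ refers to the $S$-module $\ker$ of the displayed presentation, not to $\nu_R(\ker^G)$, and these generator counts are unrelated in general, so the inequality you invoke over $R_\idealm$ is not even among the hypotheses. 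The paper's proof avoids all three problems by a change of rings: it applies Corollary \ref{modulegeneratorsnotfiberflat} to $\ker$ over the \emph{polynomial ring} $S$ itself, where, because the cokernel is Artinian, the sequence locally splits away from the origin and $\ker$ is automatically locally free of rank $r=|I|-|J|$ on the punctured spectrum of the regular local ring at the vertex, and where the stated inequality is exactly the hypothesis needed; having concluded that the $S$-module $\ker$ is not fiberflat, it then descends this to $\ker^G$ (a fiberflat realization of $\ker^G$ over $R$ would pull back along the finite dominant morphism $\Spec S \to \Spec R$, via Lemma \ref{fiberflatconstruction}(2) and the identification $S \tensor_R \ker^G \cong \ker$ over the free locus, to a fiberflat realization of $\ker$). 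The missing idea in your proposal is precisely this: run the obstruction upstairs over $S$, where regularity makes both of your ``delicate points'' trivial, rather than trying to verify the hypotheses of Corollary \ref{modulegeneratorsnotfiberflat} downstairs over $R$.
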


\begin{proof}
We look at the exact sequence
\[ 0 \longrightarrow \ker  \longrightarrow  \bigoplus_{i \in I} S(\chi_i) \stackrel{( f_{ij})}{\longrightarrow} \bigoplus_{j \in J} S(\chi_j) \longrightarrow \coker \longrightarrow 0 \, .\]
Here, $S(\chi)$ denotes $S \tensor (K, \chi)$, so $S$, but with the action $\circ$ of $G$ given as $s \circ g= \chi(g) (s) g $. For $s \in S ({\chi_i})$, we have in $S ({\chi_j})$ the equalities
\[ ( s f_{ij}) \circ g =\chi_i(g)  \cdot (s)g \cdot (f_{ij})g =  (s) g \cdot \chi_i(g) \cdot \chi_i^{-1} (g) \cdot \chi_j (g)  \cdot f_{ij}  =    (s)g \cdot \chi_j(g) \cdot f_{ij} = (s \circ g)  f_{ij} \, . \]
Therefore, the matrix describes a $G$-equivariant $S$-module homomorphism. Hence, the cokernel and the kernel carry a natural action of $G$ on them. Passing to the invariants, we get the short exact sequence of $R=S^G$-modules
\[ 0 \longrightarrow \ker^G  \longrightarrow  \bigoplus_{i \in I} S^{\chi_i} \stackrel{( f_{ij})}{\longrightarrow} \bigoplus_{j \in J} S^{\chi_j} \longrightarrow   0 \, ,\]
where the $0$ on the right follows from our assumption. The surjection and the Cohen-Macaulayness of $S^\chi$ ensure that $H^i(D(R_+), \ker^G)=0$ for $i=1, \ldots, d-1$. Therefore, by the cohomological criterion for maximal Cohen-Macaulay, we get the first statement.
	
Let $d \geq 3$ and suppose that $\ker$ fulfills the condition on its rank. Then, by Corollary \ref{modulegeneratorsnotfiberflat}, $\ker$ is not fiberflat, and hence also $\ker^G$ is not fiberflat.
\end{proof}

\begin{Example}
We consider $S=K[X,Y,Z]$ with the action of $\Z/(2)$ by negation on every variable. The invariant ring $R$ is the second Veronese algebra of dimension three, which is the only invariant ring of dimension $\geq 3$ with only finitely many maximal Cohen-Macaulay modules, see \cite[Theorem 4.1]{auslanderreitencohenmacaulay}, \cite[Theorem 16.5]{leuschkewiegand}. We consider, according to Lemma \ref{quotientnotfiberflat}, the  exact sequence
\[ 0 \longrightarrow \Syz(X,Y,Z)  \longrightarrow S^3 \stackrel{X,Y,Z}{\longrightarrow} S(\chi) \longrightarrow S/(X,Y,Z)(\chi) \longrightarrow 0 \, ,\]
where $\chi$ is the nontrivial character. Taking $G$-invariant modules gives the short exact sequence
\[   0 \longrightarrow M=( \Syz(X,Y,Z))^G   \longrightarrow R^3 \stackrel{X,Y,Z}{\longrightarrow} S^\chi \longrightarrow 0\, .\]
$S$ has the decomposition $S=R \oplus S^\chi$ into homogeneous elements of even and of odd degree, $M$ is a maximal Cohen-Macaulay module, which is not a submodule of $S$. By Lemma \ref{quotientnotfiberflat}, $M$ is not fiberflat.
\end{Example}

\section{Reflections}

\label{reflectionssection}

We describe possible reflections for a product representation. The basic trivial observation is the following.

\begin{Lemma}
\label{elementreflection}
Let $G$ be a finite group, and let $\beta$ and $\rho$ be $K$-linear representations of dimensions $d$ and $m$. Then, for $g \in G$, the linear mapping $ (\beta \times \rho)(g) \in  \GLG_d(K)  \times  \GLG_m(K)$ is a reflection if and only if $\beta(g)$ is a reflection and $\rho(g)$ is the identity or the other way around.
\end{Lemma}
\begin{proof}
This is clear from the block matrix decomposition
\[  (\beta \times \rho)( g) = \beta(g) \times \rho(g)= M_1 \times M_2 \in \GLG_d(K) \times \GLG_m(K) \, .\]
\end{proof}

\begin{Corollary}
\label{faithfulsmall}
Let $G $ be a finite group, and let $\beta$ and $\rho$ be faithful $K$-linear representations of $G$ of dimensions $d$ and $m$. Then, the embedded group $ (\beta \times \rho) (G) \subseteq  \GLG_d(K)  \times \GLG_m(K)$ is a small subgroup.	
\end{Corollary}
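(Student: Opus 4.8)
The plan is to show directly that the embedded group $(\beta \times \rho)(G) \subseteq \GLG_d(K) \times \GLG_m(K)$ contains no pseudo-reflection, which is precisely the definition of a small subgroup. The only inputs needed are Lemma \ref{elementreflection}, which reduces the reflection question for the product to the two factors separately, and the faithfulness of both $\beta$ and $\rho$. So I would take an arbitrary $g \in G$, assume for contradiction that $(\beta \times \rho)(g)$ is a reflection, and derive that $g$ must be the identity, which contradicts the fact that a reflection is required to differ from $\operatorname{Id}$.

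First I would apply Lemma \ref{elementreflection}: if $(\beta \times \rho)(g)$ is a reflection, then either $\beta(g)$ is a reflection and $\rho(g) = \operatorname{Id}$, or $\rho(g)$ is a reflection and $\beta(g) = \operatorname{Id}$. The crucial observation is that in each alternative one of the two factors is forced to be the identity map, and this is exactly where faithfulness enters.

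In the first alternative, $\rho(g) = \operatorname{Id}$ together with the faithfulness of $\rho$ forces $g = e$, and then $\beta(g) = \beta(e) = \operatorname{Id}$, so that $(\beta \times \rho)(g) = \operatorname{Id}$ is not a reflection, a contradiction. Symmetrically, in the second alternative $\beta(g) = \operatorname{Id}$ together with the faithfulness of $\beta$ forces $g = e$, giving the same contradiction. Hence no $g \in G$ yields a reflection in $(\beta \times \rho)(G)$, and therefore this group is small.

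I do not expect any real obstacle here: the statement is essentially immediate once Lemma \ref{elementreflection} is available. The only points requiring care are bookkeeping the two symmetric cases and using the definition that a (pseudo-)reflection must differ from the identity, so that the trivial element $e$ is automatically excluded and the faithfulness of whichever factor is forced to be trivial closes the argument.
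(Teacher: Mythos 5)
Your proof is correct and follows exactly the paper's route: the paper also deduces the corollary directly from Lemma \ref{elementreflection}, and your argument is simply the explicit unwinding of that one-line deduction, using faithfulness of whichever factor is forced to be the identity to conclude $g=e$ and contradict the requirement that a reflection differ from $\operatorname{Id}$.
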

\begin{proof}
This follows from Lemma \ref{elementreflection}.
\end{proof}

\begin{Corollary}
\label{smallsmall}
Let $G $ be a finite group, and let $\beta$ be a small faithful $K$-linear representation and $\rho$ be any $K$-linear representations of $G$ of dimensions $d$ and $m$. Then, the action $ \beta \times \rho$ is also small.	
\end{Corollary}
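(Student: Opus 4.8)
The plan is to reduce everything to the element-wise criterion already established in Lemma \ref{elementreflection} and then exploit the two hypotheses on $\beta$ (smallness and faithfulness) to rule out the two possible ways an element of $(\beta \times \rho)(G)$ could be a reflection. Smallness of the embedded group $(\beta\times\rho)(G) \subseteq \GLG_d(K) \times \GLG_m(K)$ means precisely that no element $(\beta\times\rho)(g)$, for $g \in G$, is a (pseudo-)reflection, so it suffices to argue group-element by group-element.

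First I would fix an arbitrary $g \in G$ and suppose, for contradiction, that $(\beta\times\rho)(g)$ is a reflection. By Lemma \ref{elementreflection}, this forces exactly one of two cases: either $\beta(g)$ is a reflection while $\rho(g) = \operatorname{Id}$, or $\beta(g) = \operatorname{Id}$ while $\rho(g)$ is a reflection. In the first case, $\beta(g)$ would be a reflection lying in $\beta(G)$, directly contradicting the assumption that $\beta$ is small.

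The second case is where faithfulness enters: if $\beta(g) = \operatorname{Id}$, then $g \in \ker \beta = \{e\}$ since $\beta$ is faithful, so $g = e$. But then $\rho(g) = \rho(e) = \operatorname{Id}$, which is not a reflection (a reflection is by definition $\neq \operatorname{Id}$), contradicting the assumption in this case. Hence neither case can occur, so no $(\beta\times\rho)(g)$ is a reflection and $\beta \times \rho$ is small.

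I do not expect a genuine obstacle here: the statement is essentially a bookkeeping consequence of Lemma \ref{elementreflection} together with the observation that the only source of a ``trivial $\beta$-component'' is the identity element, which is killed off by faithfulness of $\beta$. The only point requiring a moment's care is that $\rho$ is not assumed faithful, so one must avoid trying to derive a contradiction from $\rho$; all the work is done by $\beta$. (Note that this is weaker than Corollary \ref{faithfulsmall}, which assumes both representations faithful; here only $\beta$ is faithful, and smallness of $\beta$ compensates for the possible non-faithfulness of $\rho$.)
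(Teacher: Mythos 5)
Your proof is correct and is essentially the paper's own argument: the paper's proof of this corollary consists of the single line ``This follows from Lemma \ref{elementreflection}'', and your case analysis (smallness of $\beta$ killing the case where $\beta(g)$ is a reflection with $\rho(g)=\operatorname{Id}$, faithfulness of $\beta$ killing the case $\beta(g)=\operatorname{Id}$ with $\rho(g)$ a reflection) is precisely the intended expansion of that line. Your closing observation correctly pinpoints why faithfulness of $\beta$ alone suffices here, in contrast to Corollary \ref{faithfulsmall}.
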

\begin{proof}
This follows from Lemma \ref{elementreflection}.
\end{proof}

\begin{Corollary}
\label{subgroupreflection}
Let $G$ be a finite group, $\beta$ be a faithful $K$-linear representation of $G$ on $K^d$ and let $\rho$ be a $K$-linear representation of $G$ on $K^m$ with kernel $H\subseteq G$. Then, the elements $g \in G \setminus H $ are not reflections, considered in $  \GLG_d(K)  \times  \GLG_m(K)$.	
\end{Corollary}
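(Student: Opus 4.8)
The plan is to derive this directly from Lemma \ref{elementreflection}, which characterizes exactly when $(\beta \times \rho)(g)$ is a reflection in the product $\GLG_d(K) \times \GLG_m(K)$. That lemma reduces the question to a dichotomy, and the two hypotheses of the corollary (the kernel condition on $\rho$ and the faithfulness of $\beta$) are precisely what is needed to exclude both branches.

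First I would fix an arbitrary $g \in G \setminus H$. Since $H = \ker \rho$ by hypothesis, membership in $G \setminus H$ says exactly that $\rho(g) \neq \operatorname{Id}$. By Lemma \ref{elementreflection}, the block-diagonal map $(\beta \times \rho)(g) = \beta(g) \times \rho(g)$ is a reflection if and only if one of two mutually exclusive situations occurs: either $\beta(g)$ is a reflection and $\rho(g) = \operatorname{Id}$, or else $\rho(g)$ is a reflection and $\beta(g) = \operatorname{Id}$.

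Next I would rule out each branch. The first branch requires $\rho(g) = \operatorname{Id}$, which contradicts $g \notin \ker \rho$, so it cannot hold. For the second branch, I would invoke the faithfulness of $\beta$: the condition $\beta(g) = \operatorname{Id}$ forces $g$ to be the neutral element $e \in G$. But $e$ lies in every kernel, in particular $e \in \ker \rho = H$, which again contradicts $g \in G \setminus H$. Hence neither branch of the characterization in Lemma \ref{elementreflection} can occur, and therefore $(\beta \times \rho)(g)$ is not a reflection.

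There is essentially no genuine obstacle here, as the corollary is a formal consequence of the preceding lemma; the only point requiring any attention is that the elimination of the second (degenerate) branch uses the faithfulness of the \emph{basic} representation $\beta$ rather than any property of $\rho$. It is worth noting that this asymmetry is exactly what makes the statement one-directional: elements of $H$ may or may not map to reflections depending on the behaviour of $\beta$ on them, so no converse is asserted.
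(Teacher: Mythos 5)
Your proof is correct and follows essentially the same route as the paper's: the paper likewise observes that for $g \in G \setminus H$ neither block of $(\beta \times \rho)(g) = \beta(g) \times \rho(g)$ can be the identity (using $g \notin \ker \rho$ for the second block and faithfulness of $\beta$, together with $e \in H$, for the first) and then concludes via Lemma \ref{elementreflection}. Your write-up merely makes explicit the step $\beta(g) = \operatorname{Id} \Rightarrow g = e \in H$ that the paper leaves implicit.
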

\begin{proof}
For $g \in G \setminus H $, in the block matrix decomposition $(\beta \times \rho) (g) = M_1 \times M_2 \in \GLG_d(K) \times \GLG_m(K) $, both matrices are not the identity, so $g$ cannot be a reflection in the product representation by Lemma \ref{elementreflection}.
\end{proof}

\begin{Corollary}
\label{productnoreflection}
Let $G$ be a finite group with a $d$-dimensional faithful $K$-linear representation $\beta$ and let $\rho$ be a nontrivial $K$-linear representation of $ G $ on $K^m$. Then, the embedded group $(\beta \times \rho)(G) \subseteq  \GLG_d(K)  \times \GLG_m(K)$ is not generated by reflections.	
\end{Corollary}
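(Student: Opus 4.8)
The plan is to show that every reflection in the embedded group $(\beta \times \rho)(G)$ already lies in the image of the proper subgroup $H = \ker \rho$, so that the subgroup generated by all reflections can never exhaust $(\beta \times \rho)(G)$.

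First I would observe that, since $\rho$ is nontrivial, $H = \ker \rho$ is a proper (normal) subgroup of $G$. By Corollary \ref{subgroupreflection}, no element $g \in G \setminus H$ maps to a reflection under $\beta \times \rho$; equivalently, by Lemma \ref{elementreflection}, a reflection can only arise from a $g$ with $\rho(g) = \operatorname{Id}$. (The alternative case of Lemma \ref{elementreflection}, namely $\beta(g) = \operatorname{Id}$ with $\rho(g)$ a reflection, is ruled out by the faithfulness of $\beta$: then $g = e$, so $\rho(g) = \operatorname{Id}$ is not a reflection.) Thus every reflection of $(\beta \times \rho)(G)$ is contained in $(\beta \times \rho)(H)$.

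Next I would invoke faithfulness once more. Since $\beta$ is faithful, so is $\beta \times \rho$, whence $\beta \times \rho$ restricts to a group isomorphism on $G$ and carries the strict inclusion $H \subsetneq G$ to a strict inclusion $(\beta \times \rho)(H) \subsetneq (\beta \times \rho)(G)$. The subgroup generated by all reflections is therefore contained in the proper subgroup $(\beta \times \rho)(H)$ and cannot equal the whole group, so $(\beta \times \rho)(G)$ is not generated by reflections.

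There is essentially no hard step here: the statement is a direct consequence of Corollary \ref{subgroupreflection} together with the faithfulness of $\beta$. The only point requiring a moment's care is confirming that proper containment of subgroups of $G$ passes to proper containment of the corresponding matrix groups, which is precisely where the faithfulness of $\beta \times \rho$ (inherited from that of $\beta$) enters.
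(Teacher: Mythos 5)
Your proof is correct and follows essentially the same route as the paper: both arguments use Corollary \ref{subgroupreflection} to confine all reflections to the image of $H=\ker\rho$, which equals $\beta(H)\times\{\operatorname{Id}_{K^m}\}$, and then conclude this is a proper subgroup of $(\beta\times\rho)(G)$. Your justification of properness via injectivity of $\beta\times\rho$ is a valid (and slightly more explicit) version of the paper's implicit observation that some element of the image has a nontrivial second block.
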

\begin{proof}
Let $H \subset G$ be the kernel of $ \rho $. Then, the elements $ g \in H $ have in $ \GLG_d(K) \times \GLG_m(K) $  the form $ \beta(g) \times\operatorname{Id}_{K^m} $, and  the elements $ g \in G \setminus H $ are  by Corollary \ref{subgroupreflection} not a reflection. So the subgroup generated by the reflections is contained in the subgroup $ \beta(H) \times \{ \operatorname{Id}_{K^m} \} $, which is not $ ( \beta \times \rho) ( G) $.
\end{proof}

For a finite group $G$ and a representation $\beta:G \rightarrow \GLG_d(K)$, we denote by $G_{\beta \operatorname{refl} }$ the normal subgroup of $G$, which is the preimage of the subgroup generated by reflections.

\begin{Lemma}
\label{reflectionsubgrouprelation}
Let $G $ be a finite group, and let $\beta$ be a faithful linear representation and let $\rho$ be any $K$-linear representation of $G$ of dimension $d$ and $m$. Then, $G_{\beta \times \rho \operatorname{refl} } \subseteq G_{\beta \operatorname{refl} }$, and we have a surjective group homomorpism $G/ G_{\beta \times \rho \operatorname{refl} }  \rightarrow   G/  G_{\beta \operatorname{refl} } $.
\end{Lemma}
\begin{proof}
This is again clear from the block matrix decomposition.	
\end{proof}

\begin{Example}
\label{cycliconereflections}	
We look at the situation described in Lemma \ref{cyclicone}. Via $\beta$, $G$ acts as a reflection group, so $ G_{\beta \operatorname{refl} }=G$. Let $n$ be the order of $\ell$ in $\Z/(k)$. For the representation $\beta \times \rho_\ell$, the power $\begin{pmatrix} \zeta & 0 \\ 0 & \zeta^\ell \end{pmatrix}^m$ is a reflection if and only if $m$ is a multiple of $n$. Hence, $  G_{\beta \times \rho_\ell \operatorname{refl} } = \langle n \rangle \subseteq \Z/(k) $ is the subgroup of reflections, and  $G/ G_{\beta \times \rho \operatorname{refl} }  = \Z/(n)$. 
\end{Example}

\begin{Lemma}
\label{stabilizercompare}
Let $R' =(Q,v) \in  {\mathbb A}^d \times {\mathbb A}^m$ be a point above $Q \in  {\mathbb A}^d $.

\begin{enumerate}
\item 
We have
$ \Stab^{\beta \times \rho} R' =  \Stab^{\beta} (Q) \cap \Stab^\rho (v)  $.

\item
We have $ \Stab^{\beta \times \rho} R' \subseteq \Stab^{\beta} Q $.

\item
If $R'$ lies on the zero section, then we have the equality $ \Stab^{\beta \times \rho} R' = \Stab^{\beta} Q $.
 
\end{enumerate}
\end{Lemma}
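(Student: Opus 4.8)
The plan is to reduce all three statements to the single elementary observation that the product action is componentwise, so that an element fixes $R'=(Q,v)$ exactly when it fixes each coordinate separately. Concretely, for $g \in G$ the automorphism $(\beta \times \rho)(g)$ of ${\mathbb A}^d \times {\mathbb A}^m$ is by definition the product of the automorphism $\beta(g)$ of ${\mathbb A}^d$ and the automorphism $\rho(g)$ of ${\mathbb A}^m$, hence on points $g^{\beta \times \rho}(Q,v) = (g^\beta(Q), g^\rho(v))$. First I would record this identity once and read all three parts off it. (The same functorial description applies to the scheme-theoretic stabilizer, so the argument is insensitive to the residue field of $Q$.)

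For part (1), I would argue that $g \in \Stab^{\beta \times \rho} R'$ means $(g^\beta(Q), g^\rho(v)) = (Q,v)$, and since equality of pairs is equality in each component, this holds if and only if both $g^\beta(Q)=Q$ and $g^\rho(v)=v$, i.e. $g \in \Stab^{\beta}(Q)$ and $g \in \Stab^{\rho}(v)$. This is precisely the asserted equality $\Stab^{\beta \times \rho} R' = \Stab^{\beta}(Q) \cap \Stab^{\rho}(v)$. Part (2) is then immediate, since the intersection appearing in (1) is contained in $\Stab^{\beta}(Q)$.

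For part (3), the one extra input is that the origin $0 \in {\mathbb A}^m$ is fixed by every element of $G$: because each $\rho(g)$ is a \emph{linear} map, we have $g^\rho(0)=0$, so $\Stab^{\rho}(0)=G$. Thus if $R'=(Q,0)$ lies on the zero section, then (1) gives $\Stab^{\beta \times \rho} R' = \Stab^{\beta}(Q) \cap G = \Stab^{\beta}(Q)$, as claimed.

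There is no real obstacle in this lemma; it is a direct computation with the componentwise action. The only point that deserves a moment of care is fixing the convention for how $\beta \times \rho$ acts on a point of the product, and noting that it is exactly the linearity of $\rho$ (forcing $\Stab^{\rho}(0)=G$) that singles out the zero section and upgrades the inclusion of (2) to the equality of (3).
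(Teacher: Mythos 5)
Your proof is correct and is exactly the argument the paper has in mind — the paper's own proof is simply ``Clear,'' and your componentwise computation, with the linearity of $\rho$ giving $\Stab^{\rho}(0)=G$ for part (3), is the intended elaboration. No gaps.
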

\begin{proof}
Clear.
\end{proof}

\section{Singularities}
\label{singularitiessection}

In this section, we restrict to a linear action $\beta$ of a finite group $G$ on ${\mathbb A}^d$ and another linear action $\rho$ of $G$ on ${\mathbb A}^m$, giving rise to the quotient scheme up $Z_\rho= ( {\mathbb A}^d \times  {\mathbb A}^m )/(\beta \times \rho) \rightarrow  {\mathbb A}^d/\beta=X$. We want to understand the singularities on $Z_\rho$, in particular their relation with the singularities of $X$. As they are always quotient singularities, they are normal Cohen-Macaulay (in the nonmodular case), rational singularities, in positive characteristic (again nonmodular) they are $F$-regular. Here, we are rather interested in the existence and the localization (in the sense of where are they?) of singularities. For this question, the theorem of Chevalley, Shephard, Todd, Serre is decisive (see \cite{chevalley}, \cite{shephardtodd}, \cite{serregroupesfinis}). It says that a nonmodular group $G \subseteq \GLG_d(K)$ is a reflection group if and only if the invariant ring is itself a polynomial ring, and this is true if and only if the invariant ring is regular.

\begin{Lemma}
\label{singularityexistence}
A quotient scheme $Z_\rho \rightarrow X$ always has a singularity unless $\beta(G)$ is a reflection group and $\rho$ is the trivial representation. In this case, the quotient scheme is just the trivial bundle ${\mathbb A}^d \times {\mathbb A}^m \rightarrow {\mathbb A}^d$. 
\end{Lemma}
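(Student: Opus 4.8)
The plan is to reduce the whole statement to the Chevalley--Shephard--Todd--Serre theorem (CSTS) applied to the faithful product representation $\beta \times \rho$, feeding in the reflection analysis of the previous section. Since $\beta$ is faithful, $\beta \times \rho$ is faithful as well, and $Z_\rho = \Spec B^\rho$ is the quotient ${\mathbb A}^{d+m}/(\beta\times\rho)$. As $Z_\rho$ is normal and of finite type over $K$, it has no singularity exactly when $B^\rho$ is a regular ring, and by CSTS this happens if and only if $(\beta\times\rho)(G) \subseteq \GLG_{d+m}(K)$ is a reflection group. So the assertion reduces to showing: $(\beta\times\rho)(G)$ is a reflection group if and only if $\rho$ is trivial and $\beta(G)$ is a reflection group, together with the identification of $Z_\rho$ in the exceptional case.

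First I would dispose of the case where $\rho$ is nontrivial. Here Corollary \ref{productnoreflection} states directly that $(\beta\times\rho)(G)$ is not generated by reflections, so by CSTS the ring $B^\rho$ is not regular, and hence $Z_\rho$ has a singularity. This covers every nontrivial $\rho$, independently of the nature of $\beta$.

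Next I would treat $\rho$ trivial, where $Z_\rho = X \times {\mathbb A}^m$ is the trivial bundle over $X$, as recorded in Section \ref{settingsection}. The product $X \times {\mathbb A}^m$ is regular if and only if $X$ is regular: a localization of $X\times{\mathbb A}^m$ at a point over $P \in X$ is a localization of $\mathcal{O}_{X,P}[W_1,\dots,W_m]$, and a polynomial extension of a local ring is regular precisely when the ring is, so a singular point $P$ of $X$ produces the singular point $(P,0)$ while regularity of $X$ propagates to the product. By CSTS, $X = {\mathbb A}^d/\beta$ is regular precisely when $\beta(G)$ is a reflection group. If $\beta(G)$ is a reflection group, then $R = K[X_1,\dots,X_d]^\beta$ is a polynomial ring, $X = {\mathbb A}^d$, and $Z_\rho = {\mathbb A}^d \times {\mathbb A}^m = {\mathbb A}^{d+m}$ is the trivial bundle, which is regular; if $\beta(G)$ is not a reflection group, then $X$, and hence $Z_\rho$, has a singularity. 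Combining the two cases yields that $Z_\rho$ is nonsingular exactly when $\rho$ is trivial and $\beta(G)$ is a reflection group, which is the claim, including the final description of the quotient scheme.

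The argument is essentially an assembly of results already in hand, so there is no single hard step; the main point requiring care is the bookkeeping between the three a-priori-singular objects $X$, $Z_\rho = {\mathbb A}^{d+m}/(\beta\times\rho)$, and the product $X \times {\mathbb A}^m$, together with invoking CSTS in the correct direction each time. I would also note that the Shephard--Todd--Chevalley direction (reflection group $\Rightarrow$ polynomial invariant ring), used for the regularity conclusions, requires the nonmodular hypothesis, so this assumption is in force throughout this section, whereas the Serre direction (regular invariant ring $\Rightarrow$ reflection group) underlies the ``has a singularity'' conclusions.
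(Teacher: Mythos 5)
Your proposal is correct and takes essentially the same route as the paper, whose proof consists precisely of invoking Corollary \ref{productnoreflection} together with the theorem of Chevalley, Shephard, Todd and Serre applied to ${\mathbb A}^d \times {\mathbb A}^m$ for the case of nontrivial $\rho$, and applying that theorem to ${\mathbb A}^d$ for the second statement. Your additional bookkeeping (that $X \times {\mathbb A}^m$ is regular if and only if $X$ is, and the remark on which direction of the theorem requires the nonmodular hypothesis) merely makes explicit what the paper's two-line proof leaves implicit.
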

\begin{proof}
This follows from Corollary \ref{productnoreflection} in connection with the theorem of Chevalley, Shephard, Todd and Serre, applied to  ${\mathbb A}^d \times {\mathbb A}^m$. The second statement follows from this theorem applied to  ${\mathbb A}^d $.
\end{proof}

We use the following fact, which is a slight generalization of the theorem of Chevalley, Shephard, Todd and Serre, as a reference, we only found \cite[Satz 1]{gottschling} in the complex case (see also \cite{prill}). I thank  Mitsuyasu Hashimoto
for help with the following proof.

\begin{Lemma}
\label{reflectionsingular}
Let $\beta $ be a linear faithful representation of a nonmodular finite group $G$, let $X={\mathbb A}^d/G$ be the quotient and let $Q \in {\mathbb A}^d$ be a closed point with image point $P \in X$. Let $\Stab (Q) $ be the stabilizer of $Q$. Then, $P$ is a regular point if and only if $\beta( \Stab (Q) ) \subseteq \GLG(K^d)$ is generated by reflections.	
\end{Lemma}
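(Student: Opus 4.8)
The plan is to reduce the question to a statement purely about the stabilizer $H = \Stab(Q)$ acting linearly, and then to invoke the complete (analytic) form of the Chevalley--Shephard--Todd--Serre theorem cited as \cite[Satz 1]{gottschling}. Since regularity of $X$ at $P$ is a formal-local property, it suffices to analyze the complete local ring $\widehat{\mathcal{O}}_{X,P}$.

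First I would identify this complete local ring with an invariant ring of $H$. Write $q:Y={\mathbb A}^d \to X$ for the quotient morphism and let $q^{-1}(P) = \{Q = Q_1, \ldots, Q_r\}$ be the orbit of $Q$, with $r = [G:H]$. Completing $S$ along $\mathfrak{m}_P S$ gives the product $\prod_{i=1}^r \widehat{S}_{\mathfrak{m}_{Q_i}}$, on which $G$ acts by permuting the factors transitively, $H$ being the stabilizer of the factor at $Q$. Since the group is nonmodular and completion is flat, taking invariants commutes with completion (the Reynolds operator of Lemma \ref{actionuniversal} survives), so $\widehat{\mathcal{O}}_{X,P} \cong (\prod_i \widehat{S}_{\mathfrak{m}_{Q_i}})^G$. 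By the component-wise argument of Lemma \ref{actiondisjointunion}, this equals $(\widehat{S}_{\mathfrak{m}_Q})^H$.

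Next I would linearize the $H$-action on $\widehat{S}_{\mathfrak{m}_Q} \cong \C[[t_1, \ldots, t_d]]$. Because $H$ fixes $Q$ and $\Char K = 0$, the standard averaging argument produces $H$-equivariant formal coordinates in which the action is the linear one on the cotangent space $\mathfrak{m}_Q/\mathfrak{m}_Q^2$. As $\beta$ is already linear, its differential at the fixed point $Q$ is $\beta(h)$ itself, so the induced action on the cotangent space is the contragredient of $\beta|_H$; in particular $h$ acts as a reflection on the cotangent space precisely when $\beta(h)$ is a reflection, since the contragredient of a pseudo-reflection is again a pseudo-reflection. Hence $\widehat{\mathcal{O}}_{X,P} \cong \left( \C[[t_1, \ldots, t_d]] \right)^{\beta(H)}$ with $\beta(H)$ acting linearly, and the subgroups generated by reflections on both sides coincide.

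Finally, $P$ is regular if and only if $\widehat{\mathcal{O}}_{X,P}$ is a regular ring, and by the complete version of Chevalley--Shephard--Todd--Serre \cite[Satz 1]{gottschling} this holds if and only if $\beta(H) = \beta(\Stab(Q))$ is generated by reflections, which is exactly the assertion. The main obstacle I anticipate is the first step: making rigorous the identification $\widehat{\mathcal{O}}_{X,P} \cong (\widehat{S}_{\mathfrak{m}_Q})^H$, i.e.\ that the complete local ring of the quotient ``sees'' only the stabilizer. An equivalent and perhaps cleaner route would be Luna's étale slice theorem for finite groups, which yields an étale neighborhood of $P$ isomorphic to one of the image of the origin in $K^d/\beta(H)$; since regularity is étale-local, this bypasses the completion bookkeeping while delivering the same reduction.
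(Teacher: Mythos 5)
Your proposal is correct, but the most important point of comparison is that the paper does not prove this lemma at all: it is presented as a cited fact, with \cite[Satz 1]{gottschling} (see also \cite{prill}) named as the only reference found, and this is precisely why the section restricts to $K=\C$. So there is no internal argument to match; what you have supplied is the reduction that makes the fixed-point version of the cited result apply at an arbitrary point $Q$, and all three of your steps are sound. The identification $\widehat{\mathcal{O}}_{X,P} \cong \bigl(\widehat{S}_{\mathfrak{m}_Q}\bigr)^H$ follows from Lemma~\ref{actionuniversal} applied with $A=\widehat{R}_P$ (using that $S$ is module-finite over $R$, so $\widehat{R}_P \otimes_R S \cong \prod_i \widehat{S}_{\mathfrak{m}_{Q_i}}$) together with the ring-theoretic analogue of Lemma~\ref{actiondisjointunion}; regularity of a noetherian local ring passes to its completion; and the contragredient of a pseudo-reflection is again a pseudo-reflection, so the reflection subgroups match. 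Two simplifications are worth noting. First, no averaging argument is needed to linearize: since $\beta$ is linear and $\beta(h)Q=Q$ for all $h \in H=\Stab(Q)$, translating coordinates to $Q$ makes the $H$-action literally linear, $y \mapsto \beta(h)y$, so the formal coordinates come for free. Second, once the action is linear you do not need any analytic form of Chevalley--Shephard--Todd--Serre: the $H$-action preserves the grading, so $\bigl(\C[[t_1,\ldots,t_d]]\bigr)^{H}$ is the completion of the graded ring $\C[t_1,\ldots,t_d]^{H}$ at its irrelevant maximal ideal, and regularity of the formal invariants is therefore equivalent to regularity of the graded invariant ring, which by the classical algebraic theorem (Chevalley--Shephard--Todd together with Serre's converse) holds if and only if $\beta(H)$ is generated by reflections. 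This variant of your argument proves the lemma over any field of characteristic zero, which is strictly more than the paper claims via its citation route; your alternative via Luna's \'etale slice theorem is equally valid but, like the paper's reference, stays within the complex (characteristic-zero analytic) setting.
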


\begin{proof}
We may assume that $K$ is algebraically closed. Let $H= \Stab (Q)$, $Q=(a_1, \dots, a_d)$, $Y_i=X_i-a_i$. Let  $\idealq  \subseteq S=K[ X_1, \ldots, X_d]$ be the maximal ideal corresponding to $Q$, $\idealp= \idealq \cap S^G$. From the ring homomorphisms $ S^G \rightarrow S^H \rightarrow S$ we get natural ring homomorphisms \[ ( S^G)_\idealp \longrightarrow (S^H)_{ \idealq \cap S^H } \cong (S_\idealq)^H  \longrightarrow   S_\idealq \, . \]
The homomorphism $( S^G)_\idealp \rightarrow  (S_\idealq)^H $ is unramified by \cite[Theorem 4.1.2]{nagatalocal}, hence,  $( S^G)_\idealp$ is regular if and only if  $(S_\idealq)^H $ is regular. But we can understand the action of $H$ on ${\mathbb A}^d$ as a linear action with respect to the new variables $Y_i$, and then apply the Theorem of Chevalley, Shephard, Todd.
\end{proof}

The following fact is known as theorem of Steinberg, see \cite[Theorem 1.5]{steinbergdifferential}, we include a short proof based on the previous lemma.

\begin{Corollary}
\label{reflectiongroupsubgroup}
Let $ G \subseteq \GLG_d(K) $ be a nonmodular reflection group and let $Q \in {\mathbb A}^d$ be a point. Then the stabilizer group $\Stab (Q) $ is also a reflection group.
\end{Corollary}
\begin{proof}
By the theorem of Chevalley, Shepherd, Todd, the quotient is ${\mathbb A}^d$, so it is regular everywhere. Hence it follows from Lemma \ref{reflectionsingular} that $S=\Stab (Q)$ is generated by reflections.	
\end{proof}

\begin{Corollary}
\label{smallsingularities}
Let $\beta $ be a linear faithful small representation of a finite group $G $, let $X={\mathbb A}^d/G$ be the quotient. Then the singular locus of $X$ is the image of all fixed spaces of $ g \in G $, $ g \neq \operatorname{Id}$.
\end{Corollary}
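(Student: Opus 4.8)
The plan is to derive the corollary directly from Lemma \ref{reflectionsingular}, using smallness to collapse the reflection condition on stabilizers into a triviality condition. Throughout I work over $K=\mathbb{C}$, as Lemma \ref{reflectionsingular} requires.

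First I would fix a closed point $P \in X$, choose a preimage $Q \in \mathbb{A}^d$, and invoke Lemma \ref{reflectionsingular}: the point $P$ is regular if and only if $\beta(\Stab(Q))$ is generated by reflections. Because $\beta$ is small, the group $\beta(G) \subseteq \GLG_d(K)$ contains no reflection, and therefore neither does its subgroup $\beta(\Stab(Q))$.

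Next I would record the elementary but decisive observation that a finite linear group containing no reflection is generated by reflections only when it is trivial, since the subgroup generated by its (empty) set of reflection elements is then just $\{\operatorname{Id}\}$. Combined with the faithfulness of $\beta$, the condition that $\beta(\Stab(Q))$ be generated by reflections thus becomes $\Stab(Q)=\{\operatorname{Id}\}$. Hence $P$ is regular precisely when $Q$ has trivial stabilizer, equivalently when $Q$ is fixed by no $g \neq \operatorname{Id}$.

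Finally, passing to complements, $P$ is singular exactly when $Q$ lies in the fixed space of some $g \neq \operatorname{Id}$, that is, in $\{v \mid \beta(g)v = v\}$ for some such $g$. The preimage of the singular locus of $X$ is therefore the finite union of these fixed subspaces over all $g \neq \operatorname{Id}$, and since the quotient morphism $\mathbb{A}^d \to X$ is finite and hence closed, its image is exactly the singular locus. I do not anticipate a genuine obstacle here; the only step requiring care is the group-theoretic remark that, in the absence of reflections, being ``generated by reflections'' forces triviality, which is precisely where the smallness hypothesis enters and makes the statement clean.
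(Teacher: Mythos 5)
Your proof is correct, and its hard direction is exactly the paper's: if $Q$ lies on a fixed space of some $g \neq \operatorname{Id}$, then $\Stab(Q)$ is nontrivial, smallness forces $\beta(\Stab(Q))$ to contain no reflections and hence (being nontrivial) not to be generated by reflections, so $P$ is singular by Lemma \ref{reflectionsingular}. Where you diverge is the converse inclusion. The paper does not use Lemma \ref{reflectionsingular} there at all: it observes that over the image $V/G$ of the fixed-point-free locus the map $V \rightarrow V/G$ is a principal fiber bundle (Remark \ref{freeprincipal}), so $V/G$ is smooth. You instead run Lemma \ref{reflectionsingular} in both directions, noting that under smallness and faithfulness the condition ``$\beta(\Stab(Q))$ is generated by reflections'' collapses to $\Stab(Q) = \{\operatorname{Id}\}$, the trivial group being vacuously generated by (the empty set of) reflections. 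Both arguments are sound; your group-theoretic reduction is the one step needing care and you state it correctly. The trade-off: your route is more uniform, deriving everything from a single equivalence, but it leans entirely on the analytic Lemma \ref{reflectionsingular}, which the paper only has over $K=\C$. The paper's argument for the easy inclusion is more robust -- the étale/principal-bundle smoothness of $V/G$ holds without the complex-analytic input and without the smallness hypothesis, which is why the paper phrases it as ``the inclusion $V/G \subseteq X_{\operatorname{reg}}$ always holds.'' Since the whole section is restricted to $\C$ anyway, nothing is lost in your version.
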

\begin{proof}
Let $V\subseteq {\mathbb A}^d$ be the fixed point free locus. The inclusion $V/G \subseteq X_{ \operatorname{reg} } $ always holds: Over the image of the free locus, the map $V \rightarrow V/G$ is a principal fiber bundle, and $V/G$ is smooth, see Remark \ref{freeprincipal}. Now, if $ Q \in {\mathbb A}^d$ lies on the fixed space for some $g \in G$, $g \neq \operatorname{Id}$, then $g$ belongs to the stabilizer of $Q$, and by the smallness assumption, the stabilizer is not a reflection group. Hence, by Lemma \ref{reflectionsingular}, its image point is singular.
\end{proof}

We now want to understand the singularities of $Z_\rho$ in more detail and in particular, how the singularities of $Z_\rho$ are related to the singularities of $X$. Let $p:Z_\rho \rightarrow X$ denote the projection.

\begin{Corollary}
\label{moduleschemesingular}
Let $R' \in {\mathbb A}^d \times {\mathbb A}^m$. Then the point $p(R')=R \in Z_\rho$ is singular if and only if the image of the stabilizer group $(\beta \times \rho)( \Stab ( R') ) \subseteq (\beta \times \rho)(G) \subseteq \GLG_d(K) \times \GLG_m(K) $ is not a reflection group.	
\end{Corollary}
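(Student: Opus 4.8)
The plan is to recognize that this is an essentially immediate consequence of Lemma \ref{reflectionsingular}, applied not to the basic action $\beta$ on ${\mathbb A}^d$ but to the product action $\beta \times \rho$ on ${\mathbb A}^d \times {\mathbb A}^m$. Indeed, $Z_\rho = ({\mathbb A}^d \times {\mathbb A}^m)/(\beta \times \rho)$ is itself a quotient of an affine space by a finite linear group action, so Lemma \ref{reflectionsingular} is directly available provided the hypotheses there are met. The only point requiring attention is that Lemma \ref{reflectionsingular} is stated for a \emph{faithful} linear representation, while $\beta \times \rho$ need not be faithful if $\rho$ has a nontrivial kernel; however, since $\beta$ is already faithful, the product $\beta \times \rho$ is automatically faithful, so this hypothesis is satisfied.

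First I would set up the dictionary: $R' = (Q,v) \in {\mathbb A}^d \times {\mathbb A}^m$ is a closed point, its image point under the quotient map $q: {\mathbb A}^d \times {\mathbb A}^m \to Z_\rho$ is $R = q(R')$ (I would match the notation of the statement, writing $p(R')$ for this image, consistent with $p: Z_\rho \to X$ being used here in the overloaded sense of the quotient morphism ${\mathbb A}^d\times{\mathbb A}^m \to Z_\rho$). The stabilizer $\Stab(R')$ is the stabilizer of $R'$ under the $\beta \times \rho$-action, which by Lemma \ref{stabilizercompare}(1) equals $\Stab^\beta(Q) \cap \Stab^\rho(v)$. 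Then I would simply invoke Lemma \ref{reflectionsingular} with the group $(\beta\times\rho)(G) \subseteq \GLG_d(K)\times\GLG_m(K)$ acting faithfully on ${\mathbb A}^{d+m}$: the image point $R$ is regular if and only if the image of the stabilizer, namely $(\beta\times\rho)(\Stab(R')) \subseteq \GLG_d(K)\times\GLG_m(K)$, is generated by reflections. Negating both sides gives exactly the claimed equivalence: $R$ is singular if and only if $(\beta\times\rho)(\Stab(R'))$ is not a reflection group.

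The main (and really only) obstacle is bookkeeping about which group and which action Lemma \ref{reflectionsingular} is being applied to, together with confirming the faithfulness hypothesis. Concretely I would write:

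\begin{proof}
We apply Lemma \ref{reflectionsingular} to the linear action of $G$ on ${\mathbb A}^{d+m} = {\mathbb A}^d \times {\mathbb A}^m$ given by $\beta \times \rho$, with quotient $Z_\rho$. This representation is faithful, since already $\beta$ is faithful. By Lemma \ref{reflectionsingular}, the image point $R$ of $R'$ in $Z_\rho$ is a regular point if and only if $(\beta \times \rho)(\Stab(R')) \subseteq \GLG_d(K) \times \GLG_m(K)$ is generated by reflections. Taking the contrapositive yields that $R$ is singular if and only if $(\beta \times \rho)(\Stab(R'))$ is not a reflection group.
\end{proof}

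I would note in passing (though it is not needed for the proof) that Lemma \ref{stabilizercompare} then lets one rewrite $\Stab(R')$ as $\Stab^\beta(Q) \cap \Stab^\rho(v)$ and, via Lemma \ref{elementreflection}, analyze which of these stabilizer elements can be reflections in the product — this is the bridge to the subsequent results relating singularities of $Z_\rho$ to those of $X$. Since all the conceptual content is packaged into Lemma \ref{reflectionsingular} (itself the cited generalization of Chevalley--Shephard--Todd--Serre valid over $K = \C$), the corollary is genuinely a one-line deduction, and I expect no substantive difficulty beyond correctly identifying the ambient faithful action.
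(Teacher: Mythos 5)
Your proof is correct and matches the paper's argument exactly: the paper also proves this corollary as a direct special case of Lemma \ref{reflectionsingular} applied to the faithful product action $\beta \times \rho$ on ${\mathbb A}^d \times {\mathbb A}^m$. Your extra verification that $\beta \times \rho$ inherits faithfulness from $\beta$ is a correct and worthwhile detail that the paper leaves implicit.
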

\begin{proof}
This is a special case of Lemma \ref{reflectionsingular}.
\end{proof}

If $\rho$ is the trivial action, then $Z_\rho= X \times {\mathbb A}^m$ and then a point $R \in Z_\rho$ is singular if and only if its base point in $X$ is singular. The examples described in Lemma \ref{cyclicone} and, in fact, by Lemma \ref{singularityexistence}, for every situation where $\beta(G)$ is a reflection group and $\rho$ is not trivial, show that the singular locus of $Z_\rho$ is not always mapped into the singular locus of $X$ (this also shows that Corollary \ref{moduleschemesingularzerosection} below cannot be reversed). But also the regular locus of $Z_\rho$ is not always mapped to the regular locus of $X$, as the following example shows (we will see later in Theorem \ref{moduleschemefundamentalgroup} that the morphism $ Z_{ \rho \operatorname{reg} } \setminus p^{-1} (X_{\operatorname{reg} })  \rightarrow X_{\operatorname{reg} } $ is relevant).

\begin{Example}
In Example \ref{toric11.1}, the quotient scheme $X$ has an isolated singularity in $[0]$. The points above $P \neq [0]$ in $Z_\rho $ are regular. Let $R \in Z_\rho$ be a point above $[0]$. If $R$ lies on the zero section, then $R$ is singular by Corollary \ref{moduleschemesingular}, because the stabilizer of $R'$, a point above $R$ in ${\mathbb A}^2 \times {\mathbb A}$, is $G \cong (\beta \times \rho)(G)$, and this is not a reflection group. If $R$ does not lie on the zero section, then $R$ is regular, because the stabilizer is now trivial (these properties also follow from properties of the Veronese algebra).	
\end{Example}

\begin{Example}
\label{cyclic4reflectionsingularities}
We locate the singularities in Example \ref{cyclic4reflection}. The quotient scheme of the basic action is an $A^1$-singularity. For the one-dimensional representation $\rho$, the singular locus of $Z_\rho$ is the line $V(X_2,W)$ by Corollary \ref{moduleschemesingular}: For a point with $x_2 \neq 0$ or $w \neq 0$, the stabilizer is trivial, and for a point $(x_1,0,0)$ with $x_1 \neq 0$, the stabilizer is the subgroup $\Z/(2)$, whose generator is a reflection via $\beta$, but the corresponding element in $\GLG_3(K)$ is not a reflection.	
\end{Example}

\begin{Corollary}
\label{moduleschemesingularzerosection}
Let $ R \in Z_\rho$ be a point on the zero section, and suppose that $P=p(R) \in X$ is singular. Then, $R$ is also singular.
\end{Corollary}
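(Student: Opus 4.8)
The plan is to reduce the claim to the reflection-group criteria of Lemma \ref{reflectionsingular} and Corollary \ref{moduleschemesingular}, and then to transport the non-reflection property from $\beta$ to $\beta \times \rho$ by projecting onto the first factor. First I would choose a preimage of $R$ adapted to the zero section. The zero section of $Z_\rho \to X$ is induced by the $G$-equivariant inclusion ${\mathbb A}^d \hookrightarrow {\mathbb A}^d \times {\mathbb A}^m$, $Q \mapsto (Q,0)$ (the origin of ${\mathbb A}^m$ being $\rho$-fixed), so $R$ is the image of a point $R' = (Q,0)$ with $Q \in {\mathbb A}^d$ lying above $P$. Since $R'$ lies on the zero section, Lemma \ref{stabilizercompare}(3) gives the crucial equality $\Stab^{\beta \times \rho}(R') = \Stab^\beta(Q) =: H$.

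Next I would translate the two singularity conditions. Because $P$ is singular, Lemma \ref{reflectionsingular} applied to the faithful representation $\beta$ and the point $Q$ shows that $\beta(H) \subseteq \GLG_d(K)$ is not generated by reflections. By Corollary \ref{moduleschemesingular} (applied to $R'$, using the stabilizer identification above), it then suffices to prove that $(\beta \times \rho)(H) \subseteq \GLG_d(K) \times \GLG_m(K)$ is not a reflection group.

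Finally I would argue by contradiction via the projection $\pi_1 : \GLG_d(K) \times \GLG_m(K) \to \GLG_d(K)$, which satisfies $\pi_1 \circ (\beta \times \rho) = \beta$ and hence $\pi_1((\beta \times \rho)(H)) = \beta(H)$. If $(\beta \times \rho)(H)$ were generated by reflections $r_1, \ldots, r_k$, then by Lemma \ref{elementreflection} each $r_i$ has the form $(\text{reflection}, \operatorname{Id})$ or $(\operatorname{Id}, \text{reflection})$, so $\pi_1(r_i)$ is either a reflection of $\GLG_d(K)$ or $\operatorname{Id}$. Then $\beta(H) = \langle \pi_1(r_1), \ldots, \pi_1(r_k) \rangle$ would be generated by reflections, contradicting the preceding step. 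Hence $(\beta \times \rho)(H)$ is not a reflection group, and Corollary \ref{moduleschemesingular} shows that $R$ is singular.

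The argument is essentially formal once the stabilizer at the zero-section point is pinned down; the one place where the hypothesis that $R$ lies on the zero section really enters — and the main thing to get right — is the equality $\Stab^{\beta \times \rho}(R') = \Stab^\beta(Q)$ of Lemma \ref{stabilizercompare}(3), which guarantees that the \emph{full} $\beta$-stabilizer governs the singularity of $R$. Off the zero section one only has the inclusion $\Stab^{\beta \times \rho}(R') \subseteq \Stab^\beta(Q)$ of Lemma \ref{stabilizercompare}(2), and the conclusion can indeed fail there, in accordance with the examples where a regular point of $Z_\rho$ lies above a singular point of $X$.
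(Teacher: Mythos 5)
Your proof is correct and follows essentially the same route as the paper: choose the preimage $R'=(Q,0)$ on the zero section, apply Lemma \ref{reflectionsingular} to conclude that $\beta(\Stab(Q))$ is not a reflection group, identify $\Stab^{\beta\times\rho}(R')=\Stab^{\beta}(Q)$ via Lemma \ref{stabilizercompare}(3), and conclude singularity of $R$ via Lemma \ref{reflectionsingular} (in the form of Corollary \ref{moduleschemesingular}). The only difference is that you make explicit, through the projection $\pi_1$ and Lemma \ref{elementreflection}, the step the paper leaves tacit — that $(\beta\times\rho)(H)$ cannot be generated by reflections when $\beta(H)$ is not — which is a useful clarification rather than a different argument.
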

\begin{proof}
Let $ R'=(Q,0) \in {\mathbb A}^d \times \{0\} \subseteq {\mathbb A}^d \times {\mathbb A}^m $ be a point above $R$. Then $\beta( \Stab ( Q)) $ is not a reflection group within $\beta(G) \subseteq \GLG_d(K) $ by Lemma \ref{reflectionsingular}. We have $ \Stab  (R') =  \Stab (Q)  $ by Lemma \ref{stabilizercompare} (3). Therefore, $R$ is also singular by Lemma \ref{reflectionsingular}.
\end{proof}

\begin{Theorem}
\label{fibersingularonepoint}
Let $ Q \in {\mathbb A}^d $ be a point with image point $P \in X$.	The following properties for the quotient scheme $Z_\rho \rightarrow X$ are equivalent.

\begin{enumerate}

\item 
The fiber of $Z_\rho \rightarrow X$ over $P$ contains a singular point (of $Z_\rho$).	
	
\item 
The point $(P,0)$ (the image of $P$ under the zero section) is singular.

\item
The image $(\beta \times \rho)(\Stab(Q))$ inside $\GLG_d(K) \times \GLG_m(K)$ is not a reflection group. 
\end{enumerate}
	
\end{Theorem}
\begin{proof}
Let $R$ be a singular point of the fiber and let $R'=(Q,v)$ be a point in $ {\mathbb A}^d \times {\mathbb A^m} $ above $R$. We have $\Stab (Q,v) \subseteq \Stab (Q,0)$. By Corollary \ref{moduleschemesingular}, $\Stab (Q,v)$ is not a reflection group inside $\GLG_d(K) \times \GLG_m(K)$. Therefore, the same is true for $ \Stab (Q,0) $ by Corollary \ref{reflectiongroupsubgroup}. Hence, again by Corollary \ref{moduleschemesingular}, $(P,0)$ is also singular. This gives the equivalence between (1) and (2). The equivalence between (2) and (3) follows from this since $ \Stab (Q)  $ equals  $ \Stab (Q,0) $ by Lemma \ref{stabilizercompare} (3).
\end{proof}

\begin{Corollary}
\label{fiberramifiedsingularpoint}
Let $ Q \in  {\mathbb A}^d $ be a point with image point $P \in X$ and let
$Z_\rho \rightarrow X$ be a quotient scheme. If the fiber of $Z_\rho$ above $P$ is nonreduced, then it contains a singular point of $Z_\rho$.
\end{Corollary}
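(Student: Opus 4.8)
The plan is to reduce the statement to a bookkeeping combination of Theorem \ref{nonreducedfiber}, Corollary \ref{productnoreflection} and Theorem \ref{fibersingularonepoint}, all of which are available in the present setting: we work over $K=\C$, so $G$ is automatically nonmodular and $\C$ is algebraically closed, and $\beta$ is a faithful linear action of $G$ on ${\mathbb A}^d$. First I would fix a point $Q \in {\mathbb A}^d$ above $P$ and write $H=\Stab(Q)$ for its stabilizer under $\beta$, with restricted representation $\tilde{\rho}=\rho|_H$. Since the fiber of $Z_\rho$ over $P$ is nonreduced by hypothesis, Theorem \ref{nonreducedfiber} shows that $\tilde{\rho}$ is a \emph{nontrivial} representation of $H$.

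The heart of the argument is to verify the hypotheses of Corollary \ref{productnoreflection} for the subgroup $H$. The restriction $\beta|_H$ is again faithful, being the restriction of the faithful representation $\beta$ to a subgroup, so $H$ carries a faithful $d$-dimensional representation $\beta|_H$ together with the nontrivial representation $\tilde{\rho}$. Applying Corollary \ref{productnoreflection} to the group $H$ then yields that the embedded group
\[ (\beta \times \rho)(\Stab(Q)) = (\beta|_H \times \tilde{\rho})(H) \subseteq \GLG_d(K) \times \GLG_m(K) \]
is not generated by reflections. This is exactly condition (3) of Theorem \ref{fibersingularonepoint} for the point $Q$.

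Finally, I would invoke the equivalence $(3) \Leftrightarrow (1)$ of Theorem \ref{fibersingularonepoint}, which immediately gives that the fiber of $Z_\rho$ over $P$ contains a singular point of $Z_\rho$, as claimed. I do not expect a genuine obstacle: the only point requiring care is that the stabilizer appearing in Theorem \ref{nonreducedfiber} and the group $\Stab(Q)$ in Theorem \ref{fibersingularonepoint} must be the same, which holds since both refer to the $\beta$-stabilizer of the chosen preimage $Q$. Thus the corollary follows as a direct combination of the three cited results, the only substantive step being the transfer of the faithfulness and nontriviality hypotheses to the subgroup $H=\Stab(Q)$.
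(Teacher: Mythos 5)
Your proof is correct and follows essentially the same route as the paper: Theorem \ref{nonreducedfiber} gives nontriviality of $\rho$ on $\Stab(Q)$, the reflection analysis shows $(\beta \times \rho)(\Stab(Q))$ is not a reflection group, and Theorem \ref{fibersingularonepoint} concludes. The only (cosmetic) difference is that you invoke Corollary \ref{productnoreflection} applied to the subgroup $H=\Stab(Q)$, whereas the paper uses Corollary \ref{subgroupreflection} directly --- but these rest on the same block-matrix observation of Lemma \ref{elementreflection}, so the content is identical.
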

\begin{proof}
By Theorem \ref{nonreducedfiber}, the nonreducedness of the fiber is equivalent with the property that the restriction of $\rho$ to the stabilizer group $\Stab (Q)$ is not trivial. Hence, there exists $ g \in \Stab (Q)$, $\rho(g) \neq \operatorname{Id_{K^m} } $. All reflections of $(\beta \times \rho) (G)$ live in $ \GLG_d(K) \times \{ \operatorname{Id_{K^m} } \} $ by Corollary \ref{subgroupreflection}. Therefore, $(\beta \times \rho)(\Stab(Q))$ is not a reflection group. By Theorem \ref{fibersingularonepoint}, the fiber contains a singularity.	
\end{proof}

\begin{Corollary}
\label{reflectiongroupnotallreflexive}
Let $\beta $ be a linear faithful  representation of a finite group $G $ such that $\beta(G)$ is generated by reflections, and let $X={\mathbb A}^d/G$ be the quotient. Let $\rho$ be a nontrivial linear representation of $G$, and let $Z_\rho$ be the corresponding fiberflat bundle. Then, $Z_\rho \rightarrow X$ has singularities above a subset of codimension one. In particular, $Z_\rho$ is not a reflexive bundle.
\end{Corollary}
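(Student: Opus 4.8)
The plan is to locate nonreduced --- hence singular --- fibers of $Z_\rho$ over a codimension-one subset of $X$, and then to show that such singularities are incompatible with reflexivity. First I would choose a suitable reflection: since $\beta$ is faithful and $\beta(G)$ is generated by reflections, the elements $g \in G$ for which $\beta(g)$ is a reflection generate $G$. Were all of them contained in $\ker \rho$, the representation $\rho$ would be trivial; as $\rho$ is nontrivial by hypothesis, there is a $g \in G$ with $\beta(g)$ a reflection, say with mirror hyperplane $H_g \subseteq {\mathbb A}^d$, and with $\rho(g) \neq \operatorname{Id}_{K^m}$.

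Next I would analyze the fibers over the image of $H_g$. Because $\beta(g)$ fixes $H_g$ pointwise, we have $g \in \Stab^\beta(Q)$ for every $Q \in H_g$, so the restriction of $\rho$ to $\Stab^\beta(Q)$ is nontrivial (it does not kill $g$). By Theorem \ref{nonreducedfiber} the fiber of $Z_\rho$ over $P = p(Q)$ is then nonreduced, and by Corollary \ref{fiberramifiedsingularpoint} this fiber contains a singular point of $Z_\rho$. Setting $C = p(H_g)$, the finiteness of $p$ together with $\dim H_g = d-1$ gives $\dim C = d-1 = \dim X - 1$, so $C$ is a closed subset of codimension one, and every closed point of $C$ carries a singular fiber. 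This already yields the first assertion, that $Z_\rho \rightarrow X$ has singularities above a codimension-one subset.

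For non-reflexivity I would argue by contradiction, using that $X$ is regular: since $\beta(G)$ is a reflection group, $R$ is a polynomial ring by the theorem of Chevalley--Shephard--Todd--Serre. Let $W \subseteq X$ be the open locus over which $Z_\rho$ is a vector bundle. If $Z_\rho$ were reflexive, then $W$ would contain all points of codimension one, i.e.\ $X \setminus W$ would have codimension $\geq 2$. But over the regular base $W$ the restriction $Z_\rho|_W$ is smooth, hence regular, so every singular point of $Z_\rho$ maps into the closed set $X \setminus W$. Since we produced singular points of $Z_\rho$ over every closed point of $C$, and these are dense in $C$, the irreducibility of $C$ forces $C \subseteq X \setminus W$; the generic point of $C$ is then a codimension-one point lying in $X \setminus W$, contradicting $\operatorname{codim}(X \setminus W) \geq 2$. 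Hence $Z_\rho$ is not reflexive.

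The step I expect to be the main obstacle is this last passage: one must convert the fiberwise, closed-point statement ``the fiber is nonreduced and singular over each point of $C$'' into the generic statement ``the generic point of $C$ lies outside the vector-bundle locus.'' This is precisely where the regularity of a vector bundle over a regular base (so that singularities cannot occur over $W$) and the closedness of $X \setminus W$ enter; the remaining ingredients --- selecting the reflection not killed by $\rho$ and computing $\dim C$ --- are routine.
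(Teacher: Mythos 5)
Your proposal is correct, and it follows the same overall strategy as the paper: choose $g \in G$ with $\beta(g)$ a reflection and $\rho(g) \neq \operatorname{Id}$ (possible exactly as you argue, since the reflections generate $G$ and cannot all lie in $\ker \rho$), produce singularities of $Z_\rho$ in the fibers over the image of the mirror $H_g$, and observe that this image has codimension one. The difference lies in the middle step. The paper works at a \emph{generic} point $Q$ of $H_g$ (off the other mirrors), replaces $g$ by a generator of the pointwise stabilizer of $H_g$, identifies $\Stab(Q) = \langle g \rangle$ via Steinberg's theorem (Corollary \ref{reflectiongroupsubgroup}), and applies Theorem \ref{fibersingularonepoint} directly to conclude that $(\beta \times \rho)(\Stab(Q))$ is not a reflection group. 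You instead route through ramification: $g \in \Stab(Q)$ and $\rho(g) \neq \operatorname{Id}$ give a nonreduced fiber by Theorem \ref{nonreducedfiber}, hence a singular point by Corollary \ref{fiberramifiedsingularpoint}. This is a genuine simplification in two respects: it works at \emph{every} point of $H_g$ rather than only generic ones, and it avoids both the stabilizer computation and the ``we may assume $g$ generates'' normalization (though note that Corollary \ref{fiberramifiedsingularpoint} is itself proved via Theorem \ref{fibersingularonepoint} together with Corollary \ref{subgroupreflection}, so the two chains converge on the same underlying criterion; also, Theorem \ref{nonreducedfiber} requires $K$ algebraically closed, which is harmless since this section works over $\C$). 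Finally, your closing argument --- a vector bundle over the regular base (polynomial ring by Chevalley--Shephard--Todd--Serre) is regular, singular points over a dense set of closed points of the irreducible set $C$ force its generic point outside any vector-bundle locus containing all codimension-one points --- makes fully explicit the step the paper compresses into the single sentence ``Therefore, $Z_\rho$ is not locally free in codimension one,'' and you correctly identified this closed-point-to-generic-point conversion as the place where care is needed.
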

\begin{proof}
Since $\beta(G)$ is generated by reflections and $\rho$ is not trivial, there exists a $g\in G$ such that $\beta(g)$ is a reflection and $\rho(g) \neq \operatorname{Id}$. Thus, $\beta(g) \times \rho(g)$ is not a reflection by Lemma \ref{elementreflection}. Let $H$ denote the mirror hyperplane for $g$, we may assume that $g$ generates the group of reflections with this hyperplane. For every $Q\in H$, not in any other mirror hyperplane, the stabilizer of $Q$ is the group generated by $g$; hence, $(\beta \times \rho)(\Stab(Q))$ is not a reflection group and so, by Theorem \ref{fibersingularonepoint}, the fiber of $Z_\rho$ above the image point of $Q$ in $X$ contains singularities. The image of $H$ is a subset of codimension one in $X$. Therefore, $Z_\rho$ is not locally free in codimension one.
\end{proof}

\begin{Theorem}
\label{fibersingular}
Let $ Q \in  {\mathbb A}^d $ be a point with image point $P \in X$, $X= {\mathbb A}^d/\beta$, $\beta$ a linear action. Let $\rho$ be a linear representation of $G$ with the corresponding quotient scheme $Z_\rho$. Then every point of the fiber of $Z_\rho \rightarrow X$ above $P$ is singular if and only if $\Stab (Q) \cap \ker \rho$ is not a reflection group via $\beta$.	
\end{Theorem}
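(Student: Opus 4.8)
The plan is to reduce everything to the pointwise singularity criterion of Corollary \ref{moduleschemesingular} and then do careful bookkeeping of which group elements become reflections in the product representation. First I would fix the point $Q$ and note that, because $G$ acts transitively on the fiber of $\mathbb{A}^d \to X$ over $P$, every point of the fiber of $Z_\rho$ over $P$ has a preimage in $\mathbb{A}^d \times \mathbb{A}^m$ of the form $R'=(Q,v)$ with $v \in \mathbb{A}^m$ (the varying $Q'$ above $P$ give conjugate stabilizers, so it suffices to test $Q$). Write $H=\Stab^\beta(Q)$ and $N=H \cap \ker \rho$. By Corollary \ref{moduleschemesingular}, the image point of $(Q,v)$ is singular if and only if $(\beta \times \rho)(\Stab^{\beta\times\rho}(Q,v))$ is not a reflection group, and by Lemma \ref{stabilizercompare} (1) we have $\Stab^{\beta\times\rho}(Q,v) = H \cap \Stab^\rho(v)$.

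The next step is to pin down the reflections. Since $\beta$ is faithful, Lemma \ref{elementreflection} says that for $g \neq \mathrm{Id}$ the element $(\beta\times\rho)(g)$ is a reflection precisely when $\beta(g)$ is a reflection and $\rho(g)=\mathrm{Id}$, i.e. when $g \in \ker\rho$ and $\beta(g)$ is a reflection. The key observation is that $\ker\rho \subseteq \Stab^\rho(v)$ for every $v$, so the reflection-elements lying inside $H \cap \Stab^\rho(v)$ are exactly those of $H \cap \Stab^\rho(v) \cap \ker\rho = H \cap \ker\rho = N$; in particular this collection does not depend on $v$. Using that $\beta\times\rho$ is injective and that the reflection-elements generate a subgroup contained in $N \subseteq H \cap \Stab^\rho(v)$, I would conclude that $(\beta\times\rho)(H \cap \Stab^\rho(v))$ is a reflection group \emph{if and only if} $H \cap \Stab^\rho(v) = N$ \emph{and} $\beta(N)$ is a reflection group (via $\beta$). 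Thus the point $(Q,v)$ is regular exactly under these two simultaneous conditions.

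From here both directions of the equivalence follow. If $\beta(N)$ is not a reflection group, then the regularity condition fails for every $v$, so every point of the fiber over $P$ is singular. Conversely, if $\beta(N)$ is a reflection group, I would produce a single $v$ for which $H \cap \Stab^\rho(v)=N$, which then yields a regular point in the fiber: apply Lemma \ref{genericfree} to the faithful finite linear action of the image group $\rho(H) \cong H/N$ on $\mathbb{A}^m$ to get a nonempty open set on which the action is free, and (over $K=\mathbb{C}$) choose a closed point $v$ there with trivial $\rho(H)$-stabilizer, so that $\Stab^\rho(v) \cap H = N$. Then $(Q,v)$ satisfies both regularity conditions, so not every point of the fiber is singular. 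The main obstacle I anticipate is the second paragraph: getting the reflection-subgroup bookkeeping exactly right, in particular verifying the equivalence ``$(\beta\times\rho)(H\cap\Stab^\rho(v))$ is a reflection group iff $H\cap\Stab^\rho(v)=N$ and $\beta(N)$ is a reflection group'', which rests on the containment $\ker\rho \subseteq \Stab^\rho(v)$ and on the fact that all reflections of the product representation already sit in the $\beta$-factor.
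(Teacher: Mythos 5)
Your proposal is correct, and its skeleton matches the paper's: both proofs reduce singularity at a fiber point to the stabilizer criterion of Corollary~\ref{moduleschemesingular} via Lemma~\ref{stabilizercompare}~(1), both exploit that (by Lemma~\ref{elementreflection} and faithfulness of $\beta$, i.e.\ Corollary~\ref{subgroupreflection}) every reflection of the product representation lies in $\ker\rho$, and both use a point $v$ whose stabilizer meets $H=\Stab(Q)$ exactly in $N=H\cap\ker\rho$. Where you genuinely diverge is in the direction ``$\beta(N)$ not a reflection group $\Rightarrow$ every fiber point singular'': the paper only checks this at a generic $v$ (``reading the argument backwards'') and then invokes closedness of the singular locus, together with irreducibility of the fiber, to sweep up the remaining points. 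You instead prove a uniform pointwise criterion: since $\ker\rho\subseteq\Stab^{\rho}(v)$ for \emph{every} $v$, the reflection-elements of $S_v=H\cap\Stab^{\rho}(v)$ are exactly the elements of $N$ whose $\beta$-image is a reflection, independently of $v$, whence the image of $(Q,v)$ is regular if and only if $S_v=N$ and $\beta(N)$ is a reflection group via $\beta$. This removes the topological input (apart from the standard, and in both proofs implicit, reduction of ``all points singular'' to closed points) and buys slightly more than the theorem asserts: when $\beta(N)$ \emph{is} a reflection group, your criterion locates the singular points of the fiber exactly, namely those coming from $v$ with $S_v\supsetneq N$. For the converse you replace the paper's one-line assertion that a sufficiently generic $v$ has stabilizer $\ker\rho$ by an appeal to Lemma~\ref{genericfree} for the faithful action of $H/N\cong\rho(H)$ on ${\mathbb A}^m$; this is valid (in the free locus the orbit map is injective on closed points, so the stabilizer is trivial), though one could argue even more directly that a closed point avoiding the finitely many proper fixed subspaces of the elements of $H\setminus N$ already satisfies $S_v=N$. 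Your bookkeeping equivalence in the second paragraph is exactly right, including the faithfulness of $\beta\times\rho$ needed to transfer ``generated by reflections'' between $S_v$ and its image, and the trivial-group edge case $H=N$ causes no trouble.
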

\begin{proof}
Let $R'=(Q,v) \in {\mathbb A}^d \times { \mathbb A}^m$ with image point $R \in Z_\rho$ above $P$. By	Corollary \ref{moduleschemesingular}, the property that every point of the fiber over $P$ is singular means that for all $v \in {\mathbb A}^m$, the image of $ \Stab (Q) \cap \Stab (v)$ (using Lemma \ref{stabilizercompare} (1)) in $ \GLG_d(K) \times \GLG_m(K) $ is not a reflection group. The stabilizer of a sufficiently generic point $v$ is $\ker \rho$. Hence, the image of $ \Stab (Q) \cap \ker \rho $ in $ \GLG_d(K) \times \GLG_m(K) $ is not a reflection group. Since the image in $ \GLG_m(K)$ is the identity, it follows that the image of  $ \Stab (Q) \cap \ker \rho $ in $ \GLG_d(K)$ is not a reflection group. Reading this argument backwards shows that the generic point of the fiber is singular. Since the singular locus is closed, this implies that every point of the fiber is singular.
\end{proof}

\begin{Corollary}
Let the symmetric group $S_d$ act naturally on ${\mathbb A}^d$ and let $\rho$ denote the sign representation with the corresponding product action on ${\mathbb A}^d \times {\mathbb A}^1 $ (as in Lemma \ref{symmetricdeterminant}). Let $Z_\rho \rightarrow {\mathbb A}^d/S_d \cong {\mathbb A}^d$ be the corresponding module scheme. Let $Q \in {\mathbb A}^d$ with image point $P$. Then the following hold.

\begin{enumerate}
	
\item
If $\Stab (Q)$ is trivial, then the fiber of $Z_\rho$ above $P$ contains no singularity (of $Z_\rho$).

\item
If $\Stab (Q)$ is nontrivial, but contains only odd permutations, then the fiber of $Z_\rho$ above $P$ has exactly one singularity (which lies on the zero section).

\item
If $\Stab (Q) $ contains a nontrivial even permutation, then every point of the fiber of $Z_\rho$ above $P$ is a singularity.

\end{enumerate}
\end{Corollary}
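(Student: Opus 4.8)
The plan is to reduce all three parts to the general singularity criteria already at our disposal, namely Theorem \ref{fibersingularonepoint}, Theorem \ref{fibersingular}, and Corollary \ref{moduleschemesingular}, feeding them the combinatorics of $S_d$. First I would record two structural facts. On the one hand, the natural action $\beta$ makes $S_d$ a reflection group whose reflections are exactly the transpositions: the fixed space of a permutation matrix has dimension equal to the number of cycles of the permutation, so it is a hyperplane if and only if the permutation has $d-1$ cycles, which forces one $2$-cycle and $d-2$ fixed points, i.e. a single transposition. On the other hand, for a point $Q=(x_1, \ldots, x_d)$ the stabilizer $\Stab(Q)$ is the Young subgroup $S_{B_1} \times \cdots \times S_{B_k}$, where the $B_j$ are the index blocks on which the coordinates of $Q$ agree, and $\ker \rho = A_d$ is the alternating group.

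For part (1), if $\Stab(Q)$ is trivial then $(\beta \times \rho)(\Stab(Q))$ is the trivial group, which is vacuously a reflection group, so by Theorem \ref{fibersingularonepoint} the fiber over $P$ contains no singular point of $Z_\rho$. For part (3), the hypothesis that $\Stab(Q)$ contains a nontrivial even permutation says precisely that $\Stab(Q) \cap \ker \rho = \Stab(Q) \cap A_d$ is nontrivial. Its image under $\beta$ consists of permutation matrices of even permutations, none of which is a transposition, so by the first structural fact it contains no reflection; being nontrivial, it is therefore not a reflection group via $\beta$. Theorem \ref{fibersingular} then yields that every point of the fiber over $P$ is singular.

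For part (2), I would first unwind the hypothesis combinatorially: a product $S_{B_1} \times \cdots \times S_{B_k}$ all of whose nontrivial elements are odd can contain neither a $3$-cycle (which a block of size $\geq 3$ would produce) nor a product of two disjoint transpositions (which two blocks of size $\geq 2$ would produce), so it has exactly one block of size $2$ and all others of size $1$. Hence $\Stab(Q) = \{ \operatorname{Id}, (ij) \}$ is generated by a single transposition and meets $A_d$ trivially. Now I would scan the fiber using Corollary \ref{moduleschemesingular} and Lemma \ref{stabilizercompare} (1): for $R'=(Q,v) \in {\mathbb A}^d \times {\mathbb A}^1$ one has $\Stab(R') = \Stab(Q) \cap \Stab(v)$, and since $(ij)$ acts on the $W$-coordinate by $v \mapsto \sgn((ij)) v = -v$, it stabilizes $v$ exactly when $v=0$. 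For $v \neq 0$ the stabilizer of $R'$ is trivial, so the corresponding fiber point is regular; for $v=0$ the nontrivial element $(\beta \times \rho)((ij))$ has both blocks $\beta((ij))$ and $\rho((ij)) = -\operatorname{Id}$ different from the identity, hence is not a reflection by Lemma \ref{elementreflection}, so $(\beta \times \rho)(\Stab(R'))$ is not a reflection group and the point is singular. As $\{ \operatorname{Id}, (ij) \}$ acts on the $W$-line by negation, $v=0$ gives the unique fiber point on the zero section, and it is the only singular one.

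The main obstacle I anticipate lies in the bookkeeping for part (2): one must translate ``contains only odd permutations'' into the assertion that $\Stab(Q)$ is generated by a single transposition (so that in particular its even part is trivial), and then track the orbit structure of the $W$-line under this stabilizer carefully enough to conclude that the singular locus of the fiber is precisely one reduced point on the zero section, rather than merely nonempty. Parts (1) and (3), by contrast, are essentially direct invocations of Theorem \ref{fibersingularonepoint} and Theorem \ref{fibersingular} once the reflections of $\beta(S_d)$ have been identified with the transpositions.
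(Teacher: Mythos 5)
Your proof is correct and follows essentially the same route as the paper's: all three parts are reduced to the stabilizer-based singularity criteria (Theorem \ref{fibersingularonepoint}, Theorem \ref{fibersingular}, Corollary \ref{moduleschemesingular}), fed with the observation that the reflections of $\beta(S_d)$ are exactly the transpositions, so that $\ker \rho = A_d$ is small and $\Stab(Q) \cap A_d$ governs the off-zero-section points. The only cosmetic deviations are that the paper settles (1) via the vector-bundle structure over the image of the free locus (Corollary \ref{modulealgebralinearisomorphism}) where you invoke Theorem \ref{fibersingularonepoint} with the trivial (hence reflection) group, and that your explicit fiberwise stabilizer scan in (2) --- including the correct but dispensable Young-subgroup argument identifying $\Stab(Q)=\{\operatorname{Id},(ij)\}$, when triviality of $\Stab(Q)\cap A_d$ already follows directly from the hypothesis --- spells out what the paper compresses into a terse appeal to part (3).
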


\begin{proof} 
(1) The stabilizer group of $Q$ is trivial if and only if all coordinates of $Q$ are different. This condition describes the free locus of the action and so $Z_\rho$ is a vector bundle above the image of this locus by Corollary \ref{modulealgebralinearisomorphism} (this follows also directly from Lemma \ref{symmetricdeterminant}). (2) Under the given condition, it follows from Theorem \ref{fibersingularonepoint} that the point in the fiber on the zero section is singular. It follows from (3) that this is the only singular point. (3) follows from Theorem \ref{fibersingular}, since $\ker \rho$ is the alternating group $A_d$ and because the alternating group is small.
\end{proof}

\section{The fundamental group of quotient schemes}
\label{fundamentalgroupsection}

Let $K=\C$, all schemes are of finite type and we consider them as complex analytic spaces, without changing the notation. We want to understand the fundamental group of the regular locus of the quotient schemes $Z_\rho$ and its relation to the acting group. Note that the fundamental group of $Z_\rho$ is trivial, as the spectrum of a positively graded $\C$-algebra is contractable. For background on algebraic topology, see \cite{hatcher}.

\begin{Lemma}
\label{fundamentalgroup}
Let $K=\C$. Let $\beta $ be a linear faithful representation of a finite group $G$ on ${\mathbb A}^d $, and let $X={\mathbb A}^d/G$ be the quotient. Let $G_{\beta \operatorname{refl} } \subseteq G$ be the subgroup generated by reflections, and let $ \overline{G} = G/G_{\beta \operatorname{refl} } $. Then, the fundamental group of $X_{\operatorname{reg} } $ is $\overline{G}$.	
\end{Lemma}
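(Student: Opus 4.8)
The plan is to factor the quotient map $q\colon {\mathbb A}^d \to X$ through the intermediate quotient by the reflection subgroup, thereby turning a \emph{ramified} situation into an honest Galois covering whose total space is simply connected. Write $H = G_{\beta \operatorname{refl}}$, which is normal in $G$ since conjugates of reflections are reflections, and set $\overline{G} = G/H$. Because $H$ is generated by reflections, the theorem of Chevalley, Shephard, Todd and Serre gives that $Y' := {\mathbb A}^d/H = \Spec K[X_1,\ldots,X_d]^H$ is again a polynomial ring, so $Y' \cong {\mathbb A}^d$ is smooth and, as a complex space, simply connected. The residual group $\overline{G}$ acts on $Y'$ with $Y'/\overline{G} = X$, and the $H$-quotient map $\phi\colon {\mathbb A}^d \to Y'$ is finite.

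First I would identify the open set lying over $X_{\operatorname{reg}}$. By Lemma \ref{reflectionsingular}, the preimage $\tilde U := q^{-1}(X_{\operatorname{reg}}) \subseteq {\mathbb A}^d$ is exactly the $G$-invariant open set of points $Q$ for which $\beta(\Stab(Q))$ is generated by reflections, and $\tilde U/G = X_{\operatorname{reg}}$. Put $Y'_0 := \tilde U/H$, an open subset of $Y'$ with $Y'_0/\overline{G} = X_{\operatorname{reg}}$. The decisive observation is that $\overline{G}$ acts \emph{freely} on $Y'_0$: for $Q \in \tilde U$ the stabilizer $\Stab(Q)$ is generated by reflections, all of which lie in $H$, so $\Stab(Q) \subseteq H$; hence if $gQ \in HQ$ for some $g \in G$, then $g \in H$, i.e.\ the class of $g$ in $\overline{G}$ is trivial. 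Consequently $Y'_0 \to X_{\operatorname{reg}}$ is an unramified Galois covering with deck group $\overline{G}$, which gives the exact sequence $1 \to \pi_1(Y'_0) \to \pi_1(X_{\operatorname{reg}}) \to \overline{G} \to 1$.

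It then remains to show that $Y'_0$ is simply connected, for which I would prove that its complement in $Y' \cong {\mathbb A}^d$ has complex codimension $\geq 2$. That complement is $\phi({\mathbb A}^d \setminus \tilde U)$, and since $\phi$ is finite it suffices to bound the codimension of ${\mathbb A}^d \setminus \tilde U$, the locus where $\Stab(Q)$ is not a reflection group. Here I would use that an element of $\GLG_d(K)$ fixing a hyperplane pointwise is a (pseudo-)reflection: at a generic point $Q$ of a single mirror $H_i$, every element of $\Stab(Q)$ fixes $H_i$ pointwise, so $\Stab(Q)$ is the cyclic reflection group of that mirror, and such $Q$ lie in $\tilde U$. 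Thus ${\mathbb A}^d \setminus \tilde U$ is contained in the union of the pairwise intersections $H_i \cap H_j$ ($i \neq j$) and the fixed spaces $\operatorname{Fix}(g)$ of the non-reflections $g \neq \operatorname{Id}$, all of complex codimension $\geq 2$. Removing an analytic set of complex codimension $\geq 2$ (real codimension $\geq 4$) from the smooth simply connected space $Y'$ does not change $\pi_1$, so $\pi_1(Y'_0) = 1$ and therefore $\pi_1(X_{\operatorname{reg}}) \cong \overline{G}$.

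The main obstacle is the codimension estimate in the last step: one must argue that the locus where the stabilizer fails to be a reflection group is thin, which rests on the pointwise-hyperplane characterization of pseudo-reflections together with Steinberg's theorem (Corollary \ref{reflectiongroupsubgroup}) to control the stabilizers occurring along intersections of mirrors. The reduction through $H$ to an \emph{unramified} cover is what makes the fundamental group accessible at all, since the original map $\tilde U \to X_{\operatorname{reg}}$ is branched precisely along the images of the mirrors.
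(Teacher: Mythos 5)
Your proof is correct, and while it shares the paper's overall skeleton --- factor through the reflection quotient $Y'={\mathbb A}^d/H$ via Chevalley--Shephard--Todd--Serre, exhibit a Galois covering of $X_{\operatorname{reg}}$ with deck group $\overline{G}$, and kill the fundamental group of the total space by removing a set of complex codimension $\geq 2$ --- the execution is genuinely different at the decisive step. The paper's proof begins by declaring that one ``can assume the action $\beta$ is small'' after replacing ${\mathbb A}^d$ by $Y'$, and then runs the argument on the free locus $V$, identifying $V/G$ with $X_{\operatorname{reg}}$ through Corollary \ref{smallsingularities}. That reduction is left implicit and is not free of content: the induced $\overline{G}$-action on $Y'\cong{\mathbb A}^d$ is a priori only graded, so one must first linearize it equivariantly (possible in characteristic $0$ by choosing a $\overline{G}$-stable space of homogeneous algebra generators) and then check that the linearized action contains no pseudo-reflection, before ``small'' and Corollary \ref{smallsingularities} literally apply. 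You sidestep all of this: you never touch the induced action, but instead characterize $\tilde U=q^{-1}(X_{\operatorname{reg}})$ via Lemma \ref{reflectionsingular} as the locus of reflection-generated stabilizers, observe that any such stabilizer lies in $H$, and conclude directly that $\overline{G}$ acts freely on $\tilde U/H$, so that your covering is $\tilde U/H\rightarrow X_{\operatorname{reg}}$ rather than the paper's $V\rightarrow V/G$ in the reduced situation. The price is your explicit codimension bound for ${\mathbb A}^d\setminus\tilde U$, where the observation that a generic point of a single mirror has cyclic stabilizer generated by a reflection does the real work --- a point the paper never needs, since in the small case there are no mirrors and the non-free locus is a union of fixed spaces of codimension $\geq 2$; note that Steinberg's theorem (Corollary \ref{reflectiongroupsubgroup}), which you invoke, is not actually needed for this estimate. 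Net effect: the paper's route is shorter once the smallness reduction is granted, while yours is slightly longer but makes rigorous, entirely upstairs in the linear setting, exactly the point the paper's reduction glosses over.
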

\begin{proof}
The quotient $={\mathbb A}^d/  G_{\beta \operatorname{refl} }$ is isomorphic to the affine space, $ \overline{G}$ is acting on it and the quotient of this action is also $X$, as mentioned in the beginning of Section \ref{normalsubgroupsection}. So we can assume that the action $\beta$ is small.	Let $V \subseteq {\mathbb A}^d $ be the free locus and let $U=V/G$ be its image. The complement ${\mathbb A}^d \setminus V $ has codimension $\geq 2$ since the group $G$ contains no reflection, and hence, $ X \setminus U $ also has codimension $\geq 2$. The mapping $V \rightarrow U$ is a covering space, $U$ is a complex manifold, $V$ is simply connected, as ${\mathbb A}^d$  is and since the fundamental group does not change by removing closed real submanifolds of real codimension $\geq 3$, by \cite[Satz 4.B.2]{storchwiebe4} (applied to a smooth stratification of ${\mathbb A}^d \setminus V$). Therefore, $G$ is isomorphic to the fundamental group of $U$. By Corollary \ref{smallsingularities}, $U$ is the regular locus of $X$.
\end{proof}

\begin{Remark}
\label{fundamentalidentification}
In the correspondence described in Lemma \ref{fundamentalgroup}, a closed continuous path $\gamma$ starting and ending in a point $P \in X_{ \operatorname{reg}}$ representing an element in the fundamental group $\pi_1(X_{\operatorname{reg} })$ corresponds in the following way to an element in $G$. For a given point $Q \in V$ above $P$, there exists a unique lifting $\tilde{\gamma}$ of the path starting in $Q$ and ending in a point $Q'$ over $P$. Then, there exists a unique $g \in G$ with $g(Q)=Q'$, and $g$ corresponds to $\gamma$.
\end{Remark}

\begin{Remark}
\label{reflectionsfundamental}
If $G$ is generated by reflections, then $\overline{G}=0$ and $X_{\operatorname{reg} }={\mathbb A}^d$ with a trivial fundamental group.
\end{Remark}

\begin{Remark}
Lemma \ref{fundamentalgroup} is related to a result of M.A. Armstrong, which says (see  \cite{armstrong}), adopted to our situation, the following: Let $G$ act on ${\mathbb A}^d$ as before and let $V \subseteq {\mathbb A}^d$ be an invariant open subset such that the complement ${\mathbb A}^d \setminus V$  has codimension $\geq 2$. Then, the fundamental group of $V/G$ is $G/I$, where $I \subseteq G$ is the subgroup generated by elements with at least some fixed point.	
\end{Remark}

We study now the fundamental group of the regular locus of $Z_\rho$ in relation to the fundamental group of the regular locus of $X$.

\begin{Example}
\label{cycliconefundamentalgroup}	
We look at the situation described in Lemma \ref{cyclicone}, see also Example \ref{cycliconereflections}. Via $\beta$, $G$ acts as a reflection group, and for the quotient, we have $\pi_1({\mathbb A}^1)=0$ in accordance with Remark \ref{reflectionsfundamental}. Let $n$ be the order of $\ell$ in $\Z/(k)$. Then, the fundamental group of the regular locus of $Z_\ell$ is $\Z/(n)$ by Lemma \ref{fundamentalgroup}. If $\ell$ is coprime to $k$, then the fundamental group is $\Z/(k)$. In this case, $\rho_\ell$ is faithful, so this follows also from Corollary \ref{faithfulfundamental} below. 
\end{Example}

\begin{Example}
\label{kleinactionfundamental}
For Example \ref{kleinaction}, the quotient scheme is ${\mathbb A}^2$ with trivial fundamental group. For the first representation, the subgroup generated by the reflections is $\Z/(2)$ (the nontrivial reflection is given by the matrix with diagonal entries $1,-1,1$), so the fundamental group of the regular locus of $Z_\rho$ is $\Z/(2)$ by Lemma \ref{fundamentalgroup}. For the second representation, there is no reflection, and the fundamental group of the regular locus is $\Z/(2) \times \Z/(2)$. 
\end{Example}

\begin{Corollary}
\label{faithfulfundamental}
Let $K=\C$. Let $G$ be a finite group, and let $\beta$ be a faithful $\C$-linear representation of $G$ of dimension $d$ with quotient scheme $\Spec \C[X_1, \ldots, X_d]^\beta $. Then, for every faithful $\C$-linear representation $\rho$ of $G$, the fundamental group of the regular locus of the quotient scheme $ Z_\rho $ is isomorphic to $G$.
\end{Corollary}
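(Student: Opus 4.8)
The plan is to realise this corollary as a direct combination of Lemma \ref{fundamentalgroup} and Corollary \ref{faithfulsmall}, applied to the product representation rather than to $\beta$ alone. The key observation is that $Z_\rho = ({\mathbb A}^d \times {\mathbb A}^m)/(\beta \times \rho)$ is itself the quotient of the affine space ${\mathbb A}^{d+m}$ by a single linear action, namely $\beta \times \rho: G \to \GLG_d(K) \times \GLG_m(K) \subseteq \GLG_{d+m}(K)$, so the general computation of the fundamental group of a regular locus from Lemma \ref{fundamentalgroup} is available verbatim for this action.

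First I would check that $\beta \times \rho$ is a faithful linear representation of $G$ on ${\mathbb A}^{d+m}$: for $g \neq e$ the faithfulness of $\beta$ gives $\beta(g) \neq \operatorname{Id}$, whence $(\beta \times \rho)(g) = \beta(g) \times \rho(g) \neq \operatorname{Id}$. Thus Lemma \ref{fundamentalgroup} applies to the quotient $Z_\rho = {\mathbb A}^{d+m}/(\beta \times \rho)$ and identifies the fundamental group of the regular locus as
\[ \pi_1\bigl( (Z_\rho)_{\operatorname{reg}} \bigr) \cong G / G_{\beta \times \rho \operatorname{refl}} \, . \]

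Second, I would show that the subgroup $G_{\beta \times \rho \operatorname{refl}}$ is trivial. By Corollary \ref{faithfulsmall}, since both $\beta$ and $\rho$ are faithful, the embedded group $(\beta \times \rho)(G) \subseteq \GLG_d(K) \times \GLG_m(K)$ is small, i.e.\ it contains no reflection. Hence the subgroup of $(\beta \times \rho)(G)$ generated by reflections is the trivial group. Recalling that $G_{\beta \times \rho \operatorname{refl}}$ is by definition the preimage of this reflection subgroup under $\beta \times \rho$, and that $\beta \times \rho$ is faithful (so its kernel is trivial), I conclude $G_{\beta \times \rho \operatorname{refl}} = \{e\}$. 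Substituting into the displayed isomorphism gives $\pi_1((Z_\rho)_{\operatorname{reg}}) \cong G$, as claimed.

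I do not expect a genuine obstacle here: the entire mathematical content sits in the two cited results, and the corollary is essentially a formal specialisation obtained by feeding the product action $\beta \times \rho$ into Lemma \ref{fundamentalgroup} and using Corollary \ref{faithfulsmall} to kill the reflection subgroup. The only point deserving a line of care is the bookkeeping that ``no reflections plus faithful action'' forces the normal subgroup $G_{\beta \times \rho \operatorname{refl}}$ to be trivial rather than merely the reflection subgroup of the image.
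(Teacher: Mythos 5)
Your proposal is correct and is essentially the paper's own proof: the paper likewise deduces from Corollary \ref{faithfulsmall} that $(\beta \times \rho)(G)$ is small and then concludes via Lemma \ref{fundamentalgroup} applied to the quotient ${\mathbb A}^{d+m}/(\beta \times \rho)$. Your extra bookkeeping (faithfulness of $\beta \times \rho$ forcing $G_{\beta \times \rho \operatorname{refl}} = \{e\}$ rather than just triviality of the image's reflection subgroup) is a correct and welcome elaboration of what the paper leaves implicit.
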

\begin{proof}	
By Lemma \ref{faithfulsmall}, the group $(\beta \times \rho)(G)$ is small. Hence the claim follows from Lemma \ref{fundamentalgroup}.
\end{proof}

\begin{Corollary}
\label{smallfundamental}
Let $K=\C$. Let $G$ be a finite group, and let $\beta$ be a small faithful $\C$-linear representation of $G$ of dimension $d$ with quotient scheme $\Spec \C[X_1, \ldots, X_d]^\beta $. Then, for every $\C$-linear representation $\rho$ of $G$, the fundamental group of the regular locus of the quotient scheme $ Z_\rho $ is isomorphic to $G$.
\end{Corollary}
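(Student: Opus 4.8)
The plan is to reduce the statement to Lemma \ref{fundamentalgroup} applied to the product representation $\beta \times \rho$, exactly parallel to the proof of Corollary \ref{faithfulfundamental}, but invoking Corollary \ref{smallsmall} in place of Corollary \ref{faithfulsmall}. First I would record that $\beta \times \rho$ is a faithful linear representation of $G$ on $\mathbb{A}^{d+m}$: faithfulness is inherited from $\beta$, since if $(\beta \times \rho)(g) = \operatorname{Id}$ then in particular $\beta(g) = \operatorname{Id}$, forcing $g$ to be the neutral element. The quotient scheme attached to this representation is precisely $Z_\rho = (\mathbb{A}^d \times \mathbb{A}^m)/(\beta \times \rho)$. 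Hence Lemma \ref{fundamentalgroup}, applied to the faithful linear representation $\beta \times \rho$, identifies the fundamental group $\pi_1((Z_\rho)_{\operatorname{reg}})$ with the quotient $G/G_{\beta \times \rho \operatorname{refl}}$, where $G_{\beta \times \rho \operatorname{refl}}$ is the preimage in $G$ of the subgroup of $(\beta \times \rho)(G)$ generated by reflections.

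The key step is then to show that $G_{\beta \times \rho \operatorname{refl}}$ is trivial. By Corollary \ref{smallsmall}, since $\beta$ is small and faithful and $\rho$ is arbitrary, the product action $\beta \times \rho$ is again small, that is, the embedded group $(\beta \times \rho)(G) \subseteq \GLG_d(K) \times \GLG_m(K)$ contains no reflection. Consequently the subgroup of $(\beta \times \rho)(G)$ generated by reflections is trivial, and its preimage in $G$ is exactly the kernel of $\beta \times \rho$, which is trivial by faithfulness. Therefore $G/G_{\beta \times \rho \operatorname{refl}} = G$, and Lemma \ref{fundamentalgroup} yields $\pi_1((Z_\rho)_{\operatorname{reg}}) \cong G$, as claimed.

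There is no genuine obstacle in this argument: the statement is a formal corollary of Lemma \ref{fundamentalgroup} together with Corollary \ref{smallsmall}, and the only point requiring care is the bookkeeping observation that ``smallness of $\beta \times \rho$'' combined with ``faithfulness'' forces the reflection subgroup to vanish. The real content lies in the cited results, most notably the covering-space computation underlying Lemma \ref{fundamentalgroup}, which uses that the complement of the free locus has real codimension $\geq 3$ (so the free locus is simply connected) and that, under smallness, this free locus maps to the regular locus of the quotient via Corollary \ref{smallsingularities}.
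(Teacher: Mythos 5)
Your proof is correct and takes exactly the route of the paper, which likewise deduces smallness of $\beta \times \rho$ from Corollary \ref{smallsmall} and then applies Lemma \ref{fundamentalgroup}; the extra bookkeeping you supply (faithfulness of $\beta \times \rho$ inherited from $\beta$, triviality of $G_{\beta \times \rho \operatorname{refl}}$ as the kernel of a faithful representation) is implicit in the paper's two-line argument and is carried out accurately.
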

\begin{proof}	
By Corollary \ref{smallsmall}, the group $(\beta \times \rho)(G)$ is small. Hence, the claim follows from Lemma \ref{fundamentalgroup}.
\end{proof}

\begin{Lemma}
\label{fundamentalregularremove}
Let $K=\C$. Let $G$ be a finite group, and let $\beta$ be a faithful $\C$-linear representation of $G$ and $\rho$ be another linear representation with quotient scheme $p:Z_\rho \rightarrow {\mathbb A}^d/\beta =X$. Then, the following holds.
\begin{enumerate}

\item 
We have $ \pi_1( Z_{\rho \operatorname{reg} }  \cap p^{-1}( X_{ \operatorname{reg} } )) = \pi_1 ( Z_{\rho \operatorname{reg} }	)$.

\item 
In the case of a small basic action $\beta$, the open subset $  Z_{\rho \operatorname{reg} }  \cap p^{-1}( X_{ \operatorname{reg} } )$ coincides with the vector bundle $Z_\rho|_U $, where $U \subseteq X$ is the image of the fixed point free locus, and its fundamental group is $G$.
\end{enumerate}
\end{Lemma}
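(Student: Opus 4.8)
The plan is to reduce both parts to the purely topological fact already exploited in the proof of Lemma \ref{fundamentalgroup}: removing from a complex manifold a closed analytic subset of real codimension $\geq 3$ does not change the fundamental group (apply \cite[Satz 4.B.2]{storchwiebe4} to a smooth stratification of the subset). Everything then comes down to a codimension estimate. First I would note that $X={\mathbb A}^d/\beta$ is normal, being the quotient of a smooth variety by a finite group, so its singular locus $X_{\operatorname{sing}}$ has complex codimension $\geq 2$. By Lemma \ref{fiberflatquotient}, $p:Z_\rho \to X$ is a fiberflat bundle, so all its fibers have the same dimension $m$; hence $\dim p^{-1}(X_{\operatorname{sing}}) = \dim X_{\operatorname{sing}} + m$, which shows that $p^{-1}(X_{\operatorname{sing}})$ has complex codimension $\geq 2$, i.e. real codimension $\geq 4$, in $Z_\rho$. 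Intersecting with the open complex manifold $Z_{\rho\operatorname{reg}}$ yields a closed analytic subset of real codimension $\geq 3$, and removing it leaves $\pi_1$ unchanged. Since the resulting open set $Z_{\rho\operatorname{reg}}\setminus p^{-1}(X_{\operatorname{sing}})$ is precisely the locus of smooth points of $Z_\rho$ lying over $X_{\operatorname{reg}}$ (the locus meant in the statement, the one carrying the morphism to $X_{\operatorname{reg}}$), this proves (1).

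For (2) I would assemble three earlier results. In the small case Corollary \ref{smallsingularities} gives $X_{\operatorname{reg}} = U$, the image of the fixed point free locus. By Corollary \ref{modulealgebralinearisomorphism}, $Z_\rho|_U = p^{-1}(U)$ is a vector bundle over the smooth base $U$, hence itself smooth, so $p^{-1}(U) \subseteq Z_{\rho\operatorname{reg}}$; consequently the locus from (1) is exactly $Z_{\rho\operatorname{reg}}\cap p^{-1}(U) = p^{-1}(U) = Z_\rho|_U$, with no proper inclusion lost. A vector bundle deformation retracts onto its zero section, so $\pi_1(Z_\rho|_U) = \pi_1(U) = \pi_1(X_{\operatorname{reg}})$. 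Finally Lemma \ref{fundamentalgroup} computes $\pi_1(X_{\operatorname{reg}}) = G/G_{\beta\operatorname{refl}}$, and smallness of $\beta$ forces $G_{\beta\operatorname{refl}} = \{e\}$, so the group is $G$ (using that $\beta$ is faithful to identify $G$ with $\beta(G)$).

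The main obstacle is the codimension bound in (1): the conclusion genuinely depends on the fiberflat property through the equidimensionality of the fibers of $p$. For a general module scheme up to normalization the fiber dimension can jump (as in Example \ref{modulealgebradimension}), and then $p^{-1}(X_{\operatorname{sing}})$ could acquire codimension $1$, so the topological removal theorem would no longer apply and the fundamental group could change; thus invoking Lemma \ref{fiberflatquotient} is essential and not merely cosmetic. The remaining ingredients—normality of $X$, the retraction of a vector bundle onto its base, and the identification $U=X_{\operatorname{reg}}$—are routine once fiberflatness is in hand, so the proof is mostly a matter of correctly organizing the cited results around this single dimension count.
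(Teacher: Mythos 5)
Your proposal is correct and essentially reproduces the paper's proof: for (1) you make the same codimension count (normality of $X$ gives $\operatorname{codim} X_{\operatorname{sing}} \geq 2$, equidimensionality of the fibers of $p$ — Remark \ref{fiberdimension}, equivalently Lemma \ref{fiberflatquotient} — transports this bound upstairs, and one concludes with the removal argument from the proof of Lemma \ref{fundamentalgroup}), and for (2) the same identifications via Corollary \ref{smallsingularities} and Corollary \ref{modulealgebralinearisomorphism}, where you also correctly read the set in the statement as $Z_{\rho \operatorname{reg}} \cap p^{-1}(X_{\operatorname{reg}})$. The one small divergence is the final step of (2): you deformation-retract the vector bundle onto its zero section and apply Lemma \ref{fundamentalgroup} to the base $U = X_{\operatorname{reg}}$, whereas the paper quotes Corollary \ref{smallfundamental} (smallness of $\beta \times \rho$, via Corollary \ref{smallsmall}) together with part (1) — both routes yield $\pi_1 = G$.
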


\begin{proof}
(1). The singular locus of $X$ has codimension $\geq 2$, as $X$ is normal, and since the fibers of $Z_\rho$ have the same dimension as noted in Remark \ref{fiberdimension}, $p^{-1} ( X_{ \operatorname{reg} } ) $ also has codimension at least $2$. Hence, $ \pi_1  (Z_{ \rho \operatorname{reg} } ) \cong \pi_1 ( Z_{  \rho \operatorname{reg} } \cap p^{-1}( X_{ \operatorname{reg} } ) ) $ by the argument applied in the proof of Lemma \ref{fundamentalgroup}.

(2). In the case of a small basic, $U$ is the regular locus of $X$ by Corollary \ref{smallsingularities}.  $ Z_\rho |_U$ is a vector bundle by Corollary \ref{modulealgebralinearisomorphism}, hence $ Z_\rho)_U = Z_{\rho \operatorname{reg} }  \cap p^{-1}( X_{ \operatorname{reg} } )$. The claim about the fundamental group follows from Corollary \ref{smallfundamental}.
\end{proof}

Recall from Section \ref{reflectionssection} the definition of $G_{\beta \operatorname{refl} }$, and in particular the relation between $G_{\beta \operatorname{refl} }$ and $  G_{\beta \times \rho \operatorname{refl} } $ (Lemma \ref{reflectionsubgrouprelation}).

\begin{Theorem}
\label{moduleschemefundamentalgroup}
Let $K=\C$. Let $G$ be a finite group, let $\beta$ be a faithful $\C$-linear representation of $G$ of dimension $d$ with quotient scheme $X $, and let $\rho$ be another linear representation of $G$ with corresponding quotient scheme $p:Z_\rho \rightarrow X$. Let
$ G_{\beta \times \rho \operatorname{refl} }  \subseteq G_{\beta \operatorname{refl} } \subseteq G$ be the subgroups generated by the reflections for these representations. Then, the natural group homomorphism 
\[  \pi_1  (Z_{  \rho \operatorname{reg} }  ) \cong     \pi_1 ( Z_{  \rho \operatorname{reg} }  \cap p^{-1}( X_{ \operatorname{reg} }  )    )   \longrightarrow \pi_1(X_{ \operatorname{reg} }  ) \]
coincides with the surjection
$G /G_{\beta \times \rho \operatorname{refl} }   \rightarrow G/   G_{\beta   \operatorname{refl} }  $.	
\end{Theorem}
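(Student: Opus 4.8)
The plan is to identify both fundamental groups by Lemma \ref{fundamentalgroup} and then to read off the induced homomorphism by lifting loops, exactly as in Remark \ref{fundamentalidentification}. Applying Lemma \ref{fundamentalgroup} to the faithful representation $\beta$ gives $\pi_1(X_{\operatorname{reg}}) \cong G/G_{\beta \operatorname{refl}}$. Since $Z_\rho = ({\mathbb A}^d \times {\mathbb A}^m)/(\beta \times \rho)$ is itself the quotient of the affine space ${\mathbb A}^{d+m}$ by the faithful representation $\beta \times \rho$, a second application of Lemma \ref{fundamentalgroup} gives $\pi_1(Z_{\rho \operatorname{reg}}) \cong G/G_{\beta \times \rho \operatorname{refl}}$; the asserted isomorphism with $\pi_1(Z_{\rho \operatorname{reg}} \setminus p^{-1}(X_{\operatorname{reg}}))$ and the fact that the displayed homomorphism is the one induced by $p$ over $X_{\operatorname{reg}}$ are exactly Lemma \ref{fundamentalregularremove}. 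By Lemma \ref{reflectionsubgrouprelation} the natural quotient map $G/G_{\beta \times \rho \operatorname{refl}} \to G/G_{\beta \operatorname{refl}}$ is well defined and surjective, so the task reduces to matching the induced $p_*$ with this map.

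First I would pass to the image $U = V/G$ of the fixed-point-free locus $V \subseteq {\mathbb A}^d$ of $\beta$, which is nonempty open dense by Lemma \ref{genericfree}. By Lemma \ref{freebasechange} the group $G$ acts freely on $V \times {\mathbb A}^m$, so $V \times {\mathbb A}^m \to (V \times {\mathbb A}^m)/G = Z_\rho|_U$ is a Galois $G$-covering by Remark \ref{freeprincipal}, and by Corollary \ref{modulealgebralinearisomorphism} the projection $q : Z_\rho|_U \to U$ is the vector bundle induced from the $G$-equivariant projection $V \times {\mathbb A}^m \to V$. Since $U$ and $Z_\rho|_U$ are Zariski-open dense in $X_{\operatorname{reg}}$ and $Z_{\rho \operatorname{reg}}$ respectively (all fibers of $Z_\rho$ have dimension $m$, so $Z_\rho|_U$ omits only a closed set), the inclusions induce surjections $\pi_1(Z_\rho|_U) \twoheadrightarrow \pi_1(Z_{\rho \operatorname{reg}})$ and $\pi_1(U) \twoheadrightarrow \pi_1(X_{\operatorname{reg}})$; hence it suffices to compute $p_*$ on classes represented by loops lying in $Z_\rho|_U$, where $p$ restricts to $q$.

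Now I would fix compatible base points: choose $Q \in V$ and $v \in {\mathbb A}^m$, set $\tilde{R} = (Q,v)$, and let $R = [\tilde{R}] \in Z_\rho|_U$, $P = [Q] \in U$, so that $p(R) = P$. Given a loop $\delta$ at $R$ inside $Z_\rho|_U$, I lift it through the $G$-covering $V \times {\mathbb A}^m \to Z_\rho|_U$ to a path from $(Q,v)$ to $g \cdot (Q,v) = (gQ, \rho(g)v)$ for a unique $g \in G$; by Remark \ref{fundamentalidentification} the class of $\delta$ in $\pi_1(Z_{\rho \operatorname{reg}})$ is $g$ reduced modulo $G_{\beta \times \rho \operatorname{refl}}$. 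Projecting this lift along the $G$-equivariant map $V \times {\mathbb A}^m \to V$ produces a lift of $p \circ \delta = q \circ \delta$ from $Q$ to $gQ$, so the same lifting description identifies the class of $p \circ \delta$ in $\pi_1(X_{\operatorname{reg}})$ with $g$ reduced modulo $G_{\beta \operatorname{refl}}$. Therefore $p_*$ sends $[g] \in G/G_{\beta \times \rho \operatorname{refl}}$ to $[g] \in G/G_{\beta \operatorname{refl}}$, which is precisely the natural surjection, and since every class arises this way, $p_*$ equals that map.

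The main obstacle is the justification, in the non-small case, that the inclusion-induced surjection $\pi_1(U) \twoheadrightarrow \pi_1(X_{\operatorname{reg}}) = G/G_{\beta \operatorname{refl}}$ (and its analogue for $Z_\rho$) really is ``lift to the free locus, read the deck transformation in $G$, reduce modulo the reflection subgroup.'' This is the content lying behind Lemma \ref{fundamentalgroup} and its reduction to the small quotient ${\mathbb A}^d/G_{\beta \operatorname{refl}}$: the loops killed in passing from $U$ to $X_{\operatorname{reg}}$ are the small loops around the images of the mirror hyperplanes, whose monodromy generates $G_{\beta \operatorname{refl}}$, and correspondingly $G_{\beta \times \rho \operatorname{refl}}$ on the $Z_\rho$ side. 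Once this compatibility is secured, the $G$-equivariance of the projection $V \times {\mathbb A}^m \to V$ forces the two liftings to agree on the nose and the identification of $p_*$ is immediate; the remaining points (surjectivity of $\pi_1$ for open dense subsets of these complex varieties, and $q = p|_{Z_\rho|_U}$) are routine.
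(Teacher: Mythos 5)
Your strategy is, at bottom, the same as the paper's (identify both groups by Lemma \ref{fundamentalgroup}, pass to loci where the quotient maps are coverings, read off $p_*$ by lifting loops through the $G$-equivariant projection $V \times {\mathbb A}^m \rightarrow V$), and your reduction to loops in $Z_\rho|_U$ via surjectivity of $\pi_1$ for dense opens is fine. But the proof as written has a genuine gap at exactly the step you yourself flag as ``the main obstacle'': your appeal to Remark \ref{fundamentalidentification} is not legitimate for the covering you use. That remark describes the identification $\pi_1(Z_{\rho \operatorname{reg}}) \cong G/G_{\beta \times \rho \operatorname{refl}}$ coming from the covering implicit in Lemma \ref{fundamentalgroup}, namely the free locus of the induced \emph{small} action of $G/G_{\beta \times \rho \operatorname{refl}}$ on ${\mathbb A}^{d+m}/G_{\beta \times \rho \operatorname{refl}}$ --- not from your covering $V \times {\mathbb A}^m \rightarrow Z_\rho|_U$. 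The claim that the composite $\pi_1(Z_\rho|_U) \twoheadrightarrow \pi_1(Z_{\rho \operatorname{reg}}) \cong G/G_{\beta \times \rho \operatorname{refl}}$ is ``$G$-monodromy of $V \times {\mathbb A}^m \rightarrow Z_\rho|_U$, reduced mod $G_{\beta \times \rho \operatorname{refl}}$'' is a compatibility between two \emph{different} coverings and must be proved; your proposed justification (the kernel is generated by mirror loops whose monodromy generates the reflection subgroup) is an Armstrong-type statement that the paper only cites in a remark and that you do not establish --- and even granting it, what is needed is not just a description of the kernel but that the specific isomorphism of Lemma \ref{fundamentalgroup} intertwines the two monodromy maps. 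The paper sidesteps this entirely by first modding out $G_{\beta \times \rho \operatorname{refl}}$, so that $\beta \times \rho$ may be assumed small: then the fixed-point-free locus $\tilde{V}$ of the product action has complement of codimension $\geq 2$, is simply connected, and $\tilde{V} \rightarrow Z_{\rho \operatorname{reg}}$ is literally the universal covering, making the monodromy identification immediate; the intermediate quotient by $H = G_{\beta \operatorname{refl}}$ then carries the same lift down to $V \rightarrow U = X_{\operatorname{reg}}$.

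The gap is fillable inside your framework by a factorization of coverings, which is the paper's reduction in miniature. Set $N = G_{\beta \times \rho \operatorname{refl}}$ and $\overline{G} = G/N$; by Lemma \ref{elementreflection} and faithfulness of $\beta$, every reflection of $\beta \times \rho$ has $\rho(g) = \operatorname{Id}$, so $N \subseteq \ker \rho$ and ${\mathbb A}^{d+m}/N$ is an affine space (Chevalley--Shephard--Todd) carrying a small action of $\overline{G}$ with free locus $\tilde{V}'$, and Lemma \ref{fundamentalgroup} identifies $\pi_1(Z_{\rho \operatorname{reg}}) \cong \overline{G}$ via $\tilde{V}' \rightarrow Z_{\rho \operatorname{reg}}$. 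Since $G$ acts freely on $V \times {\mathbb A}^m$, the induced $\overline{G}$-action on $(V \times {\mathbb A}^m)/N$ is free, so $(V \times {\mathbb A}^m)/N \subseteq \tilde{V}'$, and it is the full preimage of $Z_\rho|_U$ there (because $V \times {\mathbb A}^m$ is the full preimage of $Z_\rho|_U$ in ${\mathbb A}^{d+m}$). Hence your covering factors as $V \times {\mathbb A}^m \rightarrow (V \times {\mathbb A}^m)/N \rightarrow Z_\rho|_U$, the second map being the restriction of $\tilde{V}' \rightarrow Z_{\rho \operatorname{reg}}$; lifting a loop first to $(V \times {\mathbb A}^m)/N$ and then to $V \times {\mathbb A}^m$ shows that a loop with $G$-monodromy $g$ has class $[g] \in \overline{G}$, which is the compatibility you needed. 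The identical argument with $H = G_{\beta \operatorname{refl}}$ in the base handles $\pi_1(U) \twoheadrightarrow \pi_1(X_{\operatorname{reg}})$. With these two insertions, your equivariant-projection computation of $p_*$ goes through and recovers the theorem.
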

\begin{proof}	
By Lemma \ref{fundamentalgroup}, the fundamental groups of the regular loci coincide with the residue class groups modulo the reflection groups. The isomorphism on the left was noted in Lemma \ref{fundamentalregularremove} (1).
	
We mod out 	$G /G_{\beta \times \rho \operatorname{refl} }$ in the situation, so we may assume that the product representation $\beta \times \rho$ is small. Set $H=   G_{\beta   \operatorname{refl} } $. We have the commutative diagram
\[ \begin{matrix} & & {\mathbb A}^d   & \longleftarrow &   {\mathbb A}^d  \times {\mathbb A}^m & \supseteq & \tilde{V} & \supseteq & \hat{V} \\ & & \downarrow & & \downarrow & &\downarrow & &\downarrow\\ V & \subseteq &  {\mathbb A}^d/H \cong   {\mathbb A}^d    & \longleftarrow &   ( {\mathbb A}^d  \times {\mathbb A}^m) / H   & \supseteq & \tilde{V}/H & \supseteq & \hat{V}/H \\ \downarrow & & \downarrow & & \downarrow & &\downarrow & &\downarrow \\U  & \subseteq &  X & \longleftarrow & Z_\rho & \supseteq &  \tilde{V}/G& \supseteq & \hat{V}/G \, . \end{matrix} \]
On the left, $V$ is the fixed point free locus of the action of the small group $G/H$ on ${\mathbb A}^d$, $V \rightarrow U$ is a covering space, $V$ is simply connected and $U=X_{\operatorname{reg} }$. On the right, $\tilde{V}$ is the fixed point free locus of the product action of $G$ on $  {\mathbb A}^d  \times {\mathbb A}^m $, whose complement has codimension $\geq 2$ since $\beta \times \rho$ is small. Hence, $\tilde{V}$ has trivial fundamental group. On the right, we have covering spaces, and $\tilde{V}/G = Z_{\rho \operatorname{reg} }$. On the very right, $\hat{V}$ is such that $ \hat{V}/G = Z_{ \rho \operatorname{reg} } \cap p^{-1}( X_{ \operatorname{reg} } ) $, which does not change the fundamental groups. The horizontal morphisms map $ \hat{V}/G$ to $U$ and  $ \hat{V}/H$ to $V$.	
	
We fix a point $R \in  Z_{\rho \operatorname{reg} } \cap p^{-1}( X_{ \operatorname{reg} } ) = \hat{V}/G $ with image point $P \in U$ and we fix a point $T \in \hat{V}$ above $R$. An element $\gamma \in \pi_1 ( Z_{\rho \operatorname{reg} }  \cap p^{-1}( X_{ \operatorname{reg} }  ) )$ with starting point $R$ corresponds (see Remark \ref{fundamentalidentification}) to a lifting $\hat{\gamma}$ in $\hat{V}$ starting in $T$ and ending in $T'$, and $\gamma$ corresponds to the unique element $g \in G$ with $g(T) =T'$. Let $S$ and $S'$ be the image points of $T$ and $T'$ in $\hat{V}/H$. Then, we have the relation $[g] (S)=S'$ for $[g] \in G/H$. Let $Q$ and $Q'$ be the image points of $S$ and $S'$ in $V$. There, we also have the relation $[g] (Q)=Q'$. The corresponding relations hold for the various images of the path $\hat{\gamma}$ in $\hat{V}/H$, in $V$ and in $U$, i.e., $S'$ and $Q'$ are the endpoints of the liftings. Therefore, the image of $\gamma$ in $\pi_1(U)$ corresponds to $[g]$.
\end{proof}

\begin{Example}
\label{cyclic4reflectionfundamental}
We continue with Example \ref{cyclic4reflectionsingularities}. The fundamental group of the punctured spectrum of the quotient scheme $X={\mathbb A}^2/G \cong{\mathbb A}^2/ (G/H) $ is $G/H \cong \Z/(2) $ according to Lemma \ref{fundamentalgroup}. The singular locus of $Z_\rho$ is the line $V(X_2,W)$ mapping to a curve in $X$, and its complement $ Z_{  \rho \operatorname{reg} } $ has fundamental group $G$
according to Corollary \ref{faithfulfundamental}. The set $ Z_{  \rho \operatorname{reg} }  \cap p^{-1}( X_{ \operatorname{reg} }  ) $
is the complement of two curves meeting in a point, its fundamental group is also $G$, and the natural mapping between the fundamental groups coincides with $G \rightarrow G/H$ by Theorem \ref{moduleschemefundamentalgroup}.
\end{Example}

\begin{Lemma}
\label{smallbundleetale}
Let $K=\C$. Let $G$ be a finite group, and let $\beta$ be a faithful $\C$-linear representation of $G$ of dimension $d$ with quotient scheme $X $, where $\varphi:{\mathbb A}^d \rightarrow X$ is the quotient morphism. Let $\rho$ be another linear representation of $G$ with the corresponding quotient scheme $p:Z_\rho \rightarrow X$. Suppose that the product representation $\beta \times \rho$ is small. Let $U \subseteq X_{\operatorname{reg} }$ be an open subset
where $Z_\rho$ is a vector bundle. Then $ \varphi^{-1}(U) \rightarrow U$ is \'etale.
\end{Lemma}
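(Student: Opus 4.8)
The plan is to reduce everything to the free case. I will show that the basic action $\beta$ has trivial stabilizer at every closed point of the invariant open subset $\varphi^{-1}(U)$; this forces $\varphi^{-1}(U)$ to lie in the maximal open locus $V \subseteq \A^d$ where $\beta$ is free (which exists by Lemma \ref{genericfree} and the remark following it). Since $\varphi^{-1}(U)$ is then exactly the preimage of $U$ under the quotient morphism $V \rightarrow V/G$, which is étale by Remark \ref{freeprincipal} (as $G$ comes from a usual, hence reduced, group and everything is of finite type over $\C$), and since étaleness is preserved under restriction to an open subscheme of the target, the morphism $\varphi^{-1}(U) \rightarrow U$ will be étale.

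To prove that the stabilizers are trivial, I would fix a closed point $Q \in \varphi^{-1}(U)$ with image $P = \varphi(Q) \in U$. Because $Z_\rho|_U$ is a vector bundle of rank $m$, the fiber of $Z_\rho \rightarrow X$ over $P$ is the affine space $\A^m$, which is reduced. By Theorem \ref{nonreducedfiber}, which applies since $\C$ is algebraically closed and $G$ is nonmodular in characteristic zero, a fiber is reduced exactly when the restriction of $\rho$ to the stabilizer of a preimage point is trivial. Hence $\rho(g) = \operatorname{Id}$ for all $g \in \Stab(Q)$, that is, $\Stab(Q) \subseteq \ker \rho$.

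Next I would combine regularity with smallness. Since $U \subseteq X_{\operatorname{reg}}$, the point $P$ is regular, so Lemma \ref{reflectionsingular} gives that $\beta(\Stab(Q))$ is generated by reflections. On the other hand, take any $g \in \Stab(Q)$ with $g \neq e$; then $\beta(g) \neq \operatorname{Id}$ as $\beta$ is faithful, while $\rho(g) = \operatorname{Id}$ by the previous step. By Lemma \ref{elementreflection}, $(\beta \times \rho)(g)$ is a reflection if and only if $\beta(g)$ is a reflection. But $\beta \times \rho$ is small, so $(\beta \times \rho)(g)$ is not a reflection; therefore $\beta(g)$ is not a reflection either. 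Thus $\beta(\Stab(Q))$ is a group generated by reflections which contains no reflection, and is consequently trivial, whence $\Stab(Q)$ is trivial.

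The main obstacle is this middle passage rather than any single computation: one must correctly extract triviality of $\rho|_{\Stab(Q)}$ from reducedness of the fiber via Theorem \ref{nonreducedfiber}, and then play the two reflection conditions against each other, with regularity of $P$ forcing $\beta(\Stab(Q))$ to be a reflection group while smallness of $\beta \times \rho$ forbids $\beta$ from realizing any nontrivial stabilizer element as an actual reflection. Once the stabilizers are seen to be trivial on the invariant open set $\varphi^{-1}(U)$, the descent to étaleness through Remark \ref{freeprincipal} is routine.
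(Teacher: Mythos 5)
Your proof is correct, but it takes a genuinely different route from the paper's. The paper factors $\varphi$ as ${\mathbb A}^d \stackrel{\varphi_1}{\rightarrow} {\mathbb A}^d/G_{\beta \operatorname{refl}} \cong {\mathbb A}^d \stackrel{\varphi_2}{\rightarrow} X$ and proves étaleness over $U$ in two stages: $\varphi_2$ is étale over $X_{\operatorname{reg}}$ because the residual action is small (Corollary \ref{smallsingularities} together with Remark \ref{freeprincipal}), while for $\varphi_1$ it invokes the singularity criterion Theorem \ref{fibersingularonepoint} — for $Q$ on a mirror hyperplane, $(\beta \times \rho)(\Stab(Q))$ is nontrivial yet contains no reflection by smallness of $\beta\times\rho$, so the fiber of the pulled-back bundle over $\varphi_1(Q)$ contains a singular point, which cannot happen over $\varphi_2^{-1}(U)$ where one has a vector bundle; hence $\varphi_2^{-1}(U)$ misses the images of the mirrors. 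You bypass the factorization entirely and show directly that every $\beta$-stabilizer on $\varphi^{-1}(U)$ is trivial, replacing the singularity criterion by the ramification criterion Theorem \ref{nonreducedfiber} (the vector bundle fiber over $P$ is reduced, hence $\rho|_{\Stab(Q)}$ is trivial), combined with Lemma \ref{reflectionsingular} (regularity of $P$ forces $\beta(\Stab(Q))$ to be generated by reflections) and Lemma \ref{elementreflection}. Both arguments exploit the same reflection dichotomy coming from smallness of $\beta \times \rho$, and both terminate in free $\Rightarrow$ étale via Remark \ref{freeprincipal}; yours buys the stronger intermediate conclusion that the action is free on all of $\varphi^{-1}(U)$, so that $\varphi^{-1}(U) \rightarrow U$ is in fact a principal $G$-bundle of degree $|G|$, while the paper's two-stage argument isolates the separate roles of the reflection subgroup and the small residual quotient, matching the structure exploited later in Theorem \ref{moduleschemefundamentalgroup}. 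One step you should make explicit: the passage from trivial stabilizers at closed points to freeness in the scheme-theoretic sense used in Remark \ref{freeprincipal}, i.e., that $G \times \varphi^{-1}(U) \rightarrow \varphi^{-1}(U) \times \varphi^{-1}(U)$ is a closed embedding. Over $\C$, for the reduced finite-type scheme $\varphi^{-1}(U)$, this holds because the graph of each $g \in G$ is a closed subscheme and trivial stabilizers at closed points make these graphs pairwise disjoint; this is routine, but it is exactly where your reduction to the free case happens and deserves a sentence.
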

\begin{proof}
The mapping $\varphi$ factors as
\[  {\mathbb A}^d \stackrel{\varphi_1}{\longrightarrow} {\mathbb A}^d/G_{\beta \operatorname{refl} } \cong {\mathbb A}^d  \stackrel{\varphi_2}{\longrightarrow} X \, , \]
we show the \'etale property over $ U $ for both morphisms. The map $ \varphi_2 $ is \'etale over the regular locus of $ X $ which contains all points of codimension one according to Corollary \ref{smallsingularities}. Note that $ \varphi_2^{-1}(Z_\rho) $ is a vector bundle over $ \varphi_2^{-1}(U) $. For $\varphi_1$ and the acting group $ G_{\beta \operatorname{refl} } $, due to our assumption, the image group $ (\beta \times \rho)  (G_{\beta \operatorname{refl} }) $ contains no reflection.
Let $Q \in \bigcup_i H_i \subseteq {\mathbb A}^d$ be a point in the union of the mirror hyperplanes, hence the stabilizer group $ \Stab ( Q) $ is not trivial. Then, $ (\beta \times \rho)  ( \Stab (Q) ) $ is not a reflection group, and therefore, by Theorem \ref{fibersingularonepoint}, the fiber of $ \varphi_2^{-1}(Z_\rho) $ over $ \varphi_1(Q) $ contains a singularity. Thus,  $ \varphi_1(Q) \notin \varphi_2^{-1}(U) $ since over $\varphi_2^{-1}(U)$, we have a vector bundle. Hence $ \varphi_2^{-1}(U) $ does not meet the images of the mirror hyperplanes, and so $\varphi_1$ is \'etale on $ \varphi^{-1} (U)$.    
\end{proof}

\section{Pull-back}
\label{pullbacksection}

We want to show that we can reconstruct the linear action of $G$ on ${\mathbb A}^m$ from the quotient module scheme $Z_\rho \rightarrow X$, knowing the action of $G$ on $Y$.

\begin{definition}
Let $G$ be a finite group acting on a scheme $Y$ with quotient scheme $X$. For any scheme $Z $ over $X$, the \emph{pull-back} of $Z$ is the $Y$-scheme $Y \times_X Z$ together with the $G$-action on the first component.
\end{definition}

In particular, for the quotient module scheme $Z_\rho$ up to normalization, the pull-back is $Y \times_X Z_\rho $ together with the action of the group $G$ on the first component.

\begin{Lemma}
\label{pullbacklemma}
Let the finite group $G$ act faithfully on an affine $K$-scheme $Y=\Spec S$ by $K$-automorphisms with quotient scheme $X=Y/G=\Spec R	$, and let $\rho$ be a linear representation of $G$ with the corresponding action on $ Y \times {\mathbb A}^m $ and with quotient scheme $ Z_\rho \rightarrow X $. Then there is a natural $G$-equivariant finite surjective morphism
\[ Y \times {\mathbb A}^m \longrightarrow  Y \times_X  Z_\rho  \, .\]
\end{Lemma}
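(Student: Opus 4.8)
The plan is to translate the statement into commutative algebra, construct the morphism explicitly, and then verify the three required properties — existence/equivariance, finiteness, and surjectivity — in turn.

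First I would construct the morphism on the level of rings. Writing $B^\rho = (S[W_1, \ldots, W_m])^{\beta \times \rho}$, so that $Z_\rho = \Spec B^\rho$ and the pull-back is $Y \times_X Z_\rho = \Spec(S \tensor_R B^\rho)$, there are two evident $R$-algebra maps into $S[W_1, \ldots, W_m]$: the inclusion $S \hookrightarrow S[W_1, \ldots, W_m]$ and the inclusion $B^\rho \hookrightarrow S[W_1, \ldots, W_m]$ of the invariant subalgebra, both compatible with $R = S^G$. Since the target is commutative, the universal property of the tensor product yields the multiplication homomorphism
\[ \varphi \colon S \tensor_R B^\rho \longrightarrow S[W_1, \ldots, W_m], \quad s \tensor b \longmapsto sb, \]
whose associated morphism of spectra is the desired $\Phi \colon Y \times {\mathbb A}^m \to Y \times_X Z_\rho$. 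Equivalently, $\Phi$ is obtained from the universal property of the fiber product applied to the projection $Y \times {\mathbb A}^m \to Y$ and the quotient morphism $\pi \colon Y \times {\mathbb A}^m \to Z_\rho$, which are compatible over $X$ because both composites to $X$ send a point $(y,v)$ to the orbit of $y$.

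Equivariance is then a direct check: letting $G$ act on $S \tensor_R B^\rho$ through the first factor only (well defined since $R$ is fixed) and on $S[W_1, \ldots, W_m]$ via $\beta \times \rho$, invariance of $b \in B^\rho$ gives $(sb)g = (sg)(bg) = (sg) b$, so $\varphi$ intertwines the two actions. Hence $\Phi$ is $G$-equivariant for the action of $G$ on the first component of the pull-back.

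Finally I would establish finiteness and then surjectivity, the latter being the main obstacle. For finiteness, the image of $\varphi$ contains $B^\rho$ and hence $K[W_1, \ldots, W_m]^\rho$; since the monic polynomial $\prod_{g \in G} (T - W_i g)$ has $G$-invariant coefficients, each $W_i$ is integral over $K[W_1, \ldots, W_m]^\rho$ and therefore over the image. As $S[W_1, \ldots, W_m]$ is generated as an algebra over the image by the finitely many $W_i$, it is a finite module over $S \tensor_R B^\rho$, so $\Phi$ is finite. Surjectivity is more delicate, because $\varphi$ need not be injective — the tensor product can fail to be reduced or normal, cf. Example \ref{cycliconeproduct} — so one cannot simply invoke lying-over. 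Instead I would argue on geometric points, where $\Phi$ sends $(y,v)$ to $(y, \pi(y,v))$. Given a geometric point $(y,z)$ of $Y \times_X Z_\rho$ with $q(y) = p(z)$, surjectivity of the finite quotient $\pi$ produces some $(y',v)$ with $\pi(y',v) = z$, and then $q(y') = p(z) = q(y)$ forces $y'$ and $y$ into the same $G$-orbit, the geometric fibers of $q$ being exactly the orbits. Choosing $g \in G$ with $g y' = y$ and replacing $(y',v)$ by $g \cdot (y',v) = (y, gv)$, which still maps to $z$ under the $G$-invariant $\pi$, gives $\Phi(y, gv) = (y,z)$. Since $\Phi$ is finite, hence closed, surjectivity on geometric points upgrades to surjectivity, completing the proof.
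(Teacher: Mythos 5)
Your proof is correct and follows essentially the same route as the paper's: the morphism via the universal property of the fiber product (equivalently the multiplication map $S \tensor_R B^\rho \to S[W_1,\ldots,W_m]$), equivariance by direct computation, finiteness from the integral equations of the $W_i$ over $K[W_1,\ldots,W_m]^\rho \subseteq B^\rho$, and surjectivity by adjusting a preimage with a group element using that fibers of the quotient are orbits. Your explicit monic polynomial $\prod_{g\in G}(T - W_i g)$ and the remark that non-injectivity of $\varphi$ is why lying-over alone does not give surjectivity are welcome clarifications, but they do not change the argument.
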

\begin{proof}
The natural projection $  Y \times {\mathbb A}^m  \rightarrow Y  $ and the quotient morphism
$  Y \times {\mathbb A}^m  \rightarrow Z_\rho  $ are compatible with the morphisms to $X$. By the universal property of the product, this gives the morphism. A point $(y,v)$ is sent to $ (y, [ (y,v) ])$. For $g \in G$, the element $g(y,v)=(g(y),g(v))$ is sent to $ (g(y), [ ( g(y),g(v)) ])= (g(y), [(y,v)])$; hence, the morphism is $G$-equivariant.
	
Surjectivity. The map sends $ (y,v) $ to $ (y, [(y,v)]) $. If $ (y, [ (\tilde{y} ,v)]) $ is given, then $y$ and $ \tilde{y} $ are $\beta$-conjugate, say $ \tilde{y} =gy $ with some $ g \in G $. Then, the element is the same as $( y, [(   y , g^{-1} (v)])  $, and  $(y, g^{-1} ( v )) $ is a preimage.
		
To see finite, we consider the corresponding ring homomorphism
\[ S \tensor_R B^\rho \rightarrow S[W_1, \ldots , W_m] \, . \]	The variables $W_j$  fulfil an integral equation over $K[W_1, \ldots, W_m]^\rho$, hence also over $B^\rho$.
\end{proof}

\begin{Lemma}
\label{pullbackfree}
Let the finite group $G$ act freely on a $K$-scheme $Y$ by $K$-automorphisms with quotient scheme $X=Y/G$, and let $\rho$ be a linear representation of $G$ with the corresponding action on $ Y \times {\mathbb A}^m $ and with quotient scheme $ Z_\rho \rightarrow X $. Then, the natural $G$-equivariant morphism
\[ Y \times {\mathbb A}^m \longrightarrow  Y \times_X  Z_\rho  \, \]
from Lemma \ref{pullbacklemma} is an isomorphism. 
\end{Lemma}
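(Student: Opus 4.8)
The plan is to reduce the assertion to a statement about modules and then invoke faithfully flat descent along the $G$-torsor $Y \to X$. Since being an isomorphism is local on the target, I may assume $Y = \Spec S$ is affine (covering $X$ by affines and $Y$ by invariant affine opens), so that the morphism of Lemma \ref{pullbacklemma} is $\Spec$ of the multiplication map
\[ \phi: S \tensor_R B^\rho \longrightarrow S[W_1, \ldots, W_m], \quad s \tensor b \longmapsto sb, \]
where $R = S^G$ and $B^\rho = (S[W_1,\ldots,W_m])^{\beta\times\rho}$. By Lemma \ref{freebasechange} the action on $Y \times {\mathbb A}^m$ is free as well, and by Remark \ref{freeprincipal} the morphism $Y \to X$ is faithfully flat with $S \tensor_R S \cong K^{|G|} \tensor S$. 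Thus it suffices to prove that $\phi$ is an isomorphism.

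Next I would identify $\phi$ as the symmetric algebra of a single module map. By Lemma \ref{modulealgebrafree}, freeness gives $B^\rho \cong \Sym_R(M)$ with $M = N^\rho$ and $N = S \tensor_K V \cong S^m$, so that
\[ S \tensor_R B^\rho \cong \Sym_S(S \tensor_R M), \qquad S[W_1,\ldots,W_m] \cong \Sym_S(N). \]
Since $B^\rho$ is generated over $R$ in degree one, $S \tensor_R B^\rho$ is generated over $S$ by $S \tensor_R M$, and $\phi$ is the $S$-algebra homomorphism $\Sym_S(\theta)$ induced by the canonical $S$-linear map
\[ \theta: S \tensor_R M \longrightarrow N, \quad s \tensor w \longmapsto sw . \]
Hence $\phi$ is an isomorphism if and only if $\theta$ is.

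The remaining point — and the crux of the argument — is that $\theta$ is an isomorphism. This is faithfully flat descent for the principal $G$-bundle $\Spec S \to \Spec R$ (Remark \ref{freeprincipal}, Remark \ref{flatdescent}): the free $S$-module $N = S \tensor_K V$ carries the compatible semilinear $G$-action $s \tensor v \mapsto sg \tensor vg$ of Lemma \ref{modulealgebra}, which along the isomorphism $S \tensor_R S \cong K^{|G|} \tensor S$ is exactly a descent datum; its module of invariants is $N^G = M$, and $\theta$ is the counit $S \tensor_R N^G \to N$ of the descent equivalence, hence an isomorphism. Concretely, one may instead verify this after the faithfully flat base change $-\tensor_R S$: using $S \tensor_R S \cong K^{|G|}\tensor S$, the semilinear action on $S \tensor_R N$ is carried to the permutation action on $K^{|G|}\tensor(\text{a free module})$, whose invariants trivialize back to $N$, and $\theta \tensor_R S$ becomes the identity — this is precisely the trivialization $\psi$ computed in the proof of Lemma \ref{modulealgebrafree}.

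I expect the main obstacle to be this last descent step: one must keep the two group actions (the $\beta$-action on $S$ and the $\beta\times\rho$-action on $N$) cleanly apart and check that the semilinear action on $N$ really is effective descent data with $\theta$ as its counit. The bookkeeping with the isomorphism $S \tensor_R S \cong K^{|G|}\tensor S$ is where all the care is needed, everything else being formal.
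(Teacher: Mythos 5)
Your proof is correct and takes essentially the same route as the paper, whose entire proof reads ``This is a special case of faithful flat descent'': the $G$-equivariant algebra $S[W_1, \ldots, W_m]$, with its semilinear action compatible with the free action on $S$, is a descent datum along the torsor $Y \rightarrow X$ (Remark \ref{freeprincipal}, Remark \ref{flatdescent}), and the comparison map $S \tensor_R B^\rho \rightarrow S[W_1, \ldots, W_m]$ is the resulting descent isomorphism. Your detour through Lemma \ref{modulealgebrafree} and the reduction to the degree-one map $\theta$ via $\Sym_S(\theta)$ is sound but avoidable, since effectivity of faithfully flat descent for affine schemes applied directly to $Y \times {\mathbb A}^m$ already yields the isomorphism in one step.
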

\begin{proof}
This is a special case of faithful flat descent.
\end{proof}

\begin{Theorem}
\label{pullback}
Let the finite group $G$ act faithfully on a normal affine $K$-scheme $Y$ by $K$-automorphisms with quotient scheme $X=Y/G$, and let $\rho$ be a linear representation of $G$ with the corresponding action on $ Y \times {\mathbb A}^m $ and with quotient scheme $ Z_\rho \rightarrow X $. Then, the natural $G$-equivariant finite surjective morphism
\[ Y \times {\mathbb A}^m \longrightarrow  Y \times_X  Z_\rho  \, \]
from Lemma \ref{pullbacklemma} is the normalization. It is an isomorphism over the points $y$ where the action is free. 
\end{Theorem}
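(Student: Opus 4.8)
The plan is to identify the morphism by the universal characterization of the normalization: a finite birational morphism from a normal scheme onto (the reduction of) the target is the normalization. Write $Y=\Spec S$ with $S$ a normal domain (the general case reduces to this by passing to connected components), so that $Y \times {\mathbb A}^m = \Spec S[W_1, \ldots, W_m]$ is again a normal domain, since normality of a domain is preserved under adjoining polynomial variables. By Lemma \ref{pullbacklemma} the morphism is finite and surjective, so it remains to check that it is birational onto the reduced target and then invoke the characterization.

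First I would extract the birational behaviour from Lemma \ref{pullbackfree}. Let $V \subseteq Y$ be the dense, $G$-invariant, saturated open locus where the action is free, which is nonempty by Lemma \ref{genericfree}, and let $U \subseteq X$ be its image. Forming the product, the quotient and the morphism of Lemma \ref{pullbacklemma} all commute with restriction to $V$ (invariant rings localize, base change and products restrict to opens), so over $V$ the morphism is an isomorphism by Lemma \ref{pullbackfree}; this is exactly the final sentence of the theorem. Moreover, since $Y \times {\mathbb A}^m$ is irreducible and the morphism is surjective, the target $Y \times_X Z_\rho$ is irreducible as a space, hence its reduction $(Y \times_X Z_\rho)_{\mathrm{red}}$ is integral.

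Next I would push finiteness and injectivity through the reduction. On rings, put $A = S \otimes_R B^\rho$ and $C = S[W_1, \ldots, W_m]$; the finite map $A \to C$ factors as $A \twoheadrightarrow A_{\mathrm{red}} \to C$ because $C$ is reduced, and $C$ is still module-finite over $A_{\mathrm{red}}$ as its $A$-module structure factors through $A_{\mathrm{red}}$. Because $A_{\mathrm{red}}$ is a domain and $\Spec C \to \Spec A_{\mathrm{red}}$ is dominant, the map $A_{\mathrm{red}} \hookrightarrow C$ is injective, and the isomorphism over $V$ gives $Q(A_{\mathrm{red}}) = Q(C)$. Thus $C$ is a normal domain, finite and birational over the domain $A_{\mathrm{red}}$, hence equal to the integral closure of $A_{\mathrm{red}}$ in $Q(A_{\mathrm{red}})$; that is, $Y \times {\mathbb A}^m \to Y \times_X Z_\rho$ is the normalization in the sense fixed before Theorem \ref{normalproductcompatibility} (reduce, then normalize).

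The main obstacle I expect is precisely the potential non-reducedness of $Y \times_X Z_\rho$: as $S$ need not be flat over $R$, the tensor product $S \otimes_R B^\rho$ may carry nilpotents (compare Example \ref{pullbacknilpotent}), so the finite--birational argument must be routed through $A_{\mathrm{red}}$, and one must confirm that both module-finiteness and injectivity survive this passage. The remaining ingredients --- normality of the polynomial extension and density of the free locus --- are routine given the earlier results.
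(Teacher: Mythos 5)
Your proof is correct and follows essentially the same route as the paper's: finiteness and surjectivity from Lemma \ref{pullbacklemma}, irreducibility of the target from surjectivity, birationality via Lemma \ref{pullbackfree} over the nonempty free locus (Lemma \ref{genericfree}), and the characterization of the normalization as a finite birational morphism from a normal scheme onto the integral reduction. Your explicit ring-level handling of possible nilpotents in $S \otimes_R B^\rho$ (factoring through $A_{\mathrm{red}}$) is a welcome elaboration of what the paper leaves implicit via its stated convention that normalization means first reducing and then taking the integral closure.
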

\begin{proof}
By Lemma \ref{pullbacklemma}, the morphism is finite and surjective. By the surjectivity of the morphism, it follows that $ Y \times_X  Z_\rho $ is irreducible, hence its reduction is integral. Let $V \subseteq Y$ be the open subset where the action is free with image set $U$. Then, by Lemma \ref{pullbackfree}, the induced morphism $V \times {\mathbb A}^m \longrightarrow ( Y \times_X  Z_\rho )|_V \cong V \times_U (  Z_\rho)|_U $ is an isomorphism.	By Lemma \ref{genericfree}, $V$ is not empty; hence, the morphism is a birational finite morphism between a normal scheme and an integral scheme, so it is the normalization.	
\end{proof}

\begin{Remark}
If the basic action is not faithful, then Theorem \ref{pullback} does not hold. If, looking at the extreme case, $G$ acts trivially on  a point with quotient scheme the point itself, then $Z_\rho ={\mathbb A}^m/\rho $ is the quotient scheme over the base point, but pulling it back does not give ${\mathbb A}^m$ back. The natural morphism ${\mathbb A}^m \rightarrow {\mathbb A}^m/\rho$ will not be birational.
\end{Remark}

\begin{Remark}
On the algebra level, the relevant ring homomorphisms are	
\[ \begin{matrix} & & S[ W_1 ,\ldots, W_m] \\	& & \uparrow \\
S & \rightarrow & S \tensor_{S^\beta}  S[  W_1, \ldots , W_m]^{\beta \times \rho}   \\ 	\uparrow & & \uparrow  \\	
 S^\beta	& \rightarrow &   S[ W_1 ,\ldots , W_m]^{\beta \times \rho} \, .  \end{matrix}  \]
\end{Remark}

\begin{example}
\label{cycliconespecialpullback}
We consider Theorem \ref{pullback} in Example \ref{cycliconespecial}, the relevant rings are $S=  K[X]$, $R=K[X^k]=K[A]$, and $\rho$ acts on $K[X,W]$ by weight $(1,-1)$, yielding the invariant algebra $ B^\rho = K[A,B,C]/(AB-C^k) $. The tensor product is
\[ K[X]   \tensor_{K[A]}  K[A,B,C]/(AB-C^k)   \cong  K[X,B,C]/(X^kB-C^k)    \] with the natural ring homomorphism 
\[  K[X,B,C]/(X^kB-C^k)   \longrightarrow K[X,W], \, B \mapsto W^k,\, C \mapsto XW \, .   \]
This is the normalization, as $W=C/X$ fulfils an integral equation. The points of the form $(0, \zeta w$) for $ \zeta$, a $k$th root of unity are mapped to the same point; hence, the morphism on the spectra is not injective everywhere.
\end{example}

\begin{example}
\label{cycliconespecialpullbacknomatch}
We continue with Example \ref{cycliconespecialpullback}, but now we pull-back the  invariant algebra $K[A] \subseteq K[A,B,C]/(AB-C^n)$ along the  ring homomorphism $K[A] \subseteq K[X]/(X^k - A)$. So we look at the same basic action (determined by $k$) as before, but now we look at a module scheme up to normalization on $\Spec K[A]$, which comes from a different basic action. The tensor product is
\[ K[X]  \tensor_{K[A]}  K[A,B,C]/(AB-C^n)  \cong  K[X,B,C]/(X^k B-C^n
)   \, . \]
This ring is also not normal, but also its normalization is not nice over $K[X]$.	For $k=3$ and $n=2$, we have the integral equation $XB= \frac{C^2}{X^2} =D^2$, and the ring $K[X,B,C,D]/( D^2X^2-C^2 ,XB-D^2)$ is its normalization. As a $K[X]$-algebra, the fiber ring over $X=0$ is still not reduced.
\end{example}

In the situation of Theorem \ref{pullback}, it is indeed necessary to use the reduction, as the following example shows.

\begin{example}
\label{pullbacknilpotent}
We look at Example \ref{toric11.1}, the invariant ring is $K[X^2,Y^2,XY] $, describing the $A_1$-singularity, and the invariant algebra is
\[T= K[X^2,Y^2,XY, W^2,XW,YW] = K[A,B,C,D,E,F]/( \text{relations}) \, , \]
which is the Veronese algebra in $3$ variables of degree $2$. The tensor product of this algebra along $K[X^2,Y^2,XY] \subseteq K[X,Y]$ is
\[ K[X,Y][D,E,F]/( E^2-X^2D, F^2-Y^2D, EF-DXY, XYE-X^2F, XYF- Y^2E )  \, , \]
which describes the pull-back $ {\mathbb A}^2 \times_X  Z_\rho$ in Theorem \ref{pullback}. The element $YE-XF$ is not $0$ in this algebra, but its square is
\[  (YE-XF)^2=  Y^2E^2 +X^2F^2  -2 YEXF= Y^2E^2 +X^2F^2  -Y^2E^2 -X^2F^2 =0 \, .\]
The reduction of the pull-back algebra is
\[ K[X,Y][D,E,F]/(E^2-X^2D, F^2-Y^2D, EF-DXY, YE-XF )  \, , \]
its normalization is $K[X,Y,W]$ with $W= E/X=F/Y $.
\end{example}

\begin{Remark}
If the group $G$ acts on a normal affine scheme $Y$ such that the action is free on an open invariant subset $V$ whose complement has codimension $\geq 2$ (e.g., if $G$ acts in a small way on ${\mathbb A}^d$) and $U \subseteq X$ denotes its image, then one can reconstruct the action on $Y \times {\mathbb A}^m$ from the restriction $(Z_\rho)|_U$, which is a vector bundle by Corollary \ref{modulealgebraisomorphism}. This is a special case of Lemma \ref{pullbackfree} that one gets back $ V \times {\mathbb A}^m $ together with the action. This action can be uniquely extended to an action on $ Y \times {\mathbb A}^m $ due to the codimension condition.
\end{Remark}

Remark \ref{notreducednoreconstruction} has already shown that one cannot reconstruct the original linear action from the quotient without any condition.

\begin{example}
\label{kleinactionhyperplanepullback}
We look at the pull-back in the two cases of Example \ref{kleinactionhyperplane}. The ring homomorphism from Lemma \ref{pullbacklemma} in the first case is
\[ K[X,Y,C,D]/(XY,  D^2-X^2C )  \longrightarrow  K[X,Y]/(XY) [W] \, ,  C \mapsto W^2, D \mapsto XW \, ,\]
and in the second case it is
\[ K[X,Y,C,D]/(XY,  D^2   )  \longrightarrow   K[X,Y]/(XY) [W] \, ,  C \mapsto W^2, D \mapsto 0 \, .\]
It seems to be quite difficult to describe an algebraic construction that yields from the left the right-hand side (even if we remember the group action on the left). The normalization separates the two closed subschemes $V(X)$ and $V(Y)$, so this changes too much.
\end{example}

\section{Correspondence}
\label{correspondencesection}

Let $G$ be a finite group acting on a $K$-scheme $Y$ by $K$-scheme automorphisms and let $\rho$ be a $K$-linear representation of $G$. This gives rise to a $G$-equivariant action of $G$ on the trivial vector bundle $Y \times {\mathbb A}^m \rightarrow Y $, where the action on the second component is linear. This construction may come with a loss of information. On the $Y$-side, it might be possible that two such actions of $G$ on $Y \times {\mathbb A}^m$ are equivalent as $G$-equivariant bundles over $Y$, although the representations are not equivalent, as the following examples show.

\begin{Example}
We consider the action of $ G= \Z/(k)$ on $S=K[X,X^{-1}]$ by $X \mapsto \zeta X$, i.e., the situation of Lemma \ref{cyclicone}, but restricted to the punctured affine line, where the action is free. A representation of $G$ on $K$, where $\zeta$ acts by multiplication with $ \zeta^\ell $, gives the product action on $K[X,X^{-1},W]$. However, this action can be trivialized over $S$, through the $G$-equivariant ring homomorphism $ K[X,X^{-1},W] \rightarrow  K[X,X^{-1},U]$, $W \mapsto X^ \ell U$ (trivial action on $U$).
\end{Example}

\begin{Example}
If $G$ acts as a reflection group on ${\mathbb A}^d$, and $Y = {\mathbb A}^d \setminus \bigcup H_i$, where $H_i$ denotes the mirror hyperplanes, then for any representation $\rho$, the induced action of $G$ on $Y \times  {\mathbb A}^m    $ can be trivialized. The action on $Y$ is free by Corollary \ref{reflectiongroupsubgroup}; hence, we can apply for $Y \rightarrow Y/G$ faithful flat descent. As the quotient schemes $Y \times  {\mathbb A}^m / \beta \times \rho   $ over $Y/G$ are trivial (by Lemma \ref{modulealgebrafree} and since the invariant modules are trivial), it follows that the action is also trivial.
\end{Example}

\begin{Example}
Let $Y=G$ (a discrete scheme) and let $G$ act naturally on $G$. Then, every linear representation on $G \times {\mathbb A}^m$ can be trivialized by the map
\[ G \times {\mathbb A}^m  \longrightarrow  G \times {\mathbb A}^m , (g,v) \mapsto  (g, g^{-1}(v)) \, .\]
Here, $G$ acts on the left in both components, and on the right only on $G$. The morphism is $G$-equivariant, as $(hg,hv)$ is sent to $(hg, (hg)^{-1} (hv))=(hg, g^{-1}(v)) $.
\end{Example}

For a  $K$-point $Q \in Y$ fixed by the action of $G$, there is a natural candidate to reconstruct the linear representation, namely by the induced action of $G$ on the fiber $( Y \times {\mathbb A}^m)_Q \cong {\mathbb A}^m_K = {\mathbb A}^m_Q $.

\begin{Lemma}
\label{manyfixedpoints}
Let $G$ be a finite group, acting faithfully via $\beta$ through $K$-scheme automorphisms on a $K$-scheme $Y$, and let $Q,P \in Y$ be $K$-points that are fixed by the action. Let an action of $G$ on $Y \times  {\mathbb A}^m$ be given, compatible with the basic action and linear in the second component. Suppose that there is a $G$-equivariant morphism of vector bundles  $ \psi: Y \times {\mathbb A}^m \rightarrow  Y \times {\mathbb A}^m  $ over a $G$-equivariant morphism $\varphi:Y \rightarrow Y$ sending $P$ to $Q$. Then, the action of $G$ on the fiber $ {\mathbb A}^m_P $ and the action of $G$ on the fiber $ {\mathbb A}^m_Q $
are equivalent.
\end{Lemma}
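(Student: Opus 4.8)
The plan is to extract the linear representation on each fiber by restricting the bundle morphism $\psi$ to the fibers over the fixed points, and then to use the $G$-equivariance together with the hypothesis $\varphi(P)=Q$ to produce a $G$-equivariant linear isomorphism between the two fiber representations. First I would observe that since $P$ and $Q$ are fixed by the action of $G$ on $Y$, the group $G$ genuinely acts on each fiber ${\mathbb A}^m_P$ and ${\mathbb A}^m_Q$: the compatibility of the $G$-action on $Y \times {\mathbb A}^m$ with the basic action means that for $g \in G$, the automorphism of $Y \times {\mathbb A}^m$ sends the fiber over $P$ to the fiber over $g(P)=P$ (and likewise for $Q$), and linearity in the second component guarantees these induced self-maps of ${\mathbb A}^m_P$ and ${\mathbb A}^m_Q$ are linear. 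Thus we obtain two honest $m$-dimensional linear representations of $G$, which I will call $\rho_P$ and $\rho_Q$.

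Next I would restrict the bundle morphism $\psi: Y \times {\mathbb A}^m \to Y \times {\mathbb A}^m$ to the fiber over $P$. Because $\psi$ lies over $\varphi$ and $\varphi(P)=Q$, this restriction is a linear map
\[ \psi_P : {\mathbb A}^m_P \longrightarrow {\mathbb A}^m_Q \, , \]
as $\psi$ is a morphism of vector bundles (linear on fibers). The key point is then to verify that $\psi_P$ intertwines $\rho_P$ and $\rho_Q$: for each $g \in G$, the $G$-equivariance of $\psi$ over the $G$-equivariance of $\varphi$ gives a commutative square relating the action of $g$ on the two bundles, and restricting this square to the fiber over $P$ (using again $\varphi(P)=Q$ and that $g(P)=P$, $g(Q)=Q$) yields exactly $\psi_P \circ \rho_P(g) = \rho_Q(g) \circ \psi_P$. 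This is the statement that $\psi_P$ is a morphism of representations.

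The remaining step is to upgrade $\psi_P$ to an isomorphism, which is where the main obstacle lies. A morphism of vector bundles need not be an isomorphism on fibers, so a priori $\psi_P$ could be degenerate. The natural way around this is to exploit the structure available: if $\psi$ is itself an isomorphism of vector bundles (which is the intended reading of ``equivalent as $G$-equivariant bundles'', in line with the trivialization examples preceding the lemma), then $\psi_P$ is automatically a linear isomorphism, and an equivariant isomorphism of representations is precisely an equivalence, giving $\rho_P \cong \rho_Q$. I would therefore state clearly that $\psi$ is assumed to be an equivariant bundle isomorphism (or invoke whatever precise hypothesis the statement intends), restrict its inverse to obtain $\psi_Q$ as a two-sided inverse of $\psi_P$, and conclude. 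I expect verifying the intertwining identity from the equivariance diagram to be routine, while pinning down exactly what ``morphism of vector bundles'' must entail for $\psi_P$ to be invertible is the delicate bookkeeping point to get right.
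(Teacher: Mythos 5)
Your proposal is correct and follows essentially the same route as the paper: restrict $\psi$ to the fiber over $P$ to obtain a linear map $\tilde{\psi}\colon {\mathbb A}^m_P \to {\mathbb A}^m_Q$ and deduce the intertwining relation $\tilde{\psi}(g(v)) = g(\tilde{\psi}(v))$ from the $G$-equivariance of $\psi$. The only divergence is that the paper's proof simply asserts that $\tilde{\psi}$ is a linear isomorphism without comment, so your explicit flagging that this requires reading the hypothesis as a fiberwise isomorphism (which is how the lemma is actually used in Lemma \ref{correspondencerepresentation}, where $\psi$ realizes an equivalence of actions) is a careful clarification of the same argument rather than a different method.
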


\begin{proof}
We have a commutative diagram
\[ \begin{matrix} Y \times {\mathbb A}^m & \stackrel{\psi}{\longrightarrow } &    Y \times {\mathbb A}^m  \\ \downarrow & & \downarrow \\
 Y & \stackrel{\varphi}{\longrightarrow} & Y  \end{matrix}\]
of $G$-equivariant morphisms where $\psi$ is also linear in the second component. The morphism $\psi $ induces a linear isomorphism $ \tilde{\psi}: {\mathbb A}^m_P  \rightarrow {\mathbb A}^m_Q $. We have $ \tilde{\psi} (g(v)) = g( \tilde{\psi} (v))$ for all $g \in G$ and $v \in {\mathbb A}^m_P $ by the equi\-variance of $\psi$. Hence, the representations induced on $  {\mathbb A}^m_P$ and $ {\mathbb A}^m_Q $ are equivalent via $\tilde{\psi}$.
\end{proof}

\begin{Lemma}
\label{correspondencerepresentation}
Let $G$ be a finite group, acting faithfully via $\beta$ through $K$-scheme automorphisms on a $K$-scheme $Y$ with a unique fixed $K-$point. There is a correspondence between
\begin{enumerate}
\item Linear representations of $G$ of dimension $m$.
	
\item Product actions of $G$ on $Y \times {\mathbb A}^m$, linear in the second component.

\end{enumerate}	
\end{Lemma}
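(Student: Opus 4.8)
The plan is to construct two mutually inverse maps and to check that the backward one is insensitive to the choice of trivialization. In the forward direction I send a representation $\rho\colon G\to\GLG_m(K)$ to the product action $\beta\times\rho$ on $Y\times{\mathbb A}^m$, given by $g\cdot(y,v)=(\beta(g)y,\rho(g)v)$, which is compatible with $\beta$ and linear in the second component, hence a legitimate element of (2). In the backward direction, given a product action $\alpha$ compatible with $\beta$ and linear in the second component, I restrict $\alpha$ to the fibre over the unique fixed $K$-point $Q\in Y$: since $Q$ is $G$-fixed, the closed fibre $(Y\times{\mathbb A}^m)_Q\cong{\mathbb A}^m_Q$ is $G$-stable, and the induced action there is linear by hypothesis, so it defines a representation $\rho_Q\colon G\to\GLG_m(K)$. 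Setting $\Psi(\alpha)=\rho_Q$ is my candidate inverse.

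The two composites are then immediate. Evaluating $\beta\times\rho$ on the fixed fibre gives $g\cdot(Q,v)=(\beta(g)Q,\rho(g)v)=(Q,\rho(g)v)$, so $\Psi(\beta\times\rho)=\rho$; conversely, a product action that is linear in the second component is itself of the form $\beta\times\sigma$ for a single representation $\sigma$ of $G$ on ${\mathbb A}^m$, and reading off its fixed fibre recovers $\sigma$, so applying $\Phi$ to $\Psi(\alpha)$ reproduces $\alpha$. Thus, at the level of actual actions, $\Phi$ and $\Psi$ are literally inverse to each other. The faithfulness of $\beta$ enters only to identify $G$ with its image, so that $\rho$ and $\rho_Q$ are genuinely representations of the same acting group.

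The step that carries the real content, and the only place the hypotheses are genuinely needed, is verifying that $\Psi$ is an invariant of the equivalence of product actions as $G$-equivariant bundles over $Y$; this is exactly what rules out the loss of information exhibited in the three preceding examples. If $\psi$ is a $G$-equivariant linear bundle isomorphism over a $G$-equivariant morphism $\varphi\colon Y\to Y$, then $\varphi$ maps the set of fixed $K$-points into itself, so by \emph{uniqueness} of the fixed point $\varphi(Q)=Q$, and Lemma~\ref{manyfixedpoints} applied with $P=Q$ yields that the two fixed-fibre representations are equivalent. Hence $\rho_Q$ depends only on the equivalence class, $\Phi$ is injective up to equivalence, and the correspondence is faithful. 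I expect the main obstacle here to be expository rather than mathematical: pinning down precisely which equivalence is intended on side (2) so that Lemma~\ref{manyfixedpoints} applies verbatim, since all the geometric input is already contained in that lemma together with the uniqueness of the fixed $K$-point.
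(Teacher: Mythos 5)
Your proposal is correct and follows essentially the same route as the paper's own proof: the forward map extends $\rho$ to the product action, the backward map restricts to the fibre ${\mathbb A}^m_Q$ over the unique fixed $K$-point, the key observation that any $G$-equivariant morphism $\varphi\colon Y\to Y$ must satisfy $\varphi(Q)=Q$ is used identically, and the equivalence of the induced fibre representations is delegated to Lemma~\ref{manyfixedpoints} exactly as in the paper. The only difference is expository: you write out the two composites explicitly, which the paper compresses into the single sentence that the two constructions are inverse to each other.
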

\begin{proof}
The linear action of $G$ on $K^m$ extends to a product action of $G$ on $Y \times {\mathbb A}^m$, which is linear in the second component. It is clear that equivalent representations yield equivalent equivariant $G$-actions on $Y \times  {\mathbb A}^m$ (over $Y$). Let $Q$ be the unique fixed point of the action. Then, a linear product action induces an action of $G$ on the fiber $ {\mathbb A}^m_Q $ and on its $K$-points ${\mathbb A}^m_Q (K) \cong K^m $. Suppose that we have two linear product actions and that we have a $G$-equivariant diagram
\[ \begin{matrix}    Y \times {\mathbb A}^m & \stackrel{\psi}{\longrightarrow } &    Y \times {\mathbb A}^m   \\ \downarrow & & \downarrow \\
	Y & \stackrel{\varphi}{\longrightarrow} & Y  \end{matrix}\]
between these actions, where $\psi$ is linear in the second component. Then $\varphi(Q)=Q$, as this is the only fixed point, and the argument of Lemma \ref{manyfixedpoints} shows that the induced representations are the same. The two constructions are inverse to each other.
\end{proof}

By Lemma \ref{manyfixedpoints}, one could also have many fixed points if they are always related by a $G$-automorphism of $Y$.

\begin{Corollary}
\label{correspondence}
Let $G$ be a finite group, acting faithfully via $\beta$ through $K$-algebra automorphisms on a normal affine $K$-scheme $Y$ with a unique fixed $K-$point, and let $X=Y/ \beta $. There is a correspondence between
\begin{enumerate}
\item Linear representations of $G$ of dimension $m$.
	
\item Product actions of $G$ on $Y \times {\mathbb A}^m$, linear in the second component.
	
\item Quotient schemes (module schemes up to normalization) of the form $  (Y \times {\mathbb A}^m) / \beta \times \rho \rightarrow X $.
\end{enumerate}
\end{Corollary}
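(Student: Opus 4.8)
The plan is to reduce the statement to the two constructions already at hand: the quotient construction $\rho \mapsto Z_\rho = (Y \times {\mathbb A}^m)/\beta \times \rho$ and the pull-back construction $Z \mapsto Y \times_X Z$, and to use Theorem \ref{pullback} to show that, after normalization, they are mutually inverse. The equivalence between (1) and (2) is exactly Lemma \ref{correspondencerepresentation}, so the only new content is the equivalence between (2) and (3).

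First I would set up the two maps. Given a product action as in (2), Corollary \ref{Moduleschemequotient} produces the quotient scheme $Z_\rho \rightarrow X$ together with its structure of a module scheme up to normalization; this defines a map $\Phi$ from (2) to (3), and $\Phi$ is surjective onto (3) by the very definition of the objects in (3). Conversely, given a quotient scheme $Z \rightarrow X$ as in (3), I would form the pull-back $Y \times_X Z$ with the $G$-action on the first factor, pass to its reduction and normalization, and equip the result with the induced $G$-action; this defines a candidate inverse $\Psi$, which is well-defined because it is built functorially and intrinsically from $Z$ as an $X$-scheme. Applying Theorem \ref{pullback} to the presentation $Z = Z_\rho$ shows that the normalization of $Y \times_X Z_\rho$ is $G$-equivariantly isomorphic to $Y \times {\mathbb A}^m$ carrying its original product action, so $\Psi \circ \Phi = \mathrm{id}$; since every object of (3) is by definition of the form $Z_\rho$, the composite $\Phi \circ \Psi$ is the identity as well.

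The step requiring the most care is the compatibility of the equivalence relations, and this is where the hypotheses (normality of $Y$ and the existence of a unique fixed $K$-point) enter. On the side of (3) I would take the morphisms to be isomorphisms of module schemes up to normalization, i.e. linear over $X$; such an isomorphism $Z_{\rho_1} \cong Z_{\rho_2}$ induces a $G$-equivariant isomorphism of pull-backs $Y \times_X Z_{\rho_1} \cong Y \times_X Z_{\rho_2}$ over $Y$ that is linear in the fiber direction, hence, after normalizing via Theorem \ref{pullback}, a $G$-equivariant fiberwise-linear isomorphism $Y \times {\mathbb A}^m \cong Y \times {\mathbb A}^m$ over $Y$. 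The unique fixed point $Q$ is necessarily preserved, and restricting to the fiber ${\mathbb A}^m_Q$ yields, by the argument of Lemma \ref{manyfixedpoints}, an equivalence of the two representations; this makes $\Phi$ injective on equivalence classes and completes the bijection.

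The main obstacle I anticipate is tracking linearity through the normalization step: one must ensure that the normalization of the pull-back inherits not merely a $G$-action but the \emph{linear} product structure needed to invoke the fixed-point argument. This is guaranteed because the isomorphism of Theorem \ref{pullback} is an isomorphism over the free locus, whose complement has the codimension-theoretic smallness recorded in the cited results, so that there the object is an honest vector bundle with its given linear action; by normality of $Y$ this linear fiber structure is determined by its restriction to that dense locus and extends uniquely. A secondary point to verify is that $\Psi$ indeed lands in (2) rather than producing a nonlinear action, which again follows since for $Z = Z_\rho$ the recovered action is identified by Theorem \ref{pullback} with the original linear product action.
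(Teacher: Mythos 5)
Your proposal is correct and follows the same route as the paper, whose proof consists precisely of invoking Lemma \ref{correspondencerepresentation} for the equivalence of (1) and (2) and Theorem \ref{pullback} for the equivalence of (2) and (3). You have merely made explicit the details the paper leaves implicit (the mutually inverse maps $\Phi$ and $\Psi$, and the tracking of equivalence classes via the fixed-point argument of Lemma \ref{manyfixedpoints}), which is a faithful elaboration rather than a different argument.
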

\begin{proof}
The translation between (1) and (2) is Lemma \ref{correspondencerepresentation}. The correspondence between (2) and (3) comes from Theorem \ref{pullback}.
\end{proof}

\begin{Remark}
Theorem \ref{pullback} and Corollary \ref{correspondence} tell us that we do not lose information by going from $\rho$ to $Z_\rho$. However, it does not say anything about what kind of objects do arise as $Z_\rho$. Note that in the case generated by reflections, when $X$ is itself an affine space, there are many different $Z_\rho$ which arise from different groups $G$ and their representations, as in Example \ref{cycliconespecialpullbacknomatch}.
\end{Remark}

\section{Irreducibility}
\label{irreduciblesection}

An important condition for a linear representation is its irreducibility. In the case of a small action, this property is reflected by the corresponding module of covariants being indecomposable. We want to find a notion of irreducibility for a fiberflat bundle $Z \rightarrow X$. The non-existence of a product representation $Z \neq Z_1 \times_X Z_2$ (or $Z \neq \widetilde{Z_1 \times_X Z_2}$ allowing normalization) within all schemes over $X$ might in the end be the right answer, but does not yield yet a workable definition, as many basic cancellation problems are still open. We work instead with decompositions within reflexive fiberflat bundles. Because of the reflexivity assumptions, the results of this section can only be applied in the case of a small action.

\begin{Definition}
Let $X$ be a normal scheme and let $Z \rightarrow X$ be a normal reflexive fiberflat bundle over $X$. $Z$ is called \emph{irreducible} if it is not possible to write $Z \cong \widetilde{ Z_1 \times_X Z_2}$ with normal reflexive fiberflat bundles $Z_1$ and $Z_2$ of smaller rank (isomorphism of module schemes up to normalization).
\end{Definition}

\begin{Lemma}
\label{irreduciblecharacterize}
Let $X = \Spec R $ be a normal affine scheme of finite type over a field and let $Z \rightarrow X$ be a normal reflexive fiberflat bundle over $X$. Let $U \subseteq X$ denote an open subset containing all points of codimension one such that the restriction $Z|_U$ is a vector bundle. Then, the following are equivalent.
	
\begin{enumerate}
\item
$Z$ is irreducible.

\item
The vector bundle $Z|_U$ is indecomposable.

\item
The reflexive $R-$module corresponding to $Z$ is indecomposable.
\end{enumerate}
\end{Lemma}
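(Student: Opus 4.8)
The plan is to reduce all three conditions to statements about the reflexive module $M$ that $Z$ realizes, exploiting the structural description $A=\bigoplus_n (\Sym^n M)^{**}$ from Lemma \ref{reflexivebundle}. First I would prove $(2)\Leftrightarrow(3)$, which is the formal correspondence between vector bundles on $U$ and reflexive $R$-modules; then I would deduce $(1)\Leftrightarrow(3)$ from the product and splitting lemmas for reflexive fiberflat bundles. Throughout, let $\mathcal F$ be the locally free sheaf on $U$ with $Z|_U=\Spec(\bigoplus_n \Sym^n \mathcal F)$, and let $M=\Gamma(U,\mathcal F)$ be the reflexive $R$-module associated to $Z$.

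For $(2)\Leftrightarrow(3)$, I would use that, since $U$ contains all points of codimension one and $R$ is normal, the operations $\Gamma(U,-)$ and $\widetilde{(-)}|_U$ set up a rank-preserving correspondence between reflexive $R$-modules and vector bundles on $U$ that carries finite direct sums to finite direct sums: a decomposition $\mathcal F=\mathcal F_1\oplus \mathcal F_2$ yields $M=\Gamma(U,\mathcal F_1)\oplus \Gamma(U,\mathcal F_2)$, while a decomposition $M=M_1\oplus M_2$ restricts to $\mathcal F=\widetilde{M_1}|_U\oplus \widetilde{M_2}|_U$. Because a nonzero reflexive module over the domain $R$ has positive rank, none of the summands produced can be zero, so a nontrivial decomposition on one side gives a nontrivial decomposition on the other. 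This yields the equivalence of $(2)$ and $(3)$.

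For $(1)\Leftrightarrow(3)$ I would argue by contraposition in both directions. If $M=M_1\oplus M_2$ nontrivially, then Lemma \ref{reflexivebundlesplitting} makes $M_1,M_2$ reflexive and fiberflat; their factorial closures $Z_i=\Spec(\bigoplus_k (\Sym^k M_i)^{**})$ are normal reflexive fiberflat realizations by Corollary \ref{fiberflatfactorialclosure} and Lemma \ref{reflexivebundle}, and Lemma \ref{reflexivebundleproduct} identifies $\widetilde{Z_1\times_X Z_2}$ with $\Spec(\bigoplus_n (\Sym^n(M_1\oplus M_2))^{**})=\Spec A=Z$. Since both $M_i\neq 0$, we get $\rank Z_i=\rank M_i<\rank M=\rank Z$, so $Z$ is not irreducible. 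Conversely, if $Z\cong\widetilde{Z_1\times_X Z_2}$ with $Z_1,Z_2$ normal reflexive fiberflat of smaller rank, then restricting this isomorphism of module schemes up to normalization to a common open $U$ gives an isomorphism of the underlying vector bundles, and comparing degree-one parts (again via Lemma \ref{reflexivebundleproduct}) yields $M\cong M_1\oplus M_2$, where $M_i$ is the module of $Z_i$; the smaller ranks force both $M_i\neq 0$, so $(3)$ fails.

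The main obstacle I anticipate is the bookkeeping needed to make ``the reflexive module of $Z$'' precise and stable under the operations ``up to normalization''. Concretely, I must check that an isomorphism of module schemes up to normalization, which a priori only agrees on a dense open set, restricts to a genuine isomorphism of vector bundles over an open $U$ containing all codimension-one points, so that the associated reflexive modules — recovered as the degree-one parts $A_1$ of the factorial closures via Lemma \ref{reflexivebundle} — are genuinely isomorphic. The repeated use of positivity of ranks of nonzero reflexive modules over the domain $R$, which guarantees that the decompositions are nontrivial and that ``smaller rank'' matches ``nonzero complementary summand'', is routine but must be stated carefully.
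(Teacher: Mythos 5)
Your proof is correct and follows essentially the same route as the paper: the equivalence $(1)\Leftrightarrow(3)$ by contraposition in both directions via Lemma \ref{reflexivebundlesplitting}, Lemma \ref{reflexivebundle} and Lemma \ref{reflexivebundleproduct}/Corollary \ref{fiberflatfactorialclosure} (factorial closures as the normal realizations, and $Z$ as the normalization of $Z_1\times_X Z_2$), with $(2)\Leftrightarrow(3)$ supplied by the standard reflexive-module/vector-bundle correspondence over $U$, which the paper's proof treats implicitly by shrinking $U$ and noting that the summands are then locally free on the original $U$. One micro-remark: the anticipated obstacle dissolves, since an isomorphism of module schemes up to normalization is a genuine global scheme isomorphism over $X$ compatible with the $\mathbb{A}^1$-action (hence graded, by Lemma \ref{monoidactiongraded}), so restricting it over a suitably shrunken $U$ where all factors are vector bundles — exactly as the paper does — is all that is needed.
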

\begin{proof}
Suppose that $Z$ is reducible, say $ Z = \widetilde{ Z_1 \times_X Z_2}$ as module schemes up to normalization with normal reflexive fiberflat bundles $Z_1$ and $Z_2$. This induces over $U$ an isomorphism $ Z|_U \cong (Z_1)|_U \times (Z_2)|_U $. Shrinking $U$ to a smaller open subset containing all points of codimension one, we may assume that $(Z_1)|_U$ and $ (Z_2)|_U $ are vector bundles. This implies that we get a decomposition for the corresponding modules, $M=M_1 \oplus M_2$, and so $\tilde{M}|_U \cong \tilde{M_1}|_U \oplus \tilde{M_2}|_U$. This implies that $M_1$ and $M_2$ are already locally free on the original $U$.

Now assume that we have a module decomposition $ M=M_1 \oplus M_2 $. By Lemma \ref{reflexivebundlesplitting}, also $M_1$ and $M_2$ are fiberflat $R$-modules. We set $A_1 = \bigoplus_{k \in \N} ( \Sym^k (M_1))^{**}$ and $A_2 = \bigoplus_{\ell \in \N} ( \Sym^\ell (M_2))^{**}$, and $Z_i = \Spec A_i$ give the fiberflat bundle realization of $M_i$ due to Lemma \ref{reflexivebundle}. By Corollary \ref{fiberflatfactorialclosure}, $Z = \Spec A$, where $ A= \bigoplus_{n \in \N} (\Sym^n (M_1 \oplus M_2))^{**}  $,
 is the normalization of $ Z_1 \times_X Z_2 $. 
\end{proof}

\begin{Lemma}
\label{irreduciblesmall}
Let $\beta $ be a linear faithful small representation of a finite group $G $, and let $X={\mathbb A}^d/G$ be the quotient. Let $\rho$ be a linear representation of $G$ and let $Z_\rho \rightarrow X$ be the corresponding fiberflat bundle. Then the following are equivalent.
	
\begin{enumerate}
\item
The representation $\rho$ is irreducible.
		
\item
The fiberflat bundle $Z_\rho$ is irreducible.
\end{enumerate}
\end{Lemma}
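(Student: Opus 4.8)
The plan is to route the entire equivalence through the module of covariants, using the characterization of irreducibility of normal reflexive fiberflat bundles in Lemma \ref{irreduciblecharacterize} together with the classical Auslander/McKay correspondence in the small case. First I would check that the hypotheses of Lemma \ref{irreduciblecharacterize} are in force: $X={\mathbb A}^d/G=\Spec R$ is a normal affine scheme of finite type over $K$, the quotient scheme $Z_\rho$ is normal, it is fiberflat by Lemma \ref{fiberflatquotient}, and it is reflexive by Lemma \ref{smallreflexive} (this is the point at which smallness of $\beta$ is essential). Let $U\subseteq X$ be the image of the fixed point free locus; since $\beta$ is small, the action is free in codimension one, so $U$ contains all points of codimension one, and by Corollary \ref{modulealgebraisomorphism} the restriction $Z_\rho|_U$ is the vector bundle attached to the invariant module of covariants $M=(S\tensor_K V)^G$. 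In the nonmodular case $M$ is a direct $R$-summand of the MCM module $S\tensor_K V\cong S^m$, hence itself MCM and therefore reflexive, so $M=\Gamma(U,\tilde M)$ is exactly the reflexive $R$-module corresponding to $Z_\rho$ in the sense of Lemma \ref{irreduciblecharacterize}. That lemma then gives
\[ Z_\rho \text{ is irreducible} \iff M \text{ is indecomposable.} \]

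It remains to prove that $\rho$ is irreducible if and only if $M$ is indecomposable. One direction is elementary and I would dispatch it first: if $\rho$ is reducible, write $V=V_1\oplus V_2$ with both $V_i\neq 0$; since forming $G$-invariants commutes with direct sums, $M=(S\tensor_K V_1)^G\oplus(S\tensor_K V_2)^G$ with both summands of positive rank, so $M$ is decomposable. Combined with the displayed equivalence, the contrapositive yields the implication $(2)\Rightarrow(1)$.

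The converse, that $\rho$ irreducible forces $M=(S\tensor_K V)^G$ to be indecomposable, is the substantive input and the step I expect to be the main obstacle, since it is precisely the nontrivial direction of the correspondence between irreducible representations and indecomposable modules of covariants for a small group. This is available to us as \cite[Corollary 5.20]{leuschkewiegand}, and its validity again rests on the smallness of the action. Feeding this into the displayed equivalence gives $(1)\Rightarrow(2)$, which together with the elementary direction completes the proof; I would also remark that both halves genuinely use smallness — once through reflexivity of $Z_\rho$, and once through the correspondence itself.
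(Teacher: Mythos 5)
Your proof is correct, and for the substantive direction it coincides with the paper's own: the paper also proves $(2)\Rightarrow(1)$ by noting that $Z_\rho$ is reflexive (Lemma \ref{smallreflexive}), invoking Lemma \ref{irreduciblecharacterize} to translate reducibility of the bundle into decomposability of the corresponding reflexive module, and then appealing to the classical correspondence for invariant modules in the small case — precisely your use of \cite[Corollary 5.20]{leuschkewiegand}. Where you genuinely diverge is the other direction: the paper handles ``$\rho$ reducible $\Rightarrow$ $Z_\rho$ reducible'' geometrically in one line, observing that $\rho=\rho_1\oplus\rho_2$ yields $Z_\rho\cong\widetilde{Z_{\rho_1}\times_X Z_{\rho_2}}$ directly by Theorem \ref{normalproductcompatibility}, which matches the definition of reducibility on the nose and exhibits the factors as quotient schemes; you instead stay on the module side, splitting $M=(S\tensor_K V_1)^G\oplus(S\tensor_K V_2)^G$ and invoking the implication $(3)\Rightarrow(1)$ of Lemma \ref{irreduciblecharacterize}, whose proof in the paper runs through the factorial-closure machinery (Lemma \ref{reflexivebundlesplitting}, Corollary \ref{fiberflatfactorialclosure}). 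Both routes are valid; the paper's is lighter for this direction and more informative about what the factors are, while yours buys uniformity — the whole equivalence is channelled through a single characterization — at the cost of leaning on the heavier splitting lemma. Two points to your credit: your verification that the module of covariants really is the reflexive module attached to $Z_\rho$ in the sense of Lemma \ref{irreduciblecharacterize} (an $R$-direct summand of $S\tensor_K V\cong S^m$ via the Reynolds operator, hence maximal Cohen--Macaulay, hence reflexive, hence equal to $\Gamma(U,\tilde M)$) is a step the paper leaves implicit, and your explicit flagging of the nonmodular hypothesis is accurate, since the lemma's statement suppresses it although both the summand argument and \cite[Corollary 5.20]{leuschkewiegand} require it, in the paper's proof no less than in yours.
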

\begin{proof}
If the representation is reducible, say $(V,\rho) =(V_1,\rho_1) \oplus (V_2, \rho_2)$, both representations being nontrivial, then we get by Theorem \ref{normalproductcompatibility} the decomposition $Z_\rho = \widetilde{Z_{\rho_1}   \times_X Z_{\rho_2} }$.

Suppose that the bundle $Z_\rho$ is reducible. Note that, due to the smallness assumption, $Z_\rho$ is reflexive by Lemma \ref{smallreflexive}. Hence, by Lemma \ref{irreduciblecharacterize}, the corresponding module is decomposable. The corresponding result for the invariant modules shows that the representation is reducible.	
\end{proof}

\begin{Corollary}
\label{correspondenceirreducible}
Let $\beta $ be a linear faithful small representation of a finite group $G $, and let $X={\mathbb A}^d/G$ be the quotient. There is a correspondence between
\begin{enumerate}
	
\item
Linear irreducible representations of $G$.

\item
Irreducible quotient schemes (fiberflat bundles) of the form $  (Y \times {\mathbb A}^m) / \beta \times \rho \rightarrow X $.
\end{enumerate}
\end{Corollary}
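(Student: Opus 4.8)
The plan is to obtain this as the irreducible refinement of the correspondence already established in Corollary \ref{correspondence}, using Lemma \ref{irreduciblesmall} to match irreducibility on the two sides. Here $Y = {\mathbb A}^d$ with the linear action $\beta$, and the origin $0 \in {\mathbb A}^d$ is a $K$-point fixed by $\beta$; for any linear representation $\rho$ the induced action of $G$ on the fiber $({\mathbb A}^d \times {\mathbb A}^m)_0 \cong {\mathbb A}^m$ is exactly $\rho$, so the origin furnishes the reconstruction point needed to run Lemma \ref{manyfixedpoints} and Lemma \ref{correspondencerepresentation}. To each irreducible $\rho$ I would assign the quotient scheme $Z_\rho = ({\mathbb A}^d \times {\mathbb A}^m)/\beta \times \rho \rightarrow X$.

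First I would check that this assignment lands in (2) and is compatible with irreducibility. Since $\beta$ is small and faithful, Corollary \ref{smallsmall} shows that $\beta \times \rho$ is again small; hence $Z_\rho$ is a fiberflat bundle by Lemma \ref{fiberflatquotient} and a reflexive (indeed normal) bundle by Lemma \ref{smallreflexive}, so all the hypotheses under which ``irreducible'' was defined are in force. Lemma \ref{irreduciblesmall} then gives precisely the equivalence: $\rho$ is irreducible if and only if $Z_\rho$ is irreducible. This simultaneously shows that the assignment $\rho \mapsto Z_\rho$ takes irreducible representations to irreducible fiberflat bundles and that every irreducible fiberflat bundle of the listed form arises from an irreducible $\rho$, giving surjectivity onto (2).

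It remains to establish that the assignment is injective up to equivalence and up to isomorphism of module schemes up to normalization. Given an isomorphism $Z_{\rho_1} \cong Z_{\rho_2}$ over $X$, I would pull it back along the quotient morphism ${\mathbb A}^d \rightarrow X$ and normalize: by Theorem \ref{pullback} the normalized pull-back of $Z_{\rho_i}$ is the trivial bundle ${\mathbb A}^d \times {\mathbb A}^m$ carrying the action $\beta \times \rho_i$. The given isomorphism, being a morphism of module schemes up to normalization (Lemma \ref{modulealgebrascheme}), pulls back and lifts to the normalizations as a $G$-equivariant \emph{linear} isomorphism ${\mathbb A}^d \times {\mathbb A}^m \rightarrow {\mathbb A}^d \times {\mathbb A}^m$ over a $G$-equivariant self-map of ${\mathbb A}^d$ fixing the origin, using Lemma \ref{moduleschemeuptolinear} to track linearity through the normalization. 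Restricting to the fiber over $0$ and invoking Lemma \ref{manyfixedpoints} then yields $\rho_1 \cong \rho_2$. Combining this injectivity with the surjectivity and irreducibility-matching above proves the stated correspondence.

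The main obstacle I anticipate is exactly this injectivity step: one must verify that an a priori abstract isomorphism of quotient schemes over $X$ lifts, through pull-back and normalization, to a morphism that is \emph{linear} in the ${\mathbb A}^m$-direction, so that Lemma \ref{manyfixedpoints} is applicable; the rest is a direct assembly of Corollary \ref{correspondence} and Lemma \ref{irreduciblesmall}. Keeping the module-scheme-up-to-normalization structure preserved under the isomorphism (Lemma \ref{modulealgebrascheme}, Lemma \ref{moduleschemeuptolinear}) is what makes this linearity available.
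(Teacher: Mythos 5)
Your proposal is correct and takes essentially the same route as the paper, whose entire proof consists of combining Corollary \ref{correspondence} with Lemma \ref{irreduciblesmall}; your detailed injectivity argument via Theorem \ref{pullback}, pull-back, normalization and Lemma \ref{manyfixedpoints} simply unpacks what is already packaged inside Corollary \ref{correspondence}. (One harmless slip: the module-scheme-up-to-normalization structure on $Z_\rho$ comes from Corollary \ref{Moduleschemequotient}, not from Lemma \ref{modulealgebrascheme}, which concerns the comparison map to the symmetric algebra.)
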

\begin{proof}
This follows from Corollary \ref{correspondence} and Lemma \ref{irreduciblesmall}.	
\end{proof}

\section{Trivialization}
\label{trivializationsection}

Let $\beta$ be a faithful action of a finite group on a normal scheme $ Y$ with quotient scheme $X$. In Section \ref{pullbacksection}, we have seen that we can reconstruct from a quotient scheme $Z_\rho \rightarrow X$ the action $\rho$ on ${\mathbb A}^m$. Here, we ask the question under what conditions we can reconstruct, given a fiberflat bundle $ Z\rho \rightarrow X$, the group $G$, the scheme $Y$ and the actions of $G$ on $Y$ and on  ${\mathbb A}^m$, which have produced the given bundle. We work over $K=\C$. According to Lemma \ref{fundamentalgroup}, the fundamental group of the regular locus $X_{\operatorname{reg} }$ is $ \overline{G}=G/ G_{\beta \operatorname{refl} } $. Due to the possible presence of reflections, one cannot reconstruct the group $G$ or the action $\beta$ from $X$ alone without any further condition.  

By \cite[Th\'{e}or\`{e}me 5.1]{sga1}, for any scheme $X'$ of finite type over $\C$ such that $X'_\C$ has a finite fundamental group, there exists a finite \'{e}tale morphism $Y' \rightarrow X'$ such that $Y'$ is simply connected and such that $\pi_1(X')$ acts on $Y'$ with quotient $X'$. When $X' \subseteq X$ is open and $X =\Spec R$ is a normal domain, then this can be extended  to a finite morphism $Y \rightarrow X$ by going to the normalization of $X$ inside $Q(Y')$. The group $\pi_1(X')$ also acts on $Y$ with quotient $X$. It seems to be difficult to characterize, starting from $X' =X_{\operatorname{reg} }\subseteq \Spec R$, whether $Y$ is an affine space. However, if we start with a group $G$ acting linearly and faithfully on ${\mathbb A}^d$ with quotient $X$, then this construction gives back ${\mathbb A}^d/ G_{\beta \operatorname{refl} } $ together with the action of $G/ G_{\beta \operatorname{refl} } $ on it, as was shown more generally in Lemma \ref{smallbundleetale}.

If a module scheme $p:Z \rightarrow X$ up to normalization is given, then we can use the fundamental group of the regular locus of $X$ and of the regular locus of $Z$ to perform a similar construction, which, if the data are of the form ${\mathbb A}^{d+m}/ \beta \times \rho \rightarrow {\mathbb A}^d/\beta$, should return these data as well as possible. The  existence of reflections of $\beta \times \rho$ provides a natural obstruction for a complete reconstruction, but according to Section \ref{reflectionssection}, there are many favorable situations, such as Corollary \ref{faithfulsmall} and Corollary \ref{smallsmall}, where the product representation has no reflection.

\begin{Lemma}
\label{fiberflatbundleproperties}
Let $X =\Spec R$ be a normal affine scheme of finite type over $\C$. Let $p:Z \rightarrow X$ be a normal fiberflat bundle. Then the following hold.

\begin{enumerate}
	
\item
For every open subset $U \subseteq X$, there is a natural surjection $\pi_1(Z|_U) \rightarrow \pi_1 (U)$.
	
\item 
There is a natural isomorphism $ \pi_1 ( Z_{\operatorname{reg} } \cap p^{-1} ( X_{\operatorname{reg} } ) ) \rightarrow \pi_1 (Z_{\operatorname{reg} }) $.
	
\item 
There is a natural surjection $ \pi_1 ( Z_{\operatorname{reg} } ) \rightarrow  \pi_1 ( X_{\operatorname{reg} } )$.	

\end{enumerate} 
\end{Lemma}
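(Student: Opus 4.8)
The plan is to establish the three claims in order, using the zero section of the module scheme up to normalization to get surjectivity, and the constant fiber dimension (fiberflatness) to control the locus of $Z$ lying over the singularities of $X$. Throughout I work with $K=\C$ and view the schemes as complex analytic spaces, as in Lemma \ref{fundamentalgroup}. For (1), I would argue by a purely topological retraction. Since $Z \to X$ is a module scheme up to normalization it carries a zero section $s\colon X \to Z$; restricting to $U$ gives a continuous map $s\colon U \to Z|_U$ with $p\circ s=\operatorname{id}_U$. Fixing a basepoint $x_0\in U$ and $z_0=s(x_0)$, the relation $p_*\circ s_*=\operatorname{id}$ on $\pi_1(U,x_0)$ forces $p_*\colon \pi_1(Z|_U,z_0)\to\pi_1(U,x_0)$ to be surjective. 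Here $U$ is irreducible (open in the irreducible $X$), hence connected, and $Z|_U$ is connected as $Z$ is; no regularity of $Z|_U$ is needed.

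For (2), the key observation is that the set removed from $Z_{\operatorname{reg}}$ is precisely $Z_{\operatorname{reg}}\cap p^{-1}(X_{\operatorname{sing}})$, and that it is small. Since $X$ is normal, $X_{\operatorname{sing}}$ has codimension $\ge 2$; since $Z$ is fiberflat, every fiber of $p$ has the same dimension $r$, so
\[ \dim p^{-1}(X_{\operatorname{sing}})\ \le\ \dim X_{\operatorname{sing}}+r\ \le\ (\dim X-2)+r\ =\ \dim Z-2, \]
i.e.\ $p^{-1}(X_{\operatorname{sing}})$ has codimension $\ge 2$ in $Z$ — the same estimate already used in the proof of Lemma \ref{fundamentalregularremove}. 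Intersecting with the complex manifold $Z_{\operatorname{reg}}$, we remove a closed analytic subset of complex codimension $\ge 2$, hence real codimension $\ge 4$. After passing to a smooth stratification, \cite[Satz 4.B.2]{storchwiebe4} shows that the inclusion induces an isomorphism $\pi_1(Z_{\operatorname{reg}}\cap p^{-1}(X_{\operatorname{reg}}))\to\pi_1(Z_{\operatorname{reg}})$, exactly as in the proof of Lemma \ref{fundamentalgroup}.

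For (3), I would combine (1) and (2) over the vector bundle locus. Let $U_0\subseteq X$ be the open dense subset where $Z|_{U_0}$ is a vector bundle, and set $U'=U_0\cap X_{\operatorname{reg}}$, open and dense in $X_{\operatorname{reg}}$. Over $U'$ the bundle is smooth, so $Z|_{U'}=p^{-1}(U')\subseteq W:=Z_{\operatorname{reg}}\cap p^{-1}(X_{\operatorname{reg}})$, and the square with horizontal inclusions $Z|_{U'}\hookrightarrow W$, $U'\hookrightarrow X_{\operatorname{reg}}$ and vertical maps induced by $p$ commutes (both composites send $z\mapsto p(z)$). By the retraction argument of (1) applied to $U'$, the map $\pi_1(Z|_{U'})\to\pi_1(U')$ is surjective; and since $X_{\operatorname{reg}}$ is a connected manifold and $X_{\operatorname{reg}}\setminus U'$ is a closed analytic subset of codimension $\ge 1$ (real codimension $\ge 2$), every loop perturbs off it, so $\pi_1(U')\to\pi_1(X_{\operatorname{reg}})$ is surjective. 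Hence the composite $\pi_1(Z|_{U'})\to\pi_1(X_{\operatorname{reg}})$ is surjective, and factoring it through $\pi_1(W)$ by commutativity forces $(p|_W)_*\colon\pi_1(W)\to\pi_1(X_{\operatorname{reg}})$ to be surjective. Composing with the inverse of the isomorphism of (2) gives the desired natural surjection $\pi_1(Z_{\operatorname{reg}})\cong\pi_1(W)\twoheadrightarrow\pi_1(X_{\operatorname{reg}})$, induced by $p$.

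The retraction and the diagram chase are routine; the step needing the most care is (2), where fiberflatness (constant fiber dimension) must genuinely be invoked to lift the codimension-$\ge 2$ property of $X_{\operatorname{sing}}$ to $p^{-1}(X_{\operatorname{sing}})$ inside $Z$ — without it the preimage of the singular locus could be a divisor and the isomorphism on $\pi_1$ would fail. One should also record at the outset that all spaces involved are connected, so that the fundamental groups are well defined: this follows from $X$ being integral and normal, from $Z$ being a normal fiberflat bundle equipped with a zero section over the irreducible $X$, and from Lemma \ref{fiberflatirreducible}.
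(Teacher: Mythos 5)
Your proposal is correct, and parts (1) and (2) coincide in substance with the paper's proof: the zero section forces $p_*\circ s_*=\operatorname{id}$, hence surjectivity in (1), and in (2) the constant fiber dimension combined with $\operatorname{codim}(X_{\operatorname{sing}})\geq 2$ (normality) shows the inclusion $Z_{\operatorname{reg}}\cap p^{-1}(X_{\operatorname{reg}})\subseteq Z_{\operatorname{reg}}$ removes a closed subset of codimension $\geq 2$, so \cite[Satz 4.B.2]{storchwiebe4} gives the isomorphism — exactly the paper's argument. In (3) you take a slightly different route. The paper applies part (1) with $U=X_{\operatorname{reg}}$ to get a surjection $\pi_1(p^{-1}(X_{\operatorname{reg}}))\rightarrow \pi_1(X_{\operatorname{reg}})$, then invokes \cite[Satz 4.B.2]{storchwiebe4} once more for the inclusion $Z_{\operatorname{reg}}\cap p^{-1}(X_{\operatorname{reg}})\subseteq p^{-1}(X_{\operatorname{reg}})$ to obtain a surjection on $\pi_1$, and concludes by (2); note this applies the removal lemma inside the possibly \emph{singular} space $p^{-1}(X_{\operatorname{reg}})$, where it is used in its surjectivity form. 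You instead detour through $U'=U_0\cap X_{\operatorname{reg}}$, the locus where $Z$ is a vector bundle over the smooth base, apply (1) there, use that $\pi_1(U')\rightarrow\pi_1(X_{\operatorname{reg}})$ is surjective (removing a real-codimension-$\geq 2$ analytic subset from a manifold), and finish by a diagram chase through $W=Z_{\operatorname{reg}}\cap p^{-1}(X_{\operatorname{reg}})$ and the isomorphism of (2). What your version buys is that every application of the codimension-removal result takes place in a smooth ambient space ($Z_{\operatorname{reg}}$, $X_{\operatorname{reg}}$), sidestepping the mild delicacy of removing a closed subset from the singular $p^{-1}(X_{\operatorname{reg}})$; the cost is an extra shrinking to $U'$ and the commutative square, where the paper's argument is two lines. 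Both arguments are sound, and you correctly isolate fiberflatness as the ingredient that prevents $p^{-1}(X_{\operatorname{sing}})$ from being a divisor, which is indeed where the lemma would otherwise fail.
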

\begin{proof}
(1) is clear due to the existence of the zero section. (2) The inclusion $Z_{\operatorname{reg} } \cap p^{-1} ( X_{\operatorname{reg} } ) \subseteq Z_{\operatorname{reg} } $
is an isomorphism in codimension one, due to the condition on the fibers and the normality of $X$; hence, these spaces have the same fundamental group by \cite[Satz 4.B.2]{storchwiebe4}. (3). The morphism $p^{-1}( X_{\operatorname{reg} } ) \rightarrow  X_{\operatorname{reg} }$ induces a surjection $\pi_1 ( p^{-1}( X_{\operatorname{reg} } )) \rightarrow  \pi_1 ( X_{\operatorname{reg} })$ by part (1). The inclusion  $	Z_{\operatorname{reg} } \cap p^{-1} ( X_{\operatorname{reg} } ) \subseteq	p^{-1} ( X_{\operatorname{reg} } ) $ defines a surjection $ \pi_1 ( Z_{\operatorname{reg} } \cap p^{-1} ( X_{\operatorname{reg} } ) ) \rightarrow  \pi_1 ( p^{-1} ( X_{\operatorname{reg} } ) ) $ by  \cite[Satz 4.B.2]{storchwiebe4}. Hence, (2) gives the result.	
\end{proof}

In view of Theorem \ref{moduleschemefundamentalgroup}, it is natural to impose the conditions that the fundamental groups of the regular loci are finite. 

\begin{Lemma}
	\label{reconstructionlemma}
Let $X =\Spec R$ be a normal affine scheme of finite type over $\C$. Let $p:Z \rightarrow X$ be a normal fiberflat bundle such that $ \pi_1 ( Z_{\operatorname{reg} } ) $ is finite. Then there exists a finite morphism $X' \rightarrow X$ such that the regular locus of the normalization of $Z'=X' \times_XZ$ has a trivial fundamental group.
\end{Lemma}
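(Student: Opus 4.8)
The plan is to build the finite morphism $X' \to X$ from the universal cover of $Z_{\operatorname{reg}}$, and then to verify triviality of the fundamental group after base change by working over the vector bundle locus, where everything is controlled by the simple connectedness of the affine-space fibers. The guiding model is Theorem \ref{pullback}: in the geometric case the sought-for $X'$ is the scheme $Y$ itself, and the point is to recover it intrinsically from $Z$ rather than from $X$ (on which the reflection part of the group is invisible).

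First I would set $G=\pi_1(Z_{\operatorname{reg}})$, finite by hypothesis, and invoke \cite[Th\'{e}or\`{e}me 5.1]{sga1} to obtain the universal cover $\hat Z \to Z_{\operatorname{reg}}$, a finite \'etale Galois cover with group $G$ and $\hat Z$ simply connected. Let $U \subseteq X_{\operatorname{reg}}$ be an open subset containing all points of codimension one over which $Z|_U \to U$ is a vector bundle (such $U$ exists since $Z$ is fiberflat and $X$ is normal, so $X_{\operatorname{reg}}$ and the vector bundle locus each contain all codimension-one points). Write $\hat V = \hat Z|_{p^{-1}(U)}$. Since $Z|_U \to U$ is a vector bundle, its fibers are contractible, so $p$ is a homotopy equivalence and the zero section gives an inverse to $p_*\colon \pi_1(Z|_U)\to\pi_1(U)$; hence every finite \'etale cover of $Z|_U$ is pulled back from $U$. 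Applying this to $\hat V \to Z|_U$ yields a finite \'etale cover $U'\to U$ with $\hat V \cong U'\times_U Z|_U$, where $U'$ is the pull-back of $\hat V$ along the zero section.

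Next I would take $X'$ to be the normalization of $X$ in the function field of $U'$; this is finite over $X$, and since $U'\to U$ is finite \'etale with $U$ normal and codimension-one dense, one gets $X'|_U = U'$ with $X'\setminus U'$ of codimension $\geq 2$. Set $Z'=X'\times_X Z$ and let $\tilde Z'\to X'$ be its normalization, which is again a normal fiberflat bundle by Lemma \ref{fiberflatconstruction}. Over $U$ one computes $Z'|_U = U'\times_U Z|_U = \hat V$, which is smooth, so $\tilde Z'|_{U'}=\hat V$ and $\hat V\subseteq \tilde Z'_{\operatorname{reg}}$. Two codimension estimates now close the argument: because $\tilde Z'\to X'$ is fiberflat (equidimensional fibers) and $X'\setminus U'$ has codimension $\geq 2$, the set $\tilde Z'_{\operatorname{reg}}\setminus \hat V$ has codimension $\geq 2$ in $\tilde Z'$; and $\hat V$ is the complement in $\hat Z$ of $p^{-1}(X\setminus U)$, again of codimension $\geq 2$. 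By \cite[Satz 4.B.2]{storchwiebe4} removing such sets does not change the fundamental group, so $\pi_1(\tilde Z'_{\operatorname{reg}})\cong \pi_1(\hat V)\cong \pi_1(\hat Z)=1$.

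The hard part is the middle step, that the universal cover of $Z_{\operatorname{reg}}$ restricts, over the vector bundle locus, to a cover pulled back from the base $U$. This is exactly what lets me realize the possibly large group $G=\pi_1(Z_{\operatorname{reg}})$ — which carries ``fiber monodromy'' invisible on $X_{\operatorname{reg}}$ — as a ramified cover $X'\to X$ of the base rather than as a cover of $Z$. The mechanism is that a vector bundle is a homotopy equivalence onto its base with a canonical section, so finite covers descend uniquely; combined with the fiberflat property, which confines all discrepancies between $Z$ and its models to codimension $\geq 2$, the simple connectedness of $\hat Z$ transfers to the regular locus of the normalized pull-back. I should also double-check that $U'$ is connected — equivalently that $\hat V$ is, which follows from $\hat Z$ being connected together with the codimension estimate — so that $X'$ is integral and the normalization arguments are legitimate.
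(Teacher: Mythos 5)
Your construction of $X'$ coincides with the paper's: restrict the finite universal cover $\hat Z \to Z_{\operatorname{reg}}$ over the bundle part $Z|_U$, descend it along the zero section to a connected finite \'etale cover $U' \to U$ (your connectedness worry, by the way, needs no codimension bound at all: $\pi_1(Z|_U) \to \pi_1(Z_{\operatorname{reg}})$ is surjective simply because the complement of $Z|_U$ is a proper closed analytic subset), and take $X'$ to be the normalization of $X$ in the function field of $U'$. The gap is in your verification that $\pi_1(\tilde Z'_{\operatorname{reg}})$ is trivial. It rests on the claim that $U$ may be chosen with $X \setminus U$ of codimension $\geq 2$ ``since $Z$ is fiberflat'' --- but this conflates \emph{fiberflat} with \emph{reflexive}: fiberflatness only yields a dense open $U$ over which $Z$ is a vector bundle, and the lemma is designed precisely for the non-reflexive case, where the bundle locus misses a divisor (Corollary \ref{reflectiongroupnotallreflexive}; this is the situation needed for Theorem \ref{reconstructpullback} when $\beta$ has reflections). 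In the paper's own illustration directly after the lemma (Lemma \ref{cyclicone} with $\gcd(\ell,k)=1$, $X = {\mathbb A}^1$, $U = {\mathbb A}^1 \setminus \{0\}$), one has $\hat Z = {\mathbb A}^2 \setminus \{0\}$ and $\hat V = \C^\times \times \C$, so $\pi_1(\hat V) = \Z$ while $\pi_1(\hat Z) = \pi_1(\tilde Z'_{\operatorname{reg}}) = 0$: \emph{both} of your asserted isomorphisms $\pi_1(\tilde Z'_{\operatorname{reg}}) \cong \pi_1(\hat V)$ and $\pi_1(\hat V) \cong \pi_1(\hat Z)$ fail, and the correct conclusion only emerges because the two failures cancel --- which your argument does not show.

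The repair, which is the paper's actual route, is to enlarge $\hat V$ not to all of $\tilde Z'_{\operatorname{reg}}$ in one step but to the intermediate open set $\varphi^{-1}(Z_{\operatorname{reg}})$, where $\varphi : \tilde Z' \to Z$ is the finite composite. Its complement in $\tilde Z'$ has codimension $\geq 2$ by the normality of $Z$ (no hypothesis on $X \setminus U$ is needed), so $\pi_1(\tilde Z'_{\operatorname{reg}}) \cong \pi_1(\tilde Z'_{\operatorname{reg}} \cap \varphi^{-1}(Z_{\operatorname{reg}}))$ by \cite[Satz 4.B.2]{storchwiebe4}. The step you are missing is the identification of $\varphi^{-1}(Z_{\operatorname{reg}})$ with the universal cover $W \to Z_{\operatorname{reg}}$: both $Z'|_{U'} = U' \times_U Z|_U$ and $W|_{Z|_U}$ are $H$-Galois covers of $Z|_U$ corresponding to the same normal subgroup $I = \ker (\pi_1(Z|_U) \to H)$, hence are isomorphic over $Z|_U$; therefore $\tilde Z'$ and $W$ have the same function field, and since $\varphi^{-1}(Z_{\operatorname{reg}})$ and $W$ are both the integral closure of $Z_{\operatorname{reg}}$ in that field, they coincide. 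This shows $\varphi^{-1}(Z_{\operatorname{reg}})$ is smooth (so it lies inside $\tilde Z'_{\operatorname{reg}}$, and the intersection above is all of it) and simply connected, giving $\pi_1(\tilde Z'_{\operatorname{reg}}) = \pi_1(W) = 0$. In short: your first half is the paper's construction, but the final codimension bookkeeping must run through the normality of $Z$ and the covering-space comparison over $Z|_U$, not through a codimension condition on the bundle locus in $X$, which fails for general fiberflat bundles.
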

\begin{proof}
Let $U \subseteq X_{\operatorname{reg} } $ be nonempty such that $ Z|_U $ is a vector bundle. We look at the commutative diagram
\[ \begin{matrix}  Z|_{U} & \longrightarrow & Z_{\operatorname{reg} } \cap p^{-1} ( X_{\operatorname{reg} } ) &  {\longrightarrow} &    Z_{\operatorname{reg} }  &  {\longrightarrow} & Z \\ \downarrow & & \downarrow & & \downarrow & & \downarrow \\U & \longrightarrow & X_{\operatorname{reg} } & \longrightarrow & X & \longrightarrow & X \, .  \end{matrix}  \]
Then, $\pi_1(U)$ is isomorphic to $\pi_1 ( Z|_U)$ and $ \pi_1( Z_{\operatorname{reg} } \cap p^{-1} ( X_{\operatorname{reg} } ))$ is isomorphic to $ \pi_1 ( Z_{\operatorname{reg} }  )$ by Lemma \ref{fiberflatbundleproperties}. The inclusions $ U \subseteq X_{\operatorname{reg} }$ and $ Z|_U \subseteq Z_{\operatorname{reg} }$ define a surjective homomorphism $ \pi_1( Z|_U) \rightarrow \pi_1( Z_{\operatorname{reg} }) = H $ by \cite[Satz 4.B.2]{storchwiebe4}. Let $I$ be the kernel of the second homomorphism, considered as a subgroup of $\pi_1( U) \cong \pi_1( Z|_U) $.

There exists a finite \'etale morphism $U' \rightarrow U$ with $U'$ connected such that $H$ acts on $U'$ with quotient $U$. One can look at the analytic simply connected universal covering space $\tilde{U} \rightarrow U$, where $\pi_1(U)$ acts on by deck transformations. The normal subgroup $I \subseteq \pi_1(U)$ acts on $\tilde{U}$ as well with quotient space $U'= \tilde{U}/I \rightarrow U$, with $H$ acting on $U'$ and quotient $U$. By the existence theorem of Grothendieck-Riemann, $U'$ has the structure of an algebraic scheme, and $U' \rightarrow U$ is a finite morphism.

Let $L$ be the quotient field of $U'$ and let $X'$ be the spectrum of the integral closure of $R$ in $L$. The action of $H$ on $U'$ extends to an action of $H$ on $X'$ with quotient $X$.

Set $Z'=X' \times_X Z$. This contains the vector bundle $Z'|_{U'} = U' \times_U Z$ as an integral open subscheme. Let $\tilde{Z}$ be the normalization of the integral component of $Z'$ containing $Z'|_{U'}$, which is a fiberflat bundle over $X'$ by Lemma \ref{fiberflatconstruction} (4). $H$ is also acting on $Z'$, on  $\tilde{Z}$ and on $ Z'|_{U'}  $. The quotient of $Z'$ is $Z$, and this is also the quotient of $\tilde{Z}$, since $ \tilde{Z}/G \rightarrow Z$ is finite and birational and $Z$ is normal. We have the inclusions ($\varphi: \tilde{Z} \rightarrow Z' \rightarrow Z$ finite)
\[ Z'|_{U'} \subseteq  \tilde{Z}_{\operatorname{reg} } \cap \varphi^{-1} (Z_{\operatorname{reg} } ) \subseteq \tilde{Z}_{\operatorname{reg} }   \, ,  \] 
where the inclusion on the right is an isomorphism in codimension one, by the finiteness of $\varphi$ and the normality of $Z$. Hence, the upper right arrow in the commutative diagram
\[  \begin{matrix} \pi(U') &=& \pi_1(Z'|_{U'})& \longrightarrow & \pi_1 (\tilde{Z}_{\operatorname{reg} } \cap \varphi^{-1} (Z_{\operatorname{reg} } ) )  & \stackrel{\cong}{\longrightarrow} & \pi_1 ( \tilde{Z}_{\operatorname{reg}} ) \\ \downarrow && \downarrow & & \downarrow & & \\\pi(U) &=& \pi_1(Z|_U) & \longrightarrow & \pi_1 (Z_{\operatorname{reg}} ) & & \,  \end{matrix}  \]
is an isomorphism. The horizontal homomorphisms in the middle are surjective.

Let $W \rightarrow Z_{\operatorname{reg} }$ be the finite universal covering space. Then, we can compare $ Z'|_{U'} $ and $W|_{Z|_U}$ over $Z|_U$. Both are covering spaces of the same degree $|H|$, $H$ acts on both with $Z|_U$ as quotient, and the fibers are in bijection to $H$ with $H$ acting on itself. Hence, there exists a (not unique) isomorphism $ Z'|_{U'}  \rightarrow  W|_{Z|_U}  $ above $Z|_U$. With this identification, $W$ and $ \tilde{Z}$ have the same quotient field, and $W$ and $\varphi^{-1}(Z_{\operatorname{reg}})$ are given by the integral closure of $Z_{\operatorname{reg}}$ in the quotient field. Hence, they coincide, $  \varphi^{-1} ( Z_{\operatorname{reg} }) \rightarrow  Z_{\operatorname{reg} } $ is a covering space and $\pi_1(\varphi^{-1}(Z_{\operatorname{reg}}) = \pi_1 (W)=0$. Also, $ \varphi^{-1}(Z_{\operatorname{reg}})$ is smooth, and so the intersection above is just $ \varphi^{-1} ( Z_{\operatorname{reg}}) $. Therefore, the fundamental group of $\tilde{Z}_{\operatorname{reg} }$ is trivial.
\end{proof}

\begin{Example}
Let $X={\mathbb A}^1$ with $Z \rightarrow X$ a fiberflat bundle that is a vector bundle above $U=X \setminus \{ 0 \} $. The fundamental group of $U$ is $\Z$, and the fundamental group of $Z_{\operatorname{reg} }$ is $\Z/(k)$,  as we have the natural surjection $ \pi_1 (Z|_U) \cong \Z \rightarrow \pi ( Z_{\operatorname{reg} }   ) \cong \Z/(k) $. Then, the construction in Lemma \ref{reconstructionlemma} yields $U'= {\mathbb A}^1 \setminus  \{0\} $ with the morphism $z \mapsto z^k$ and the corresponding extension to ${\mathbb A}^1$. 
	
	If $Z=K[X,W]^{\rho_\ell}$  with $\ell$ coprime to $k$ (else the fundamental group of the regular locus of $Z$ would not be $\Z/(k)$, see Example \ref{cycliconefundamentalgroup}), then the pull-back is $ Z'= \Spec K[X, X^iW^j,\, i + \ell j \in \Z (k)]$ and its normalization is ${\mathbb A}^2$, the preimage of $Z_{\operatorname{reg} }$ is the punctured affine plane, which is simply connected.
\end{Example}

\begin{Theorem}
	\label{reconstruction}
	Let $X$ be a normal affine scheme of finite type over $\C$ and let $p:Z \rightarrow X$ be a normal fiberflat bundle. Suppose that  the fundamental group $ H=\pi_1  ( Z_{\operatorname{reg} } )  $  is finite. Then, there exists a commutative diagram
	\[  \begin{matrix} Y & \longleftarrow & W \\ \downarrow  & & \downarrow \\ 	X &\longleftarrow & Z \, , \end{matrix} \]
	where
	\begin{enumerate}
		\item 
		$Y \rightarrow X$ is a normal scheme finite over $X$, with an action of $H $ on $Y$ with quotient $X$.
		
		\item
		$W \rightarrow Y$ is a normal fiberflat bundle, finite over $Z$ and \'etale in codimension one, with an action of $ H $ on $W$ with quotient $Z$.
		
		\item
	The actions of $H$ on $W$ and $Y$ are compatible.
		
		\item 
		$Y_{\operatorname{reg} }$ and $W_{\operatorname{reg} }$ are simply connected.
		
		\item 
		The corresponding diagram with the zero sections also commutes.
\end{enumerate}
	
	The objects $W \rightarrow Y$ in the diagram are uniquely determined up to an isomorphism over the base $Z \rightarrow X$.
\end{Theorem}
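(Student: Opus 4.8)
## Proof proposal for Theorem \ref{reconstruction}

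The plan is to build the diagram directly from the construction in Lemma \ref{reconstructionlemma}, identifying the finite cover $X' \to X$ produced there with the desired $Y \to X$, and the normalized pull-back $\tilde{Z}$ with $W$. First I would apply Lemma \ref{reconstructionlemma} to obtain a finite morphism $X' \to X$ together with the action of $H = \pi_1(Z_{\operatorname{reg}})$ on $X'$ having quotient $X$; by construction $X'$ is the normalization of $R$ in the quotient field $L$ of the étale cover $U' \to U$, so $X'$ is normal and finite over $X$. I set $Y = X'$, which gives (1). Next I set $W = \tilde{Z}$, the normalization of the integral component of $X' \times_X Z$ containing $Z'|_{U'}$; Lemma \ref{reconstructionlemma} already establishes that $W \to Y$ is a fiberflat bundle (via Lemma \ref{fiberflatconstruction}(4)), that $H$ acts on $W$ with quotient $Z$, and that the regular locus $W_{\operatorname{reg}} = \tilde{Z}_{\operatorname{reg}}$ is simply connected. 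Since $X'$ was chosen so that $H$ acts on $U'$ with quotient $U$, the simple connectivity of $Y_{\operatorname{reg}}$ follows from the same argument applied on the base (the surjection $\pi_1(U') \to \pi_1(Y_{\operatorname{reg}})$ of Lemma \ref{fiberflatbundleproperties}(3), together with the kernel computation that produced $U'$). This yields (4).

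For (2), the finiteness of $W \to Z$ is the finiteness of $\varphi:\tilde{Z} \to Z$ noted in Lemma \ref{reconstructionlemma}, and the étale-in-codimension-one property follows because $\varphi^{-1}(Z_{\operatorname{reg}}) \to Z_{\operatorname{reg}}$ is shown there to be a covering space (identified with the universal covering $W|_{Z_{\operatorname{reg}}} \to Z_{\operatorname{reg}}$), hence unramified, and because the complement of $\varphi^{-1}(Z_{\operatorname{reg}})$ has codimension $\geq 2$ in $W$ by the fiberflat and normality hypotheses. The compatibility (3) of the two $H$-actions is automatic: the action on $W = \tilde{Z}$ is induced from the diagonal action on $X' \times_X Z$ restricted to the distinguished component and lifted to the normalization, which by construction sits over the action on $Y = X'$. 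For (5), I would observe that the zero section $X \to Z$ of the fiberflat bundle pulls back to a section $Y = X' \to X' \times_X Z$, whose image lands in the distinguished integral component (it meets $Z'|_{U'}$) and therefore lifts to the normalization $W$, giving a zero section $Y \to W$; the commutativity of the zero-section diagram is then the commutativity of the defining square after composing with sections, checkable on the dense open $U'$ by Lemma \ref{moduleschemeuptoconstruction} and the rigidity principle used throughout (identities of morphisms agreeing on a dense open agree everywhere, cf. \cite[Proposition 7.2.2]{ega1}).

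The genuinely new content beyond Lemma \ref{reconstructionlemma} is the final uniqueness assertion, and this is the step I expect to be the main obstacle. Given two diagrams $W_1 \to Y_1$ and $W_2 \to Y_2$ over $Z \to X$ satisfying (1)--(5), I would argue as follows. The finite morphisms $Y_i \to X$ are both, over $X_{\operatorname{reg}}$, the étale cover associated to the surjection $\pi_1(X_{\operatorname{reg}}) \to H$ coming from the $H$-action with quotient $X$; since $Y_i$ is the normalization of $X$ in the function field of this cover and $Y_{i,\operatorname{reg}}$ is simply connected, the two covers have the same Galois group $H$ acting with the same quotient, so by the uniqueness of the universal cover (applied to $X_{\operatorname{reg}}$, whose fundamental group is fixed) there is an $H$-equivariant isomorphism $Y_1 \cong Y_2$ over $X$, unique up to the action of $H$. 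Transporting $W_2$ along this isomorphism, both $W_1$ and $W_2$ are then normal fiberflat bundles over the same base $Y$, finite and étale-in-codimension-one over $Z$, with $W_{i,\operatorname{reg}}$ simply connected; hence each $W_{i,\operatorname{reg}} \to Z_{\operatorname{reg}}$ is the universal covering space, and these are canonically isomorphic. The isomorphism extends over all of $W_i$ because both are normal and agree in codimension one (using that $W_i \to Z$ is finite and the complement of the regular-preimage locus has codimension $\geq 2$), again invoking \cite[Satz 4.B.2]{storchwiebe4} and normality. The delicate point will be showing that the resulting isomorphism respects the fiberflat \emph{bundle} structure (the zero section and scalar action), not merely the scheme structure over $Z$: for this I would check that any isomorphism of universal covers can be normalized to carry one zero section to the other—since the zero sections both lie over the zero section of $Z$ and any two lifts differ by a deck transformation in $H$, the ambiguity is exactly the $H$-action already present in the statement, so uniqueness holds precisely up to isomorphism over $Z \to X$ as claimed.
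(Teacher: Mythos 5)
Your existence half is essentially the paper's own argument: you take $Y=X'$ and $W=\tilde{Z}$ from Lemma \ref{reconstructionlemma} and read off (1), (2), (3) and the simple connectivity of $W_{\operatorname{reg}}$ from that construction. One local slip there: your justification that $Y_{\operatorname{reg}}$ is simply connected via ``the surjection $\pi_1(U') \rightarrow \pi_1(Y_{\operatorname{reg}})$ together with the kernel computation'' does not close, because $\pi_1(U') \cong I$ can be nontrivial and a null-homotopy of a lifted loop inside $\varphi^{-1}(Z_{\operatorname{reg}})$ projects into $Y$ but not necessarily into $Y_{\operatorname{reg}}$. The correct (and the paper's) move is simply to apply Lemma \ref{fiberflatbundleproperties} (3) to the fiberflat bundle $W \rightarrow Y$: it gives a surjection $\pi_1(W_{\operatorname{reg}}) \rightarrow \pi_1(Y_{\operatorname{reg}})$, and $\pi_1(W_{\operatorname{reg}})=0$.

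The genuine gap is the first step of your uniqueness argument. You claim that each $Y_i \rightarrow X$ is, over $X_{\operatorname{reg}}$, ``the \'etale cover associated to the surjection $\pi_1(X_{\operatorname{reg}}) \rightarrow H$ coming from the $H$-action.'' This fails on two counts. First, the natural map runs in the opposite direction: by Lemma \ref{fiberflatbundleproperties} (3) and Theorem \ref{moduleschemefundamentalgroup} one has a surjection $H = \pi_1(Z_{\operatorname{reg}}) \rightarrow \pi_1(X_{\operatorname{reg}})$, and there is in general no surjection $\pi_1(X_{\operatorname{reg}}) \rightarrow H$ at all (the surjection $\pi_1(U) \rightarrow H$ used in Lemma \ref{reconstructionlemma} does not factor through $\pi_1(X_{\operatorname{reg}})$). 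Second, $Y \rightarrow X$ need not be \'etale over $X_{\operatorname{reg}}$ — indeed the paper remarks immediately after the theorem that $Y \rightarrow X$ is in general not even \'etale in codimension one — so there is no covering-space rigidity over $X_{\operatorname{reg}}$ to invoke, and the intended isomorphism $Y_1 \cong Y_2$ cannot be produced this way. The fix is to reverse your order, which is what the paper does: first pin down $W$ intrinsically from $Z$ alone — since $W \rightarrow Z$ is finite and \'etale in codimension one with $W$ normal, $\varphi^{-1}(Z_{\operatorname{reg}}) \rightarrow Z_{\operatorname{reg}}$ is the finite \emph{universal} covering space (its fundamental group vanishes by the codimension-$2$ comparison with $W_{\operatorname{reg}}$), and $W$ is the normalization of $Z$ in it, hence unique; then recover $Y$ as the image of the zero section of $W$, whose uniqueness is exactly what condition (5) guarantees. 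Your second half in fact contains most of this material, so the repair is a reordering rather than a new idea — but as written, the argument that launches the uniqueness proof rests on a false premise.
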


\begin{proof}
Let $Y= X'$ and $W = \tilde{Z}$ as constructed in Lemma \ref{reconstructionlemma}, which gives the commutative diagram and the second part of (4). The proof of Lemma \ref{reconstructionlemma} shows that (1), (2), (3) hold. By Lemma \ref{fiberflatbundleproperties}, $Y_{\operatorname{reg}}$ is also simply connected. The inclusion $Y \rightarrow W$ is compatible with the actions of $H$, and going to the quotients gives back $X \rightarrow Z$. $W$ is uniquely determined by $Z$, as it is normal and as it contains the finite universal covering space of $Z_{\operatorname{reg}} $. The uniqueness of $Y$ follows from the commutativity of the zero sections.
\end{proof}

Note that the morphism $Y \rightarrow X$ is in general not \'etale in codimension one.

\begin{Remark}
If $X$ has an isolated singularity with a trivial local fundamental group and $Z$ is a vector bundle over the regular locus, then Theorem \ref{reconstruction} does not do anything.	
\end{Remark}

\begin{Theorem}
\label{reconstructpullback}
Let $K=\C$. Let $G$ be a finite group, let $\beta$ be a faithful $\C$-linear representation of $G$ of dimension $d$ with quotient scheme $X $ and let $\rho$ be another linear representation of $G$ with corresponding quotient scheme $p:Z_\rho \rightarrow X$. Suppose that the product representation $\beta \times \rho$ is small. Then, the construction of Theorem \ref{reconstruction} yields back $G$ together with the actions $\beta$ and $\rho$ on ${\mathbb A}^d$ and  ${\mathbb A}^m$.
\end{Theorem}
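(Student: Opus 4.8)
The goal is to show that the abstract reconstruction machinery of Theorem \ref{reconstruction}, applied to the concrete fiberflat bundle $Z_\rho \to X$ arising from a small product representation $\beta \times \rho$, returns exactly the original linear data: the group $G$ acting on ${\mathbb A}^d$ via $\beta$ and on ${\mathbb A}^m$ via $\rho$. The plan is to identify the abstractly-constructed objects $Y \to X$ and $W \to Y$ from Theorem \ref{reconstruction} with the explicit linear objects ${\mathbb A}^d \to X$ and ${\mathbb A}^d \times {\mathbb A}^m \to {\mathbb A}^d$, and to identify the recovered group $H = \pi_1(Z_{\rho \operatorname{reg}})$ with $G$.

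**Main line of argument.** First I would invoke Corollary \ref{faithfulfundamental} (or its generalization; here one needs that $\beta \times \rho$ is small, which is the hypothesis, so $\rho$ need not itself be faithful, but $\beta \times \rho$ always is since $\beta$ is faithful) to compute $H = \pi_1(Z_{\rho \operatorname{reg}}) \cong G$ via Lemma \ref{fundamentalgroup}: since the product representation is small, the reflection subgroup $G_{\beta \times \rho \operatorname{refl}}$ is trivial, so the fundamental group of the regular locus of $Z_\rho$ is all of $G$. This shows $H \cong G$ as the reconstructed group. Next, I would show that the candidate diagram with $Y = {\mathbb A}^d$, $W = {\mathbb A}^d \times {\mathbb A}^m$, and the natural $G$-actions $\beta$ and $\beta \times \rho$ satisfies all five properties (1)--(5) of Theorem \ref{reconstruction}. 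Properties (1) and (3) are immediate: ${\mathbb A}^d \to X$ is finite with $G$-action and quotient $X$ by construction, and the actions are compatible by definition of the product action. For (4), ${\mathbb A}^d$ and ${\mathbb A}^d \times {\mathbb A}^m$ are affine spaces whose regular loci are simply connected. For (2), the key point is Theorem \ref{pullback}: the pull-back $ {\mathbb A}^d \times_X Z_\rho$ has normalization ${\mathbb A}^d \times {\mathbb A}^m$, and the morphism ${\mathbb A}^d \times {\mathbb A}^m \to {\mathbb A}^d \times_X Z_\rho$ is an isomorphism over the free locus, hence étale in codimension one (since the product action is small, fixed points lie in codimension $\geq 2$). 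Property (5) holds because the zero section of the product action $(y,0)$ is $G$-equivariant and descends compatibly.

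**Invoking uniqueness.** Once I have verified that $({\mathbb A}^d \to X, \ {\mathbb A}^d \times {\mathbb A}^m \to {\mathbb A}^d)$ with the $G$-actions $\beta, \beta\times\rho$ satisfies all the axioms (1)--(5), the uniqueness clause of Theorem \ref{reconstruction} — which asserts that $W \to Y$ is determined up to isomorphism over $Z \to X$ — forces the abstractly-constructed $W \to Y$ to be isomorphic to ${\mathbb A}^d \times {\mathbb A}^m \to {\mathbb A}^d$ over $Z_\rho \to X$. The recovered group action of $H \cong G$ on $W \cong {\mathbb A}^d \times {\mathbb A}^m$ is then, via this isomorphism, the action $\beta \times \rho$, and restricting to the base gives $\beta$ on ${\mathbb A}^d$ and restricting to a fiber over the fixed point gives $\rho$ on ${\mathbb A}^m$ (using the fiber-reconstruction of Lemma \ref{correspondencerepresentation} applied at the vertex, which is the unique fixed point). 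This last identification of the fiber action with $\rho$ is where Lemma \ref{manyfixedpoints} and the unique-fixed-point structure enter.

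**Anticipated main obstacle.** The delicate step is matching the \emph{$H$-action} produced abstractly inside Theorem \ref{reconstruction} (built from deck transformations of universal covers and étale descent) with the \emph{a priori} linear $G$-action, not merely matching the underlying schemes. The uniqueness statement in Theorem \ref{reconstruction} is formulated as uniqueness of $W \to Y$ over $Z \to X$; I must check that this uniqueness is $H$-equivariant, i.e.\ that the isomorphism identifying the two copies of $W$ intertwines the two $H$-actions. I expect this to follow from the fact that $W$ is recovered as the normalization of $Z_{\rho\operatorname{reg}}$ in the function field of its universal cover, and both the abstract and the concrete action restrict over the free locus to the deck-transformation action of $\pi_1(Z_\rho|_U) \to H$, which is canonical; the Galois correspondence for the covering $W|_U \to Z_\rho|_U$ then pins down the action uniquely. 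Verifying this compatibility carefully — tracing through Remark \ref{fundamentalidentification} and the covering-space identifications in the proof of Lemma \ref{reconstructionlemma} — is the part requiring genuine attention rather than formal bookkeeping.
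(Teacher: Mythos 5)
Your argument is correct in substance, but it takes a genuinely different route from the paper's proof. You proceed axiomatically: compute $H=\pi_1(Z_{\rho\operatorname{reg}})\cong G$ from Lemma \ref{fundamentalgroup} (correctly noting that $\beta\times\rho$ is faithful and small), exhibit the linear data $\bigl({\mathbb A}^d\rightarrow X,\ {\mathbb A}^d\times{\mathbb A}^m\rightarrow{\mathbb A}^d\bigr)$ as a tuple satisfying properties (1)--(5) of Theorem \ref{reconstruction} (smallness gives that the non-free locus of $\beta\times\rho$ has codimension $\geq 2$, whence finiteness, \'etaleness in codimension one, and simple connectivity of the relevant covers), and then invoke that theorem's uniqueness clause. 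The paper instead traces through the \emph{construction} of Lemma \ref{reconstructionlemma}: its key technical input is Lemma \ref{smallbundleetale}, which shows that the quotient map $\varphi:{\mathbb A}^d\rightarrow X$ is \'etale over the open set $U$ where $Z_\rho$ is a vector bundle, so that $\varphi^{-1}(U)\rightarrow U$ is exactly the covering $U'\rightarrow U$ the construction produces (the matching of the two subgroups of $\pi_1(U)$ being the content of Theorem \ref{moduleschemefundamentalgroup}); it then recovers $X'=Y={\mathbb A}^d$ as the integral closure and concludes by uniqueness in Theorem \ref{pullback}, not by the uniqueness clause of Theorem \ref{reconstruction}. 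This difference matters precisely at the step you flag as delicate: the uniqueness in Theorem \ref{reconstruction} is stated as an isomorphism of $W\rightarrow Y$ over $Z\rightarrow X$, so you must separately argue that it intertwines the $H$-actions; your sketch (both actions restrict over the free locus to the canonical deck-transformation action of the covering, and normality plus density pin down the extension) is sound, but the paper's route sidesteps the issue entirely, since Theorem \ref{pullback} identifies the normalized pull-back \emph{together with its concrete $G$-action}, so nothing equivariant needs to be re-matched. Your approach buys modularity (everything is reduced to the black-boxed uniqueness statement); the paper's buys an explicit identification and avoids the equivariance bookkeeping. One small slip at the end: the origin need not be the \emph{unique} fixed $K$-point of $\beta$ (it is not when $\beta$ contains a trivial summand), so Lemma \ref{correspondencerepresentation} does not literally apply; this is harmless, since once the equivariant identification $W\cong{\mathbb A}^d\times{\mathbb A}^m$ carrying $\beta\times\rho$ is established, $\rho$ is read off from the fiber over \emph{any} fixed point via the argument of Lemma \ref{manyfixedpoints}, without any uniqueness hypothesis.
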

\begin{proof}
The condition means that the subgroup  $ G_{\beta \times \rho \operatorname{refl} }  $  generated by the reflections for the product representation $\beta \times \rho$ is trivial. By Lemma
\ref{fundamentalgroup} and Theorem \ref{moduleschemefundamentalgroup}, 
we have the natural surjective group homomorphism
\[  G =\pi_1  ( Z_{\operatorname{reg} } ) \cong  \pi_1 (    Z_{\operatorname{reg} } \cap p^{-1} (X_{\operatorname{reg} } )  )  \longrightarrow \pi_1 ( X_{\operatorname{reg} } ) = G/ G_{\beta \operatorname{refl} } \, .\]
In particular, all conditions to apply Theorem \ref{reconstruction} are fulfilled. We look at the quotient map $\varphi: Y \rightarrow X=Y/G$. 

Let $U \subseteq X_{\operatorname{reg} }$ be an open nonempty subset
where $Z_\rho$ is a vector bundle. The morphism $ \varphi^{-1}(U) \rightarrow U$ is \'etale by Lemma \ref{smallbundleetale}. This means that $ \varphi^{-1}(U) \rightarrow U $ is a covering space with acting group $G$ and with a surjection $ \pi_1 (U) \rightarrow G \rightarrow 0 $. In the construction of Lemma \ref{reconstructionlemma}, we construct $U'$ starting with these data. Therefore, $U'= \varphi^{-1}(U)$ and then also $X'=Y$, as this is the integral closure of $X$ in the quotient field of $U'$. Therefore, the claim follows from uniqueness in Theorem \ref{pullback}.
\end{proof}

\section{Questions}
\label{questionssection}

We close with some questions (Question \ref{multilinearquestion}, Question \ref{positivequestion} and Question \ref{analyticquestion} are rather projects).

\begin{Question}
What is the most general situation where reconstruction in the sense of Theorem \ref{pullback} and {Corollary} \ref{correspondence} holds? By Remark \ref{notreducednoreconstruction}, one needs to assume that $Y$ is reduced, and for the equivalence between (1) and (2) in {Corollary} \ref{correspondence}, one needs further conditions.
\end{Question}

\begin{Question}
More specifically, suppose that $G$ is a reflection group acting faithfully and linearly on ${\mathbb A}^d$ with mirror hyperplanes $H_1, \ldots, H_n$, $n \geq 2$. Let $Y= \bigcup_{i \in I} H_i$ with the induced (faithful) action. Can one reconstruct a linear representation $\rho$ from the quotient scheme $Z_\rho \rightarrow Y/G$?
\end{Question}

\begin{Question}
What is the exact relation between the general correspondence developed here and the correspondence for (true) reflection groups developed in \cite[Theorem D]{buchweitzfaberingalls}?
\end{Question}

\begin{Question}
Is it true as discussed in Remark \ref{regularfiberflatconjecture} that for a regular ring, a fiberflat bundle is free?
\end{Question}

\begin{Question}
Among the maximal Cohen-Macaulay modules of an invariant ring of a linear action, is it true that the ones coming from a linear representation
are characterized by fiberflatnesss?
\end{Question}

\begin{Question}
What are the rings with only finitely many reflexive fiberflat bundles?
\end{Question}

\begin{Question}
For an isolated singularity $\Spec R$, when has the tangent bundle of the punctured spectrum a fiberflat extension?
\end{Question}

\begin{Question}
When is the Frobenius pushforward $F_*R$ for a normal $F$-finite ring $R$ in positive characteristic reflexive and fiberflat? Are there other examples beyond invariant rings for finite groups?
\end{Question}

\begin{Question}
Do the results of Section \ref{irreduciblesection} also hold without the reflexive and smallness assumption?
\end{Question}

\begin{Question}
	\label{multilinearquestion}
	What does the multilinear theory of module schemes up to normalization look like? It should be such that it respects the correspondence in Corollary \ref{correspondence}.
\end{Question}

\begin{Question}
	\label{positivequestion}
	In Sections \ref{fundamentalgroupsection} and \ref{trivializationsection}, we worked over the complex numbers and with the topological fundamental group. What do the corresponding results in positive characteristic look like? 
\end{Question}

\begin{Question}
Suppose $X=Y/G$ and let $\pi:\tilde{X} \rightarrow X$ be a resolution of singularities. What is the correct pull-back of a quotient scheme $Z_\rho$ over $X$ to $\tilde{X}$? As $X$ might be smooth, any reasonable approach to this question will have to take the singularities of $Z_\rho$ into account.
\end{Question}

\begin{Question}
What does hold when $G$ is not a finite, but a reductive group?
\end{Question}

\begin{Question}
\label{analyticquestion}
In the analytic context, what does the theory look like if we replace the scheme $Y$ with a (normal connected) complex space?
\end{Question}

\begin{Question}
What about mixed characteristic?
\end{Question}

\bibliography{bibfile}

\bibliographystyle{plain}

\end{document}